\newcommand{\BibTeX}{{\scshape Bib}\kern-.08em\TeX}
\newcommand{\T}{\S\kern .15em\relax }
\newcommand{\AMS}{$\mathcal{A}$\kern-.1667em\lower.5ex\hbox
        {$\mathcal{M}$}\kern-.125em$\mathcal{S}$}
\theoremstyle{plain}
\newtheorem*{montheo}{\textsc{Théorème}}
\newtheorem*{mapropo}{\textsc{Proposition}}
\newtheorem*{monhypo}{\textsc{Hypothèse}}
\newtheorem*{monlem}{\textsc{Lemme}}
\newtheorem*{monlem1}{\textsc{Lemme 1}}
\newtheorem*{monlem2}{\textsc{Lemme 2}}
\newtheorem*{moncoro}{\textsc{Corollaire}}
\newtheorem*{marema}{\textsc{Remarque}}
\newtheorem*{marema1}{\textsc{Remarque 1}}
\newtheorem*{marema2}{\textsc{Remarque 2}}
\newtheorem*{marema3}{\textsc{Remarque 3}}
\newtheorem*{madefi}{\textsc{Définition}}
\newtheorem*{notation}{\textsc{Notation}}
\newtheorem*{notations}{\textsc{Notations}}
\newtheorem*{exemple}{\textsc{Exemple}}
\newtheorem*{variante}{\textsc{Variante}}
\def\cad{c'est--\`a--dire\ }
\def\ni{\noindent}
\def\v#1{\vskip#1mm}
\def\ni{\noindent}
\def\H{\EuScript{H}}
\def\Hn{\EuScript{H}^\natural}
\def\G{\EuScript{G}}
\def\P{\EuScript{P}}
\def\F{\EuScript{F}}
\def\A{\EuScript{A}}
\def\N{\EuScript{N}}
\def\C{\EuScript{C}}
\def\E{\EuScript{E}}
\def\sP{\scriptscriptstyle{P}}
\def\sQ{\scriptscriptstyle{Q}}
\def\sK{\scriptscriptstyle{K}}
\def\sJ{\scriptscriptstyle{J}}
\def\sR{\scriptscriptstyle{R}}
\date{\today}
\title[LA TH\'EOR\`EME DE PALEY-WIENER TORDU]
{LA TRANSFORM\'EE DE FOURIER POUR LES ESPACES TORDUS SUR UN GROUPE R\'EDUCTIF $\mathfrak{p}$-ADIQUE\\ 
I. LE TH\'EOR\`EME DE PALEY--WIENER}
\author{Guy Henniart}
\address{Universit\'e  Paris--Sud\\
Laboratoire de Math\'ematiques d'Orsay\\
CNRS\\
F--91405 Orsay Cedex}
\email{Guy.Henniart@math.u-psud.fr}
\author{Bertrand Lemaire}
\address{Institut de Math\'ematiques de Luminy et
UMR 6206 du CNRS\\
Universit\'e Aix--Marseille II, Case Postale 907\\
F--13288 Marseille Cedex 9}
\email{lemaire@iml.univ-mrs.fr}
\begin{document}
\def\smfbyname{}
\small

\begin{abstract}
Soit ${\boldsymbol{G}}$ un groupe rŽductif connexe d\'efini sur un corps local 
non archimŽdien $F$. On pose $G={\boldsymbol{G}}(F)$. Soit aussi
$\theta$ un $F$--automorphisme de ${\boldsymbol{G}}$, et $\omega$ un caractre lisse de $G$. 
On s'intŽresse aux repr\'esentations complexes lisses $\pi$ de $G$ telles que $\pi^\theta=\pi\circ\theta$ 
est isomorphe \`a $\omega\pi=\omega\otimes\pi$. Si $\pi$ est admissible, en particulier irrŽductible, 
le choix d'un isomorphisme $A$ de $\omega\pi$ sur $\pi^\theta$ (et d'une mesure de Haar sur $G$) 
dŽfinit une distribution $\Theta_\pi^A={\rm tr}(\pi\circ A)$ sur $G$. La transform\'ee de Fourier tordue 
associe \`a une fonction $f$ sur $G$ localement constante et \`a support compact, la fonction 
$(\pi,A)\mapsto \Theta_\pi^A(f)$ sur un groupe de Grothendieck ad\'equat. On d\'ecrit ici son image 
(thŽorme de Paley--Wiener), et l'on rŽduit la description de son noyau (th\'eor\`eme de densit\'e spectrale) ˆ un ŽnoncŽ 
sur la partie discrte de la thŽorie.
\end{abstract}

\begin{altabstract}
Let ${\boldsymbol{G}}$ be a connected reductive group defined over a non--Archimedean local field $F$. Put 
$G={\boldsymbol{G}}(F)$. Let $\theta$ be an $F$--automorphism of ${\boldsymbol{G}}$, and let $\omega$ be a smooth character of $G$. 
This paper is concerned with the smooth complex representations $\pi$ of $G$ such that $\pi^\theta=\pi\circ\theta$ 
is isomorphic to $\omega\pi=\omega\otimes\pi$. If $\pi$ is admissible, in particular irreducible, the choice of an 
isomorphism $A$ from $\omega\pi$ to $\pi^\theta$ (and of a Haar measure on $G$) defines a distribution 
$\Theta_\pi^A={\rm tr}(\pi\circ A)$ on $G$. The twisted Fourier transform associates to a compactly supported locally constant 
function $f$ on $G$, the function $(\pi,A)\mapsto \Theta_\pi^A(f)$ on a suitable Grothendieck group. Here we describe 
its image (Paley--Wiener theorem), and we reduce the description of its kernel (spectral density theorem) to a result on the discrete part of the theory.
\end{altabstract}

\subjclass{22E50}

\keywords{corps local non archimŽdien, espace tordu, caractre tordu, transformŽe de Fourier , thŽorme de Paley--Wiener, thŽorme de densitŽ spectrale}

\altkeywords{non--Archimedean local field,
twisted space, twisted character, Fourier transform, Paley--Wiener theorem, spectral density theorem}
\maketitle
\tableofcontents

\section{Introduction}

\subsection{}Soit $F$ un corps commutatif localement compact non archimŽdien, 
et soit ${\boldsymbol{G}}$ un groupe rŽductif connexe dŽfini sur $F$. Le groupe $G={\boldsymbol{G}}(F)$ 
des points $F$--rationnels de ${\boldsymbol{G}}$, muni de la topologie 
donnŽe par $F$, est localement profini --- en particulier localement compact --- et unimodulaire. 
On appelle {\it reprŽsentation de $G$}, ou {\it $G$--module}, une reprŽsentation lisse de $G$ 
ˆ valeurs dans le groupe des automorphismes d'un espace 
vectoriel sur ${\Bbb C}$. Le choix d'une mesure de Haar $dg$ sur 
$G$ permet de 
dŽfinir, pour toute reprŽsentation admissible $\pi$ de $G$, une distribution $\Theta_\pi$ sur $G$, \cad 
une forme lin\'eaire sur l'espace $\H(G)$ des fonctions localement constantes et \`a support compact sur $G$: pour 
$f\in \H(G)$, l'opŽrateur 
$\pi(f)=\int_Gf(g)\pi(g)dg$ sur l'espace de $\pi$ est de rang fini, et l'on pose
$$
\Theta_\pi(f)={\rm tr}(\pi(f)).
$$
Cette distribution $\Theta_\pi$ ne dŽpend que de la classe d'isomorphisme de $\pi$ (et aussi du choix de $dg$). 
Notons $\G(G)$ le groupe de Grotendieck des reprŽsentations de longueur finie de $G$. Tout ŽlŽment $\pi$ de $\G(G)$ 
dŽfinit par linŽaritŽ une distribution $\Theta_\pi$ sur $G$. 

Le thŽorme de Paley--Wiener (scalaire) prouvŽ dans \cite{BDK} caractŽrise les applications 
${\Bbb Z}$--linŽaires de $\G(G)$ vers ${\Bbb C}$ qui sont de la forme $\pi\mapsto \Theta_\pi(f)$ pour une fonction $f\in \H(G)$. 
L'espace $\H(G)$ est muni d'un produit de convolution $*$, donn\'e par
$$
f*h(x)= \int_Gf(g)h(g^{-1}x)dg.
$$ 
Le noyau de l'application $f\mapsto (\pi\mapsto \Theta_\pi(f))$ contient le sous--espace $[\H(G),\H(G)]$ 
de $\H(G)$ engendr\'e par les commutateurs $f*h-h*f$. Le th\'eor\`eme de densit\'e spectrale affirme que 
ce noyau est \'egal \`a $[\H(G),\H(G)]$. Il a \'et\'e d\'emontr\'e par Kazhdan dans \cite[appendix]{K1}, via un 
argument local--global utilisant la formule des traces, donc valable seulement 
si $F$ est de caractŽristique nulle. Kazhdan a ensuite Žtendu son rŽsultat au cas o $F$ est de caractŽristique 
non nulle \cite{K2}, par la mŽthode des corps proches en supposant $G$ dŽployŽ. 
Notons que cette mŽthode est certainement valable sous des hypothses moins restrictives, par exemple en 
supposant la caractŽristique rŽsiduelle grande par rapport au rang de $G$ --- voir les travaux rŽcents de J.--L.~Waldspurger 
sur le lemme fondamental ---, mais cela reste ˆ rŽdiger. 

\subsection{}
On s'intŽresse ici ˆ la version \og tordue \fg des rŽsultats prŽcŽdents. La torsion 
en question est donnŽe par un $F$--automorphisme de $G$, disons $\theta$. On fixe aussi 
un caractre $\omega$ de $G$, \cad un homomorphisme continu dans ${\Bbb C}^\times$. Pour $f\in \H(G)$ 
et $x\in G$, on note ${^x\!f}$ la fonction 
$g\mapsto \omega(x)f(x^{-1}g\theta(x))$ sur $G$. La thŽorie de l'endoscopie tordue 
Žtudie les distributions $D$ sur $G$ qui, pour tout $f\in \H(G)$ et tout $x\in G$, 
vŽrifient $D({^x\!f})=D(f)$. 

Soit $\pi$ une reprŽsentation irrŽductible de $G$ telle que $\pi^\theta=\pi\circ \theta $ est 
isomorphe ˆ $\omega\pi =\omega\otimes \pi$. Le choix d'un isomorphisme $A$ de $\omega\pi$ sur $\pi^\theta$ 
dŽfinit comme plus haut une distribution $\Theta_\pi^A={\rm tr}(\pi\circ A)$ sur $G$. Cette distribution 
$\Theta_\pi^A$ dŽpend bien s\^ur du choix de $A$ (et aussi de celui de $dg$), et elle vŽrifie
$$
\Theta_\pi^A({^x\!f})=\Theta_\pi^A(f).
$$

Pour dŽcrire l'image et le noyau de l'application $f\mapsto (\pi\mapsto \Theta_\pi^A(f))$ comme dans le cas non tordu, il faut commencer 
par la dŽfinir! On peut le faire de diverses mani\`eres, l'une d'elle \'etant la suivante. 
Soit $\G_{\Bbb C}(G,\theta,\omega)$ le ${\Bbb C}$--espace vectoriel engendr\'e par les paires $(\pi,A)$ 
o\`u $\pi$ est une repr\'esentation de $G$ de longueur finie telle que $\pi^\theta\simeq \omega\pi$ et $A$ 
est un isomorphisme de $\omega\pi$ sur $\pi^\theta$, modulo les relations:

\begin{itemize}
\item pour toute suite exacte $0\rightarrow (\pi_1,A_1)\rightarrow (\pi_2,A_2)\rightarrow (\pi_3,A_3)\rightarrow 0$, i.e. 
une suite exacte de $G$-modules qui commute aux $A_i$, on a 
$(\pi_3,A_3)=(\pi_2,A_2)-(\pi_1,A_1)$;
\item pour tout $\lambda\in {\Bbb C}^\times$, on a $(\pi,\lambda A)=\lambda (\pi,A)$;
\item pour tout entier $k>1$ et toute paire $(\rho,B)$ form\'ee d'une repr\'esentation de longueur finie 
$\rho$ de $G$ telle que $\rho(k)\simeq \rho$ et d'un isomorphisme $B$ de $\rho$ sur 
$\rho(k)$, on a $\iota_k(\rho,B)=0$.
\end{itemize}

\ni Ci--dessus, $\iota_k(\rho,B)$ est la paire $(\pi,A)$ dŽfinie par 
$$
\pi=\rho\oplus \rho(1)\oplus \cdots \oplus \rho(k-1),\quad 
A(v_0,v_1,\ldots ,v_{k-1})=(v_1,\ldots ,v_{k-1},B(v_0)),
$$ o\`u 
l'on a pos\'e $\rho(i)=\omega_i^{-1}\rho^{\theta^i}$, $\omega_i$ d\'esignant 
le caract\`ere $g\mapsto w(g\theta(g)\cdots \theta^{i-1}(g))$ de $G$. Par construction, $\pi(1)=\omega^{-1}\pi^\theta$ 
est isomorphe ˆ $\pi$ et $A$ est un isomorphisme de $\pi$ sur $\pi(1)$.

L'application $f\mapsto ((\pi,A)\mapsto \Theta_\pi^A(f))$ d\'efinit un morphisme ${\Bbb C}$--lin\'eaire
$$
\H(G)\rightarrow \G_{\Bbb C}(G,\theta,\omega)^*={\rm Hom}_{\Bbb C}(\G_{\Bbb C}(G,\theta,\omega),{\Bbb C}).
$$
C'est ce morphisme que l'on \'etudie dans cet article.

\subsection{}Plut\^ot que de fixer le $F$--automorphisme $\theta$ de ${\boldsymbol{G}}$, il convient de travailler avec un 
${\boldsymbol{G}}$--espace algŽbrique tordu ${\boldsymbol{G}}^\natural$ tel que l'ensemble $G^\natural={\boldsymbol{G}}^\natural(F)$ de ses points 
$F$--rationnels est non vide. Le choix d'un point--base $\delta_1\in G^\natural$ dŽfinit 
un $F$--automorphisme $\theta={\rm Int}_{{\boldsymbol{G}}^\natural}(\delta_1)$ de ${\boldsymbol{G}}$ qui permet d'identifier $G^\natural$ au 
$G$-espace topologique tordu $G\theta$ (cf. \ref{donnŽes}). On appelle {\it $\omega$--reprŽsentation de $G^\natural$}, ou 
{\it $(G^\natural,\omega)$--module}, la donn\'ee d'une paire $(\pi,A)$ form\'ee d'une reprŽsentation 
$\pi$ de $G$ telle que $\pi(1)\simeq \pi$ et d'un isomorphisme $A$ de $\pi$ sur $\pi(1)$ 
(on renvoie ˆ \ref{rappelsrep} pour une dŽfinition plus intrinsque). On note $\Pi$ la paire $(\pi,A)$, et l'on pose $\Pi^\circ=\pi$. 
Les $\omega$--reprŽsentations de $G^\natural$ s'organisent 
naturellement en une catŽgorie abŽlienne, et $\G_{\Bbb C}(G^\natural,\omega)=\G_{\Bbb C}(G,\theta,\omega)$ est un 
quotient du groupe de Grothendieck des $\omega$--repr\'esentations $\Pi$ de $G^\natural$ telles que la reprŽsentation 
$\Pi^\circ$ de $G$ sous--jacente est de longueur finie.

Toute $\omega$--repr\'esentation $\Pi$ de $G^\natural$ telle que $\Pi^\circ$ est admissible d\'efinit comme plus haut 
une distribution $\Theta_\Pi$ sur $G^\natural$, \cad une forme linŽaire sur l'espace $\H(G^\natural)$ des fonctions 
localement constantes et ˆ support compact sur $G^\natural$: pour $\phi\in \H(G^\natural)$, on pose 
$$
\Theta_\Pi(\phi)={\rm tr}(\Pi(\phi)),
$$
o $\Pi(\phi)$ est l'opŽrateur $\int_{G^\natural}\phi(\delta)\Pi(\delta)d\delta$ sur l'espace de $\Pi$ 
(il est de rang fini). Ici $d\delta$ est la mesure $G$--invariante sur $G^\natural$ image de $dg$ par 
l'homŽomorphisme $g\mapsto g\cdot \delta_1$. On a donc
$$
\Theta_\Pi(\phi)= \Theta_{\Pi^\circ}^{\Pi(\delta_1)}(\phi^\circ),
$$
o $\phi^\circ$ est la fonction $g\mapsto \phi(g\cdot \delta_1)$ sur $G$. 
Traduite en ces termes, la transform\'ee de Fourier pour $(G^\natural,\omega)$ est le morphisme ${\Bbb C}$--lin\'eaire
$$
\H(G^\natural)\rightarrow \G_{\Bbb C}(G^\natural,\omega)^*={\rm Hom}_{\Bbb C}(\G_{\Bbb C}(G^\natural,\omega),{\Bbb C})
$$
d\'eduit par linŽaritŽ de l'application $\phi \mapsto (\Pi\mapsto \Theta_\Pi(\phi))$. 

Notre rŽsultat principal (\'enoncŽ en \ref{ŽnoncŽ}) est une description de ce morphisme: le thŽorme 
de \og Paley--Wiener tordu\fg dŽcrit son image, et le thŽorme de \og densitŽ spectrale tordue\fg son noyau. 
En fait la densitŽ spectrale en question est plut\^ot une consŽquence de la description du noyau: l'espace $\H(G^\natural)$ 
est naturellement muni d'une structure de $\H(G)$--bimodule, et le sous--espace $[\H(G^\natural),\H(G)]_\omega$ de 
$\H(G^\natural)$ engendrŽ par les fonctions $\phi*f -\omega f * \phi$ est clairement contenu dans le noyau. Le thŽorme de densitŽ spectrale 
tordue dit que cette inclusion est une ŽgalitŽ: si une fonction $\phi$ annule toutes les traces $\Theta_\Pi$, o $\Pi$ parcourt  les 
$\omega$--reprŽsentations de $G^\natural$ telles que $\Pi^\circ$ est irrŽductible, alors elle est dans $[\H(G^\natural),\H(G)]_\omega$. 
Cela implique en particulier qu'elle annule toutes les distributions $\mathfrak{D}$ sur $G^\natural$ telles que $\mathfrak{D}({^x\phi'})=
\mathfrak{D}(\phi')$ pour tout $\phi'\in \H(G^\natural)$ et tout $x\in G$, 
o\`u l'on a pos\'e ${^x\phi'}(\delta)= \omega(x)\phi'(x^{-1}\cdot \delta\cdot x)$. 

Dans cet article, on dŽmontre la surjectivitŽ dans le thŽorme de \ref{ŽnoncŽ} (thŽorme de Paley--Wiener tordu). 
La preuve de l'injectivitŽ (thŽorme de densitŽ spectrale tordue), beaucoup plus longue que prŽvue, est seulement ŽbauchŽe ici, 
et sera terminŽe ailleurs \cite{HL}.  

\subsection{}Le thŽorme de Paley--Wiener tordu a ŽtŽ dŽmontrŽ par Rogawski dans \cite{R}, pour 
$\theta$ d'ordre fini et $\omega=1$. La preuve est essentiellement celle de \cite{BDK}, adaptŽe au cas tordu. 
Sous les m\^emes hypothses ($\theta^l={\rm id}$ et $\omega=1$), Flicker a dŽcrit dans \cite{F} une preuve 
locale du thŽorme de densitŽ spectrale, 
utilisant la mŽthode de \og dŽvissage \fg de Bernstein. \`A notre connaissance, cette mŽthode n'a 
jamais ŽtŽ rŽdigŽe par Bernstein. Elle est particulirement bien expliquŽe par Dat (dans le cas non tordu) 
dans son article sur le $K_0$ \cite{D}. Ce dernier permet d'ailleurs de reconstruire les arguments 
manquants dans \cite{F}. 

La dŽmonstration proposŽe ici et dans \cite{HL} est entirement locale, et aussi entirement spectrale puisqu'aucun 
recours aux intŽgrales orbitales n'est nŽcessaire. Comme dans \cite{F}, on traite de fa\c{c}on semblable 
la surjectivitŽ (Paley--Wiener) et l'injectivitŽ (densitŽ spectrale). Le th\'eor\`eme de Paley--Wiener est 
dŽmontrŽ ici en adaptant au cas tordu les arguments de \cite{BDK}. Le th\'eor\`eme de densit\'e spectrale sera (compl\`etement) 
dŽmontrŽ dans \cite{HL} gr\^ace la mŽthode de dŽvissage de Bernstein.

Par induction parabolique et rŽcurrence sur la 
dimension de $G$, on ramne l'Žtude de la transform\'ee de Fourier ˆ la partie  
\og discr\`ete \fg de $(G^\natural,\omega)$. Notant $\smash{\overline{\H}}^{\rm dis}(G^\natural,\omega)$ le sous--espace 
de $\smash{\overline{\H}}(G^\natural,\omega)= \H(G^\natural)/[\H(G^\natural),\H(G)]_\omega$ engendrŽ par les fonctions 
\og $\omega$--cuspidales \fg, et $\G^{\rm dis}_{\Bbb C}(G^\natural,\omega)^*$ l'espace des formes linŽaires 
\og discrtes \fg sur $\G_{\Bbb C}(G^\natural, \omega)$ --- ces notions 
sont les avatars tordus des notions habituelles, cf. \ref{dŽcrivons les points-clŽs} ---, 
la transformŽe de Fourier pour 
$(G^\natural,\omega)$ se restreint en un morphisme 
$$
\smash{\overline{\H}}^{\rm dis}(G^\natural,\omega)\rightarrow\G_{\Bbb C}^{\rm dis}(G^\natural,\omega)^*.
$$
Une bonne partie du prŽsent article est consacrŽe ˆ l'Žtude de ce morphisme, prŽcisŽment ˆ la description de 
son image, son injectivitŽ Žtant dŽmontrŽe dans \cite{HL}. 
La description (image et noyau) de la transformŽe de Fourier sur l'espace $\G_{\Bbb C}(G^\natural,\omega)$ tout entier 
s'en dŽduit ensuite aisŽment. Notons que si le centre de $G^\natural$ est compact --- cas particulier auquel il 
est en principe toujours possible de se ramener en fixant le caractre central --- le morphisme ci--dessus est un isomorphisme. 
Notons aussi que dans le cas non tordu, cet isomorphisme a dŽjˆ ŽtŽ Žtabli en 
caractŽristique nulle par Kazhdan \cite[theorem B]{K1}.

\subsection{}Le thŽorme de Paley--Wiener dŽmontrŽ ici a dŽjˆ ŽtŽ utilisŽ par 
J.-L. Waldspurger pour Žtablir la formule des traces locale tordue en caractŽristique nulle \cite{W}. 
Notons que dans ce m\^eme papier, l'auteur dŽmontre --- toujours en caractŽristique nulle, et 
sous l'hypothse o la restriction de $\theta$ au centre $Z({\boldsymbol{G}})$ de ${\boldsymbol{G}}$ est d'ordre fini --- un thŽorme de densitŽ, 
appelŽ \og thŽorme $0$\fg de Kazhdan \cite[5.5]{W}, qui (en caractŽristique nulle) est Žquivalent au thŽorme de densitŽ spectrale 
Žtabli ici. PrŽcisŽment, Waldspurger (dans \cite{W}) commence par 
Žtablir une premire formule des traces locale tordue {\it non--invariante}, formule de laquelle il dŽduit le \og thŽorme $0$\fg de Kazhdan. 
Ensuite il utilise le thŽorme de Paley--Wiener --- en particulier l'existence de pseudo--coefficients --- pour rendre cette 
premire formule invariante. 

\subsection{}\label{dŽcrivons les points-clŽs}DŽcrivons brivement les points--clŽs de la dŽmonstration du thŽorme de Paley--Wiener tordu. 
On fixe une famille $\P(G^\natural)$ de sous--espaces paraboliques standard $P^\natural$ de $G^\natural$ 
munis d'une dŽcomposition de Levi standard $P^\natural=M_{\sP}^\natural\cdot U_{\sP}$; ici $P$ dŽsigne le 
sous--groupe parabolique de $G$ sous--jacent ˆ $P^\natural$, et $U_{\sP}$ son radical unipotent. 
Pour $P^\natural\in \P(G^\natural)$, les versions tordues des foncteurs induction parabolique 
et restriction de Jacquet normalisŽs  dŽfinissent des morphismes ${\Bbb C}$--linŽaires
$${^\omega{i}}_{P^\natural}^{G^\natural}:\G_{\Bbb C}(M_{\sP}^\natural,\omega)
\rightarrow \G_{\Bbb C}(G^\natural,\omega),\quad 
{^{\omega}r}_{G^\natural}^{P^\natural}:\G_{\Bbb C}(G^\natural,\omega)\rightarrow \G_{\Bbb C}(M_{\sP}^\natural,\omega).
$$
Notons $\G_{{\Bbb C},{\rm ind}}(G^\natural,\omega)$ le sous--espace de $\G_{\Bbb C}(G^\natural,\omega)$ 
engendrŽ par les ${^\omega{i}}_{P^\natural}^{G^\natural}(\G_{\Bbb C}(M_{\sP}^\natural,\omega))$ pour $P^\natural\in \P(G^\natural)$ 
distinct de $G^\natural$, et posons
$$
\G^{\rm dis}_{\Bbb C}(G^\natural,\omega)=\G_{\Bbb C}(G^\natural,\omega)/\G_{{\Bbb C},{\rm ind}}(G^\natural,\omega).
$$
Une forme linŽaire sur $\G_{\Bbb C}(G^\natural,\omega)$ est dite \og discrte \fg si elle s'annule sur $\G_{{\Bbb C},{\rm ind}}(G^\natural,\omega)$. 
Une $\omega$--reprŽsentation $\Pi$ de $G^\natural$ telle que $\Pi^\circ$ est irrŽductible est dite 
\og discrte \fg si son image dans $\G_{\Bbb C}^{\rm dis}(G^\natural,\omega)$ n'est pas nulle. On note 
${\rm Irr}^{\rm dis}_{\Bbb C}(G^\natural,\omega)$ le sous--ensemble de $\G_{\Bbb C}(G^\natural,\omega)$ formŽ des 
$\omega$--reprŽsentations discrtes de $G^\natural$. Notons que la dŽcomposition
$$
\G_{\Bbb C}(G^\natural,\omega)=\langle \Pi:\Pi\in {\rm Irr}^{\rm dis}_{\Bbb C}(G^\natural,\omega)\rangle + \G_{{\Bbb C},{\rm ind}}(G^\natural,\omega)
$$
n'est en gŽnŽral pas une somme directe.

Pour $\phi\in \H(G^\natural)$ et $P^\natural\in \P(G^\natural)$, on a dŽfini dans \cite[5.9]{L2} le terme constant 
tordu ${^\omega\phi_{P^\natural,K}}\in \H(M_{\sP}^\natural)$ relatif ˆ un sous--groupe compact maximal spŽcial 
$K$ de $G$ choisi de manire convenable (en bonne position par rapport aux sous--groupes paraboliques standard de $G$, cf. 
\ref{terme constant}). On dit que $\phi$ est \og $\omega$--cuspidale \fg si pour tout $P^\natural\in \P(G^\natural)$ distinct de 
$G^\natural$, l'image de ${^\omega\phi_{P^\natural,K}}$ dans $\overline{\H}(M_{\sP}^\natural,\omega)$ est nulle. 
D'apr\`es l'analogue tordu de la formule de Van Dijk pour les traces des reprŽsentations induites \cite[thŽo.]{L2}, si 
$\phi$ est $\omega$--cuspidale alors $\Theta_\Pi(\phi)=0$ pour tout $\Pi\in \G_{{\Bbb C},{\rm ind}}(G^\natural,\omega)$. D'ailleurs 
si l'on admet le thŽorme de densitŽ spectrale tordue pour tous les sous--espaces de Levi propres de $G^\natural$, 
la rŽciproque est vraie aussi. La transform\'ee de Fourier induit donc bien un morphisme
$$
\smash{\overline{\H}}^{\rm dis}(G^\natural,\omega)\rightarrow \G_{\Bbb C}^{\rm dis}(G^\natural,\omega)^*.
$$
On dŽmontre en \ref{rŽduction} que le thŽorme principal (\ref{ŽnoncŽ}) se ramne ˆ un ŽnoncŽ analogue sur la partie discrte 
de la thŽorie (\ref{ŽnoncŽ discret}), \cad ˆ la description de l'image et ˆ l'injectivitŽ du morphisme ci--dessus.

\subsection{}DŽcrivons l'image du morphisme prŽcŽdent. 
L'application $(k,\pi)\mapsto \pi(k)$ induit une action de ${\Bbb Z}$ sur la plupart des objets reliŽs ˆ la thŽorie 
des reprŽsentations de $G$:
\begin{itemize}
\item l'ensemble ${\rm Irr}(G)$ des classes d'isomorphisme de 
reprŽsentations irrŽductibles de $G$;
\item l'ensemble $\Theta(G)=\coprod_\mathfrak{s}\Theta(\mathfrak{s})$ des classes 
de $G$--conjugaison de paires cuspidales de $G$, o $\frak{s}$ parcourt l'ensemble des classes d'Žquivalence inertielle 
de paires cuspidales de $G$ et $\Theta(\mathfrak{s})$ dŽsigne la variŽtŽ complexe associŽe ˆ $\mathfrak{s}$ (cf. \ref{support inertiel}); 
\item le centre $\mathfrak{Z}(G)$ de la catŽgorie des $G$--modules;
\item etc. 
\end{itemize}
L'application caractre infinitŽsimal
$$
\theta_G: {\rm Irr}(G)\rightarrow \Theta(G)
$$
est ainsi ${\Bbb Z}$--Žquivariante. Pour chaque classe d'Žquivalence inertielle $\mathfrak{s}$, on note 
$\Theta^{\rm dis}_{G^\natural,\omega}(\mathfrak{s})$ le sous--ensemble de $\Theta(\mathfrak{s})$ 
form\'e des $\theta_G(\Pi^\circ)$ pour une $\omega$--reprŽsentation 
discrte $\Pi$ de $G^\natural$. Remarquons que pour que 
$\Theta^{\rm dis}_{G^\natural,\omega}(\mathfrak{s})$ soit non vide, 
il faut que la variŽtŽ $\Theta(\mathfrak{s})$ soit ${\Bbb Z}$--stable. 
Comme dans \cite{BDK}, on montre (en \ref{dŽcomposition}--\ref{constructible}) la

\begin{mapropo}
L'ensemble $\Theta^{\rm dis}_{G^\natural,\omega}(\mathfrak{s})$ est une 
partie constructible de $\Theta(\mathfrak{s})$.
\end{mapropo}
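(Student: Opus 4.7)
Le plan est d'adapter l'argument de \cite{BDK} au cadre tordu, en pr\'esentant $\Theta^{\rm dis}_{G^\natural,\omega}(\mathfrak{s})$ comme la diff\'erence de deux sous-ensembles constructibles de $\Theta(\mathfrak{s})$. En premier lieu, on isole le lieu o\`u une $\omega$--repr\'esentation peut exister: pour une repr\'esentation irr\'eductible $\pi$ de $G$ de caract\`ere infinit\'esimal $\chi$, l'existence d'un isomorphisme d'entrelacement $A : \pi \to \pi(1)$ impose $\pi \simeq \pi(1)$, donc $\sigma(\chi) = \chi$, o\`u $\sigma$ d\'esigne le g\'en\'erateur de l'action de ${\Bbb Z}$ sur $\Theta(\mathfrak{s})$. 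On a donc $\Theta^{\rm dis}_{G^\natural,\omega}(\mathfrak{s}) \subset \Theta(\mathfrak{s})^\sigma$; si $\Theta(\mathfrak{s})$ n'est pas ${\Bbb Z}$--stable, cet ensemble est vide, donc trivialement constructible. Sinon, $\sigma$ agit par un automorphisme alg\'ebrique de $\Theta(\mathfrak{s})$, et $\Theta(\mathfrak{s})^\sigma$ est Zariski-ferm\'e.

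Ensuite, on d\'ecrit la partie non-discr\`ete \`a l'int\'erieur de $\Theta(\mathfrak{s})^\sigma$. Par d\'efinition, $\Pi$ est non-discr\`ete si et seulement si $\Pi \in \G_{{\Bbb C},{\rm ind}}(G^\natural,\omega)$, et pour $\Pi^\circ$ irr\'eductible ceci doit \'equivaloir \`a ce que $\Pi$ figure dans ${^\omega i}_{P^\natural}^{G^\natural}(\Sigma)$ pour un $P^\natural \in \P(G^\natural)$ propre et une $\omega$--repr\'esentation $\Sigma$ de $M_{\sP}^\natural$. Pour chaque tel $P^\natural$ et chaque classe d'\'equivalence inertielle $\mathfrak{s}_M$ de $M_{\sP}$ dont l'induction tordue atterrit dans $\mathfrak{s}$, le caract\`ere infinit\'esimal fournit un morphisme de vari\'et\'es alg\'ebriques complexes $\Theta(\mathfrak{s}_M) \to \Theta(\mathfrak{s})$, dont l'image est constructible par le th\'eor\`eme de Chevalley. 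La famille $\P(G^\natural)$ \'etant finie et chaque $M_{\sP}$ n'admettant qu'un nombre fini de classes $\mathfrak{s}_M$ pertinentes, l'union $\N$ de ces images, intersect\'ee avec $\Theta(\mathfrak{s})^\sigma$, reste constructible, et $\Theta^{\rm dis}_{G^\natural,\omega}(\mathfrak{s}) = \Theta(\mathfrak{s})^\sigma \setminus \N$ est constructible dans $\Theta(\mathfrak{s})$.

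L'obstacle principal sera d'\'etablir la caract\'erisation g\'eom\'etrique de la non-discr\'etion utilis\'ee ci-dessus. Sur la repr\'esentation $G$ sous-jacente, un argument classique \`a la \cite{BDK} montre que $\Pi^\circ$ est sous-quotient d'une induite propre exactement lorsque $\chi$ est dans l'image d'un morphisme d'induction; cependant, dans le cadre tordu, il faut v\'erifier en plus que la structure d'entrelacement $A$ sur $\Pi$ se rel\`eve en une $\omega$--structure sur la donn\'ee inductrice sur $M_{\sP}^\natural$, de sorte que $\Pi$ apparaisse r\'eellement dans ${^\omega i}_{P^\natural}^{G^\natural}$ et pas seulement comme $G$--sous-quotient de l'induite sous-jacente. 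Cette compatibilit\'e est contr\^ol\'ee par les op\'erateurs d'entrelacement tordus et par l'analogue tordu de la classification de Langlands, et c'est l\`a que se concentre l'analyse suppl\'ementaire par rapport \`a \cite{BDK}.
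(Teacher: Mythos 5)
Votre réduction préliminaire (se placer dans le lieu des points fixes de l'action de ${\Bbb Z}$, qui est fermé — c'est le lemme de \ref{support inertiel}) est correcte, mais le c\oe ur de votre argument repose sur une caractérisation fausse de la non--discrétion. Vous affirmez que, pour $\Pi^\circ$ irréductible, \og $\Pi\in \G_{{\Bbb C},{\rm ind}}(G^\natural,\omega)$ \fg équivaut à \og $\Pi$ est un constituant de ${^\omega i}_{P^\natural}^{G^\natural}(\Sigma)$ pour un $P^\natural$ propre \fg. Ce n'est pas la définition et ce n'est pas vrai : la discrétion se lit dans le groupe de Grothendieck \emph{modulo le sous--espace engendré par les classes induites}, et une représentation peut très bien être sous--quotient d'une induite propre sans que sa classe appartienne à ce sous--espace (penser à la représentation de Steinberg dans le cas non tordu : elle est sous--quotient d'une série principale mais son image dans $\G^{\rm dis}_{\Bbb C}$ est non nulle). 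Concrètement, si $\mathfrak{s}=[M_P,\rho]_G$ avec $M_P\neq G$, \emph{toute} représentation irréductible de support inertiel $\mathfrak{s}$ est sous--quotient d'une induite parabolique propre, donc votre ensemble $\N$ (réunion d'images de morphismes $\Theta(\mathfrak{s}_M)\to\Theta(\mathfrak{s})$) recouvre $\Theta(\mathfrak{s})$ tout entier, et votre formule $\Theta^{\rm dis}_{G^\natural,\omega}(\mathfrak{s})=\Theta(\mathfrak{s})^\sigma\smallsetminus\N$ donnerait l'ensemble vide dès que $\mathfrak{s}$ n'est pas cuspidale — ce qui est faux. Le support cuspidal seul ne peut pas détecter l'appartenance à $\G_{{\Bbb C},{\rm ind}}(G^\natural,\omega)$ ; l'argument d'image constructible à la Chevalley ne décrit donc pas le bon lieu, et c'est précisément pour cela que l'argument de \cite{BDK} n'est pas de cette nature.

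La démonstration du texte procède autrement : on vérifie le critère $(*)$ de \ref{d�composition} (pour toute sous--variété localement fermée $\mathfrak{X}$ de $\Theta(\mathfrak{s})$, il existe un morphisme dominant étale $\phi:\mathfrak{Y}\to\mathfrak{X}$ tel que l'image réciproque du complémentaire de $\mathfrak{X}\cap\Theta'$ soit vide ou égale à $\mathfrak{Y}$). Le point clé est le lemme de décomposition des familles régulières : une application régulière $\nu:\mathfrak{X}\to\G(G^\natural,\omega)$ (provenant d'un $(G^\natural,\omega,B)$--module admissible) se décompose, après changement de base dominant étale, en $\sum_i a_i\mu_i$ avec des familles irréductibles deux--à--deux disjointes, et sa variante (\ref{cons�quence}) produit des familles de $\omega$--représentations $G$--irréductibles deux--à--deux ${\Bbb C}^\times$--disjointes. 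L'appartenance au complémentaire de $\Theta'$ s'exprime alors point par point comme la condition que les constituants $\mu_k^\circ(y)$ appartiennent au sous--groupe de $\G_1(G)$ engendré par les classes induites $q_1\circ\nu_i(y)$ ; grâce à la disjonction et à l'intégralité des coefficients (relations $\sum_i b_{k,i}a_{i,j}=\delta_{k,j}$), cette condition vaut en un point si et seulement si elle vaut identiquement sur $\mathfrak{Y}$, d'où la constructibilité. Enfin, la difficulté tordue que vous signalez (relever la structure d'entrelacement) est réelle, mais elle est traitée au niveau des familles, via la structure de $(\H^\natural_J,\omega)$--module sur les vecteurs $J$--invariants (proposition de \ref{cons�quence}), et non via la classification de Langlands représentation par représentation.
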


Notons $\mathfrak{P}(G^\natural)$ le groupe --- algŽbrique, diagonalisable sur 
${\Bbb C}$ --- des caractres 
non ramifiŽs de $G$ qui sont $\theta$--stables (il ne dŽpend pas du choix de $\delta_1\in G^\natural$), et posons 
$d(G^\natural)= \dim \mathfrak{P}(G^\natural)$. 
Comme dans loc.~cit., on en dŽduit (en \ref{finitude}) le

\begin{moncoro}
L'ensemble $\Theta^{\rm dis}_{G^\natural,\omega}(\mathfrak{s})$ 
est union finie de $\mathfrak{P}(G^\natural)$--orbites.
\end{moncoro}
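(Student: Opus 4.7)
Let me outline the plan. Write $Y := \Theta^{\rm dis}_{G^\natural,\omega}(\mathfrak{s})$. By the preceding proposition, $Y$ is a constructible subset of the complex algebraic variety $\Theta(\mathfrak{s})$. The plan is to combine this with a stability statement under $\mathfrak{P}(G^\natural)$, and then to invoke the standard algebro-geometric finiteness argument of \cite{BDK}.

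\textbf{Step 1 (stability).} The first task is to check that $Y$ is stable under the natural action of $\mathfrak{P}(G^\natural)$ on $\Theta(\mathfrak{s})$. For $\psi\in\mathfrak{P}(G^\natural)$ and an $\omega$-repr\'esentation $\Pi=(\pi,A)$ of $G^\natural$, set $\psi\cdot\Pi:=(\psi\pi,A)$. Since $\psi$ is $\theta$-stable, one has $(\psi\pi)^\theta=\psi^\theta\pi^\theta=\psi\pi^\theta$, so the same operator $A$ intertwines $\omega(\psi\pi)$ with $(\psi\pi)^\theta$; thus $\psi\cdot\Pi$ is again a well-defined $\omega$-repr\'esentation. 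The restriction of $\psi$ to any Levi $M_{\sP}$ is an unramified character, and parabolic induction commutes with such twisting, so the subspace $\G_{{\Bbb C},{\rm ind}}(G^\natural,\omega)$ is preserved. Hence $\psi\cdot\Pi$ is discrete whenever $\Pi$ is, and the infinitesimal character satisfies $\theta_G(\psi\pi)=\psi\cdot\theta_G(\pi)$, showing that $Y$ is $\mathfrak{P}(G^\natural)$-stable.

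\textbf{Step 2 (finite union of orbits).} The variety $\Theta(\mathfrak{s})$ is (up to a finite quotient) an irreducible torsor under the torus $\Psi(M)$ of unramified characters of a Levi $M$ containing a cuspidal pair in $\mathfrak{s}$. The connected algebraic group $\mathfrak{P}(G^\natural)$ acts on $\Theta(\mathfrak{s})$ by multiplication via the restriction map $\mathfrak{P}(G^\natural)\hookrightarrow\Psi(M)$, with locally closed orbits whose generic dimension equals $d(G^\natural)$. One now invokes the finiteness argument of \cite{BDK}: the Zariski closure $\overline{Y}$ is $\mathfrak{P}(G^\natural)$-stable (by connectedness), and one argues by Noetherian induction on $\dim \overline{Y}$. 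Each irreducible component of $\overline{Y}$, being stable under the torus action on the quasi-torus $\Theta(\mathfrak{s})$, is the closure of a single $\mathfrak{P}(G^\natural)$-orbit; its complement in that component has strictly smaller dimension and is still stable, so induction applies. Finally, since $Y$ is $\mathfrak{P}(G^\natural)$-stable, for each orbit $O$ appearing in the resulting finite decomposition of $\overline{Y}$ one has $Y\cap O\in\{\emptyset,O\}$, so $Y$ itself is a finite union of orbits.

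The substantive content is really Step 1, where one must carefully track how twisting by a $\theta$-stable unramified character interacts with the $\omega$-twisted intertwiner $A$ and with the definition of discreteness (via parabolic induction from proper Levi subspaces). Step 2 is a routine transcription of the geometric argument of \cite{BDK}, with $\Psi(G)$ replaced throughout by its $\theta$-stable subtorus $\mathfrak{P}(G^\natural)$.
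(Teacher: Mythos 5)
Your Step 1 is correct in substance (it is essentially the compatibility recorded in the second lemma of \ref{actions duales}, namely $\Psi\,{^\omega{i}_{P^\natural}^{G^\natural}}(\Sigma)={^\omega{i}_{P^\natural}^{G^\natural}}(\Psi\vert_{M^\natural_{\sP}}\Sigma)$), but it is not where the difficulty lies, and Step 2 contains a genuine gap. From constructibility plus $\mathfrak{P}(G^\natural)$--stability alone the conclusion does not follow, because a closed irreducible subset of $\Theta(\mathfrak{s})$ stable under the subgroup $\mathfrak{P}(G^\natural)$ need not be the closure of a single orbit --- stability does not imply homogeneity. The simplest counterexample is $\Theta(\mathfrak{s})$ itself, which is constructible and $\mathfrak{P}(G^\natural)$--stable but has dimension $d(M_P)$, in general strictly larger than $d(G^\natural)$, hence is not a finite union of $\mathfrak{P}(G^\natural)$--orbits. (Note also that $\mathfrak{P}(G^\natural)=\mathfrak{P}(G)^\theta$ is in general not connected, so the appeal to connectedness is not available as stated.) Your reading of the finiteness argument of \cite{BDK} is therefore not accurate either --- that argument is not a Noetherian induction on orbit closures; it too needs an extra input that bounds the discrete part by orbit--sized pieces.

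The missing input, which is exactly what the paper uses, is unitarity via the Langlands classification. By the lemma on the cuspidal support of discrete representations (the first subsection of Section 5, a consequence of \ref{langlands} and \ref{base langlands}), every $x$ in $\Theta^{\rm dis}_{G^\natural,\omega}(\mathfrak{s})$ can be written $x=\theta_G(\psi\pi')$ where $\pi'$ underlies a tempered, hence unitary, $\omega_{\rm u}$--representation of $G^\natural$ and $\psi\in\mathfrak{P}(G^\natural)$. Introducing the anti--algebraic involution $\pi\mapsto\pi^+=\overline{\check{\pi}}$ on ${\rm Irr}(G)$ and the induced involution on $\Theta(\mathfrak{s})$, unitarity gives $\pi'^+=\pi'$, whence $x^+=\psi^+\psi^{-1}\cdot x$, an element of $\mathfrak{P}(G^\natural)\cdot x$. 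The involution therefore descends to the quotient $\Theta(\mathfrak{s})/\mathfrak{P}(G^\natural)$ and fixes pointwise the image of $\Theta^{\rm dis}_{G^\natural,\omega}(\mathfrak{s})$ there; that image is constructible by the proposition (proved in \ref{constructible}), and a constructible set pointwise fixed by an anti--algebraic involution is finite. Once this is in place, your Step 1 does give that the set is the union of the finitely many corresponding $\mathfrak{P}(G^\natural)$--orbits; without it, constructibility and stability are simply not enough.
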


Soit aussi $\mathfrak{P}_{\Bbb C}(G^\natural)$ l'ensemble des ($\omega=1$)--reprŽsentations du $G/G^1$--espace tordu 
$G^\natural/G^1$, o $G^1\subset G$ dŽsigne le groupe engendrŽ par les sous--groupes compacts de $G$. 
L'ensemble $\mathfrak{P}_{\Bbb C}(G^\natural)$, identifi\'e ˆ un ensemble de fonctions $G^\natural \rightarrow {\Bbb C}^\times$, 
est muni d'une structure de groupe, qui en fait une extension (alg\'ebrique, scindŽe) de 
$\mathfrak{P}(G^\natural)$ par ${\Bbb C}^\times$. On en dŽduit la description de l'image du 
morphisme $\smash{\overline{\H}}^{\rm dis}(G^\natural,\omega)\rightarrow \G_{\Bbb C}^{\rm dis}(G^\natural,\omega)^*$. C'est 
l'espace, disons $\F^{\rm dis}(G^\natural,\omega)$, des formes linŽaires $\varphi$ sur $\G_{\Bbb C}^{\rm dis}(G^\natural,\omega)$ 
vŽrifiant: 
 
 \begin{itemize}
 \item il existe un ensemble {\it fini} $\mathfrak{S}$ de classes d'Žquivalence inertielle $\mathfrak{s}$ tel que 
 pour tout $\Pi\in {\rm Irr}^{\rm dis}_{\Bbb C}(G^\natural,\omega)$, on a 
 $\varphi(\Pi)=0$ si $\theta_G(\Pi^\circ)\in \Theta(G)\smallsetminus \coprod_{\frak{s}\in \frak{S}}\Theta(\frak{s})$;
 \item pour tout $\Pi\in {\rm Irr}^{\rm dis}_{\Bbb C}(G^\natural,\omega)$,  l'application 
 $\mathfrak{P}_{\Bbb C}(G^\natural)\rightarrow {\Bbb C},\, \Psi\mapsto \varphi(\Psi\Pi)$ est une fonction {\it rŽgulire} 
 sur la variŽtŽ $\mathfrak{P}_{\Bbb C}(G^\natural)$.
 \end{itemize}

\subsection{}L'article s'organise comme suit. 

Dans la section 2, on reprend la thŽorie des reprŽsentations de $G$  
dans le cas tordu. Il s'agit essentiellement de suivre l'action de ${\Bbb Z}$ 
--- donnŽe par $(k,\pi)\mapsto \pi(k)$ --- sur les principaux objets de la thŽorie. 
Le cadre choisi est celui des $\omega$--reprŽsentations de $G^\natural$, qui 
sont reliŽes aux reprŽsentations tordues de $G$ via le foncteur d'oubli $\Pi\mapsto \Pi^\circ$. 
Parmi les rŽsultats obtenus dans ce cadre tordu, signalons: le lemme gŽomŽtrique, le 
thŽorme du quotient de Langlands, la description du centre de Berstein.

Le rŽsultat principal est ŽnoncŽ dans la section 3. On montre aussi 
comment la description de l'image de la transformŽe de Fourier implique 
la \og variante tempŽrŽe \fg du thŽorme de Paley--Wiener, c'est--ˆ--dire la version en termes 
des $\omega_{\rm u}$--reprŽsentations tempŽrŽes des sous--espaces de Levi de $G^\natural$; o $\omega_{\rm u}$ est le 
caractre unitaire $\omega \vert \omega\vert^{-1}$ de $G$.

Dans la section 4, on ramne l'Žtude de la transformŽe de Fourier ˆ celle de sa restriction ˆ la 
partie \og discrte \fg des reprŽsentations.

Dans la section 5, on dŽmontre le thŽorme de Paley--Wiener dans le cas discret. Pour cela on 
adapte au cas tordu les techniques de \cite{BDK}. Comme dans loc.~cit., le point--clŽ consiste ˆ 
montrer que pour toute classe d'Žquivalence inertielle $\mathfrak{s}$ dans $G$, 
l'ensemble $\Theta_{G^\natural,\omega}^{\rm dis}(\mathfrak{s})$ est une partie constructible de $\Theta(\mathfrak{s})$. 

\subsection{}Signalons brivement certaines hypothses admises au cours de l'article. 

En \ref{donnŽes}, on fixe un point--base $\delta_1\in G^\natural$, et l'on pose $\theta={\rm Int}_{\boldsymbol{G}^\natural}(\delta_1)$. 

En \ref{H modules}, on fixe une mesure de Haar $dg$ sur $G$ et l'on note $d\delta$ la mesure de Haar sur $G^\natural$ image de $dg$ par 
l'isomorphisme $G\rightarrow G^\natural,\, g\mapsto g\cdot \delta$ pour un (resp. pour tout) $\delta\in G^\natural$.

En \ref{ip et rj}, on fixe un sous--espace parabolique minimal $P^\natural_\circ$ de $G^\natural$, et une dŽcomposition de Levi 
$P^\natural_\circ=M^\natural_\circ\cdot U_\circ$. Le groupe $P_\circ$ sous--jacent ˆ $P^\natural_\circ$ est un sous--groupe parabolique 
minimal de $G$, et $P_\circ=M_\circ U_\circ$ (dŽcomposition de Levi ) o $M_\circ$ est le groupe 
sous--jacent ˆ $M^\natural_\circ$. 

\`A partir de \ref{ip et rj}, on suppose que $\delta_1$ appartient ˆ $M^\natural_\circ$. La paire parabolique minimale 
$(P_\circ,M_\circ)$ de $G$ est donc $\theta$--stable.

\`A partir de \ref{bons sgoc}, on suppose que $\theta$ stablise un sous--groupe d'Iwahori de $G$ en bonne position par rapport ˆ $(P_\circ,M_\circ)$. 

En \ref{terme constant}, on fixe un sous--groupe compact maximal spŽcial $K_\circ$ de $G$ en bonne position par rapport ˆ toute paire 
parabolique de $G$ contenant $(P_\circ,M_\circ)$. 
Ce groupe $K_\circ$ n'est pas supposŽ $\theta$--stable. \`A partir de \ref{terme constant}, on 
suppose que toutes les mesures de Haar utilisŽes sont celles normalisŽes par $K_\circ$.

\vfill\eject
\section{ReprŽsentations des espaces tordus}

\subsection{Conventions}\label{conventions}Pour Žviter de tomber dans les piges habituels, on fixe un 
{\it univers} de Grothendieck assez grand $\mathfrak{U}$, cf. \cite[chap.~I]{G}. Toutes les catŽgories considŽrŽes 
dans cet article sont implicitement des $\mathfrak{U}$--catŽgories: les objets d'une catŽgorie $\C$ sont les ŽlŽments 
d'un ensemble qui appartient ˆ l'univers $\mathfrak{U}$, notŽ ${\rm Ob}(\C)$, et pour $M,\,N\in {\rm Ob}(\C)$, 
les fl\`eches $M\rightarrow N$ dans $\C$ sont les ŽlŽments d'un ensemble qui appartient lui aussi ˆ l'univers 
$\mathfrak{U}$, notŽ ${\rm Hom}_\C(M,N)$. En particulier, on appelle simplement \og ensemble \fg un 
ensemble qui appartient ˆ l'univers $\mathfrak{U}$. Toutes les conventions de loc.~cit. sont adoptŽes 
ici. Par exemple, quand on parle de  {\it systme inductif} (resp. {\it projectif}) 
d'objets d'une catŽgorie, on suppose implicitement que ce systme est indexŽ par un ensemble 
appartenant ˆ $\mathfrak{U}$; idem pour les sommes directes et les produits directs. 

Sauf mention expresse du contraire, 
les modules sur un anneau $A$ sont des modules {\it ˆ gauche}. Rappelons que tout anneau 
$A$ possde une unitŽ, disons $1_A$, et que tout $A$--module $X$ vŽrifie 
$1_A\cdot x = x$, $x\in X$.

\subsection{Les donnŽes}\label{donnŽes}
\index{$F$ (corps de base), $\mathfrak{o}$, $\mathfrak{p}$, $\kappa$}
Soit $F$ un corps commutatif localement compact non archimŽdien 
(de caractŽristique quelconque). On note $\mathfrak{o}$ 
l'anneau des entiers de $F$, $\mathfrak{p}$ l'idŽal maximal de $F$, et $\kappa$ le corps rŽsiduel $\mathfrak{o}/\mathfrak{p}$.

Soit ${\boldsymbol{G}}$\index{$\boldsymbol{G}$, $\boldsymbol{G}^\natural$} un groupe r\'eductif connexe d\'efini sur $F$, et soit ${\boldsymbol{G}}^\natural$ 
un ${\boldsymbol{G}}$--espace alg\'ebrique tordu (au sens de J.-P.~Labesse) lui aussi d\'efini sur $F$. 
Rappelons que ${\boldsymbol{G}}^\natural$ est une vari\'et\'e alg\'ebrique affine d\'efinie sur $F$, munie:

\begin{itemize}
\item d'une action alg\'ebrique de ${\boldsymbol{G}}$ ˆ gauche d\'efinie sur $F$
$$
{\boldsymbol{G}}\times {\boldsymbol{G}}^\natural\rightarrow {\boldsymbol{G}}^\natural,\, (g,\delta)\mapsto g\cdot \delta
$$ 
telle que pour tout $\delta\in {\boldsymbol{G}}^\natural$, l'application ${\boldsymbol{G}}\rightarrow {\boldsymbol{G}}^\natural,\, 
g\mapsto g\cdot \delta$ est un isomorphisme de vari\'et\'es alg\'ebriques;
\item d'une application
$$
{\rm Int}_{{\boldsymbol{G}}^\natural}:{\boldsymbol{G}}^\natural\rightarrow {\rm Aut}({\boldsymbol{G}})
$$
o ${\rm Aut}({\boldsymbol{G}})$ d\'esigne le groupe 
des automorphismes alg\'ebriques de ${\boldsymbol{G}}$, 
telle que pour tout $g\in {\boldsymbol{G}}$ et tout $\delta\in {\boldsymbol{G}}^\natural$, on a
$${\rm Int}_{{\boldsymbol{G}}^\natural}(g\cdot \delta)= 
{\rm Int}_{\boldsymbol{G}}(g)\circ {\rm Int}_{{\boldsymbol{G}}^\natural}(\delta).
$$
\end{itemize}

\ni Cela munit ${\boldsymbol{G}}^\natural$ d'une action algŽbrique de ${\boldsymbol{G}}$ ˆ droite dŽfinie sur $F$, donnŽe par
$$
{\boldsymbol{G}}^\natural\times {\boldsymbol{G}}\rightarrow {\boldsymbol{G}}^\natural,\,(\delta,g)\mapsto \delta\cdot g= {\rm Int}_{{\boldsymbol{G}}^\natural}(\delta)(g)\cdot \delta.
$$

On suppose que l'ensemble $G^\natural={\boldsymbol{G}}^\natural(F)$\index{$G=\boldsymbol{G}(F)$, $G^\natural=\boldsymbol{G}^\natural(F)$, 
$\G_k=\boldsymbol{G}_k(F)$} des points $F$--rationnels de 
${\boldsymbol{G}}^\natural$ est non vide, et l'on munit $G=\boldsymbol{G}(F)$ et $G^\natural$ de la topologie $\mathfrak{p}$--adique, 
ce qui fait de $G^\natural$ un $G$--espace topologique tordu, cf. \cite[2.4]{L2}. La donnŽe de l'espace topologique 
tordu $(G,G^\natural)$ Žquivaut ˆ celle d'un groupe topologique ${\Bbb Z}$--graduŽ $\mathcal{G}=\coprod_{k\in {\Bbb Z}}\mathcal{G}_k$ 
tel que $\mathcal{G}_0=G$ et $\mathcal{G}_1=G^\natural$. Pour $k\in {\Bbb Z}$, $\mathcal{G}_k$ est un $G$--espace topologique 
tordu, et le groupe des points $F$--rationnels d'un ${\boldsymbol{G}}$--espace algŽbrique tordu ${\boldsymbol{G}}_k$ dŽfini sur $F$. Les ${\boldsymbol{G}}_k$ 
sont reliŽs entre eux par des $F$--morphismes de transition
$$
\varphi_{k,k'}:{\boldsymbol{G}}_k\times {\boldsymbol{G}}_{k'}\rightarrow {\boldsymbol{G}}_{k+k'}
$$
vŽrifiant
$$
\varphi_k(g\cdot \gamma,g'\cdot \gamma')= g{\rm Int}_{{\boldsymbol{G}}_k}(\gamma)(g')\cdot \varphi_{k,k'}(\gamma,\gamma')
$$
et
$$
{\rm Int}_{{\boldsymbol{G}}_{k+k'}}(\varphi_{k,k'}(\gamma,\gamma'))= {\rm Int}_{{\boldsymbol{G}}_k}(\gamma)\circ {\rm Int}_{{\boldsymbol{G}}_{k'}}(\gamma').
$$
Pour $k'=-k$, on dispose d'un $F$--morphisme \og inverse \fg
$$
{\boldsymbol{G}}_k\rightarrow {\boldsymbol{G}}_{-k},\, \gamma \mapsto \gamma^{-1}
$$
donnŽ par
$$
\varphi_{k,-k}(\gamma,\gamma^{-1})=1_G.
$$
On a donc ${\rm Int}_{{\boldsymbol{G}}_{-k}}(\gamma^{-1})={\rm Int}_{{\boldsymbol{G}}_k}(\gamma)^{-1}$. 
Ces donnŽes dŽfinissent un $F$--schŽma en groupes lisse de composantes 
connexes les ${\boldsymbol{G}}_k$, dont $\mathcal{G}$ est le groupe des points $F$--rationnels.

Fixons\index{$\delta_1$, $\theta={\rm Int}_{\boldsymbol{G}^\natural}(\delta_1)$, $\omega$} un point--base $\delta_1\in G^\natural$, 
et notons $\theta$ le $F$--automorphisme ${\rm Int}_{{\boldsymbol{G}}^\natural}(\delta_1)$ de ${\boldsymbol{G}}^\natural$. Le sous--ensemble 
$G\theta=G\rtimes \theta$ de $G\rtimes {\rm Aut}_F(\boldsymbol{G})$ est naturellement muni d'une structure de $G$--espace topologique tordu (cf. la remarque 1 
de \cite[3.4]{L2}); ici ${\rm Aut}_F(\boldsymbol{G})$ dŽsigne le groupe des $F$--automorphismes algŽbriques de $G$. On peut bien s\^ur 
identifier $G^\natural$ ˆ $G\theta$ via l'application 
$g\cdot \delta_1\mapsto g\theta$, mais on pr\'ef\`ere ne pas le faire car cette identification n'est 
pas canonique (en gŽnŽral elle dŽpend du choix de $\delta_1$, cf. la remarque 3 de loc.~cit.). 

On fixe aussi 
un caract\`ere $\omega$ de $G={\boldsymbol{G}}(F)$, \cad un morphisme continu de $G$ dans ${\Bbb C}^\times$. 
Notons $C({\boldsymbol{G}})$\index{$Z({\boldsymbol{G}})$, $C({\boldsymbol{G}})$} 
la composante neutre du centre $Z({\boldsymbol{G}})$ de ${\boldsymbol{G}}$. C'est un tore 
d\'efini sur $F$. 

\begin{marema}{\rm La restriction de $\theta$ \`a $C({\boldsymbol{G}})$ ne dŽpend pas du choix de $\delta_1$, 
et on ne suppose pas qu'elle est d'ordre fini. 
En d'autres termes, on ne suppose pas que ${\boldsymbol{G}}^\natural$ est isomorphe \`a une composante connexe 
d'un groupe alg\'ebrique affine. On 
ne suppose pas non plus que $\omega$ est unitaire.\hfill $\blacksquare$ }
\end{marema}

\subsection{$\omega$--reprŽsentations de $G^\natural$}\label{rappelsrep}
Pour un groupe topologique totalement discontinu $H$, on appelle {\it reprŽsentation de $H$}, ou {\it $H$--module}, 
une reprŽsentation lisse de $H$ ˆ valeurs dans le groupe des 
automorphismes d'un espace vectoriel complexe. Les reprŽsentations de $H$ forment une catŽgorie 
abŽlienne, notŽe $\mathfrak{R}(H)$. On note ${\rm Irr}(H)$ l'ensemble des classes d'isomorphisme de reprŽsentations 
irrŽductibles de $H$.\index{$\mathfrak{R}(H)$, ${\rm Irr}(H)$}

On s'int\'eresse aux repr\'esentations 
$\pi$ de $G$ telles que $\omega\pi = \omega\otimes \pi$ 
est isomorphe \`a $\pi^\theta =\pi\circ\theta$. Si $\pi$ est irr\'eductible, alors d'apr\`es le lemme de Schur,  
l'espace ${\rm Hom}_G(\omega\pi,\pi^\theta)$ des op\'erateurs d'entrelacement entre $\omega\pi$ et $\pi^\theta$ 
est de dimension $1$, mais en g\'en\'eral il n'y a pas de vecteur privil\'egi\'e dans cet espace --- sauf si le 
groupe ${\boldsymbol{G}}$ est quasi--d\'eploy\'e sur $F$, mais m\^eme dans ce cas il faut faire des choix. 
On a donc int\'er\^et \`a travailler dans 
une cat\'egorie de repr\'esentations englobant cet espace, par exemple celle des $\omega$--repr\'esentations 
(lisses) de $G^\natural$ introduite dans \cite[2.6]{L2}.

Une {\it $\omega$--repr\'esentation de $G^\natural$} --- ou {\it $(G^\natural,\omega)$--module} ---, est la donn\'ee d'une repr\'esentation $(\pi,V)$ de $G$ et d'une application 
$\Pi:G^\natural\rightarrow {\rm Aut}_{\Bbb C}(V)$ telle que, pour tout $\delta\in G^\natural$ et tous  $x,\,y\in G$, on a
$$
\Pi(x\cdot \delta\cdot y)=\omega(y)\pi(x)\circ\Pi(\delta)\circ \pi(y).
$$
Pour $x\in G$ et $\delta\in G^\natural$, on a
$$
\pi(x)=\Pi(x\cdot\delta)\circ \Pi(\delta)^{-1}= \omega(x)^{-1}\Pi(\delta)^{-1}\circ \Pi(\delta\cdot x).
$$
La repr\'esentation $\pi$ est d\'etermin\'ee par $\Pi$, et not\'ee $\Pi^\circ$ comme dans loc.~cit. 
Remarquons que 
l'op\'erateur $A=\Pi(\delta_1)$ est un isomorphisme de $\omega\pi$ sur $\pi^\theta$. 
L'espace d'une $\omega$--reprŽsentation $\Pi$ de $G^\natural$, \cad celui 
de la reprŽsentation $\Pi^\circ$ de $G$ sous--jacente, est notŽ $V_\Pi=V_{\Pi^\circ}$.

Les $\omega$--repr\'esentations de $G^\natural$ s'organisent naturellement en une cat\'egorie, notŽe  
$\mathfrak{R}(G^\natural,\omega)$.\index{$\mathfrak{R}(G^\natural,\omega)$} Un morphisme $u$ entre 
deux $\omega$--reprŽsentations $\Pi$ et $\Pi'$ de $G^\natural$ est une application ${\Bbb C}$--linŽaire 
$u:V_\Pi\rightarrow V_{\Pi'}$ telle que $u\circ \Pi(\delta)=\Pi'(\delta)\circ u$ pour tout $\delta\in G^\natural$ --- de manire Žquivalente, $u$ 
est un morphisme entre $\Pi^\circ$ et $\Pi'^\circ$ tel que $u\circ \Pi(\delta_1)=\Pi'(\delta_1)\circ u$. 
L'application $\Pi\mapsto \Pi^\circ$ d\'efinit un foncteur d'oubli 
de $\mathfrak{R}(G^\natural,\omega)$ dans $\mathfrak{R}(G)$, et ce foncteur est fid\`ele. 
Notons que s'il existe une $\omega$--reprŽsentation $\Pi$ de $G^\natural$ telle que $\Pi^\circ$ est irrŽductible, alors 
le caract\`ere 
$\omega$ est trivial sur le centre $Z^\natural= Z(G^\natural)$\index{$Z^\natural= Z(G^\natural)$} de $G^\natural$, d\'efini par
$$
Z^\natural=\{z\in Z(G):\theta(z)=z\}.
$$
En d'autres termes, si $\omega\vert_{Z^\natural}\neq 1$ alors la thŽorie qui nous intŽresse ici est vide.

On a des notions Žvidentes de sous--$\omega$--repr\'esentation (resp. de $\omega$--repr\'esentation quotient) d'une $\omega$--repr\'esentation 
de $G^\natural$, et de suite exacte courte de $\omega$--repr\'esentations de $G^\natural$ (cf. loc.~cit.). Si $u$ est 
un morphisme entre deux $\omega$--reprŽsentations $\Pi$ et $\Pi'$ de $G^\natural$, 
le noyau $\ker u$ et l'image ${\rm Im}\,u$ sont des sous--$\omega$--reprŽsentations de $\Pi$ et $\Pi'$ respectivement, et 
l'on a la suite exacte courte de $\omega$--reprŽsentations de $G^\natural$:
$$
0\rightarrow \ker u \rightarrow \Pi \rightarrow \Pi'/{\rm Im}\,u\rightarrow 0.
$$
Cela fait de $\mathfrak{R}(G^\natural,\omega)$ une catŽgorie ab\'elienne. 

Une $\omega$--repr\'esentation $\Pi$ de $G^\natural$ est dite {\it irr\'eductible} si $V_\Pi$ est l'unique sous--espace non nul $G^\natural$--stable de $V_\Pi$, et 
{\it $G$--irrŽductible} (ou {\it fortement irr\'eductible} \cite{L2}) si la repr\'esentation $\Pi^\circ$ de $G$ est irr\'eductible. On note 
${\rm Irr}(G^\natural,\omega)$\index{${\rm Irr}(G^\natural,\omega)$, ${\rm Irr}_0(G^\natural,\omega)$, 
${\rm Irr}_{G^\natural,\omega}(G)$} l'ensemble des classes d'isomorphisme de $\omega$--repr\'esentations irr\'eductibles de $G$, 
et ${\rm Irr}_0(G^\natural,\omega)$ le sous--ensemble de ${\rm Irr}(G^\natural,\omega)$ form\'e des $\omega$--repr\'esentations 
qui sont $G$--irr\'eductibles. 

On a une action naturelle de ${\Bbb C}^\times$ sur ${\rm Irr}(G^\natural,\omega)$, not\'ee
$$
{\Bbb C}^\times \times {\rm Irr}(G^\natural,\omega)\rightarrow {\rm Irr}(G^\natural,\omega),\,(\lambda,\Pi)\mapsto \lambda\cdot \Pi.
$$
Cette action stabilise ${\rm Irr}_0(G^\natural,\omega)$, et le 
foncteur d'oubli $\Pi\mapsto \Pi^\circ$ induit une application injective
$$
{\rm Irr}_0(G^\natural,\omega)/{\Bbb C}^\times\hookrightarrow {\rm Irr}(G)
$$
d'image le 
sous--ensemble ${\rm Irr}_{G^\natural,\omega}(G)$ de ${\rm Irr}(G)$ form\'e des $\pi$ tels que 
$\omega^{-1}\pi^\theta= \pi$.

\subsection{Les reprŽsentations $\pi(k)$ pour $k\in {\Bbb Z}$}\label{pi(k)}

Si $\pi$ est une repr\'esentation de $G$, pour chaque entier $k\geq 1$, 
on note $\omega_k$ le caract\`ere de $G$ d\'efini par\index{$\omega_k$, $ \N_{\theta,k}$, $\pi(k)$}
$$
\omega_k= \omega\circ \N_{\theta,k},\quad
\N_{\theta,k}(x)=
x\theta(x)\cdots \theta^{k-1}(x),\quad
x\in G,
$$
et $\pi(k)$ la repr\'esentation de $G$ d\'efinie par
$$
\pi(k)=\omega_k^{-1}\pi^{\theta^k}.
$$ 
Le caract\`ere $\omega_k$ ne d\'epend pas du choix du 
choix du point--base $\delta_1$, alors que (contrairement ˆ ce que la notation pourrait faire croire) 
la repr\'esentation $\pi(k)$ 
en d\'epend. Pour $k,\,k'\geq 1$, on a $\pi(k)(k')=\pi(k+k')$. Pour chaque entier $k\geq 1$, notons 
$\pi(-k)$ la repr\'esentation de $G$ telle que $\pi(-k)(k)=\pi$. 
PrŽcisŽment, on a $\pi(-k)= \omega_{-k}^{-1}\pi^{\theta^{-k}}$ o $\omega_{-k}$ est le caractre de 
$G$ dŽfini par
$$
\omega_{-k}= \omega_k^{-1}\circ \theta^{-k}=\omega^{-1}\circ \theta^{-1}\circ \N_{\theta^{-1},k}.
$$
On vŽrifie que $\pi(k)(-k)=\pi$. Par suite posant $\pi(0)=\pi$, on a
$$
\pi(k)(k')=\pi(k+k'),\quad k,\,k'\in {\Bbb Z}.
$$

On l'a dit plus haut, la repr\'esentation $\pi(k)$ dŽpend du choix du point--base 
$\delta_1$. En effet, remplacer $\delta_1$ par $\delta'_1=x\cdot \delta_1$ pour un $x\in G$ revient \`a remplacer $\theta$ 
par le $F$--automorphisme $\theta'={\rm Int}_{\boldsymbol{G}}(x)\circ \theta$ de ${\boldsymbol{G}}$, et pour chaque 
entier $k \geq 1$, \`a remplacer $\pi(k)$ par
$$
\omega_k^{-1}\pi^{\theta'^k}= \omega_k^{-1}\otimes (\pi\circ {\rm Int}_{\boldsymbol{G}}(\N_{\theta,k}(x))\circ \theta^k),
$$ 
qui est isomorphe \`a $\pi(k)$, et $\pi(-k)$ par
$$
\omega_{-k}^{-1}\pi^{\theta'^{-k}}= \omega_{-k}^{-1}\otimes (\pi\circ \theta^{-k}\circ {\rm Int}_{\boldsymbol{G}}(\N_{\theta,k}(x)^{-1})),
$$
qui est isomorphe \`a $\pi(-k)$. Pour $k\in {\Bbb Z}$, $\pi(k)$ ne d\'epend donc \`a isomorphisme pr\`es 
que de $G^\natural$, de $\omega$, et 
de la classe d'isomorphisme de 
$\pi$.

\subsection{Le foncteur $\iota_k$ pour $k\geq 1$}\label{iota}\index{$\iota^k$}
Soit un entier $k\geq 1$. Pour $\delta\in {\boldsymbol{G}}^\natural$, on dŽfinit comme en \ref{pi(k)} une 
application $\N_{\delta,k}=\N_{\tau,k}:{\boldsymbol{G}}\rightarrow {\boldsymbol{G}}$, $\tau={\rm Int}_{\boldsymbol{G}}(\delta)$: 
pour $x\in {\boldsymbol{G}}$, on pose
$$
\N_{\delta,k}(x)= x \tau(x)\cdots \tau^{k-1}(x).
$$
L'application $\N_{\delta,k}$ ainsi dŽfinie est un 
morphisme de variŽtŽs algŽbriques, et si $\tau$ est dŽfini sur $F$ (e.g. si $\delta\in G^\natural$) 
alors $\N_{\delta,k}$ l'est aussi.

Le ${\boldsymbol{G}}$--espace algŽbrique tordu ${\boldsymbol{G}}^\natural$ est muni d'un $F$--morphisme de variŽtŽs algŽbriques 
$$
{\boldsymbol{G}}^\natural\rightarrow {\boldsymbol{G}}_k,\, \delta \mapsto \delta^k
$$
dŽfini comme suit: on pose $\delta^1=\delta$ et $\delta^k= \varphi_{1,k-1}(\delta,\delta^{k-1})$ si $k>1$. 
Pour $\delta\in {\boldsymbol{G}}^\natural$ et $g\in {\boldsymbol{G}}$, on a
$$
(g\cdot \delta)^k= \N_{\delta,k}(g)\cdot \delta^k,
$$
et si $k>1$, on a
$$
\delta^{k-1} = \varphi_{k,-1}(\delta^k,\delta^{-1})= \varphi_{-1,k}(\delta^{-1},\delta^k).
$$
Le choix d'un point--base $\delta_1$ de $G^\natural$ fournit un point--base $\delta_k$ de 
$\mathcal{G}_k$: on pose\index{$\delta_k=(\delta_1)^k$}
$$
\delta_k=(\delta_1)^k.
$$

On d\'efinit comme suit un foncteur
$$
\iota_k:\mathfrak{R}(\mathcal{G}_k,\omega_k)\rightarrow \mathfrak{R}(G^\natural,\omega).
$$
Pour une $\omega_k$--repr\'esentation $\Sigma$ de $\mathcal{G}_k$, on note $\iota_k(\Sigma)=\Pi$ 
la $\omega$--repr\'esentation de $G^\natural$ d\'efinie par:

\begin{itemize}
\item la repr\'esentation $\Pi^\circ$ de $G$ 
sous--jacente \`a $\Pi$ est $\oplus_{i=0}^{k-1}\Sigma^\circ(i)$,
\item $\Pi(\delta_1)(v_0,\ldots ,v_{k-1})=(v_1,\ldots ,v_{k-1},\Sigma(\delta_k)(v_0))$.
\end{itemize}
\ni Pour un morphisme $u$ entre deux $\omega_k$--repr\'esentations $\Sigma$ et $\Sigma'$ de 
$\mathcal{G}_k$, on note $\iota_k(u)$ le morphisme $u\times\cdots \times u$ entre $\iota_k(\Sigma)$ 
et $\iota_k(\Sigma')$. 

Pour $k=1$, on a $\mathcal{G}_1=G^\natural$ et $\iota_1$ est le foncteur identique de $\mathfrak{R}(G^\natural,\omega)$. 
Notons que pour $k>1$, la $\omega$--reprŽsentation $\iota_k(\Sigma)$ dŽpend du choix de $\delta_1$, mais sa classe 
d'isomorphisme n'en dŽpend pas.

\begin{marema}\label{iota(k,k')}\index{$\iota_k^{k'}$}
{\rm Pour des entiers $k,\,k'\geq 1$ tels que $k'$ divise $k$, on dŽfinit de la m\^eme manire 
un $F$--morphisme de variŽtŽs algŽbriques
$$
{\boldsymbol{G}}_{k'}\rightarrow {\boldsymbol{G}}_k,\,\delta\mapsto \delta^{k/ k'}
$$
et --- gr\^ace aux points--base $\delta_{k'}$ de $\mathcal{G}_{k'}$ et $\delta_k = (\delta_{k'})^{k/k'}$ de $\mathcal{G}_k$ 
--- un foncteur
$$
\iota_k^{k'}:\mathfrak{R}(\mathcal{G}_k,\omega_k)\rightarrow \mathfrak{R}(\mathcal{G}_{k'},\omega_{k'}).
$$
Pour $k,\,k'\!,\, k''\geq 1$ tels que $k'$ divise $k$ et $k''$ divise $k'$, on a
$$
\iota_{k'}^{k''}\circ\iota_k^{k'}= \iota_k^{k''}.
$$
En particulier pour $k''=1$, on a $\iota_{k'}\circ \iota_k^{k'}= \iota_k$.
\hfill $\blacksquare$
}
\end{marema}

\subsection{L'invariant $s(\Pi)$}\label{s(Pi)}\index{$s(\pi)=s_{G^\natural,\omega}(\pi)$, $s(\Pi)$}
Si $\pi$ est une reprŽsentation irrŽductible de $G$, on lui associe 
comme suit un invariant $s(\pi)\in {\Bbb Z}_{\geq 1}\cup \{+\infty\}$. S'il existe un plus petit entier 
$k_0\geq 1$ tel que $\pi(k_0)$ est isomorphe \`a $\pi$, on pose $s(\pi)=k_0$, sinon 
on pose $s(\pi)=+\infty$. Notons que cet invariant $s(\pi)$ ne dŽpend que $G^\natural$, de 
$\omega$, et de la classe d'isomorphisme de $\pi$. Si nŽcessaire, on le notera aussi $s_{G^\natural,\omega}(\pi)$. 
Pour $k\in {\Bbb Z}$, on a
$$s(\pi(k))=s(\pi).
$$

Si $\Pi$ est une $\omega$--repr\'esentation irr\'eductible de $G^\natural$, on lui associe 
comme dans \cite[8.4]{L2} un invariant $s(\Pi)\in {\Bbb Z}_{\geq 1}\cup\{+\infty\}$. 
Rappelons la construction. On choisit une sous--repr\'esentation irr\'eductible $\pi_0$ de $\Pi^\circ$, 
et l'on pose
$$s(\Pi)=s(\pi_0).
$$ L'invariant $s(\Pi)$ est bien d\'efini (i.e. il ne d\'epend 
pas du choix de $\pi_0$), et il d\'epend seulement de 
la classe d'isomorphisme de $\Pi$. 
On a
$$
\Pi^\circ \simeq \left\{
\begin{array}{ll}
\oplus_{k\in {\Bbb Z}}\,\pi_0(k) & \mbox{si $s(\pi_0)=+\infty$}\\
\oplus_{k=0}^{s(\pi_0)-1}\pi_0(k)&\mbox{sinon}
\end{array}
\right..
$$
En particulier la reprŽsentation 
$\Pi^\circ$ est semisimple, et elle est de type fini si et seulement si 
$s(\Pi)<+\infty$, auquel cas elle est de longueur finie. 
Si $s=s(\Pi)<+\infty$, alors d'apr\`es loc.~cit., il existe une $\omega_s$--repr\'esentation 
$G$--irr\'eductible $\Sigma$ de $\mathcal{G}_s$ telle que $\Sigma^\circ =\pi_0$ et $\iota_s(\Sigma)$ est isomorphe \`a $\Pi$.

Pour $k\in {\Bbb Z}_{\geq 1}$, notons ${\rm Irr}_{k-1}(G^\natural,\omega)$\index{${\rm Irr}_{k-1}(G^\natural,\omega)$, 
${\rm Irr}'_0(\mathcal{G}_k,\omega_k)$} le sous--ensemble 
de ${\rm Irr}(G^\natural,\omega)$ form\'e des $\Pi$ tels que $s(\Pi)=k$ (pour $k=1$, les notations sont coh\'erentes), 
et ${\rm Irr}'_0(\mathcal{G}_k,\omega_k)$ le 
sous--ensemble de ${\rm Irr}_0(\mathcal{G}_k,\omega_k)$ 
form\'e des $\Sigma$ tels que $s(\Sigma^\circ)=k$. Ici ${\rm Irr}_0(\mathcal{G}_k,\omega_k)$ est l'ensemble des classes 
d'isomorphisme de $\omega_k$--reprŽsentations $G$--irrŽductibles de $\mathcal{G}_k$, et $s(\Sigma^\circ)=s_{G^\natural,\omega}(\Sigma^\circ)$. 
Les $\omega_k$--reprŽsentations $G$--irrŽductibles de $\mathcal{G}_k$ dont 
la classe d'isomorphisme appartient ˆ ${\rm Irr}'_0(\mathcal{G}_k,\omega_k)$ sont 
dites {\it $(G^\natural,\omega)$--rŽgulires}. 

\begin{marema}\label{s(Sigma)}
{\rm On peut, pour toute repr\'esentation 
irr\'eductible $\sigma$ de $G$, d\'efinir l'invariant $s_k(\sigma)=s_{\mathcal{G}_k,\omega_k}(\sigma)\in {\Bbb Z}_{\geq 1}\cup\{+\infty\}$\index{$s(\sigma)=s_{\G_k,\omega_k}(\sigma)$} 
en rempla\c{c}ant dans la d\'efinition de $s(\sigma)$ la paire $(G^\natural,\omega)$ par la paire $(\mathcal{G}_k,\omega_k)$. 
On a $s_k(\sigma)=+\infty$ si $s(\sigma)=+\infty$, et
$s_k(\sigma)=\inf \{i\in {\Bbb Z}_{\geq 1}: \sigma(ki)\simeq \sigma(k)\}$ sinon. En d'autre termes, on a
$$
s_k(\sigma)={1\over k}{\rm ppcm}(k,s(\sigma))
$$
Pour une $\omega_k$--repr\'esentation irr\'eductible $\Sigma$ de $\mathcal{G}_k$, l'invariant 
$s(\Sigma)$ associ\'e comme plus haut \`a $\Sigma$ est donn\'e par
$$
s(\Sigma)=s_k(\sigma_0)
$$
pour une (resp. pour toute) sous--repr\'esentation irr\'eductible $\sigma_0$ de $\Sigma^\circ$. Ainsi 
$\Sigma$ est $G$--irr\'eductible si et seulement si $s(\Sigma)=1$.\hfill $\blacksquare$}
\end{marema}

On dŽfinit comme suit une action de ${\Bbb Z}_k={\Bbb Z}/k{\Bbb Z}$\index{${\Bbb Z}_k={\Bbb Z}/k{\Bbb Z}$} 
sur ${\rm Irr}(\mathcal{G}_k,\omega_k)$. 
Rappelons que l'on a posŽ $\delta_k=(\delta_1)^k\in \mathcal{G}_k$.
Pour 
une $\omega_k$--repr\'esentation $\Sigma$ de $\mathcal{G}_k$ et un entier $i\geq 1$, on note $\Sigma(i)$ la 
$\omega_k$--repr\'esentation de $\mathcal{G}_k$ donn\'ee par
$$
\Sigma(i)(g\cdot \delta_k)=\Sigma^\circ(i)(g)\circ \Sigma(\delta_k),\quad g\in G.
$$
La repr\'esentation de $G$ sous--jacente est $\Sigma(i)^\circ=\Sigma^\circ(i)$, et \`a isomorphisme pr\`es, $\Sigma(i)$ 
ne d\'epend pas du choix de $\delta_1$. Comme ${\rm Int}_{{\boldsymbol{G}}_k}(\delta_k)=\theta^k$, la 
$\omega_k$--repr\'esentation $\Sigma(k)$ de $\mathcal{G}_k$ est isomorphe \`a $\Sigma$. 
On obtient ainsi une action de ${\Bbb Z}_k$ sur ${\rm Irr}(\mathcal{G}_k,\omega_k)$ 
qui stabilise ${\rm Irr}_0(\mathcal{G}_k,\omega_k)$, et 
${\rm Irr}'_0(\mathcal{G}_k,\omega_k)$ est le sous--ensemble de ${\rm Irr}_0(\mathcal{G}_k,\omega_k)$ 
form\'e des $\Sigma$ dont le stabilisateur sous ${\Bbb Z}_k$ est trivial. Le lemme suivant 
est une simple gŽnŽralisation de \cite[lemma 2.1]{R}.

\begin{monlem}\label{bijection iota}
Le foncteur $\iota_k$ induit une application bijective
$$
{\rm Irr}'_0(\mathcal{G}_k,\omega_k)/{\Bbb Z}_k\rightarrow {\rm Irr}_{k-1}(G^\natural,\omega).
$$
\end{monlem}

\begin{proof} Soit $\Pi$ une $\omega$--reprŽsentation irrŽductible de $G^\natural$ d'invariant $s(\Pi)=k$. 
On a vu que pour toute sous--reprŽsentation irrŽductible $\pi_0$ de $\Pi^\circ$, il 
existe une $\omega_k$--reprŽsentation $G$--irrŽductible $\Sigma$ de $\mathcal{G}_k$ telle que 
$\Sigma^\circ =\pi_0$ et $\Pi \simeq \iota_k(\Sigma)$. Par dŽfinition de $s(\Pi)$, les 
reprŽsentations $\pi_0(i)$ de $G$, $i=0,\dots ,k-1$, sont deux--ˆ--deux non Žquivalentes. Par suite les 
$\omega_k$--reprŽsentations $\Sigma(i)$ de $\mathcal{G}_k$, $i=0,\ldots ,k-1$, sont deux--ˆ--deux 
non Žquivalentes. Elles dŽfinissent donc un ŽlŽment de ${\rm Irr}'_0(\mathcal{G}_k,\omega_k)/{\Bbb Z}_k$. 
RŽciproquement, soit $\Sigma'$ une $\omega_k$--reprŽsentation $G$--irrŽductible de $\mathcal{G}_k$ 
dont le stabilisateur sous ${\Bbb Z}_k$ est trivial, telle que $\iota_k(\Sigma')\simeq \Pi$. Puisque 
$\iota_k(\Sigma')^\circ \simeq \oplus_{i=0}^{k-1}\Sigma'(i)^\circ$ et $\Pi^\circ\simeq \oplus_{i=0}^{k-1}\Sigma(i)^\circ$, 
il existe un indice $j\in \{0,\ldots ,k-1\}$ tel que $\Sigma'^\circ \simeq \Sigma(j)^\circ$. On en dŽduit que $\Sigma'$ 
est isomorphe ˆ $\lambda\cdot \Sigma(j)$ pour un nombre complexe non nul $\lambda$, mais comme 
$\iota_k(\lambda\cdot \Sigma(j))=\lambda\cdot \iota_k(\Sigma(j))\simeq \lambda \cdot \iota_k(\Sigma)$, ce 
$\lambda$ vaut $1$. D'o le lemme. \end{proof}

\subsection{L'espace  $\G_{\Bbb C}(G^\natural,\omega)$.}\label{groth}L'action de ${\Bbb C}^\times$ sur 
${\rm Irr}(G^\natural,\omega)$ provient d'une action fonctorielle sur $\mathfrak{R}(G^\natural,\omega)$, 
triviale sur les fl\`eches, 
encore notŽe $(\lambda,\Pi)\mapsto \lambda\cdot \Pi$. Soit $\G_{\Bbb C}(G^\natural,\omega)$\index{$\G_{\Bbb C}(G^\natural,\omega)$} 
le ${\Bbb C}$--espace vectoriel engendrŽ (sur ${\Bbb C}$) 
par les $\omega$--reprŽsentations $\Pi$ de $G^\natural$ telles que $\Pi^\circ$ est de longueur finie, 
modulo les relations:

\begin{itemize}
\item pour toute suite exacte $0\rightarrow \Pi_1\rightarrow\Pi_2\rightarrow \Pi_3\rightarrow 0$ de $\omega$--reprŽsentations 
de $G^\natural$ (telles que les $\Pi_i^\circ$ sont de longueur finie), 
on a $\Pi_3= \Pi_2 - \Pi_1$;
\item pour tout $\lambda\in {\Bbb C}Ñ^\times$, on a $\lambda\cdot \Pi=\lambda\Pi$;
\item pour tout entier $k>1$, on a $\iota_k(\mathcal{G}_k,\omega_k)=0$.
\end{itemize}

\ni Pour une $\omega$--reprŽsentation $\Pi$ de $G^\natural$ et un nombre complexe non nul $\lambda$, on a 
$(\lambda\cdot\Pi)^\circ=\Pi^\circ$ mais $\lambda\cdot \Pi\not\simeq \Pi$ si $\lambda\neq 1$. 
La deuxime relation signifie que si $\lambda_1,\ldots ,\lambda_n$ sont des nombres complexes non nuls 
tels que $\sum_{i=1}^n\lambda_i =0$, alors pour toute $\omega$--reprŽsentation $\Pi$ de $G^\natural$ 
telle que $\Pi^\circ$ est de longueur finie, on a $\sum_{i=1}^n\lambda_i\Pi=0$ dans $\G_{\Bbb C}(G^\natural,\omega)$. 

\v1
Notons ${\rm Irr}_{<+\infty}(G^\natural,\omega)$\index{${\rm Irr}_{<+\infty}(G^\natural,\omega)$} 
le sous--ensemble des ${\rm Irr}(G^\natural,\omega)$ formŽ des 
$\Pi$ tels que $s(\Pi)<+\infty$. On a donc
$$
{\rm Irr}_{<+\infty}(G^\natural,\omega)=\coprod_{k\geq 0}{\rm Irr}_k(G^\natural,\omega),
$$
et l'action de ${\Bbb C}^\times$ sur ${\rm Irr}(G^\natural,\omega)$ stabilise chacun des espaces 
${\rm Irr}_k(G^\natural,\omega)$. D'autre part on a une action de ${\Bbb Z}$ sur 
${\rm Irr}(G)$, donn\'ee par $(k,\pi)\mapsto \pi(k)$. D'apr\`es \ref{s(Pi)}, l'application 
$\Pi\mapsto \pi_0$, o\`u $\pi_0$ est sous--repr\'esentation irr\'eductible de $\Pi^\circ$, 
induit une application injective
$$
{\rm Irr}_{<+\infty}(G^\natural,\omega)/{\Bbb C}^\times\rightarrow {\rm Irr}(G)/{\Bbb Z}
$$
d'image l'ensemble des ${\Bbb Z}$--orbites des $\pi$ dans ${\rm Irr}(G)$ tels que $s_{G^\natural,\omega}(\pi)<+\infty$. 

On note:\index{$\EuScript{G}(G^\natural,\omega)$, $\EuScript{G}_0(G^\natural,\omega)$, $\EuScript{G}_{>0}(G^\natural,\omega)$}

\begin{itemize}
\item $\EuScript{G}(G^\natural,\omega)$ 
le ${\Bbb Z}$--module libre de base ${\rm Irr}^\omega_{<+\infty}(G^\natural)$, 
\item $\EuScript{G}_0(G^\natural,\omega)$ 
le sous--groupe de $\EuScript{G}(G^\natural,\omega)$ engendr\'e par ${\rm Irr}_0(G^\natural,\omega)$, 
\item $\EuScript{G}_{>0}(G^\natural,\omega)$ le sous--groupe de $\EuScript{G}(G^\natural,\omega)$ engendr\'e par 
$\coprod_{k\geq 1}{\rm Irr}_k(G^\natural,\omega)$.
\end{itemize}
On a donc la dŽcomposition
$$
\G(G^\natural,\omega)=\G_0(G^\natural,\omega)\oplus \G_{>0}(G^\natural,\omega)
$$
Soit aussi $\EuScript{G}(G)$ le ${\Bbb Z}$--module libre de base ${\rm Irr}(G)$. Le foncteur d'oubli $\Pi\mapsto \Pi^\circ$ 
induit un morphisme de groupes
$$
\EuScript{G}(G^\natural,\omega)\rightarrow \EuScript{G}(G),
$$
encore notŽ $\Pi\mapsto \Pi^\circ$.

On peut aussi, pour chaque entier $k\geq 1$, remplacer la paire $(G^\natural,\omega)$ par la paire 
$(\mathcal{G}_k,\omega_k)$ dans les d\'efinitions ci--dessus (cf. \ref{s(Sigma)}, remarque). Le foncteur 
$\iota_k^\circ:\mathfrak{R}(G)\rightarrow \mathfrak{R}(G)$ sous--jacent \`a $\iota_k$ 
envoie repr\'esentation de longueur finie sur repr\'esentation de longueur finie, par cons\'equent $\iota_k$ induit 
un morphisme de groupes
$$
\G(\mathcal{G}_k,\omega_k)\rightarrow \G(G^\natural,\omega),
$$
encore not\'e $\iota_k$. 

Le quotient $\G_0(G^\natural,\omega)=\G(G^\natural,\omega)/\G_{>0}(G^\natural,\omega)$ est encore 
trop gros: il contient des \'el\'ements qui ne contribuent en rien \`a l'affaire 
qui nous int\'eresse (cf. \ref{carac}). Soit donc $\G_{0^+}(G^\natural,\omega)$\index{$\G_{0^+}(G^\natural,\omega)$} 
le sous--groupe de $\G(G^\natural,\omega)$ engendr\'e 
par $\G_{>0}(G^\natural,\omega)$ et par les \'el\'ements de la forme $
\sum_{i=1}^{n}\lambda_i\cdot \Pi$ pour un ŽlŽment $\Pi$ de ${\rm Irr}_0(G^\natural,\omega)$, un entier $n>1$, 
et des nombres complexes non nuls $\lambda_1,\ldots ,\lambda_n$ tels que $\sum_{i=1}^n\lambda_i=0$.

\begin{monlem}\label{inclusion iota +}Pour tout entier $k >1$, on a l'inclusion
$$
\iota_k(\G(\mathcal{G}_k,\omega_k))\subset \G_{0^+}(G^\natural,\omega).
$$
\end{monlem}

\begin{proof}Il suffit de montrer que pour toute $\omega_k$--reprŽsentation 
irrŽductible $\Sigma$ de $\mathcal{G}_k$, la classe d'isomorphisme de $\iota_k(\Sigma)$ 
appartient ˆ $\G_{0^+}(G^\natural,\omega)$. 
D'aprs la remarque de \ref{iota} et le lemme de \ref{s(Sigma)}, il existe un entier $a\geq 1$ et une 
$\omega_{ka}$--reprŽsentation $G$--irrŽductible $\Sigma'$ de $\mathcal{G}_{ka}$ tels que 
$\Sigma$ est isomorphe ˆ $\iota_{ka}^k(\Sigma')$. Par suite $\iota_k(\Sigma)$ est isomorphe ˆ 
$\iota_k(\iota_{ka}^k(\Sigma'))= \iota_{ka}(\Sigma')$, et quitte ˆ remplacer $k$ par $ka$ et $\Sigma$ 
par $\Sigma'$, on peut supposer que $\Sigma$ est $G$--irrŽductible. Soit alors $\sigma=\Sigma^\circ$ 
et $s=s(\sigma)$. Puisque $\sigma(k)\simeq \sigma$, $s$ divise $k$. 
Si $s=1$, alors d'aprs le lemme de \ref{s(Sigma)}, $\iota_k(\Sigma)$ est une $\omega$--reprŽsentation irrŽductible 
de $G$ d'invariant $s(\iota_k(\Sigma))=k$, et son image dans $\G(G^\natural,\omega)$ appartient 
ˆ $\G_{>0}(G^\natural,\omega)$. On peut donc supposer $s>1$. Posons $\Delta=\iota_k^s(\Sigma)$. 
C'est une $\omega_s$--reprŽsentation de $\mathcal{G}_s$, telle que
$$
\Delta^\circ =\sigma \oplus \sigma(s)\oplus \cdots \oplus \sigma((k'-1)s),\, k'=k/s.
$$
Choisissons un isomorphisme $\widetilde{B}$ de $\sigma$ sur $\sigma(s)$. Alors $\widetilde{B}^{k'}$ 
est un isomorphisme de $\sigma$ sur $\sigma(k)$, et l'on peut choisir $\widetilde{B}$ de telle manire 
que $\widetilde{B}^{k'}= \Sigma(\delta_k)$. Notons $\widetilde{\Sigma}$ la 
$\omega_s$--reprŽsentation ($G$--irrŽductible) de $\mathcal{G}_s$ dŽfinie par $\widetilde{\Sigma}^\circ =\sigma$ et 
$\widetilde{\Sigma}(\delta_s)= \widetilde{B}$. Soit $\mu$ une racine primitive $k'$--ime de l'unitŽ 
(dans ${\Bbb C}^\times$). Posons $\Delta'=\oplus_{i=0}^{k'-1}\mu^i\cdot \widetilde{\Sigma}$. C'est une 
$\omega_s$--reprŽsentation de $\mathcal{G}_s$, telle que $\Delta'^\circ = \oplus_{i=0}^{k'-1}\sigma$. 
Pour $j=0,\ldots ,k'-1$, notons $V_{\Delta,j}$ le sous--espace vectoriel de $V_\Delta=\oplus_{i=0}^{k'-1} V_\sigma$ formŽ 
des vecteurs de la forme
$$
(v,\mu^j\widetilde{B}(v),\mu^{2j}\widetilde{B}^2(v),\ldots , \mu^{(k'-1)j}\widetilde{B}^{k'-1}(v)),\quad v\in V_\sigma.
$$
Il est stable sous l'action de $G$ (via $\Delta^\circ$) et sous celle de $\Delta(\delta_s)$, 
donc dŽfinit une sous--$\omega_{\rm s}$--reprŽsentation 
de $\Delta$, que l'on note $\Delta_j$. L'application
$$
V_\sigma\rightarrow V_{\Delta,j},\, v\mapsto (v,\mu^j\widetilde{B}(v),\mu^{2j}\widetilde{B}^2(v),\ldots , \mu^{(k'-1)j}\widetilde{B}^{k'-1}(v))
$$
est un isomorphisme de $\mu^j\cdot \widetilde{\Sigma}$ sur $\Delta_j$. 
On en dŽduit que $\Delta=\oplus_{j=0}^{k'-1}\Delta_j$ est isomorphe ˆ $\Delta'$. Par consŽquent $\iota_k(\Sigma)=\iota_s(\Delta)$ est isomorphe 
ˆ $\oplus_{i=0}^{k'-1}\mu^i\cdot \iota_s(\widetilde{\Sigma})$, dont la classe d'isomorphisme appartient ˆ 
$\G_{0^+}(G^\natural,\omega)$.  
\end{proof}
 
D'aprs le lemme, l'espace $\G_{\Bbb C}(G^\natural,\omega)$ introduit au dŽbut de ce $n^\circ$ s'identifie canoniquement 
au quotient $\G(G^\natural,\omega)/\G_{0^+}(G^\natural,\omega)$. De plus le dual algŽbrique
$$
\G_{\Bbb C}(G^\natural,\omega)^*={\rm Hom}_{\Bbb C}(\G_{\Bbb C}(G^\natural,\omega),{\Bbb C})
$$
co\"{\i}ncide avec l'espace des formes ${\Bbb Z}$--lin\'eaires 
$\Phi$ sur $\G_0(G^\natural,\omega)$ 
vŽrifiant
$$
\Phi(\lambda\cdot\Pi)=\lambda \Phi(\Pi)
$$
pour tout $\Pi\in {\rm Irr}_0(G^\natural,\omega)$ et tout $\lambda\in {\Bbb C}^\times$. 

\begin{notations}\label{nota}
{\rm La projection canonique $\G(G^\natural,\omega)\rightarrow \G_{\Bbb C}(G^\natural,\omega)$ identifie 
${\rm Irr}_0(G^\natural,\omega)$ ˆ un sous--ensemble de $\G_{\Bbb C}(G^\natural,\omega)$, que l'on note aussi 
${\rm Irr}_{\Bbb C}(G^\natural,\omega)$\index{${\rm Irr}_{\Bbb C}(G^\natural,\omega)$} --- il engendre $\G_{\Bbb C}(G^\natural,\omega)$ mais n'est pas 
une base sur ${\Bbb C}$.
D'autre part, l'action de ${\Bbb C}^\times$ sur $\G(G^\natural,\omega)$ 
est notŽe avec un \og $\cdot$ \fg, que l'on aura tendance ˆ supprimer aprs projection sur $\G_{\Bbb C}(G^\natural,\omega)$.}
\end{notations}

\subsection{$(\Hn,\omega)$--modules et $(\Hn_{\sK},\omega)$--modules}\label{H modules}
Pour un espace topologique 
totalement discontinu $X$, on note $\EuScript{H}(X)$ l'espace des fonctions complexes sur $X$ qui 
sont localement constantes et ˆ support compact. 
On pose $\EuScript{H}=\EuScript{H}(G)$ et $\Hn=\EuScript{H}(G^\natural)$.
\index{$\H(X)$, $\H=\H(G)$, $\H^\natural=\H(G^\natural)$}

Fixons une mesure de Haar $dg$\index{$dg$, $d\delta$} sur $G$, et notons $d\delta$ la mesure de Haar sur 
$G^\natural$ au sens de \cite[2.5]{L2} image de $dg$ par l'isomorphisme $G\rightarrow G^\natural,
\,g\mapsto g\cdot \delta$ pour un (resp. pour tout) 
$\delta\in G^\natural$. La mesure $dg$ munit l'espace $\H$ d'un produit de convolution, et 
l'espace $\Hn$ d'une structure de $\H$--bimodule \cite[8.2]{L2}: pour $f,\,f'\in \H$, 
$\phi\in\Hn$ et $\delta\in G^\natural$, on pose
$$
(f*\phi)(\delta)=\int_Gf(g)\phi(g^{-1}\cdot\delta)dg,\quad (\phi*f )(\delta)=\int_G\phi(\delta\cdot g)f(g^{-1})dg.
$$
Comme dans loc.~cit. on appelle $(\Hn,\omega)$--module un 
$\H$--module $V$ muni d'une application 
$\Hn\rightarrow {\rm End}_{\Bbb C}(V),\,\phi\mapsto (v\mapsto \phi\cdot v)$ 
telle que pour tout $\phi\in \Hn$, tous $f,\,f'\in \H$ et tout $v\in V$, on a
$$
(f*\phi*f')\cdot v= f\cdot (\phi\cdot (\omega f'\cdot v)).
$$
Les $(\Hn,\omega)$--modules forment une sous--catŽgorie (non pleine) de la 
catŽgorie des $\H$--modules ˆ gauche: un morphisme entre deux $(\Hn,\omega)$--modules $V_1$ et $V_2$ est 
une application ${\Bbb C}$--linŽaire $u:V_1\rightarrow V_2$ telle que 
$u(\phi\cdot v)=\phi\cdot u(v)$ pour tout $\phi\in \Hn$ et tout $v\in V_1$ 
(une telle application est automatiquement $\H$--linŽaire).

\begin{variante}
{\rm Soit $\H^\natural_\omega = \H(G^\natural,\omega)$\index{$\H^\natural_\omega = \H(G^\natural,\omega)$} 
l'espace vectoriel $\H^\natural$ muni 
de la structure de $\H$--bimodule 
donnŽe par (pour $\phi\in \H^\natural$ et $f\in \H$):
$$
f\cdot \phi = f* \phi,\quad \phi\cdot f = \phi * \omega^{-1}f.
$$
Par dŽfinition, la notion de $(\Hn,\omega)$--module Žquivaut ˆ celle de $\Hn_\omega$--module (\cad de 
$(\Hn_\omega,\xi=1)$--module).\hfill $\blacksquare$}
\end{variante}

\begin{exemple}
{\rm
L'application $\H\rightarrow \H^\natural_\omega,\, f\mapsto u(f)=u_{\delta_1}(f)$ dŽfinie par $u(f)(g\cdot \delta_1)=f(g)$ 
($g\in G$) est un isomorphisme ${\Bbb C}$--linŽaire vŽrifiant $u(f*h*f')= f\cdot u(h) \cdot \omega f'^\theta$.
Pour $\phi\in \H^\natural$ et $f\in \H$, posons
$$
\phi \bullet  f = u^{-1}(\phi\cdot f)\in \H.
$$ 
Pour $f,\,f',\,h\in \H$ et $\phi\in \H^\natural$, on a
$$
(f*\phi * f')\bullet h=  f * (\phi\bullet (\omega f' * h)).
$$
En d'autres termes, $\H$ est muni d'une structure de $(\H^\natural,\omega)$--module. \`A isomorphisme 
prs, cette structure ne dŽpend pas du choix de $\delta_1\in G^\natural$: remplacer $\delta_1$ par $\delta'_1= x\cdot \delta$ 
revient ˆ remplacer $u$ par $u'= \delta_x\circ u$, o l'on a posŽ $\delta_x(f)(g)=f(gx)$, $f\in \H$, $g\in G$.\hfill 
$\blacksquare$}
\end{exemple}

Pour une $\omega$--reprŽsentation $\Pi$ de $G^\natural$ et une fonction $\phi\in \Hn$, on note 
$\Pi(\phi)$ le ${\Bbb C}$-endo\-morphisme de l'espace $V$ de $\Pi$ donnŽ par
$$
\Pi(\phi)(v)=\int_{G^\natural}\phi(\delta)\Pi(\delta)(v)d\delta,\quad v\in V.
$$
Puisque $\phi$ est localement constante et ˆ support compact, et que $\Pi^\circ$ est lisse, l'intŽgrale 
est absolument convergente (c'est m\^eme une 
somme finie). Cela munit $V$ d'une structure de $(\Hn,\omega)$--module {\it non dŽgŽnŽrŽ}, 
\cad tel que $\Hn\cdot V= V$, et l'application 
$(\Pi,V)\mapsto V$ est un isomorphisme entre $\mathfrak{R}(G^\natural,\omega)$ et la catŽgorie 
des $(\Hn,\omega)$--modules non dŽgŽnŽrŽs --- une sous--catŽgorie pleine de celle des $(\Hn,\omega)$--modules.

Pour un sous-groupe ouvert compact $K$ de $G$, on note 
$\H_K=\H_K(G)$\index{$\H_K=\H_K(G)$, $e_K$} la sous-algbre de $\H$ formŽe des fonctions qui sont bi--invariantes par $K$. 
On note $e_K$ l'ŽlŽment unitŽ de $\H_K$, \cad la fonction caractŽristique de $K$ divisŽe par ${\rm vol}(K,dg)$, et 
l'on pose\index{$\Hn_{\sK}=\H_K(G^\natural)$, $e_{K^\natural}$}
$$
\Hn_{\sK}=\H_K(G^\natural)=e_K*\Hn*e_K.
$$
C'est le sous--$\H_K$--bimodule de $\Hn$ formŽ des fonctions qui sont bi--invariantes par $K$. 
Si de plus $\omega$ est trivial sur $K$, on dŽfinit comme ci--dessus les notions de 
$(\Hn_{\sK},\omega)$--module et de $(\Hn_{\sK},\omega)$-module non dŽgŽnŽrŽ, ainsi que les catŽgories 
correspondantes. Alors pour toute $\omega$--reprŽsentation $(\Pi,V)$ de $G^\natural$, 
le sous--espace $V^K=e_K\cdot V$ de $V$ (formŽ des vecteurs qui sont fixŽs par $K$) 
est naturellement muni d'une structure de $(\Hn_{\sK},\omega)$--module.

Soit $K^\natural$ un sous--espace tordu ouvert compact de $G^\natural$, \cad un sous--ensemble 
de la forme $K^\natural =K\cdot \delta$ pour un sous--groupe ouvert compact $K$ de $G$ 
et un ŽlŽment $\delta$ de $G^\natural$ tel que ${\rm Int}_{{\boldsymbol{G}}^\natural}(\delta)(K)=K$. 
Le $\H_K$--bimodule $\Hn_{\sK}$ est un 
$\H_K$--module ˆ gauche (resp. ˆ droite) libre de rang $1$: notant $e_{K^\natural}$ la fonction 
caractŽristique de $K^\natural$ divisŽe par ${\rm vol}(K^\natural,d\delta)={\rm vol}(K,dg)$, on a
$$
\Hn_{\sK}= \H_K*e_{K^\natural}= e_{K^\natural}*\H_K.
$$
Si de plus $\omega$ est trivial sur $K$, alors pour toute $\omega$--reprŽsentation 
$(\Pi,V)$ de $G^\natural$, le $(\H^\natural_K,\omega)$--module $V^K$ est automatiquement non dŽgŽnŽrŽ: on a
$$
V^K = \Hn_{\sK}\cdot V^K\;(= \Hn_{\sK}\cdot V).
$$
En particulier il co\"{\i}ncide avec le sous--espace $V^{K^\natural}=e_{K^\natural}\cdot V$ de $V$.

Un $(\Hn,\omega)$--module non dŽgŽnŽrŽ $V$ est dit {\it simple} s'il est non nul 
et si le seul sous--espace non nul $\Hn$-stable de $V$ est $V$ lui--m\^eme, et il est dit {\it $\H$--simple} 
s'il est simple comme $\H$--module. 
On dŽfinit de la m\^eme manire les notions de $(\Hn_{\sK},\omega)$--module (non dŽgŽnŽrŽ) simple et 
$\H_K$--simple. 

D'aprs \cite[8.3, 8.6]{L2}, une $\omega$--reprŽsentation non nulle $(\Pi,V)$ de $G^\natural$ est irrŽductible 
(resp. $G$--irrŽductible) si et seulement si pour tout sous--espace tordu ouvert compact $K^\natural$ de $G^\natural$ 
tel que $\omega$ est trivial sur $K$, le 
$(\Hn_{\sK},\omega)$--module (resp. le $\H_K$--module) $V^K$ est nul ou simple; o 
$K$ est le sous--groupe de $G$ sous--jacent ˆ $K^\natural$.
De plus pour un tel $K^\natural$, l'application $(\Pi,V)\mapsto V^{K^\natural}=V^K$ induit (d'aprs loc.~cit.) une bijection 
entre:

\begin{itemize}
\item l'ensemble des classes 
d'isomorphisme de $\omega$--reprŽsenta\-tions irrŽductibles $(\Pi,V)$ de $G^\natural$ 
telles que $V^K\neq 0$, 
\item et l'ensemble des classes d'isomorphisme de $(\Hn_{\sK},\omega)$--modules simples;
\end{itemize}
elle induit aussi une bijection entre:
\begin{itemize}
\item l'ensemble des classes 
d'isomorphisme de $\omega$--reprŽsentations $G$--irrŽductibles $(\Pi,V)$ de $G^\natural$ 
telles que $V^K\neq 0$,
\item et l'ensemble des classes d'isomorphisme de $(\Hn_{\sK},\omega)$--modules $\H_K$--simples.
\end{itemize}

\begin{marema1}
{\rm Les rŽsultats ci--dessus sont vrais pour tout espace topologique tordu 
localement profini $(G,G^\natural)$ vŽrifiant la propriŽtŽ $({\rm P}_2)$ de \cite[8.3]{L2}, \cad tel qu'il 
existe une base de voisinages de $1$ dans $G$ formŽe de sous--groupes ouverts compacts et 
un ŽlŽment $\delta\in G^\natural$ normalisant chacun des ŽlŽments de la base. Dans le cas qui nous intŽresse ici, on a vŽrifiŽ \cite[8.6]{L2} 
que si $I^\natural$ est un sous--espace d'Iwahori de $G^\natural$, \cad un sous--espace tordu de 
la forme $I^\natural=I\cdot \delta$ pour un sous--groupe d'Iwahori $I$ de $G$, alors 
tous les sous-groupes de congruence de $I$ sont normalis\'es par $\delta$ (voir \ref{bons sgoc}). Notons que puisque 
les sous--groupes d'Iwahori de $G$ sont tous conjuguŽs dans $G$, il existe un sous--espace d'Iwahori 
de $G^\natural$.\hfill $\blacksquare$
}
\end{marema1}

\begin{marema2}
{\rm Soit $K^\natural$ un sous--espace tordu ouvert compact de $G^\natural$ tel que $\omega$ est trivial sur le groupe 
$K$ sous--jacent ˆ $K^\natural$. 
\`A tout $(\H^\natural_{\sK},\omega)$--module simple $W$ est associŽ comme en 
\ref{s(Pi)} un invariant $s(W)\in {\Bbb Z}_{\geq 1}\cup \{+\infty\}$. Puisque 
$\H^\natural_{\sK}= \H_K* e_{K^\natural}=e_{K^\natural}*\H_K$, l'application $x \mapsto e_{K^\natural}\cdot x$ 
est un ${\Bbb C}$--automorphisme de $W$. Choisissons 
un sous--$\H_K$--module simple $W_0$ de $W$, et pour chaque entier $k\geq 1$, 
notons $W_k$ et $W_{-k}$ les sous--$\H_K$--modules de $W$ dŽfinis par $W_k= e_{K^\natural}\cdot W_{k-1}$ et 
$e_{K^\natural}\cdot W_{-k}= W_{-k+1}$. Pour $k\in {\Bbb Z}$, le $\H_K$--module $W_k$ est simple. 
On distingue 
deux cas: ou bien $\dim_{\Bbb C}(W)=+\infty$, auquel cas on pose 
$s(W)= +\infty$, et l'on a $W = \oplus_{k\in {\Bbb Z}}W_k$; ou bien $\dim_{\Bbb C}(W)<+\infty$, auquel cas il existe un plus petit entier 
$s=s(W)\geq 1$ tel que $W_s = W_0$, et l'on a $W=\oplus_{k=0}^{s-1}W_k$. Bien s\^ur si $(\Pi,V)$ est une 
$\omega$--reprŽsentation irrŽductible de $G^\natural$ telle que le $(\H^\natural_{\sK},\omega)$--module 
$V^K$ est isomorphe ˆ $W$, on a $s(\Pi)=s(V^K)$. \hfill $\blacksquare$
}\end{marema2}

\begin{marema3}
{\rm Puisque $G$ est dŽnombrable ˆ l'infini, la dŽmonstration du lemme de Schur donnŽe 
dans \cite[2.11]{BZ} est valable ici: pour toute $\omega$--reprŽsentation irrŽductible $\Pi$ de 
$G^\natural$, l'espace des endomorphismes de $\Pi$ est de dimension $1$. En particulier, $\Pi$ 
possde un caractre central $\omega_\Pi: Z(G^\natural) \rightarrow {\Bbb C}^\times$. Pour 
toute sous--reprŽsentation irrŽductible $\pi_0$ de $\Pi^\circ$, la restriction ˆ $Z(G^\natural)$ 
du caractre central $\omega_{\pi_0}:Z(G)\rightarrow {\Bbb C}^\times$ de $\pi_0$ 
co\"{\i}ncide avec $\omega_{\Pi}$. Si de plus $\Pi$ est $G$--irrŽductible, on a 
$\omega^{-1}(\omega_{\Pi^\circ})^\theta = \omega_{\Pi^\circ}$. \hfill $\blacksquare$}
\end{marema3}

\subsection{Les caractres $\Theta_\Pi$}\label{carac}
Pour toute $\omega$--repr\'esentation $\Pi$ de $G^\natural$ 
telle que $\Pi^\circ$ est admissible, on dŽfinit comme suit une distribution $\Theta_\Pi$\index{$\Theta_\Pi$} sur $G^\natural$, appel\'ee 
{\it caractre--distribution} ou simplement {\it caractre}, de $\Pi$: pour $\phi\in \Hn$, 
l'opŽrateur $\Pi(\phi)$ sur l'espace de $\Pi$ est de rang fini, et l'on pose
$$
\Theta_\Pi(\phi)={\rm trace}(\Pi(\phi)).
$$
La distribution $\Theta_\Pi$ ne dŽpend que de la classe d'isomorphisme de $\Pi$ (et bien s\^ur du choix de $d\delta$), et v\'erifie
$$
\Theta_{\lambda\cdot\Pi}=\lambda\Theta_\Pi,\quad \lambda\in {\Bbb C}^\times.
$$
Pour tout ŽlŽment $\Pi$ de $\EuScript{G}(G^\natural,\omega)$, on d\'efinit par linŽaritŽ 
une distribution $\Theta_\Pi$ sur $\H^\natural$, qui v\'erifie

\begin{itemize}
\item  $\Theta_{\lambda\cdot \Pi}=\lambda\Theta_\Pi$ pour tout $\lambda\in {\Bbb C}^\times$,
\item $\Theta_\Pi=0$ si $\Pi\in \iota_k(\G(\mathcal{G}_k,\omega_k))$ pour un entier $k>1$.  
\end{itemize}

\ni On en dŽduit que pour $\phi\in \Hn$, l'application
$$
\EuScript{G}(G^\natural,\omega)\rightarrow {\Bbb C},\, \Pi\mapsto \Theta_\Pi(\phi)
$$
se factorise ˆ travers $\G_{\Bbb C}(G^\natural,\omega)$. C'est donc 
un \'el\'ement du dual algŽbrique $\G_{\Bbb C}(G^\natural,\omega)^*$\index{$\G_{\Bbb C}(G^\natural,\omega)^*$} 
de $\G_{\Bbb C}(G^\natural,\omega)$, que l'on note $\Phi_\phi$. Notre thŽorme principal 
--- cf. \ref{ŽnoncŽ} pour un ŽnoncŽ prŽcis --- est une 
description de ce morphisme ${\Bbb C}$--linŽaire
$$\H^\natural\rightarrow \G_{\Bbb C}(G^\natural,\omega)^*,\,\phi\mapsto \Phi_\phi.
$$
Le thŽorme de Paley--Wiener 
dŽcrit son image, et le thŽorme de densitŽ spectrale son noyau.

\subsection{Induction parabolique et restriction de Jacquet}\label{ip et rj} 
Soit $P^\natural$ un sous--espace parabolique de 
$G^\natural$, muni d'une d\'ecomposition de Levi\index{$P^\natural =M^\natural\cdot U$} 
$$P^\natural =M^\natural\cdot U.$$
On note avec les m\^emes lettres sans l'exposant \og $\natural$ \fg
les groupes topologiques sous--jacents \`a $P^\natural$ et $M^\natural$:  
$P$ est un sous--groupe parabolique de $G$ de radical unipotent $U$, 
et $M$ est une composante de Levi 
de $P$. Soit\index{$i_P^G$, $r_G^P$}
$$
i_P^G:\mathfrak{R}(M)\rightarrow \mathfrak{R}(G)
$$
et
$$
r_G^P:\mathfrak{R}(G)\rightarrow \mathfrak{R}(M)
$$
les foncteurs induction parabolique et restriction de Jacquet normalisŽs. On 
considre $\omega$ comme un caractre de $M$ par restriction. 
Dans \cite[5.9 et 5.10]{L2} sont d\'efinis les foncteurs induction parabolique normalis\'ee
\index{${^\omega{i}}_{P^\natural}^{G^\natural}$, ${^\omega{r}}_{G^\natural}^{P^\natural}$}
$$
{^\omega{i}}_{P^\natural}^{G^\natural}:\mathfrak{R}(M^\natural,\omega)\rightarrow \mathfrak{R}(G^\natural,\omega)
$$
et restriction de Jacquet normalis\'ee
$$
{^\omega{r}}_{G^\natural}^{P^\natural}:\mathfrak{R}(G^\natural,\omega)\rightarrow \mathfrak{R}(M^\natural,\omega).
$$
Ils vŽrifient $({^\omega{i}}_{P^\natural}^{G^\natural})^\circ =
\iota_P^G$ et $({^\omega{r}}_{G^\natural}^{P^\natural})^\circ = r_G^P$, et commutent aux foncteurs 
$\iota_k$. Comme 
$i_P^G$ et $r_G^P$ prŽservent la propriŽtŽ d'\^etre de longueur finie, on obtient des 
morphismes de groupes
$$
{^\omega{i}}_{P^\natural}^{G^\natural}:\G(M^\natural,\omega)\rightarrow \G(G^\natural,\omega)
$$
et 
$$
{^\omega{r}}_{G^\natural}^{P^\natural}:\G(G^\natural,\omega)\rightarrow \EuScript{G}(M^\natural,\omega).
$$
Par passage aux quotients, ces derniers induisent des morphismes ${\Bbb C}$--linŽaires
$$
{^\omega{i}}_{P^\natural}^{G^\natural}:\G_{\Bbb C}(M^\natural,\omega)\rightarrow \G_{\Bbb C}(G^\natural,\omega)
$$
et 
$$
{^\omega{r}}_{G^\natural}^{P^\natural}:\G_{\Bbb C}(G^\natural,\omega)\rightarrow \EuScript{G}_{\Bbb C}(M^\natural,\omega).
$$

\begin{marema}
{\rm L'espace tordu $M^\natural$ et le groupe $M$ ne sont pas spŽcifiŽs dans les notations, 
mais cette ambigu\"{\i}tŽ dispara\^{\i}tra plus loin puisque nous n'aurons ˆ considŽrer que des 
sous--groupes paraboliques \og standard \fg, \cad contenant un sous--groupe parabolique 
minimal de $G$ fix\'e une fois pour toutes. Notons aussi que pour que l'espace 
$\G_{\Bbb C}(M^\natural,\omega)$ soit non nul, il faut que $\omega$ soit trivial sur 
le centre $Z(M^\natural)$ de $M^\natural$ \hfill$\blacksquare$}
\end{marema}

Fixons un sous--espace parabolique 
minimal $P^\natural_\circ$ de $G^\natural$, et une d\'ecomposition de Levi\index{$P^\natural_\circ=
M^\natural_\circ\cdot U_\circ$}
$$P^\natural_\circ=
M^\natural_\circ\cdot U_\circ.
$$ 
Notons $\EuScript{P}(G^\natural)$\index{$\EuScript{P}(G^\natural)$, $\P(G^\natural,\omega)$} 
l'ensemble des sous--espaces paraboliques de $G^\natural$ contenant 
$P^\natural_\circ$ --- on les qualifie de \og standard \fg ---, et $\P(G^\natural,\omega)$ le 
sous--ensemble de $\P(G^\natural)$ formŽ des $P^\natural$ tels que $\omega$ est trivial sur $M^\natural_{\sP}$. 
Pour $P^\natural \in \EuScript{P}(G^\natural)$, on note 
$M_{\sP}^\natural$ l'unique composante de Levi de $P^\natural$ 
contenant $M^\natural_\circ$, $P$ et $M_P$ les groupes topologiques sous--jacents 
\`a $P^\natural$ et $M_{\sP}^\natural$, et $U_P$ le radical unipotent de $P$.\index{$M_P$, $P^\natural=M^\natural_{\sP}\cdot U_P$} On a la d\'ecomposition 
de Levi
$$
P^\natural=M_{\sP}^\natural\cdot U_P.
$$
Pour $P^\natural,\, Q^\natural\in \P(G^\natural)$ tels que $Q^\natural\subset P^\natural$, on note 
${^{\omega}i_{Q^\natural}^{P^\natural}}$ le foncteur\index{${^{\omega}i_{Q^\natural}^{P^\natural}}$, 
${^{\omega}r_{P^\natural}^{Q^\natural}}$, ${^{\omega}i_{Q^\natural,{\Bbb C}}^{P^\natural}}$, ${^{\omega}r_{P^\natural,{\Bbb C}}^{Q^\natural}}$}
$$
{^{\omega}i_{Q^\natural\cap M_{\sP}^\natural}^{M_{\sP}^\natural}}:\mathfrak{R}(M_{\sQ}^\natural,\omega)
\rightarrow \mathfrak{R}(M_{\sP}^\natural,\omega)
$$
et ${^{\omega}r_{P^\natural}^{Q^\natural}}$ le foncteur
$$
{^{\omega}r_{M_{\sP}^\natural}^{Q^\natural\cap M_{\sP}^\natural}}:\mathfrak{R}(M_{\sP}^\natural,\omega)
\rightarrow \mathfrak{R}(M_{\sQ}^\natural,\omega).
$$
On obtient comme plus haut 
des morphismes de groupes
$$
{^{\omega}i_{Q^\natural}^{P^\natural}}:\G(M_{\sQ}^\natural,\omega)\rightarrow \G(M_{\sP}^\natural,\omega),
\quad
{^{\omega}r_{P^\natural}^{Q^\natural}}:\G(M_{\sP}^\natural,\omega)\rightarrow \G(M_{\sQ}^\natural,\omega),
$$
et des morphismes ${\Bbb C}$--linŽaires
$$
{^{\omega}i_{Q^\natural}^{P^\natural}}:\G_{\Bbb C}(M_{\sQ}^\natural,\omega)\rightarrow \G_{\Bbb C}(M_{\sP}^\natural,\omega),
\quad
{^{\omega}r_{P^\natural}^{Q^\natural}}:\G_{\Bbb C}(M_{\sP}^\natural,\omega)\rightarrow \G_{\Bbb C}(M_{\sQ}^\natural,\omega),
$$
que l'on notera parfois aussi ${^{\omega}i_{Q^\natural,{\Bbb C}}^{P^\natural}}$ et ${^{\omega}r_{P^\natural,{\Bbb C}}^{Q^\natural}}$ pour 
Žviter toute ambigu\"{\i}tŽ.

\begin{monlem}Soit $P^\natural,\,Q^\natural,\,R^\natural\in \P(G^\natural)$ tels que $R^\natural\subset Q^\natural\subset P^\natural$. 
Soit $\Sigma$ une $\omega$--reprŽsentation de $M^\natural_{\sR}$, et soit $\Pi$ une $\omega$--reprŽsentation de $M^\natural_{\sP}$.
\begin{enumerate}
\item[(1)]
On a un isomorphisme naturel, fonctoriel en $\Sigma$,
$$
{^\omega{i}_{Q^\natural}^{P^\natural}}\circ {^\omega{i}_{R^\natural}^{Q^\natural}}(\Sigma)\simeq {^\omega{i}_{R^\natural}^{P^\natural}}(\Sigma).
$$
\item[(2)]
On a un isomorphisme naturel, fonctoriel en $\Pi$,
$$
{^\omega{r}_{Q^\natural}^{R^\natural}}\circ {^\omega{r}_{P^\natural}^{Q^\natural}}(\Pi)\simeq
{^\omega{r}_{P^\natural}^{R^\natural}}(\Pi).
$$
\item[(3)] On a un isomorphisme naturel, fonctoriel en $\Sigma$ et $\Pi$,
$$
{\rm Hom}_{\smash{M^\natural_{\sR}}}({^\omega{r}_{P^\natural}^{R^\natural}}(\Pi),\Sigma)\simeq
{\rm Hom}_{\smash{M^\natural_{\sP}}}(\Pi,{^\omega{i}_{R^\natural}^{P^\natural}}(\Sigma)).
$$
\end{enumerate}
\end{monlem}

\begin{proof} Les propriŽtŽs de transitivitŽ des foncteurs induction parabolique et 
restriction de Jacquet normalisŽs sont consŽquences directes des dŽfinitions. 
On Žpargne au lecteur leurs vŽrifications. Quant au point (3), on sait que le foncteur $r_P^R =({^\omega{r}_{P^\natural}^{R^\natural}})^\circ$ 
est un adjoint ˆ gauche du foncteur $i_R^P= ({^\omega{i}_{R^\natural}^{P^\natural}})^\circ$: on a un isomorphisme naturel, 
fonctoriel en $\Sigma^\circ$ et $\Pi^\circ$,
$$
{\rm Hom}_{M_R}(r_P^R(\Pi^\circ),\Sigma^\circ)\simeq {\rm Hom}_{M_P}(\Pi^\circ, i_R^P(\Sigma^\circ)).
$$
Il induit par restriction un isomorphisme 
de ${\rm Hom}_{\smash{M^\natural_{\sR}}}({^\omega{r}_{P^\natural}^{R^\natural}}(\Pi),\Sigma)$ 
sur ${\rm Hom}_{\smash{M^\natural_{\sP}}}(\Pi,{^\omega{i}_{R^\natural}^{P^\natural}}(\Sigma))$, 
fonctoriel en $\Sigma$ et $\Pi$.
\end{proof}

On note $\EuScript{L}(G^\natural)$\index{$\EuScript{L}(G^\natural)$, $\EuScript{L}(G^\natural,\omega)$} l'ensemble $\{M_{\sP}^\natural:P^\natural\in \EuScript{P}(G^\natural)\}$, 
et $\EuScript{L}(G^\natural,\omega)$ le sous--ensemble de $\EuScript{L}(G^\natural)$ formŽ 
des $M_{\sP}^\natural$ tels que $\omega$ est trivial sur le centre $Z(M^\natural_{\sP})=Z(M_P)^\theta$ de $M^\natural_{\sP}$. 
Pour $P\in \EuScript{P}(G^\natural)$, on a $P^\natural =M_{\sP}^\natural\cdot U_\circ$, par consŽquent 
l'application $\EuScript{P}(G^\natural)\rightarrow \EuScript{L}(G^\natural),\,
P^\natural\mapsto M_{\sP}^\natural$ est bijective, et elle induit une bijection $\P(G^\natural,\omega)\rightarrow \EuScript{L}(G^\natural,\omega)$. 
  
Le groupe $P_\circ$ est un sous--groupe parabolique minimal de $G$, et l'on d\'efinit 
de la m\^eme manire $\P(G)$, $M_P$ pour 
$P\in \P(G)$, $i_Q^P$ et $r_P^Q$ pour $P,\,Q\in \P(G)$ tels que $Q\subset P$, et 
$\EuScript{L}(G)$. L'application 
$\P(G)\rightarrow \EuScript{L}(G),\, P\mapsto M_P$ est bijective.\index{$\P(G)$, $\EuScript{L}(G)$}\index{$i_Q^P$, $r_P^Q$}

\begin{monhypo}
On suppose dŽsormais que le point--base 
$\delta_1$ est choisi dans $M^\natural_\circ$.
\end{monhypo}
\v1

Puisque $\theta={\rm Int}_{{\boldsymbol{G}}^\natural}(\delta_1)$ stabilise $P_\circ$, 
il op\`ere sur $\P(G)$ et l'application $P^\natural\mapsto P$ est une bijection de $\EuScript{P}(G^\natural)$ sur 
le sous--ensemble $\P(G)^\theta$\index{$\P(G)^\theta$, $\EuScript{L}(G)^\theta$} de $\EuScript{P}(G)$ form\'e des $P$ qui sont 
$\theta$--stables. Pour $P\in \EuScript{P}(G)$, on a $\theta(M_P)=M_{\theta(P)}$ et $\theta(U_P)=U_{\theta(P)}$. 
En particulier $\theta$ opre aussi sur $\EuScript{L}(G)$. 
On note $\EuScript{L}(G)^\theta$ le sous--ensemble de $\EuScript{L}(G)$ formŽ des 
$M$ qui sont $\theta$-stables. 
Pour $P\in \EuScript{P}(G)$, puisque $P=M_PP_\circ$, on a
$$\theta(P)=\theta(M_P)P_\circ.
$$ 
On en d\'eduit que l'application $\EuScript{P}(G)^\theta\rightarrow \EuScript{L}(G)^\theta,\,P\mapsto M_P$ 
est elle aussi bijective.

Notons que les opŽrations de $\theta$ sur $\P(G)$ et sur $\EuScript{L}(G)$ dŽpendent 
du choix de la composante de Levi $M_\circ^\natural$ de $P_\circ^\natural$, mais elles 
ne dŽpendent pas du choix de $\delta_1$ dans $M_\circ^\natural$.

\subsection{Caractres non ramifiŽs}\label{carac nr}
Soit $M$ un groupe topologique localement profini, et soit $M^\natural$ un $M$--espace topologique tordu. 
On note $M^1$ le sous-groupe de $M$ engendrŽ par ses sous--groupes ouverts compacts, 
et l'on pose\index{$M^1$, $\mathfrak{P}(M)={\rm Hom}(M/M^1,{\Bbb C}^\times)$}
$$\mathfrak{P}(M)={\rm Hom}(M/M^1,{\Bbb C}^\times).
$$
Les ŽlŽments de $\mathfrak{P}(M)$ sont appelŽs  {\it caractres non ramifiŽs de $M$}.

\begin{exemple}
{\rm Soit $A$ le groupe des points $F$--rationnels d'un tore dŽployŽ et dŽfini 
sur $F$, et soit $X={\rm X}^*(A)$\index{${\rm X}^*(A)$, $A^\varpi$, $\mathfrak{P}(A)=
{\rm Hom}(A^\varpi,{\Bbb C}^\times)$} le groupe des caractres algŽbriques de $A$. On a  $A={\rm Hom}(X,F^\times)$ 
et $A^1$ est le sous--groupe compact maximal ${\rm Hom}(X,\mathfrak{o}^\times)$ de $A$. Le 
choix d'une uniformisante $\varpi$ de $F^\times$ dŽfinit un sous--groupe fermŽ (discret) $A^\varpi = 
{\rm Hom}(X,\langle \varpi \rangle)$ de $A$. L'application produit $A^\varpi \times A^1\rightarrow A$ est un 
isomorphisme de groupes topologiques; d'o un isomorphisme $A^\varpi\simeq A/A^1$, qui 
identifie ${\rm Hom}(A^\varpi,{\Bbb C}^\times)$ au groupe $\mathfrak{P}(A)$.}
\end{exemple}

Pour tout caract\`ere $\chi$ de $M$, le caractre $\chi^\tau=\chi\circ\tau$ de $M$, o $\tau={\rm Int}_{M^\natural}(\delta)$ 
pour un $\delta\in M^\natural$, ne dŽpend pas du choix de $\delta\in M^\natural$. On note 
$\mathfrak{P}(M^\natural)$\index{$\mathfrak{P}(M^\natural)$} le sous--groupe de $\mathfrak{P}(M)$ formŽ des 
$\psi$ tels que $\psi\circ{\rm Int}_{G^\natural}(\delta)=\psi$ pour un (i.e. pour tout) 
$\delta\in M^\natural$. Les ŽlŽments de $\mathfrak{P}(M^\natural)$ sont appelŽs {\it caractres non ramifiŽs de $M^\natural$}. 
Pour $\delta\in M^\natural$, on pose aussi $\mathfrak{P}(M)^\tau= \mathfrak{P}(M^\natural)$, 
$\tau={\rm Int}_{M^\natural}(\delta)$.

\begin{marema1}
{\rm Supposons de plus que $M^\natural$ 
est un sous--espace tordu de $G^\natural$, par exemple une composante de Levi d'un sous--espace parabolique 
de $G^\natural$. On peut alors dŽfinir un 
autre sous--groupe de $\mathfrak{P}(M)$, qui contient $\mathfrak{P}(M^\natural)$. 
Soit $\mathfrak{P}(M,G^\natural)$ le sous--groupe de $\mathfrak{P}(M)$ formŽ des 
$\psi$ vŽrifiant la propriŽtŽ suivante: il 
existe un ŽlŽment $w$ de $N_G(M)/M$ tel que pour un (i.e. pour tout) $\delta\in M^\natural$, posant 
$\tau={\rm Int}_{M^\natural}(\delta)$, on a 
$$
\psi^\tau = {^w\psi},\quad {^w\psi}= \psi\circ {\rm Int}\,w^{-1};
$$
o\`u $N_G(M)$ dŽsigne le normalisateur de $M$ dans $G$. 
En d'autres termes, $\mathfrak{P}(M,G^\natural)$ est le sous--groupe de $\mathfrak{P}(M)$ form\'e des $\psi$ tels que 
$\psi\circ {\rm Int}_{G^\natural}(\delta)\vert_M=\psi$ pour un $\delta\in G^\natural$ normalisant $M$. Ce groupe 
$\mathfrak{P}(M,G^\natural)$ dŽpend bien s\^ur de $M$ et de $G^\natural$, mais il ne dŽpend pas de $M^\natural$. On a 
$\mathfrak{P}(M^\natural)=\mathfrak{P}(M,M^\natural)$.
\hfill$\blacksquare$}
\end{marema1}

Soit $\mathfrak{P}_{\Bbb C}(M^\natural)$\index{$\mathfrak{P}_{\Bbb C}(M^\natural)$} 
le sous--ensemble de ${\rm Irr}_{\Bbb C}(M^\natural)={\rm Irr}_{\Bbb C}(M^\natural,\omega=1)$ 
form\'e des $\Pi$ tels que $\Pi^\circ$ est un caract\`ere non ramifi\'e de $M$. Ce caract\`ere est 
alors un \'el\'ement de $\mathfrak{P}(M^\natural)$. D'ailleurs le foncteur d'oubli $\Pi\mapsto \Pi^\circ$ induit une application 
bijective
$$
\mathfrak{P}_{\Bbb C}(M^\natural)/{\Bbb C}^\times \rightarrow \mathfrak{P}(M^\natural).
$$
Pour $\Psi\in \mathfrak{P}_{\Bbb C}(M^\natural)$ et 
$\delta\in M^\natural$, $\Psi(\delta)$ est un ${\Bbb C}$-automorphisme d'un espace vectoriel complexe de dimension $1$, 
\cad la multiplication par un nombre complexe non nul, et l'on identifie $\Psi(\delta)$ \`a 
ce nombre. Cela munit l'ensemble $\mathfrak{P}_{\Bbb C}(M^\natural)$ 
d'une structure de groupe: pour $\Psi,\,\Psi'\in \mathfrak{P}_{\Bbb C}(M^\natural)$ et $\delta\in M^\natural$, on pose
$$(\Psi \Psi')(\delta)=\Psi(\delta)\Psi'(\delta).
$$
Tout ŽlŽment $\delta$ de $M^\natural$ dŽfinit un 
scindage du morphisme $\mathfrak{P}_{\Bbb C}(M^\natural)\rightarrow \mathfrak{P}(M^\natural),\,\Psi\mapsto \Psi^\circ$: 
pour $\psi\in \mathfrak{P}(M^\natural)$, on note $\psi^\delta$ l'ŽlŽment de $\mathfrak{P}_{\Bbb C}(M^\natural_{\sP})$ 
dŽfini par $\psi^\delta(g\cdot \delta)= \psi(g)$, $g\in G$. 
Pour tout caractre $\eta$ de $M$, le groupe $\mathfrak{P}_{\Bbb C}(M^\natural)$ op\`ere sur l'ensemble  
des $\eta$-reprŽsentations de $M^\natural$: pour 
$\Psi\in \mathfrak{P}_{\Bbb C}(M^\natural)$, $\Pi$ une $\eta$-reprŽsentation de $M^\natural$ et $\delta\in M^\natural$, on pose
$$
(\Psi\cdot\Pi)(\delta)= \Psi(\delta)\Pi(\delta),\quad \delta\in M^\natural.
$$
On a
$$
(\Psi\cdot\Pi)^\circ =\Psi^\circ\Pi^\circ.
$$

\begin{marema2}
{\rm 
Pour $P\in \P(G^\natural)$, le groupe $\mathfrak{P}(M_P)$ est un tore complexe (de 
groupe des caractres algŽbriques $M_P/M_P^1$), 
et $\mathfrak{P}(M^\natural_{\sP})=\mathfrak{P}(M_P)^\theta$. Par suite les groupes 
$\mathfrak{P}(M^\natural_{\sP})$ et $\mathfrak{P}_{\Bbb C}(M^\natural_{\sP})$ sont 
des variŽtŽs algŽbriques affines complexes, en fait des groupes algŽbriques affines diagonalisables sur ${\Bbb C}$. 
Le morphisme $\mathfrak{P}_{\Bbb C}(M^\natural_{\sP})\rightarrow \mathfrak{P}(M^\natural_{\sP}),\, \Psi\mapsto \Psi^\circ$ 
est algŽbrique, et pour $\delta\in M^\natural_{\sP}$, le morphisme $\mathfrak{P}(M^\natural_{\sP})\rightarrow \mathfrak{P}_{\Bbb C}(M^\natural_{\sP}),\, 
\psi\mapsto \psi^\delta$ est lui aussi algŽbrique. On note $\mathfrak{P}^0(M^\natural_{\sP})$ et $\mathfrak{P}^0_{\Bbb C}(M^\natural_{\sP})$
\index{$\mathfrak{P}^0(M^\natural_{\sP})$, $\mathfrak{P}^0_{\Bbb C}(M^\natural_{\sP})$} 
les composantes neutres de $\mathfrak{P}(M^\natural_{\sP})$ et $\mathfrak{P}_{\Bbb C}(M^\natural_{\sP})$. 
Ce sont des tores complexes, et le morphisme 
$\mathfrak{P}_{\Bbb C}(M^\natural_{\sP})\rightarrow \mathfrak{P}(M^\natural_{\sP})$ 
induit par restriction un morphisme surjectif $\mathfrak{P}^0_{\Bbb C}(M^\natural_{\sP})\rightarrow \mathfrak{P}^0(M^\natural_{\sP})$ 
de noyau ${\Bbb C}^\times$.\hfill $\blacksquare$
}
\end{marema2}

On aura aussi besoin de la variante suivante des constructions prŽcŽdentes. 
Pour un caractre non ramifi\'e $\xi$ de $M$, on note $\mathfrak{P}(M^\natural,\xi)$ le sous--groupe 
de $\mathfrak{P}(M)$ formŽ des $\psi$ tels que $\psi\circ {\rm Int}_{G^\natural}(\delta)=\xi \psi$ pour 
un (i.e. pour tout) $\delta\in M^\natural$, et l'on note $\mathfrak{P}_{\Bbb C}(M^\natural,\xi)$\index{$\mathfrak{P}(M^\natural,\xi)$, 
$\mathfrak{P}_{\Bbb C}(M^\natural,\xi)$} le 
sous--ensemble de ${\rm Irr}_{\Bbb C}(M^\natural,\xi)$ form\'e des $\Pi$ tels que $\Pi^\circ$ est un 
caractre non ramifiŽ de $M$. Ce caractre appartient ˆ $\mathfrak{P}(M^\natural,\xi)$, et le foncteur d'oubli $\Pi\mapsto \Pi^\circ$ 
induit une application bijective
$$
\mathfrak{P}_{\Bbb C}(M^\natural,\xi)/{\Bbb C}^\times \rightarrow \mathfrak{P}(M^\natural,\xi).
$$
L'ensemble 
$\mathfrak{P}_{\Bbb C}(M^\natural,\xi)$ est un espace principal homogne sous $\mathfrak{P}_{\Bbb C}(M^\natural)$, 
et pour $\Psi\in \mathfrak{P}_{\Bbb C}(M^\natural,\xi)$ et $\delta\in M^\natural$, on peut comme plus haut 
identifier $\Psi(\delta)$ ˆ un nombre complexe non nul. Enfin pour tout caractre $\eta$ de $M$, 
toute $\eta$-reprŽsentation $\Pi$ de $M^\natural$ 
et tout $\Psi\in \mathfrak{P}_{\Bbb C}(M^\natural,\xi)$, on note $\Psi\cdot \Pi$ la $\xi\eta$--reprŽsentation 
de $M^\natural$ dŽfinie comme ci--dessus. On a encore
$$
(\Psi\cdot\Pi)^\circ =\Psi^\circ\Pi^\circ.
$$

\subsection{Quotient de Langlands}\label{langlands}Soit $\vert \omega \vert $ le caractre non 
ramifiŽ de $G$ donnŽ par
$$\vert \omega \vert (g)=\vert \omega (g)\vert,\quad g\in G,
$$
et soit $\omega_{\rm u}$ le caract\`ere 
unitaire de $G$ tel que\index{$\omega=\omega_{\rm u}\vert\omega\vert$}
$$\omega=\omega_{\rm u}\vert\omega\vert.
$$

Soit $\Pi$ une $\omega$--repr\'esentation $G$--irr\'eductible de $G^\natural$. 
D'apr\`es la classification de Langlands, il existe un triplet $(P,\sigma,\xi)$\index{$(P,\sigma,\xi)$} form\'e d'un 
sous-groupe parabolique standard $P$ de $G$, d'une repr\'esentation irr\'eductible temp\'er\'ee 
$\sigma$ de $M_P$ et d'un caract\`ere non ramifi\'e $\xi$ de $M_P$ qui est positif par rapport 
\`a $U_P$, tels que $\Sigma^\circ$ est isomorphe \`a l'unique quotient irr\'eductible de 
${i}_P^G(\xi\sigma)$. L'unicit\'e de la d\'ecomposition de Langlands implique que $\theta(P)=P$ et
que $\omega^{-1}(\xi\sigma)^\theta$ est isomorphe \`a $\sigma$. Comme la repr\'esentation $\omega_{\rm u}^{-1}\sigma^\theta$ 
et le caract\`ere $\vert \omega\vert^{-1} \xi^\theta$ de $M_P$ sont respectivement temp\'er\'ee et 
positif par rapport \`a $U_P$, $\omega_{\rm u}^{-1}\sigma^\theta $ est isomorphe ˆ $\sigma$ et $\vert \omega\vert^{-1}\xi^\theta =\xi$. 
On en d\'eduit qu'il existe une $\omega_{\rm u}$-repr\'esentation $M_P$--irr\'eductible $\Sigma$ de $M_{\sP}^\natural$ 
et un ŽlŽment $\Xi$ de $\mathfrak{P}_{\Bbb C}(M_{\sP}^\natural,\vert \omega\vert)$ tels que $\Sigma^\circ = \sigma$, $\Xi^\circ = \xi$, 
et $\Pi$ est l'unique quotient irr\'eductible de 
${^\omega{i}}_{P^\natural}^{G^\natural}(\Xi\cdot\Sigma)$. 

\begin{madefi}
{\rm Une $\omega$--reprŽsentation $G$--irrŽductible $\Pi$ de $G^\natural$ 
est dite {\it unitaire} s'il existe un produit scalaire hermitien $G^\natural$--invariant 
sur l'espace de $\Pi$, ce qui n'est possible que si le caractre $\omega$ est 
lui--m\^eme unitaire. Un tel produit, s'il existe, est a fortiori $G$--invariant, et la reprŽsentation 
$\Pi^\circ$ de $G$ sous--jacente \`a $\Pi$ est unitaire.
}
\end{madefi}

Supposons que $\pi= \Pi^\circ$ est unitaire, et fixons un 
produit scalaire hermitien $G$--invariant sur l'espace $V$ de $\pi$. Posons
$$
(v,v')^\natural = (\Pi(\delta)(v),\Pi(\delta)(v')),\quad v,\,v'\in V,
$$
pour un (i.e. pour tout) $\delta\in G^\natural$. Si $\omega$ est unitaire, c'est un autre produit scalaire hermitien 
$G$--invariant sur $V$, par suite $(\cdot,\cdot)^\natural = \lambda (\cdot,\cdot)$ pour 
un nombre r\'eel $\lambda>0$, et $\Pi$ est unitaire si et seulement 
si $\lambda=1$. 

\begin{marema}
{\rm Si $\Pi$ est une $\omega$--repr\'esentation de $G^\natural$, 
sa contragr\'ediente $\check{\Pi}$\index{$(\check{\Pi},\check{V})$ (contragrŽdiente)} est dŽfinie comme suit. Notant $V$ l'espace de $\pi=\Pi^\circ$ et $\check{V}$ celui de 
la contragrŽdiente $\check{\pi}$ de $\pi$, on pose
$$
\langle v, \check{\Pi}(\delta)(\check{v})\rangle = \langle \Pi(\delta)^{-1}(v),\check{v}\rangle,\quad (v,\check{v})\in V\times \check{V}.
$$
Cela d\'efinit une $\omega^{-1}$--repr\'esentation $\check{\Pi}$ de $G^\natural$, telle que 
$\check{\Pi}^\circ = \check{\pi}$. Si de plus $\pi$ est irr\'eductible et unitaire, alors 
notant $(\overline{\Pi},\overline{V})$ 
la conjugu\'ee complexe de $\Pi$\index{$(\overline{\Pi},\overline{V})$ (conjuguŽe complexe)} --- c'est une $\overline{\omega}$--repr\'esentation 
$G$--irr\'eductible de $G^\natural$ ---, l'op\'erateur $v\mapsto A_v=\langle v,\cdot\rangle$ de 
$\overline{V}$ dans $\check{V}$ est un isomorphisme de $\overline{\pi}$ 
sur $\check{\pi}$, et c'est un isomorphisme de $\overline{\Pi}$ sur $\check{\Pi}$ 
si et seulement si $\Pi$ est unitaire (auquel cas on a forcŽment $\overline{\omega}=\omega^{-1}$).\hfill$\blacksquare$
}
\end{marema}

D'apr\`es ce qui pr\'ec\`ede, \`a toute $\omega$--repr\'esentation $G$--irr\'eductible $\Pi$ de $G^\natural$ est associ\'ee un 
triplet $(P^\natural,\Sigma, \Xi)$\index{$(P^\natural,\Sigma, \Xi)$} o\`u:

\begin{itemize}
\item $P^\natural$ est un sous--espace parabolique standard de $G^\natural$,
\item $\Sigma$ est une $\omega_{\rm u}$--repr\'esentation $M_P$--irr\'eductible {\it temp\'er\'ee} de $M_{\sP}^\natural$, 
\cad unitaire et telle que $\Sigma^\circ$ est temp\'er\'ee,
\item $\Xi$ est un \'el\'ement de $\mathfrak{P}_{\Bbb C}(M_{\sP}^\natural,\vert \omega \vert)$ tel que 
$\Xi^\circ$ est positif par rapport \`a $U_P$.
\end{itemize}
\goodbreak

\ni Un tel triplet est appelŽ {\it triplet de Langlands pour $(G^\natural,\omega)$}, et $\Pi$ est l'unique 
quotient $G$--irrŽductible, donc aussi l'unique quotient irrŽductible, de l'induite parabolique 
${^\omega{i}_{P^\natural}^{G^\natural}}(\Xi\cdot\Pi)$.

R\'eciproquement, \`a tout triplet de Langlands $\mu=(P^\natural,\Sigma,\Xi)$ pour $(G^\natural,\omega)$ est associ\'ee 
une $\omega$--reprŽsentation $G$--irrŽductible $\Pi=\Pi_\mu$ de $G^\natural$: c'est l'unique quotient 
irrŽductible de $\widetilde{\Pi}_\mu= {^\omega{i}}_{P^\natural}^{G^\natural}(\Xi\cdot \Sigma)$\index{$\widetilde{\Pi}_\mu$, $\Pi_\mu$}. 
En effet, puisque $\mu^\circ=(P,\Sigma^\circ,\Xi^\circ)$ est 
un triplet de Langlands pour $G$, la reprŽsentation 
$$
{i}_P^G(\Xi^\circ\Sigma^\circ)= \widetilde{\Pi}_\mu^\circ
$$
de $G$ 
a un unique quotient irr\'eductible, disons $\pi$. Comme la reprŽsentation 
$${i}_P^G(\Xi^\circ\Sigma^\circ)(1)= \omega^{-1}{i}_P^G(\Xi^\circ\Sigma^\circ)^\theta$$ de $G$ 
est isomorphe \`a ${i}_P^G(\Xi^\circ\Sigma^\circ)$, on a nŽcessairement
$$
\pi(1)\simeq \pi.
$$
On en dŽduit qu'il existe un quotient $\Pi$ de $\widetilde{\Pi}_\mu$ tel que $\Pi^\circ=\pi$. Ce quotient est 
l'unique quotient irrŽductible de $\widetilde{\Pi}_\mu$. 
 
Deux tels triplets de Langlands $(P^\natural_1,\Sigma_1,\Xi_1)$, $(P^\natural_2,\Sigma_2,\Xi_2)$ pour $(G^\natural,\omega)$ 
sont dits \'equivalents si $P_1^\natural =P_2^\natural$ 
et s'il existe un nombre complexe $\lambda$ de module $1$ tels que $\lambda\cdot\Sigma_1\simeq \Sigma_2$ et 
$\lambda^{-1}\Xi_1=\Xi_2$. Comme dans le cas non tordu, l'application
$$
\mu\mapsto \Pi_\mu
$$
induit une bijection de l'ensemble des classes d'Žquivalence de triplets de Langlands pour $(G^\natural,\omega)$ 
sur ${\rm Irr}_0(G^\natural,\omega)$.

\subsection{\og DŽcomposition \fg de Langlands}\label{base langlands}
Notons 
${\rm Irr}_{0,{\rm t}}(G^\natural,\omega_{\rm u})$\index{${\rm Irr}_{0,{\rm t}}(G^\natural,\omega_{\rm u})$} le sous--ensemble de 
${\rm Irr}_0(G^\natural,\omega_{\rm u})$ formŽ des ŽlŽments tempŽrŽs. Il s'identifie ˆ un sous--ensemble 
de ${\rm Irr}_{\Bbb C}(G^\natural, \omega_{\rm u})$, que l'on note ${\rm Irr}_{{\Bbb C},{\rm t}}(G^\natural,\omega_{\rm u})$ --- cf. 
\ref{nota} (notations). D'aprs \ref{langlands}, le ${\Bbb Z}$--module $\G_0(G^\natural,\omega)$ est somme directe 
des ${\Bbb Z}$--modules ${\Bbb Z}\Pi_\mu$ pour $\mu$ parcourant les classes d'Žquivalence de triplets de 
Langlands pour $(G^\natural,\omega)$. 
On en dŽduit la version tordue suivante du thŽorme de la base de Langlands (qu'il convient 
d'appeler \og dŽcomposition \fg de Langlands, cf. la remarque ci--dessous).

\begin{monlem1}
On a la dŽcomposition dans $\G(G^\natural,\omega)$
$$
\G(G^\natural,\omega)=\sum_\mu {\Bbb Z}\widetilde{\Pi}_\mu + \G_{>0}(G^\natural,\omega)
$$
o\`u $\mu$ parcourt les classes d'Žquivalence de triplets de Langlands pour $(G^\natural,\omega)$.
\end{monlem1}

\begin{proof} Puisque $\G(G^\natural,\omega)=\G_0(G^\natural,\omega)\oplus \G_{>0}(G^\natural,\omega)$, 
il suffit de montrer que $\G_0(G^\natural,\omega)$ est contenu dans l'expression ˆ droite 
de l'ŽgalitŽ dans l'ŽnoncŽ. Soit donc $\Pi$ une $\omega$--reprŽsentation 
$G$--irrŽductible de $G^\natural$, et soit 
$(P^\natural,\Sigma,\Xi)$ un triplet de Langlands pour $(G^\natural,\omega)$ associ\'e \`a $\Pi$. 
Posons $\sigma=\Sigma^\circ$, $\xi=\Xi^\circ$, et soit $(M_{P'},\sigma')$ 
une paire cuspidale standard de $G$ telle que $P'\subset P$ et $\xi\sigma$ 
est isomorphe \`a un sous--quotient de l'induite parabolique ${i}_{P'}^P(\sigma')$. Chaque composant --- 
i.e. classe d'isomorphisme d'un sous--quotient irr\'eductible --- de ${i}_P^G(\xi\sigma)={^\omega{{i}_{P^\natural}^{G^\natural}(\Xi\cdot\Sigma)}}^\circ$ est aussi un composant de ${i}_{P'}^G(\sigma')$. Dans $\EuScript{G}(G^\natural,\omega)$, \'ecrivons
$$
\Pi \equiv {^\omega{{i}_{P^\natural}^{G^\natural}}}(\Xi\cdot \Sigma) - \sum_{i=1}^n \Pi_i\quad({\rm mod}\; \EuScript{G}_{>0}(G^\natural,\omega))
$$
o\`u les $\Pi_i$ sont des \'el\'ements de ${\rm Irr}_0(G^\natural,\omega)$ tels que $\theta_G(\Pi_i^\circ)=[M_{P'},\sigma']$. 
On refait ensuite la m\^eme chose avec chacun des $\Pi_i$:
$$
\Pi_i\equiv {^\omega{i}_{P_i^\natural}^{G^\natural}}(\Xi_i\cdot \Sigma_i) - \sum_{j=1}^{n_i}\Pi_{i,j}\quad ({\rm mod}\;\EuScript{G}_{>0}(G^\natural,\omega)).
$$
Puisque l'application $\theta_G:{\rm Irr}(G)\rightarrow \Theta(G)$ est ˆ fibres finies, 
d'aprs le r\'esultat bien connu de Borel--Wallach \cite[4.3]{BW}, le processus de d\'ecomposition s'arr\^ete au bout d'un 
nombre fini d'\'etapes. On obtient donc une d\'ecomposition de la forme
$$
\Pi \equiv \sum_{i=1}^m\widetilde{\Pi}_{\mu_i}\quad ({\rm mod}\;\EuScript{G}_{>0}(G^\natural,\omega)),
$$
o les $\mu_i$ sont des triplets de Langlands pour $(G^\natural,\omega)$ (qui ne sont 
pas forcŽment deux--ˆ--deux non Žquivalents). \end{proof}

\begin{marema}
{\rm 
D'aprs le thŽorme de la base de Langlands pour $G$, la somme 
$\sum_\mu {\Bbb Z}\widetilde{\Pi}_\mu$ est directe, mais sa projection 
sur $\G_0(G^\natural,\omega)=\G(G^\natural,\omega)/\G_{>0}(G^\natural,\omega)$ 
ne l'est en gŽnŽral pas. En effet si 
$\mu$ est un triplet de Langlands pour $(G^\natural,\omega)$ tel 
que la $\omega$--reprŽsentation $\widetilde{\Pi}_\mu$ de $G^\natural$ est rŽductible, 
les composants irrŽductibles de $\widetilde{\Pi}_\mu$ ne sont pas forcŽment 
$G$--irrŽductibles.\hfill $\blacksquare$ }
\end{marema}

Un triplet de Langlands $(P^\natural,\Sigma,\Xi)$ pour $(G^\natural,\omega)$ est dit {\it induit} si 
$P^\natural\neq G^\natural$. Une $\omega$--reprŽsentation $G$--irrŽductible $\Pi$ de $G^\natural$ est dite 
{\it essentiellement tempŽrŽe} s'il existe un $\Psi\in \mathfrak{P}_{\Bbb C}(G^\natural,\vert \omega \vert^{-1})$ tel que 
la $\omega_{\rm u}$--reprŽsenta\-tion $\Psi\cdot\Pi$ de $G^\natural$ est tempŽrŽe, \cad si $\Pi$ est 
isomorphe \`a $\Pi_\mu=\widetilde{\Pi}_\mu$ pour un $\mu$ non induit. De manire Žquivalente, $\Pi$ est essentiellement 
tempŽrŽe si $\Pi^\circ$ est une reprŽsentation essentiellement tempŽrŽe de $G$. On note 
${\rm Irr}_{0,{\rm e.t}}(G^\natural,\omega)$\index{${\rm Irr}_{0,{\rm e.t}}(G^\natural,\omega)$} 
le sous--ensemble de ${\rm Irr}_0(G^\natural,\omega)$ formŽ
des ŽlŽments essentiellement tempŽrŽs. On note aussi:

\begin{itemize}
\item $\G_{\rm e.t}(G^\natural,\omega)$ le sous--groupe de $\G(G^\natural,\omega)$ 
engendrŽ par les $\iota_k({\rm Irr}_{0,{\rm e.t}}(\mathcal{G}_k,\omega_k))$ pour un entier 
$k\geq 1$ --- il est stable sous ${\Bbb C}^\times$;
\item $\G_{L-{\rm ind}}(G^\natural,\omega)$ le sous--groupe de $\G(G^\natural,\omega)$ 
engendrŽ par les $\iota_k(\widetilde{\Pi}_\mu)$ pour un entier $k\geq 1$ et un triplet de Langlands induit 
$\mu$ pour $(G^\natural_k,\omega_k)$ --- lui aussi est stable sous ${\Bbb C}^\times$;
\item $\G_{{\Bbb C},{\rm e.t}}(G^\natural,\omega)$ et $\G_{{\Bbb C},{L-{\rm ind}}}(G^\natural,\omega)$ leurs projections 
respectives dans $\G_{\Bbb C}(G^\natural,\omega)$ --- deux sous--espaces vectoriels de $\G_{\Bbb C}(G^\natural,\omega)$.
\end{itemize}
\index{$\G_{\rm e.t}(G^\natural,\omega)$, $\G_{L-{\rm ind}}(G^\natural,\omega)$}
\index{$\G_{{\Bbb C},{\rm e.t}}(G^\natural,\omega)$, $\G_{{\Bbb C},{L-{\rm ind}}}(G^\natural,\omega)$}

\ni D'aprs le lemme de \ref{inclusion iota +}, $\G_{{\Bbb C},{\rm e.t}}(G^\natural,\omega)$ est 
le sous--espace vectoriel de $\G_{\Bbb C}(G^\natural,\omega)$ 
engendrŽ par ${\rm Irr}_{{\Bbb C},{\rm e.t}}(G^\natural,\omega)={\rm Irr}_{0,{\rm e.t}}(G^\natural,\omega)$, 
et $\G_{{\Bbb C},{L-{\rm ind}}}(G^\natural,\omega)$ est le sous--espace vectoriel de $\G_{\Bbb C}(G^\natural,\omega)$ 
engendrŽ par les projections des $\widetilde{\Pi}_\mu$ dans $\G_{\Bbb C}(G^\natural,\omega)$, o $\mu$ 
parcourt les triplets de Langlands induits pour $(G^\natural,\omega)$. 

D'aprs le lemme 1 et le thŽorme de la base de Langlands pour $G$, on a la dŽcom\-position en somme 
directe
$$
\G(G^\natural,\omega)= \G_{\rm e.t}(G^\natural,\omega)\oplus
\G_{L-{\rm ind}}(G^\natural,\omega).
$$
On en dŽduit le

\begin{monlem2}
On a la dŽcomposition en somme directe
$$
\G_{\Bbb C}(G^\natural,\omega)= \G_{{\Bbb C},{\rm e.t}}(G^\natural,\omega)\oplus
\G_{{\Bbb C},{L-{\rm ind}}}(G^\natural,\omega).
$$
\end{monlem2}

\begin{proof} Pour allŽger l'Žcriture, posons $\G_{0^+}=\G_{0^+}(G^\natural,\omega)$, 
$\G_{\rm e.t}=\G_{\rm e.t}(G^\natural,\omega)$, etc. Il s'agit d'Žtablir l'inclusion
$$
\G_{0^+}\subset (\G_{\rm e.t}\cap \G_{0^+})+
(\G_{L-{\rm ind}}\cap \G_{0^+}).\leqno{(*)}
$$

Soit un ŽlŽment $\Pi$ de ${\rm Irr}_{k-1}(G^\natural,\omega)$ pour un entier $k> 1$, et soit $\Sigma\in {\rm Irr}_0(\mathcal{G}_k,\omega_k)$ 
tel que $\Pi=\iota_k(\Sigma)$. Dans $\G(\mathcal{G}_k,\omega_k)$, $\Sigma$ se dŽcompose en $\Sigma=\Sigma_1+\Sigma_2$, 
o $\Sigma_1\in \G_{\rm e.t}(\mathcal{G}_k,\omega_k)$ et $\Sigma_2\in \G_{L-{\rm ind}}(\mathcal{G}_k,\omega_k)$. 
Alors $\Pi=\iota_k(\Sigma_1)+\iota_k(\Sigma_2)$, et d'aprs le lemme de \ref{inclusion iota +}, $\Pi$ appartient ˆ la somme 
ˆ droite de l'inclusion $(*)$.

Soit un ŽlŽment $\Pi'$ de ${\rm Irr}_0(G^\natural,\omega)$ et des nombres complexes non nuls $\lambda_1,\ldots ,\lambda_n$, 
$n>1$, tels que $\sum_{i=1}^n\lambda_i=0$. 
\'Ecrivons $\Pi'=\Pi'_1+\Pi'_2$, o $\Pi'_1\in \G_{\rm e.t}$ et $\Pi'_2\in \G_{L-{\rm ind}}$. 
Alors $\Pi = \sum_{i=1}^n\lambda_i\cdot \Pi'$ s'Žcrit $\Pi=\Pi_1+\Pi_2$, o $\Pi_1=\sum_{i=1}^n\lambda_i\cdot \Pi'_1$ et 
$\Pi_2=\sum_{i=1}^n\lambda_i\cdot \Pi'_2$, par consŽquent $\Pi$ appartient ˆ la somme ˆ droite de l'inclusion $(*)$.

L'inclusion $(*)$ Žtant vŽrifiŽe, le lemme est dŽmontrŽ.
\end{proof}

\subsection{Support cuspidal et caractres infinitŽsimaux}\label{supp cusp}
Soit $A_\circ$ le tore dŽployŽ maximal (du centre) de $M_\circ$. On a $Z_G(A_\circ)=M_\circ$, 
et le normalisateur $N_G(A_\circ)$ de $A_\circ$ dans $G$ co\"{\i}ncide avec $N_G(M_\circ)$. 
On note $W_G$ le groupe de Weyl de $G$ dŽfini par\index{$W_G=N_G(A_\circ)/M_\circ$}
$$
W_G=N_G(A_\circ)/M_\circ.
$$

Soit $\pi$ une repr\'esentation irr\'eductible $\pi$ de $G$. 
\`A $\pi$ est associŽe une {\it paire cuspidale standard} $(M_P,\rho)$ de $G$, o\`u $P¬\in \EuScript{P}(G)$ 
et $\rho$ est une repr\'esentation irr\'eductible cuspidale de $M_P$, telle que $\pi$ est isomorphe \`a un  
sous--quotient de $i_P^G(\rho)$. Si $(M_{P'},\rho')$ est une autre paire cuspidale standard de $G$ 
telle que $\pi$ est isomorphe \`a un sous--quotient de $i_{P'}^G(\rho')$, alors il existe un ŽlŽment $w$ 
de $W_G$ tel que $w(M_P)=M_{P'}$ et ${^{n_w\!}\rho}\simeq \rho'$, o\`u:\index{$w(M_P)$, $n_w$}

\begin{itemize}
\item $n_w$ est un repr\'esentant de $w$ dans $N_G(A_\circ)$,
\item ${^{n_w\!}\rho}$ est la repr\'esentation $\rho\circ {\rm Int}_{\boldsymbol{G}}(n_w^{-1})$ de $w(M_P)= {\rm Int}_{\boldsymbol{G}}(n_w)(M_P)$.
\end{itemize}

\ni Pour $X\subset G$ et $g\in G$, on pose ${^gX}={\rm Int}_{\boldsymbol{G}}(g)(X)$. 
Deux paires cuspidales --- standard ou non --- $(M,\rho)$ et $(M'\!,\rho')$ de $G$ telles que $M'={^gM}$ et 
$\rho'\simeq {^g\rho}$ pour un $g\in G$ sont dites {\it $G$--\'equivalentes} ou simplement {\it Žquivalentes}, et l'on note 
$[M,\rho]=[M,\rho]_G$\index{$[M,\rho]=[M,\rho]_G$} la classe d'\'equivalence de $(M,\rho)$. Toute paire cuspidale de 
$G$ est Žquivalente ˆ une paire cuspidale standard. 
La classe d'\'equivalence d'une paire cuspidale 
standard $(M_P,\rho)$ de $G$ 
telle que $\pi$ est isomorphe \`a un sous--quotient 
de $i_P^G(\rho)$ est appelŽe {\it support cuspidal de $\pi$}, et not\'ee $\theta_G(\pi)$.

Pour une paire cuspidale $(M,\rho)$ de $G$, on note $\rho(1)$ la repr\'esentation 
$\omega^{-1}\rho^\theta$ de $\theta^{-1}(M)$. On obtient ainsi une autre paire cuspidale 
$$(M,\rho)(1)=(\theta^{-1}(M),\rho(1))$$ 
de $G$, dont la classe 
de $G$--\'equivalence, notŽe $[M,\rho](1)$, ne dŽpend pas du choix de $\delta_1\in M^\natural_\circ$. 
Notons que si $(M,\rho)$ est standard, i.e. si $M=M_P$ pour un $P\in \EuScript{P}(G)$, alors 
$(M,\rho)(1)$\index{$(M,\rho)(1)$, $[M,\rho](1)$} est aussi standard, puisque $\theta^{-1}(M_P)=M_{\theta^{-1}(M_P)P_1}$.

Soit $\Pi$ une $\omega$--repr\'esentation $G$--irr\'eductible de $G^\natural$. Posons 
$\pi=\Pi^\circ$, et soit $(M_P,\rho)$ une paire cuspidale standard de $G$ telle que 
$\theta_G(\pi)=[M_P,\rho]$.  Puisque $\pi$ est isomorphe ˆ $\pi(1)$, 
c'est un sous--quotient de 
$i_{\theta^{-1}(P)}^G(\rho(1))$, et aussi un sous--quotient de ${i}_{\theta^{-1}(M_P)P_1}^G(\rho(1))$. 
Par suite les paires cuspidales standard $(M_P,\rho)$ et $(M_P,\rho)(1)$ de $G$ 
sont \'equivalentes: 
il existe un ŽlŽment $w\in W_G$ tel que $w(M_P)=\theta^{-1}(M_P)$ et 
${^{n_w}\rho}\simeq \rho(1)$. En d'autres termes, posant $\tau=\theta\circ {\rm Int}_{\boldsymbol{G}}(n_w)$, on a 
$\tau(M_P)=M_P$ et $\omega^{-1}\rho^\tau\simeq \rho$.

On note $\Theta(G)$ l'ensemble des classes d'\'equivalence de paires cuspidales de 
$G$, et $\Theta_1(G)$ le sous--ensemble de $\Theta(G)$ form\'e des $[M,\rho]$ tels que 
$[M,\rho](1)=[M,\rho]$. L'application support cuspidal\index{$\Theta(G)$, $\Theta_1(G)$, $\theta_G$}
$$
\theta_G:{\rm Irr}(G)\rightarrow \Theta(G)
$$
induit une application
$$
{\rm Irr}_0(G^\natural,\omega)/{\Bbb C}^\times\rightarrow \Theta_1(G),\,\Pi\mapsto \theta_{G^\natural}(\Pi)= \theta_G(\Pi^\circ),
$$
dont l'image est notŽe $\Theta_{G^\natural,\omega}(G)$. 
Notons que pour qu'une paire cuspidale standard $(M_P,\rho)$ de $G$ telle que 
$[M_P,\rho](1)=[M_P,\rho]$ soit dans l'image de $\theta_{G^\natural}$, il faut et il suffit que l'induite 
parabolique $\pi=i_P^G(\rho)$ --- qui est isomorphe ˆ $\pi(1)$ --- possde un sous--quotient irrŽductible $\pi'$ tel que 
$\pi'(1)\simeq \pi'$. 

L'application $[M,\rho]\mapsto [M,\rho](1)$ dŽfinit comme en \ref{s(Pi)} 
une action de ${\Bbb Z}$ sur $\Theta(G)$. On note $\Theta(G)/{\Bbb Z}$ l'ensemble 
des orbites de ${\Bbb Z}$ pour cette action, et l'on identifie $\Theta_1(G)$ 
ˆ un sous--ensemble de $\Theta(G)/{\Bbb Z}$. 
L'application $\theta_G$ est ${\Bbb Z}$-Žquivariante, et l'application 
$\Pi\mapsto \theta_{G^\natural}(\Pi)$ 
se prolonge en une application surjective\index{$\Theta(G)/{\Bbb Z}$, $\theta_{G^\natural}$}
$$
\theta_{G^\natural}:{\rm Irr}(G^\natural,\omega)/{\Bbb C}^\times\rightarrow \Theta(G)/{\Bbb Z},
$$
dŽfinie comme suit. Pour une $\omega$--reprŽsentation irrŽductible $\Pi$ de $G^\natural$, on choisit 
une sous--reprŽsentation irrŽductible $\pi_0$ de $\Pi^\circ$, et l'on note $\theta_{G^\natural}(\Pi)$ la 
${\Bbb Z}$--orbite de $\theta_G(\pi_0)$ dans $\Theta(G)$. Cette ${\Bbb Z}$--orbite est bien dŽfinie --- i.e. 
elle ne dŽpend pas du choix de $\pi_0$ --- et dŽpend seulement de la classe d'isomorphisme de $\Pi$.

\subsection{Support inertiel}\label{support inertiel}
Deux paires cuspidales $(M,\rho)$ et $(M',\rho')$ de 
$G$ sont dites {\it inertiellement Žquivalentes} s'il existe un caractre non ramifiŽ $\psi'$ de 
$M'$ tel que ${^gM}=M'$ et ${^g\rho}\simeq \psi' \rho'$ pour un $g\in G$. 

Soit $\mathfrak{B}(G)$\index{$\mathfrak{B}(G)$} l'ensemble des classes d'\'equivalence inertielle de paires cuspidales de $G$. 
Pour $\mathfrak{s}\in \mathfrak{B}(G)$, on note $\Theta(\mathfrak{s})\subset \Theta(G)$\index{$\Theta(\mathfrak{s})$} la fibre au-dessus 
de $\mathfrak{s}$. Si $\mathfrak{s}$ est la classe d'\'equivalence inertielle d'une 
paire cuspidale $(M,\rho)$ de $G$, alors $\Theta(\mathfrak{s})$ est un espace homog\`ene sous 
le tore complexe $\mathfrak{P}(M)$ --- pour $\psi\in \mathfrak{P}(M)$, on pose $\psi\cdot[M,\rho]=[M,\psi\rho]$ ---, 
et en particulier une vari\'et\'e alg\'ebrique affine complexe. PrŽcisŽment, notons 
$\mathfrak{P}(M)(\rho)$\index{$\mathfrak{P}(M)(\rho)$} la $\mathfrak{P}(M)$--orbite de $\rho$ dans ${\rm Irr}(M)$, \cad 
l'ensemble des classes d'isomorphisme de reprŽsentations de $M$ de la forme $\psi\rho$ pour un 
$\psi\in \mathfrak{P}(M)$. C'est une variŽtŽ algŽbrique affine complexe 
(le quotient de $\mathfrak{P}(M)$ par un groupe fini). 
Posons\index{$W_G(M)=N_G(M)/M$, $W_\mathfrak{s}$}
$$
W_G(M)=N_G(M)/M
$$
et
$$
W_\mathfrak{s}= \{w\in W_G(M): {^{n_w}\rho}\simeq \psi \rho,\;\exists \psi\in \mathfrak{P}(M)\},
$$
o $n_w$ dŽsigne un reprŽsentant de 
$w$ dans $N_G(M)$. Le groupe 
$W_{\mathfrak{s}}$ opre sur la variŽtŽ $\mathfrak{P}(M)(\rho)$, et l'application 
$\psi\rho\mapsto [M,\psi\rho]$ est un isomorphisme de la variŽtŽ quotient 
$\mathfrak{P}(M)(\rho)/W_{\mathfrak{s}}$ sur $\Theta(\mathfrak{s})$. 

L'action de ${\Bbb Z}$ sur $\Theta(G)$ prŽserve les fibres de l'application $\Theta(G)\rightarrow \mathfrak{B}(G)$, 
donc induit une action sur $\mathfrak{B}(G)$, notŽe $(k,\mathfrak{s})\mapsto \mathfrak{s}(k)$: si $\mathfrak{s}$ est la 
classe inertielle d'une paire cuspidale 
$(M,\rho)$ de $G$, alors $\mathfrak{s}(k)$\index{$\mathfrak{s}(k)$} est la classe inertielle de $(\theta^{-k}(M),\rho(k))$. 
On note $\mathfrak{B}_1(G)$\index{$\mathfrak{B}_1(G)$, $\Theta_1(G)$} le sous--ensemble de $\mathfrak{B}(G)$ formŽ des $\mathfrak{s}$ tels que 
$\mathfrak{s}(1)=\mathfrak{s}$, i.e. tels que la fibre $\Theta(\mathfrak{s})$ au-dessus de $\mathfrak{s}$ est ${\Bbb Z}$--stable.

\begin{monlem}
Pour $\mathfrak{s}\in \mathfrak{B}_1(G)$, l'ensemble
$$
\Theta_1(\mathfrak{s})=\Theta(\mathfrak{s})\cap \Theta_1(G)
$$
est une sous--vari\'et\'e algŽbrique ferm\'ee (Žventuellement vide) de $\Theta(\mathfrak{s})$.
\end{monlem}

\begin{proof}
On peut supposer $\Theta_1(\mathfrak{s})$ non vide sinon 
il n'y a rien ˆ dŽmontrer. Soit $(M_P,\rho)$ une paire cuspidale standard 
de $G$ telle que $[M_P,\rho]\in \Theta_1(\mathfrak{s})$, et soit $w\in W_G$ tel 
que $\rho(1)\simeq {^{n_w}\rho}$. D'apr\`es \ref{supp cusp}, posant $\tau = \theta\circ {\rm Int}_{\boldsymbol{G}}(n_w)$, 
on a $\tau(M_P)=M_P$ et $\omega^{-1}\rho^\tau \simeq \rho$. De m\^eme pour $\psi\in \mathfrak{P}(M_P)$, 
la classe $[M_P,\psi \rho]$ appartient \`a $\Theta_1(\mathfrak{s})$ si et seulement s'il 
existe un ŽlŽment $w_\psi$ de $W_G$ tel que, posant $\tau_\psi=\theta\circ {\rm Int}_{\boldsymbol{G}}(n_{w_\psi})$, on a 
$\tau_\psi(M_P)=M_P$ et $\omega^{-1}(\psi \rho)^{\tau_\psi} \simeq \psi\rho$. Puisque
$$w(M_P)=\theta^{-1}(M_P)=w_\psi(M_P),$$ il existe un $s_\psi\in N_G(M_P)$ tel que 
$n_{w_\psi}=n_w s_\psi$. On a donc $\tau_\psi= \tau \circ {\rm Int}_{\boldsymbol{G}}(s_\psi)$, 
et comme
$$
\omega^{-1}(\psi \rho)^{\tau_\psi}
=(\psi^\tau \omega^{-1}\rho^\tau)^{{\rm Int}_{\boldsymbol{G}}(s_\psi)}
\simeq (\psi^\tau \rho)^{{\rm Int}_{\boldsymbol{G}}(s_\psi)}
$$
est isomorphe \`a $\psi \rho$, on a
$$
{^{s_\psi}\rho}\simeq ({^{s_\psi}\psi})^{-1}\psi^\tau \rho.
$$
En particulier, $\bar{s}_\psi = s_\psi\;({\rm mod}\,M_P)$ appartient au sous--groupe $W_\mathfrak{s}$ de $W_G(M_P)$, 
et $({^{\bar{s}_\psi}\psi})^{-1}\psi^\tau$ stabilise $[M_P,\rho]$. 
R\'eciproquement, supposons qu'il existe un $s\in N_G(M_P)$ tel que ${^s\rho}$ est isomorphe \`a 
$({^s\psi})^{-1}\psi^\tau  \rho$. Alors
$${^s(\psi\rho)}\simeq \psi^\tau \rho\simeq \psi^\tau(\omega^{-1}\rho^\tau)= \omega^{-1}(\psi \rho)^\tau,
$$
et $[M_P,\psi\rho]$ appartient \`a $\Theta_1(\mathfrak{s})$. En dŽfinitive, on a montrŽ qu'un ŽlŽment 
$[M_P,\psi\rho]$ de $\Theta(\frak{s})$ appartient ˆ $\Theta_1(\frak{s})$ si et seulement s'il existe un $s\in N_G(M_P)$ tel que 
${^s(\psi\rho)}\simeq \psi^\tau\rho$. Puisque le groupe $W_G(M_P)$ est fini, cette condition dŽfinit une sous--variŽtŽ algŽbrique fermŽe 
de $\Psi(M_P)(\rho)$. Comme le morphisme $\Psi(M)(\rho)\rightarrow \Theta(\mathfrak{s}),\, \psi\rho \mapsto [M_P,\psi\rho]$ est 
quasi--fini, il est fini (par homogŽnŽitŽ), donc fermŽ. D'o\`u le lemme.
\end{proof}

\v2
L'application support inertiel
$$\beta_G:{\rm Irr}(G)\rightarrow \mathfrak{B}(G)$$
induit une application\index{$\beta_G$, $\mathfrak{B}_{G^\natural,\omega}(G)$}
$$
{\rm Irr}_0(G^\natural,\omega)/{\Bbb C}^\times\rightarrow \mathfrak{B}_1(G),\,\Pi\mapsto 
\beta_{G^\natural}(\Pi)=\beta_G(\Pi^\circ),
$$
dont l'image est notŽe $\mathfrak{B}_{G^\natural,\omega}(G)$.

\begin{marema}
{\rm Soit $\mathfrak{s}\in \mathfrak{B}_1(G)$, et soit $(M_P,\rho)$ une paire 
cuspidale standard de $G$ telle que $[M_P,\rho]\in \Theta(\mathfrak{s})$. On sait qu'il existe un sous--ensemble Zariski--dense 
$\EuScript{V}$ de $\Theta(\mathfrak{s})$, que l'on peut supposer ${\Bbb Z}$--stable, 
tel pour tout $[M_P,\rho']\in \EuScript{V}$, la reprŽsentation ${i}_P^G(\rho')$ 
--- dŽfinie seulement ˆ isomorphisme prs --- est irrŽductible. Si l'intersection 
$\EuScript{V}\cap \Theta_1(G)$ est non vide, alors $\mathfrak{s}$ est dans 
l'image de $\beta_{G^\natural}$.\hfill $\blacksquare$}
\end{marema}

Comme en \ref{supp cusp}, on note $\mathfrak{B}(G)/{\Bbb Z}$ l'ensemble des orbites de 
${\Bbb Z}$ dans $\mathfrak{B}(G)$, 
et l'on identifie $\mathfrak{B}_1(G)$ ˆ un sous--ensemble de $\mathfrak{B}(G)/{\Bbb Z}$. 
L'application $\beta_G$ est 
${\Bbb Z}$--Žquivariante, et l'application $\Pi\mapsto \beta_{G^\natural}(\Pi)$ se prolonge 
comme en loc.~cit. en une application surjective\index{$\beta_{G^\natural}$, $\mathfrak{B}(G)/{\Bbb Z}$}
$$
\beta_{G^\natural}:{\rm Irr}(G^\natural,\omega)/{\Bbb C}^\times\rightarrow \mathfrak{B}(G)/{\Bbb Z}.
$$

\subsection{Le \og centre \fg (rappels, cas non tordu)}\label{centre}Le centre d'une catŽgorie abŽlienne 
$\A$ est par dŽfinition l'anneau des endomorphisme du foncteur identique de $\A$. On le 
note $\mathfrak{Z}(\A)$.\index{$\mathfrak{Z}(\A)$, $\mathfrak{Z}(G)=\mathfrak{Z}(\mathfrak{R}(G))$} 
Un ŽlŽment de $z$ de $\mathfrak{Z}(\A)$ est la 
donnŽe, pour chaque objet $E$ de $\A$, d'une fl\`eche $z_E:E\rightarrow E$ dans $\A$, 
de sorte que pour toute fl\`eche $u:E_1\rightarrow E_2$ dans $\A$, on ait
$$
u\circ z_{E_1}=z_{E_2}\circ u.
$$

Notons $\mathfrak{Z}(G)$ le centre de la catŽgorie $\mathfrak{R}(G)$. 
Rappelons que $\H=\H(G)$ est une ${\Bbb C}$--algbre {\it ˆ idempotent}  
(cf. \cite[1.1]{BD}), et que l'application $(\pi,V)\mapsto V$ est un isomorphisme 
de $\mathfrak{R}(G)$ sur la catŽgorie des $\H$-modules (ˆ gauche) non dŽgŽnŽrŽs. 
On peut voir $\H$ comme un $\H$--module non dŽgŽnŽrŽ pour 
la multiplication ˆ gauche. Pour chaque ŽlŽment $z\in \mathfrak{Z}(G)$, on a donc 
un ŽlŽment $z_\H \in {\rm End}_G(\H)$. D'apr\`es \cite[1.5]{BD}, l'application $z\mapsto z_\H$ 
est un isomorphisme de $\mathfrak{Z}(G)$ sur le commutant ${\rm End}_{\H\times \H^{\rm op}}(\H)$ 
dans ${\rm End}_{\Bbb Z}(\H)$ des multiplications ˆ gauche et ˆ droite. Pour $z\in \mathfrak{Z}(G)$ et $f\in \H$, 
on pose $z\cdot f= z_\H(f)$.

Pour $\mathfrak{s}\in \mathfrak{B}(G)$, notons $\mathfrak{R}_{\mathfrak{s}}(G)$ la 
sous-catŽgorie pleine de $\mathfrak{R}(G)$ formŽe des reprŽsentations 
$\pi$ telles que $\beta_G(\pi')=\mathfrak{s}$ pour tout sous-quotient irrŽductible 
$\pi'$ de $\pi$. D'aprs \cite[2.10]{BD}, 
$\mathfrak{R}(G)$ est le produit des catŽgories $\mathfrak{R}_{\mathfrak{s}}(G)$ pour 
$\mathfrak{s}$ parcourant les ŽlŽments de $\mathfrak{B}(G)$. En d'autres termes, toute 
reprŽsentation $\pi$ de $G$ s'Žcrit comme une somme directe 
$\pi=\oplus_{\mathfrak{s}}\pi_{\mathfrak{s}}$ o $\mathfrak{s}$ parcourt les 
ŽlŽments de $\mathfrak{B}(G)$ et $\pi_{\mathfrak{s}}$ est un objet de 
$\mathfrak{R}_{\mathfrak{s}}(G)$, et si $\pi'=\oplus_{\mathfrak{s}}\pi'_{\mathfrak{s}}$ 
est une autre reprŽsentation de $G$, on a 
$$
{\rm Hom}_G(\pi,\pi')=\oplus_{\mathfrak{s}}{\rm Hom}_G(\pi_{\mathfrak{s}},\pi'_{\mathfrak{s}}).
$$
On note $\mathfrak{Z}_\mathfrak{s}=\mathfrak{Z}_\mathfrak{s}(G)$ 
le centre de la catŽgorie $\mathfrak{R}_{\mathfrak{s}}(G)$. On a la dŽcomposition en produit d'anneaux
$$\mathfrak{Z}(G)=\prod_{\mathfrak{s}}\mathfrak{Z}_\mathfrak{s}.
$$
Pour toute partie $\mathfrak{S}$ de $\mathfrak{B}(G)$, on pose\index{$\mathfrak{R}_{\mathfrak{s}}(G)$, $\mathfrak{R}_{\mathfrak{S}}(G)$}
\index{$\mathfrak{Z}_\mathfrak{s}=\mathfrak{Z}_\mathfrak{s}(G)$, $\mathfrak{Z}_\mathfrak{S}(G)$}
$$
\mathfrak{R}_{\mathfrak{S}}(G)=\prod_{\mathfrak{s}\in \mathfrak{S}}\mathfrak{R}_{\mathfrak{s}}(G), \quad 
\mathfrak{Z}_\mathfrak{S}(G)=\prod_{\mathfrak{s}\in \mathfrak{S}}\mathfrak{Z}_\mathfrak{s}.
$$

Fixons un ŽlŽment $\mathfrak{s}\in \mathfrak{B}(G)$, et choisissons une paire 
cuspidale standard $(M_P,\rho)$ de $G$ telle que $[M_P,\rho]\in \Theta(\mathfrak{s})$. 
Rappelons que $M_P^1$ est le sous--groupe de 
$M_P$ engendrŽ par ses sous--groupes ouverts compacts. Posons $\Lambda_P=M_P/M_P^1$.
\index{$\Lambda_P=M_P/M_P^1$, $B_P={\Bbb C}[\Lambda_P]$, $\varphi_P$, $\varphi_{B_P}$} 
C'est un ${\Bbb Z}$--module libre de type fini, 
et l'on a $\mathfrak{P}(M_P)={\rm Hom}_{\Bbb Z}(\Lambda_P,{\Bbb C}^\times)$. En d'autres termes, 
$\Lambda_P$ est le groupe des caractres algŽbriques du 
tore complexe $\mathfrak{P}(M_P)$. Soit $B_P={\Bbb C}[\Lambda_P]$ l'algbre affine de $\mathfrak{P}(M_P)$, 
et soit $\varphi_P: M_P\rightarrow B_P$ le \og caractre universel \fg donnŽ par l'Žvaluation: 
$$\varphi_P(m)(\psi)= \psi(m),\quad 
m\in M_P,\, \psi\in \mathfrak{P}(M_P).
$$
C'est aussi le composŽ de la projection canonique $M_P\rightarrow \Lambda_P$ et 
de l'inclusion $\Lambda_P\hookrightarrow B_P$.
Soit $\rho_{B_P}$ la reprŽsentation de $M_P$ dŽfinie comme suit: l'espace de $\rho_{B_P}$ est $W=V_\rho\otimes_{\Bbb C}B_P$, 
et pour $m\in M_P$, $v\in V_\rho$ et $b\in B_P$, on pose
$$
\rho_{B_P}(m)(v\otimes b)=\rho(m)(v)\otimes \varphi_P(m)b.
$$
Notons $\pi$ la reprŽsentation ${i}_P^G(\rho_{B_P})$ de $G$. 
L'anneau $B_P$ opre naturellement sur l'espace $V$ de $\pi$, et pour 
$\psi\in \mathfrak{P}(M_P)$ correspondant ˆ $u\in {\rm Hom}_{\Bbb C}(B_P,{\Bbb C})$ 
--- i.e. tels que $\psi =u\circ \varphi_P$ --- 
la localisation $\pi_\psi$ de $\pi$ en $\psi$, \cad la reprŽsentation de $G$ dŽduite de $\pi$ sur 
l'espace $V_\psi = V\otimes_{B_P,u}{\Bbb C}$, est isomorphe \`a ${i}_P^G(\psi\rho)$.\index{$(\pi_\psi,V_\psi)$} 
Soit maintenant $z$ un \'elŽment de $\mathfrak{Z}_\mathfrak{s}$. D'aprs \cite[1.17]{BD}, 
l'endomorphisme $z_\pi$ de $\pi$ est la multiplication par un ŽlŽment de $B_P$, disons $b$, 
et pour chaque $\psi\in \mathfrak{P}(M_P)$, l'endomorphisme $z_{\pi_\psi}$ de $\pi_\psi$ 
est la multiplication par $b(\psi)$. De plus (loc. cit.), si $\psi,\,\psi'\in \mathfrak{P}(M_P)$ sont tels que 
${^{n_w}(\psi\rho)}\simeq \psi'\rho$ pour un $w\in W_G(M_P)$, alors $b(\psi)=b(\psi')$. Par consŽquent 
la fonction $\psi\mapsto b(\psi)$ sur $\mathfrak{P}(M_P)$ se descend en une 
fonction rŽgulire sur la variŽtŽ $\Theta(\mathfrak{s})$, disons $f_z$. 
Le thŽorme 2.13 de \cite{BD} dit que l'application\index{$z\mapsto f_z$}
$$
\mathfrak{Z}_\mathfrak{s}\rightarrow {\Bbb C}[\Theta(\mathfrak{s})],\,z\mapsto f_z
$$
est un isomorphisme d'anneaux. 

\begin{marema}
{\rm 
Soit $z\in \mathfrak{Z}(G)$, dŽcomposŽ en 
$z=\prod_{\mathfrak{s}}z_{\mathfrak{s}}$, $z_{\mathfrak{s}}\in \mathfrak{Z}_\mathfrak{s}$. 
Pour toute reprŽsentation irrŽductible $\pi$ de $G$ telle que $\beta_G(\pi)=\mathfrak{s}$, on a
$$
z_\pi= f_{z_{\mathfrak{s}}}(\theta_G(\pi)){\rm id}_{V_\pi}.\eqno{\blacksquare}
$$
}
\end{marema}

\subsection{L'anneau $\mathfrak{Z}(G^\natural,\omega)$}\label{l'anneau Z}
On l'a dit plus haut, l'action de ${\Bbb C}^\times$ sur ${\rm Irr}(G^\natural,\omega)$ provient d'une 
action fonctorielle sur $\mathfrak{R}(G^\natural,\omega)$, triviale sur les fl\`eches. Par dualitŽ on 
obtient une action sur l'anneau des endomorphismes du foncteur identique de $\mathfrak{R}(G^\natural,\omega)$. 
Soit $\mathfrak{Z}(G^\natural,\omega)$\index{$\mathfrak{Z}(G^\natural,\omega)$} l'anneau des endomorphismes {\it ${\Bbb C}^\times$--invariants} 
du foncteur identique de $\mathfrak{R}(G^\natural,\omega)$.  
Un ŽlŽment $\EuScript{Z}$ de $\mathfrak{Z}(G^\natural,\omega)$ 
est par dŽfinition la donnŽe, pour chaque $\omega$--reprŽsentation $\Pi$ de $G^\natural$, 
d'un endomorphisme $\EuScript{Z}_\Pi$ de $\Pi$ tel que $\EuScript{Z}_{\lambda\cdot \Pi}=\EuScript{Z}_\Pi$ 
pour tout $\lambda\in {\Bbb C}^\times$, de sorte que pour tout morphisme $u:\Pi\rightarrow \Pi'$ entre deux $\omega$--reprŽsentations 
$\Pi$ et $\Pi'$ de $G^\natural$, on ait
$$
u\circ \EuScript{Z}_\Pi =\EuScript{Z}_{\Pi'}\circ u.
$$

Soit $(\Pi,V)$ une $\omega$--reprŽsentation de $G^\natural$. DŽcomposons $\Pi^\circ$ en 
$$
\Pi^\circ =\oplus_{\mathfrak{S}}(\Pi^\circ)_{\mathfrak{S}}
$$
o $\mathfrak{S}$ parcourt les ŽlŽments de $\mathfrak{B}(G)/{\Bbb Z}$, 
et $(\Pi^\circ)_{\mathfrak{S}}$ 
est un objet de $\mathfrak{R}_{\mathfrak{S}}(G)$. Pour chaque ${\Bbb Z}$-orbite 
$\mathfrak{S}$ dans $\mathfrak{B}(G)$, l'opŽrateur $\Pi(\delta_1)$ envoie $\Pi^\circ_{\mathfrak{S}}$ sur 
$\Pi^\circ_{\mathfrak{S}}(1)=\Pi^\circ_{\mathfrak{S}}$, par consŽquent $\Pi$ se dŽcompose en
$$
\Pi=\oplus_{\mathfrak{S}}\Pi_{\mathfrak{S}},\quad \Pi_{\mathfrak{S}}^\circ =(\Pi_{\mathfrak{S}})^\circ,
$$
et si $\Pi'=\oplus_{\mathfrak{S}}\Pi'_{\mathfrak{S}}$ est une autre $\omega$--reprŽsentation de $G^\natural$, 
on a
$$
{\rm Hom}_{G^\natural}(\Pi,\Pi')=\oplus_{\mathfrak{S}}{\rm Hom}_{G^\natural}(\Pi_{\mathfrak{S}},\Pi'_{\mathfrak{S}}).
$$
Pour une partie $\mathfrak{S}$ de $\mathfrak{B}(G)/{\Bbb Z}$, on note $\mathfrak{R}_{\mathfrak{S}}(G^\natural,\omega)$
\index{$\mathfrak{R}_{\mathfrak{S}}(G^\natural,\omega)$, $\mathfrak{Z}_{\mathfrak{S}}(G^\natural,\omega)$} 
la sous--catŽgorie pleine de $\mathfrak{R}(G^\natural,\omega)$ formŽe des $\Pi$ tels que $\Pi^\circ$ est un 
objet de $\mathfrak{R}_{\mathfrak{S}}(G)$, ou --- ce qui revient au m\^eme --- tels que $\beta_{G^\natural}(\Pi')\in \mathfrak{S}$ 
pour tout sous--quotient irrŽductible $\Pi'$ de $\Pi$. Elle est stable sous l'action de ${\Bbb C}^\times$. 
On note $\mathfrak{Z}_{\mathfrak{S}}(G^\natural,\omega)$ le sous--anneau de $\mathfrak{Z}(G^\natural,\omega)$ formŽ 
des endomorphismes ${\Bbb C}^\times$--invariants du foncteur identique de $\mathfrak{R}_{\mathfrak{S}}(G^\natural,\omega)$. 
D'aprs ce qui prŽcde, $\mathfrak{R}(G^\natural,\omega)$ est le produit des catŽgories $\mathfrak{R}_{\mathfrak{S}}(G^\natural,\omega)$ 
pour $\mathfrak{S}$ parcourant les ŽlŽments de $\mathfrak{B}(G)/{\Bbb Z}$. Par suite on a la dŽcomposition en produit d'anneaux
$$
\mathfrak{Z}(G^\natural,\omega)=\prod_{\mathfrak{S}\in \mathfrak{B}(G)/{\Bbb Z}}\mathfrak{Z}_{\mathfrak{S}}(G^\natural,\omega).
$$

Posons\index{$\mathfrak{R}_0(G^\natural,\omega)$, $\mathfrak{Z}_0(G^\natural,\omega)$}
$$\mathfrak{R}_0(G^\natural,\omega)
=\mathfrak{R}_{\mathfrak{S}_0}(G^\natural,\omega),\quad \mathfrak{S}_0=\mathfrak{B}_{G^\natural,\omega}(G).
$$
Toute $\omega$--reprŽsentation 
$G$--irrŽductible de $G^\natural$ est un objet de $\mathfrak{R}_0(G^\natural,\omega)$, et 
$\mathfrak{R}_0(G^\natural,\omega)$ est le produit des catŽgories $\mathfrak{R}_{\mathfrak{s}}(G^\natural,\omega)$ 
pour $\mathfrak{s}$ parcourant les ŽlŽments de $\mathfrak{B}_{G^\natural,\omega}(G)$. 
Ainsi, pour dŽcrire l'anneau
$$\mathfrak{Z}_0(G^\natural,\omega)=\prod_{\mathfrak{s}\in \mathfrak{S}_0}\mathfrak{Z}_\mathfrak{s}(G^\natural,\omega)
$$
des endomorphismes ${\Bbb C}^\times$--invariants du foncteur identique de $\mathfrak{R}_0(G^\natural,\omega)$, il 
suffit de dŽcrire chacun des anneaux $\mathfrak{Z}_{\mathfrak{s}}(G^\natural,\omega)$.

\subsection{Action de ${\Bbb Z}$ sur le \og centre \fg}\label{action sur le centre}
L'application $(k,\pi)\mapsto \pi(k)$ dŽfinit une action fonctorielle de ${\Bbb Z}$ sur $\mathfrak{R}(G)$, 
triviale sur les fl\`eches. On en dŽduit une action $(k,z)\mapsto z(k)$\index{$(k,z)\mapsto z(k)$} 
de ${\Bbb Z}$ sur $\mathfrak{Z}(G)$: pour $z\in \mathfrak{Z}(G)$ et $k\in {\Bbb Z}$, $z(k)$ est l'ŽlŽment 
de $\mathfrak{Z}(G)$ donnŽ par
$$z(k)_\pi=z_{\pi(k)}.$$
Pour $k\in {\Bbb Z}$, l'application $\mathfrak{Z}(G)\rightarrow \mathfrak{Z}(G),\,z\mapsto z(k)$ est un automorphisme 
d'anneau (et m\^eme de ${\Bbb C}$--algbre). On note $\mathfrak{Z}_1(G)$\index{$\mathfrak{Z}_1(G)$} le sous--anneau de $\mathfrak{Z}(G)$ formŽ des $z$ tels que $z(1)=z$. 
Pour $z\in \mathfrak{Z}(G)$ et $\Pi$ une $\omega$--reprŽsentation de $G^\natural$, 
on a
$$
z_{\Pi^\circ(1)}\circ\Pi(\delta_1)= \Pi(\delta_1)\circ z_{\Pi^\circ}.
$$
En particulier si $z(1)=z$, alors $z_{\Pi^\circ(1)}=z_{\Pi^\circ}$ est un endomorphisme de $\Pi$.

\begin{marema1}
{\rm L'application $\H\rightarrow \H,\,f\mapsto \omega f^\theta$ est un automorphisme 
d'anneau, disons $\tau$. Pour toute reprŽsentation $\pi$ de $\mathfrak{R}(G)$, on a $\pi(k)(f)= \pi(\tau^{-k}(f))$, $k\in {\Bbb Z}$, $f\in \H$. 
On en dŽduit que l'action de ${\Bbb Z}$ sur $\mathfrak{Z}(G)$, identifiŽ ˆ ${\rm End}_{\H,\H^{\rm op}}(\H)$ via 
l'isomorphisme $z\mapsto z_\H$, est donnŽe par $z(k)\cdot f= z\cdot \tau^{-k}(f)$. Soit $\H(\tau-1)$\index{$\H(\tau-1)$, $\overline{\H}_1=\overline{\H}_1(G)$} 
le sous--espace vectoriel de $\H$ engendrŽ par les fonctions $f* (\tau(h)-h)$ pour $f,\,h\in \H$. 
C'est un idŽal bilatre $\tau$--stable de $\H$. Puisque $z(1)_\H = z_\H\circ \tau^{-1}$, 
un ŽlŽment $z$ de $\frak{Z}(G)$ appartient ˆ $\frak{Z}_1(G)$ 
si et seulement s'il s'annule sur $\H(\tau-1)$. Notons $\overline{\H}_1=\overline{\H}_1(G)$ 
le $\H$--bimodule quotient $\H/\H(\tau-1)$. L'isomorphisme $z\mapsto z_\H$ 
identifie donc $\mathfrak{Z}_1(G)$ au sous--anneau ${\rm End}_{\H\times \H^{\rm op}}(\overline{\H}_1)$ de 
${\rm End}_{\H\times \H^{\rm op}}(\H)$. \hfill $\blacksquare$}
\end{marema1}

Soit $\mathfrak{s}$ un ŽlŽment de $\mathfrak{B}_1(G)$. L'anneau $\mathfrak{Z}_\mathfrak{s}$ est 
${\Bbb Z}$--stable, et l'on note $\mathfrak{Z}_{\mathfrak{s},1}=\mathfrak{Z}_{\mathfrak{s},1}(G)$
\index{$\mathfrak{Z}_{\mathfrak{s},1}=\mathfrak{Z}_{\mathfrak{s},1}(G)$} le sous--anneau de 
$\mathfrak{Z}_\mathfrak{s}$ formŽ des $z$ tels que $z(1)=z$. La variŽtŽ $\Theta(\mathfrak{s})$ est 
elle--aussi ${\Bbb Z}$--stable, et l'isomorphisme 
$\mathfrak{Z}_\mathfrak{s}\rightarrow {\Bbb C}[\Theta(\mathfrak{s})],\,z\mapsto f_z$ se restreint en un isomorphisme 
d'anneaux
$$
\mathfrak{Z}_{\mathfrak{s},1}\rightarrow {\Bbb C}[\Theta(\mathfrak{s})]^{\Bbb{Z}},
$$
o $\Bbb{C}[\Theta(\frak{s})]^{\Bbb{Z}}$ dŽsigne le sous--anneau de $\Bbb{C}[\Theta(\frak{s}]$ formŽ des fonctions $\Bbb{Z}$--invariantes.

On dŽfinit comme suit une application
$$
\mathfrak{Z}_{\mathfrak{s},1}\rightarrow \mathfrak{Z}_{\mathfrak{s}}(G^\natural,\omega),
\,z\mapsto \iota(z).
$$
Pour $z\in \mathfrak{Z}_{\mathfrak{s},1}$ et $\Pi$ un objet de $\mathfrak{R}_{\mathfrak{s}}(G^\natural,\omega)$, 
l'endomorphisme $z_{\Pi^\circ}$ de $\Pi^\circ$ 
est en fait un endomorphisme de $\Pi$, et puisque $(\lambda\cdot\Pi)^\circ=\Pi^\circ$ pour tout $\lambda\in {\Bbb C}^\times$, c'est un 
ŽlŽment de $\mathfrak{Z}_{\mathfrak{s}}(G^\natural,\omega)$. On pose 
$\iota(z)_\Pi= z_{\Pi^\circ}$. L'application $\iota$ ainsi 
dŽfinie est un morphisme d'anneaux, et il est bijectif. En effet, on dŽfinit comme suit 
son inverse $\EuScript{Z}\mapsto \EuScript{Z}^\circ$. Pour $\EuScript{Z}\in \mathfrak{Z}_{\mathfrak{s}}(G^\natural,\omega)$ et $\pi$ 
un objet irrŽductible de $\mathfrak{R}_{\mathfrak{s}}(G)$, on pose $\widetilde{\pi}=\pi\oplus \pi(1)\oplus \cdots \oplus \pi(s-1)$ si $s=s(\pi)<+\infty$ 
et $\widetilde{\pi}=\oplus_{k\in {\Bbb Z}}\pi(k)$ sinon. La reprŽsentation $\widetilde{\pi}(1)$ de $G$ est 
isomorphe ˆ $\widetilde{\pi}$, et il existe une $\omega$--reprŽsentation $\Pi$ de $G^\natural$ 
telle que $\Pi^\circ=\widetilde{\pi}$. Cette reprŽsentation est irrŽductible, et d'aprs le lemme de Schur, 
l'endomorphisme 
$\EuScript{Z}_\Pi$ de $\Pi$ est la multiplication par une constante $\mu\in {\Bbb C}^\times$. On pose 
$(\EuScript{Z}^\circ)_\pi=\mu {\rm id}_{V_\pi}$. Par construction, on a $(\EuScript{Z}^\circ)_{\pi(k)}=(\EuScript{Z}^\circ)_\pi$ 
pour tout $k\in {\Bbb Z}$. D'aprs  \cite[1.8]{BD}, cela dŽfinit un ŽlŽment $\EuScript{Z}^\circ$ de $\mathfrak{Z}(G)$, 
qui vŽrifie $\EuScript{Z}^\circ(1)=\EuScript{Z}^\circ$. L'application
$$\mathfrak{Z}_{\mathfrak{s}}(G^\natural,\omega)
\rightarrow  \mathfrak{Z}_{\mathfrak{s},1},\,\EuScript{Z}\mapsto \EuScript{Z}^\circ
$$
ainsi dŽfinie est un morphisme d'anneaux, et pour $z\in \mathfrak{Z}_{\mathfrak{s},1}$ et 
$\EuScript{Z}\in \mathfrak{Z}_{\mathfrak{s}}(G^\natural,\omega)$, on a bien
$$
\iota(z)^\circ=z,\quad \iota(\EuScript{Z}^\circ)=\EuScript{Z}.
$$
En composant l'isomorphisme $\mathfrak{Z}_{\mathfrak{s}}(G^\natural,\omega)
\rightarrow  \mathfrak{Z}_{\mathfrak{s},1},\,\EuScript{Z}\mapsto \EuScript{Z}^\circ$ 
avec l'isomorphisme
$$\mathfrak{Z}_{\mathfrak{s},1}\rightarrow {\Bbb C}[\Theta(\mathfrak{s})]^{\Bbb Z},\,z\mapsto f_z,
$$ 
on obtient un isomorphisme d'anneaux
$$
\mathfrak{Z}_{\mathfrak{s}}(G^\natural,\omega)\rightarrow  {\Bbb C}[\Theta(\mathfrak{s})]^{\Bbb Z},\,\EuScript{Z}\mapsto f_{\EuScript{Z}}.
$$

\begin{marema2}
{\rm La description de l'anneau $\mathfrak{Z}_{\mathfrak{s}}(G^\natural,\omega)$ 
ci--dessus est valable m\^eme 
si la variŽtŽ $\Theta_1(\mathfrak{s})$ est vide, et a fortiori m\^eme 
si aucun objet de $\mathfrak{R}_{\mathfrak{s}}(G^\natural,\omega)$ 
n'est $G$--irrŽductible (on n'a pas supposŽ que $\mathfrak{s}$ appartient ˆ 
$\mathfrak{B}_{G^\natural,\omega}(G)$).\hfill $\blacksquare$}
\end{marema2}

\begin{marema3}
{\rm Si l'action de $\Bbb{Z}$ sur 
$\Theta(\frak{s})$ se factorise ˆ travers un quotient fini de $\Bbb{Z}$, alors l'ensemble $\Theta(\frak{s})/\Bbb{Z}$ des 
$\Bbb{Z}$--orbites dans $\Theta(\frak{s})$ est un quotient gŽomŽtrique de $\Theta(\frak{s})$. En particulier c'est une variŽtŽ algŽbrique 
affine, d'algbre affine $\Bbb{C}[\Theta(\frak{s})/\Bbb{Z}]= \Bbb{C}[\Theta(\frak{s})]^{\Bbb{Z}}$.

En gŽnŽral, cela n'est pas vrai: l'ensemble $\Theta(\frak{s})/\Bbb{Z}$ des $\Bbb{Z}$--orbites dans $\Theta(\frak{s})$ 
n'est pas un quotient gŽomŽtrique de $\Theta(\frak{s})$. Par exemple si $\boldsymbol{G}=\Bbb{G}_{\rm m}$, 
$\theta={\rm id}$ et $\omega$ est un caractres non ramifiŽ unitaire de $G=F^\times$ dont les puissances sont denses dans le tore 
$\mathfrak{P}(F^\times)\simeq {\Bbb C}^\times$, alors les $\Bbb{Z}$--orbites dans $\frak{P}(F^\times)$ 
ne sont pas fermŽes, et toute fonction rŽgulire $\Bbb{Z}$--invariante sur $\frak{P}(F^\times)$ est constante. \hfill $\blacksquare$}
\end{marema3}

\subsection{\og Bons \fg sous--groupes ouverts compacts de $G$}\label{bons sgoc} Si $J$ est un sous--groupe 
ouvert compact de $G$, on note ${\rm Irr}_J(G)$ le sous--ensemble de ${\rm Irr}(G)$ formŽ des 
$\pi$ tels que $V_\pi^J\neq 0$. D'aprs \cite[3.7 et 3.9]{BD}, $\beta_G({\rm Irr}_J(G))$ est un sous--ensemble {\it fini} 
de $\mathfrak{B}(G)$. On dit que $J$ est un \og bon \fg sous--groupe ouvert compact de $G$ 
si
$$
\beta_G^{-1}(\beta_G({\rm Irr}_J(G)))={\rm Irr}_J(G),
$$
auquel cas on pose $\mathfrak{S}(J)=\beta_G({\rm Irr}_J(G))$. De manire Žquivalente, $J$ est un bon sous--groupe 
ouvert compact de $G$ si et seulement si toute reprŽsentation $\pi$ de $G$ admet une dŽcomposition en 
somme directe\index{$\pi=\pi_J\oplus \pi_J^\perp$}
$$
\pi=\pi_J\oplus \pi_J^\perp
$$
o $\pi_J$ dŽsigne la sous--reprŽsentation de $\pi$ d'espace $\pi(G)(V^J)$ de $V$, 
et $\pi_J^\perp$ la sous--reprŽsen\-tation de $\pi$ d'espace $\pi(G)(\ker \pi(e_J))$.
En ce cas, on note 
$\mathfrak{R}_J(G)$ la sous--catŽgorie pleine 
de $\mathfrak{R}(G)$ formŽe des reprŽsentations $\pi$ telles que $\pi_J=\pi$. Elle est stable 
par sous--quotient, et co\"{\i}ncide avec $\mathfrak{R}_{\mathfrak{S}(J)}(G)$ \cite[3.9]{BD}. 
De plus (loc.~cit.), le foncteur 
$V\mapsto V^J$ est une Žquivalence entre $\mathfrak{R}_J(G)$\index{$\mathfrak{R}_J(G)$} et la catŽgorie des $\H_J$--modules 
(ˆ gauche), 
de quasi--inverse le foncteur qui a un $\H_J$--module $W$ associe le 
$\H$--module non dŽgŽnŽrŽ $(\H*e_J)\otimes_{\H_J}W$.

D'aprs \cite[2.9]{BD}, pour qu'un sous--groupe ouvert compact $J$ de $G$ soit bon, il suffit qu'il vŽrifie les conditions (a) et (b) suivantes --- notŽes (3.7.1) et (3.7.2) dans \cite[3.7]{BD} ---, pour tout sous--groupe de Levi $M$ de $G$ et 
tout sous--groupe parabolique $P$ de $G$ de composante de Levi $M$:

\begin{enumerate}
\item[{\bf (a)}] pour tout conjuguŽ $J'$ de $J$ dans $G$, 
la classe de conjugaison de $J'_P= (J'\cap P)/ (J'\cap U_P)$ dans $M$ ne dŽpend pas de $P$ ni de $J'$; 
o $J'_P$ est identifiŽ ˆ un sous--groupe de $M$ via l'isomorphisme canonique $M\rightarrow P/U_P$.
\item[{\bf (b)}] pour toute reprŽsentation $(\pi,V)$ de $G$, la projection canonique $V\rightarrow V_P=V/V(U_P)$ 
induit une application surjective $V^J \rightarrow (V_P)^{J_P}$; o $V(U_P)$ est le sous--espace de $V$ 
engendrŽ par les vecteurs $\pi(u)(v)-v$, $u\in U_P$, $v\in V$.
\end{enumerate}

Soit $I$ un sous--groupe d'Iwahori de $G$, \cad le fixateur {\it connexe} d'une chambre de 
l'immeuble (affine) Žtendu de $G$. Rappelons que $I$ est le groupe des points 
$\mathfrak{o}$--rationnels d'un $\mathfrak{o}$--schŽma en groupes affine lisse connexe 
$\mathfrak{I}$ de fibre gŽnŽrique ${\boldsymbol{G}}$. Pour chaque entier $n\geq 1$, on note $I^n$ 
le $n$--ime sous--groupe de congruence de $I$, \cad le noyau de la projection canonique 
(rŽduction modulo $\mathfrak{p}^n$) 
$\mathfrak{J}(\mathfrak{o})\rightarrow \mathfrak{J}(\mathfrak{o}/\mathfrak{p}^n)$. On 
pose aussi $I^0=I$.\index{$I=I^0$, $I^n$}

\begin{mapropo}
Pour $n\geq 0$, $I^n$ est un bon sous--groupe ouvert compact de $G$.
\end{mapropo}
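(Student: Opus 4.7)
The proof plan is to apply the sufficient criterion \cite[2.9]{BD} recalled just above the proposition, by verifying conditions (a) and (b) for $J = I^n$, for every parabolic pair $(M,P)$ of $G$ and every $G$-conjugate $J'$ of $J$.

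The fundamental input is the Iwahori factorization of $I^n$ with respect to a parabolic $P = MU_P$ compatible with $I$ (in the sense that the alcove defining $I$ lies in an apartment containing a maximal $F$-split torus of $M$): writing $U_P^-$ for the unipotent radical opposite to $U_P$, one has
$$
I^n = (I^n \cap U_P^-)\cdot(I^n \cap M)\cdot(I^n \cap U_P).
$$
For an arbitrary $G$-conjugate $J' = gI^ng^{-1}$, after multiplying $g$ by an element of $P$ one may arrange that $gIg^{-1}$ is again compatible with $P$, so $J'$ inherits the analogous factorization. Under the canonical identification $P/U_P = M$, the subgroup $J'_P$ is then identified with $J'\cap M$, which is the $n$-th congruence subgroup of an Iwahori subgroup of $M$. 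Since all Iwahori subgroups of $M$ are $M$-conjugate, condition (a) is immediate: the $M$-conjugacy class of $J'_P$ depends neither on $P$ nor on $g$.

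For condition (b), I plan to produce, for a given $\bar v \in (V_P)^{J_P}$, an $I^n$-fixed lift $v \in V$ by a three-step averaging procedure aligned with the Iwahori factorization. Write $J^\pm = J \cap U_P^\pm$ and $J_M = J \cap M$, and fix an arbitrary lift $v_0$ of $\bar v$. First, set $v_1 = e_{J^+}\cdot v_0$; since $\pi(u)w - w \in V(U_P)$ for all $u \in U_P$ and $w \in V$, this step preserves the image in $V_P$. Second, set $v_2 = e_{J_M}\cdot v_1$; the Iwahori factorization gives $J_M J^+ = J^+ J_M$, so $v_2$ remains $J^+$-fixed, and since $\bar v$ is $J_M$-fixed, the image of $v_2$ in $V_P$ is still $\bar v$. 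Third, set $v = e_{J^-}\cdot v_2$; combined with Iwahori factorization, this yields $J^- J_M J^+ = J$-invariance.

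The main obstacle is the last step: one must check that $\pi(u^-) v_2 - v_2 \in V(U_P)$ for every $u^- \in J^-$, so that the image in $V_P$ is unchanged. This is the Jacquet-functor version of Casselman's canonical-lifting lemma. The idea is that, by enlarging the compact open subgroup $J^+$ to a sufficiently deep compact open $K^+ \subset U_P$ fixing $v_1$ (which is possible, without changing the class of $v_1$ in $V_P$, by the description $V(U_P) = \bigcup_K \ker(e_K\vert_V)$), the commutators $[u^-, K^+]$ can be forced into an arbitrarily small compact open subgroup of $U_P$, so that $\pi(u^-) v_2 \equiv v_2 \pmod{V(U_P)}$. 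The quantitative control of these commutators is where the special structure of $I^n$ enters crucially: the congruence filtration $\{I^n\}_{n\geq 0}$ of the smooth $\mathfrak{o}$-group scheme $\mathfrak{I}$ is compatible with the root-space decompositions furnished by the Levi $M$ and its opposite unipotent radicals, making the bound on the depth of $[J^-, K^+]$ effective and yielding the required invariance.
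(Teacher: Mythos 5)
Your overall strategy is the same as the paper's up to condition (b): the paper also applies the criterion of \cite[2.9]{BD}, starts from the schematic Iwahori factorization of $I^n$ with respect to $(P_\circ,M_\circ)$ furnished by Bruhat--Tits, and for condition (a) makes your step \og after multiplying $g$ by an element of $P$\fg{} precise via the Iwasawa decomposition $G=P\,N_G(A_\circ)\,I$ together with the length induction of \cite[1.3.2--1.3.3]{L1}; note also that your identification of $J'\cap M$ with a congruence subgroup of an Iwahori subgroup of $M$ implicitly uses that schematic decomposition. As a sketch of (a) this is acceptable.

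The genuine gap is in your verification of condition (b). The paper does not reprove it: it quotes \cite[3.5.2]{BD}, which says precisely that the triangular decomposition implies (b). Your direct argument fails at the third averaging. First, the commutator estimate is false: for $u^-\in \overline{U}_P\smallsetminus\{1\}$ and $u^+\in U_P$, the commutator $[u^-,u^+]$ does not lie in $U_P$ at all (in $SL_2$ its entries involve $b^{2}c$ and $bc^{2}$), and enlarging $K^+$ makes such commutators bigger, not smaller. Second, the conclusion you aim for is itself false in general. Take $V=C_c^\infty(G)$ with $G$ acting by right translations, so that $V_{U_P}\simeq C_c^\infty(G/U_P)$ and the projection is integration along $U_P$. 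Choose $K^+$ normalized by $J_M$ and let $v_2=\mathbf{1}_{J_MK^+}$, which is fixed by $J_MK^+$ and whose image $\bar v$ is fixed by $J_M$ and supported on the cosets $mU_P$, $m\in J_M$. For $u^-\in J^-\smallsetminus\{1\}$, the image of $\pi(u^-)v_2$ is supported on the cosets $mk(u^-)^{-1}U_P$, which are disjoint from the previous ones since $(u^-)^{-1}\notin MU_P=P$; hence $\pi(u^-)v_2-v_2\notin V(U_P)$, however large $K^+$ is, and $e_{J^-}v_2$ is $J$-fixed but does not lift $\bar v$. The correct mechanism (Casselman \cite{C}, and the proof of \cite[3.5.2]{BD}) runs in the opposite direction, namely one conjugates by a strictly contracting element $a\in A_{M}$, obtaining $\pi(e_J)\pi(a^k)v_2=\pi(a^k)v_2$ for $k$ large, that is, a $J$-fixed lift of the translate of $\bar v$ by $a^k$, and one then needs an additional stabilization or finiteness argument to reach $\bar v$ itself. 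So either invoke \cite[3.5.2]{BD} as the paper does, or supply that contracting-element argument in full; the congruence structure of $I^n$ does not by itself repair the averaging step.
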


\proof Le rŽsultat est connu mais nous n'en avons trouvŽ aucune dŽmonstration 
dans la littŽrature. Puisque 
$G$ opre transitivement sur les chambres de son immeuble Žtendu, quitte ˆ remplacer $I$ par l'un de ses conjuguŽs dans $G$, on peut supposer que la chambre fixŽe par $I$ est contenue dans l'appartement associŽ ˆ $A_\circ$. 
En ce cas $I$ est en \og bonne position \fg par rapport ˆ la paire $(P_\circ,M_\circ)$, 
\cad qu'il vŽrifie la dŽcomposition triangulaire\index{$\overline{U}_\circ$, $\overline{U}$}
$$
I= (I\cap \overline{U}_\circ)(I\cap M_\circ)(I\cap U_\circ),
$$
o $\overline{U}_\circ$ est le radical unipotent du sous--groupe parabolique de $G$ opposŽ ˆ 
$P_\circ$ par rapport 
ˆ $M_\circ$. D'ailleurs (cf. \cite{BT}) cette dŽcomposition est une \og dŽcomposition schŽmatique\fg au sens o 
il existe des $\mathfrak{o}$--schŽmas en groupes affines lisses connexes $\mathfrak{M}_\circ$, $\mathfrak{U}_\circ$, 
$\overline{\mathfrak{U}}_\circ$ de fibres gŽnŽriques $\boldsymbol{M}_\circ$, $\boldsymbol{U}_\circ$, 
$\overline{\boldsymbol{U}}_\circ$ et de groupes des points $\mathfrak{o}$--rationnels $I\cap M_\circ$, $I\cap U_\circ$, 
$I\cap \overline{U}_\circ$
tels que l'application produit $\overline{\mathfrak{U}}_\circ\times_\mathfrak{o} \mathfrak{M}_\circ\times_\mathfrak{o} \mathfrak{U}_\circ
\rightarrow \mathfrak{I}$ est un isomorphisme de $\mathfrak{o}$--schŽmas. Ici $\boldsymbol{M}_\circ$, 
$\boldsymbol{U}_\circ$, $\overline{\boldsymbol{U}}_\circ$ dŽsignent les groupes algŽbriques dŽfinis sur $F$ dont $M_\circ$, $U_\circ$, 
$\overline{U}_\circ$ sont les groupes des points $F$--rationnels. On en dŽduit que pour chaque entier 
$n\geq 1$, $I^n$ est en bonne position par rapport ˆ $(P_\circ,M_\circ)$.

Fixons un entier $n\geq 0$ et posons $J=I^n$. Fixons aussi un sous--groupe de Levi $M$ de $G$ et un 
sous--groupe parabolique $P$ de $G$ de composante de Levi $M$.  Notons $U$ le radical unipotent de $P$, et 
$\overline{U}$ le radical unipotent du sous--groupe parabolique de $G$ opposŽ ˆ $P$ par rapport ˆ $M$. 

Supposons pour commencer que $M$ contient $M_\circ$. Soit $g\in G$ tel que ${^gP}=gPg^{-1}$ contient $P_\circ$. 
Rappelons que $A_\circ$ est un tore dŽployŽ maximal de $G$. 
Puisque $A_\circ$ et ${^{g^{-1}}\!A_\circ}$ sont contenus dans $P$, il existe un $p\in P$ tel que ${^{g^{-1}}\!A_\circ}={^p\!A_\circ}$. 
Alors $x=gp$ appartient ˆ $\N_\circ=N_G(A_\circ)$,\index{$\N_\circ=N_G(A_\circ)$} et l'on a ${^gP}={^xP}$. Puisque $x$ stabilise l'appartement de l'immeuble Žtendu 
de $G$ associŽ ˆ $A_\circ$, la chambre fixŽe par ${^xI}$ est encore dans cet appartement, par consŽquent ${^x\!J}$ est en bonne position par 
rapport ˆ $(P_\circ,M_\circ)$, donc aussi par rapport ˆ $(P',M_{P'})$ pour tout $P'\in \P(G)$. 
Puisque $P'={^xP}\in \P(G)$ et ${^xM}\supset {^xM_\circ}=M_\circ$, on a $M_{P'}={^xM}$, d'o
$$
{^x\!J}=({^x\!J}\cap {^x\overline{U}})({^x\!J}\cap {^xM})({^x\!J}\cap {^xU}).
$$
En conjuguant par $x^{-1}$, on obient la 
dŽcomposition triangulaire
$$
J=(J\cap \overline{U})(J\cap M)(J\cap U).\leqno{(*)}
$$
D'aprs \cite[3.5.2]{BD}, la propriŽtŽ $(*)$ implique la condition {\bf (b)} pour la paire $(P,M)$. Quant 
ˆ la condition $({\boldsymbol{a}})$, soit $J'={^{g'}\!\!J}$ pour un $g'\in G$. Puisque $P_\circ
\subset {^xP}$ pour un $x\in \N_\circ$, la dŽcomposition d'Iwasawa $G=P_\circ\N_\circ I$ implique la dŽcomposition $G=P\N_\circ I$. 
Comme par ailleurs $I$ normalise $J$, on peut supposer que $g'=py$ pour un $p\in P$ et un $y\in \N_\circ$. 
Ainsi $J'\cap P= {^p(^y\!J\cap P)}$ et $J'_P$ est conjuguŽ dans $M$ ˆ $(^y\!J)_P$. Or 
le groupe ${^y\!J}$ est en bonne position par rapport ˆ $(P_\circ,M_\circ)$, par consŽquent 
lui aussi vŽrifie $(*)$. En particulier on a la dŽcomposition
$$
{^y\!J}\cap P= (^y\!J\cap M)(^y\!J\cap U),
$$
laquelle implique l'ŽgalitŽ $(^y\!J)_P ={^y\!J} \cap M$. Par rŽcurrence sur la longueur des ŽlŽments de $W_G=\N_\circ/M_\circ$, on 
en dŽduit comme dans la dŽmonstration du lemme 1.3.2 de \cite{L1} qu'il existe un ŽlŽment $y_M\in N_M(A_\circ)$ tel 
que ${^y\!J}\cap M= {^{y_M}(J\cap M)}$. Cela prouve que la condition ${\bf (a)}$ est vŽrifiŽe. 

Passons au cas gŽnŽral: on ne suppose plus que $M$ contient $M_\circ$. On procde alors exactement 
comme dans les dŽmonstrations des lemmes 1.3.2 et 1.3.3 de \cite{L1}.\endproof

\begin{marema}
{\rm On suppose toujours que le groupe $I$ est 
en bonne position par rapport ˆ $(P_\circ,M_\circ)$. Le groupe $\theta(I)$ est un autre 
sous--groupe d'Iwahori de $G$, et puisque $\delta_1$ appartient ˆ $M_\circ^\natural$, il est lui aussi 
en bonne position par rapport ˆ $(P_\circ,M_\circ)$: on a la dŽcomposition triangulaire
$$
\theta(I)=(\theta(I)\cap \overline{U}_\circ)(\theta(I)\cap M_\circ)(\theta(I)\cap U_\circ).
$$
Par suite il existe un Žlement $m_1\in M_1$ tel que $\theta(I)=m_1^{-1}Im_1$. 
Quitte ˆ remplacer $\delta_1$ par $m_1\cdot \delta_1\in M_\circ^\natural$, on peut supposer 
$\theta(I)=I$. Alors $\theta(I^n)=I^n$ pour chaque entier $n\geq 0$.\hfill $\blacksquare$
}
\end{marema}

D'aprs la remarque, on peut supposer vŽrifiŽe l'htpothse suivante. 

\begin{monhypo}
On suppose de plus que le point--base $\delta_1\in M_\circ^\natural$ est choisi de 
telle manire que le $F$--automorphisme $\theta={\rm Int}_{{\boldsymbol{G}}^\natural}(\delta_1)$ de 
${\boldsymbol{G}}$ stabilise un sous--groupe d'Iwahori de $G$ en bonne position par rapport ˆ $(P_\circ,M_\circ)$. 
\end{monhypo}

\v2 D'aprs la proposition et la remarque ci--dessus, il existe une base de voisinages de $1$ dans $G$ 
formŽe de sous--groupes ouverts compacts qui sont tous bons, $\theta$--stables et en 
bonne position par rapport ˆ $(P_\circ,M_\circ)$. Cette propriŽtŽ sera trs 
utile pour la suite.

\subsection{\og Bons \fg sous--espaces tordus ouverts compacts de $G^\natural$}\label{bons seoc}
Un sous--espace tordu ouvert compact $J^\natural$ de $G^\natural$ est dit \og bon \fg 
si le sous--groupe ouvert compact de $G$ sous--jacent ˆ $J^\natural$ est bon. 
D'aprs \ref{bons sgoc}, il existe des bons sous--espaces tordus ouverts compacts 
de $G^\natural$ aussi petits que l'on veut. 

Soit $J^\natural=J\cdot \delta$ un bon sous--espace ouvert compact de $G^\natural$ tel que $\omega\vert_J=1$. 
Si $\Pi$ est une $\omega$--reprŽsentation de $G^\natural$, puisque
$$V^J = \pi(e_J)(V)= \Pi(e_{J^\natural})(V)=\Pi(\delta)(V^J),
$$ 
la dŽcomposition de $V$ en $V=V_J\oplus V_J^\perp$ (cf. \ref{bons sgoc}) est $G^\natural$--stable: pour 
$g\in G$ et $v\in V$, on a
$$
\Pi(g\cdot \delta)\circ \pi(e_J)(v)= \pi(g)\circ \Pi(\delta)\circ \pi(e_J)(v)= \pi(g)\circ \Pi(e_{J^\natural})(v).
$$ 
En d'autres termes, $\Pi$ se dŽcompose en\index{$\Pi=\Pi_J\oplus \Pi_J^\perp$}
$$
\Pi=\Pi_J\oplus \Pi_J^\perp,
$$
o $\Pi^J$ est la restriction de $\Pi$ sur $V_J$ et $\Pi_J^\perp$ la restriction de $\Pi$ sur $V_J^\perp$. 
On note $\mathfrak{R}_J(G^\natural,\omega)$ la sous--catŽgorie pleine de $\mathfrak{R}(G,\omega)$ formŽ 
des $\omega$--reprŽsentations $\Pi$ telles que $\Pi_J=\Pi$. Puisque le foncteur d'oubli $\Pi\mapsto \Pi^\circ$ envoie 
$\mathfrak{R}_J(G^\natural,\omega)$ dans $\mathfrak{R}_J(G)$, la catŽgorie $\mathfrak{R}_J(G^\natural,\omega)$ 
est stable par sous--quotients.

Posons $\mathfrak{S}=\mathfrak{S}(J)$. C'est une partie (finie) ${\Bbb Z}$--stable de $\mathfrak{B}(G)$, et 
la catŽgorie $\mathfrak{R}_J(G^\natural,\omega)$ co\"{\i}ncide avec $\mathfrak{R}_{\mathfrak{S}}(G^\natural,\omega)$. 
De plus, le foncteur $V\mapsto V^J$ est une Žquivalence entre $\mathfrak{R}_J(G^\natural,\omega)$ 
et la catŽgorie des $(\Hn_{\sJ},\omega)$--modules non dŽgŽnŽrŽs. En effet, pour un 
$(\Hn_{\sJ},\omega)$--module non dŽgŽnŽrŽ $W$, l'espace $V= (\H*e_J)\otimes_{\H_J}W$ est un 
$\H$-module non dŽgŽnŽrŽ, tel que $V^J=W$ et $V_J=V$. Il dŽfinit donc une reprŽsentation de $G$, 
disons $\pi$. Pour $g\in G$ et $v\in V$ de la forme $v= (f*e_J)\otimes w$, o $f\in \H$ et $w\in W$, on pose 
$$
\Pi(g\cdot \delta)(v)=\pi(g)\circ \Pi(\delta)(v),\quad \Pi(\delta)(v)= ({^\tau(\omega^{-1}f)}* e_J)\otimes w;
$$
o l'on a posŽ ${^\tau{f'}}= f'\circ \tau^{-1}$, $\tau={\rm Int}_{{\boldsymbol{G}}^\natural}(\delta)$. Notant 
${_gf}$ la fonction $x\mapsto f(g^{-1}x)$, on a
\begin{eqnarray*}
\Pi(\delta)\circ \pi(g)(v)&= & \Pi(\delta)({_gf}*e_J\otimes w)\\
&=&{^\tau(\omega^{-1}{_gf})}*e_J\otimes w\\
&=& \omega^{-1}(g) \pi(\tau(g))\circ \Pi(\delta)(v)= \omega^{-1}(g)\Pi(\delta\cdot g)(v).
\end{eqnarray*}
On obtient ainsi 
une $\omega$--reprŽsentation $(\Pi,V)$ de $G^\natural$, telle que $\Pi^\circ = \pi$. Puisque le foncteur d'oubli 
$\Pi\mapsto \Pi^\circ$ est fidle, cela dŽfinit un quasi--inverse du foncteur $V\mapsto V^J$ entre 
$\mathfrak{R}_J(G^\natural,\omega)$ et la catŽgorie des 
$(\Hn_{\sJ},\omega)$--modules non dŽgŽnŽrŽs.

\begin{notation}
{\rm Soit ${\boldsymbol{J}}(G^\natural,\omega)$\index{${\boldsymbol{J}}(G^\natural,\omega)$} 
l'ensemble des bons sous--espaces ouverts compacts $J^\natural$ de $G^\natural$ 
tels que $\omega$ est trivial sur $J$, et soit ${\boldsymbol{J}}_{G^\natural,\omega}(G)$ l'ensemble des sous--groupes 
ouverts compacts $J$ de $G$ tels que $J$ est le groupe sous--jacent ˆ un ŽlŽment $J^\natural$ de ${\boldsymbol{J}}(G^\natural,\omega)$.}
\end{notation}

D'aprs \ref{bons sgoc}, si $I$ est un sous--groupe d'Iwahori de $G^\natural$ normalisŽ par $\delta_1$, notant 
$n_0$ le plus petit entier $\geq 0$ tel que $\omega\vert_{I^{n_0}}=1$, on a l'inclusion
$$
\{I^n:n\geq n_0\}\subset {\boldsymbol{J}}_{G^\natural,\omega}(G).
$$
En particulier, ${\boldsymbol{J}}_{G^\natural,\omega}(G)$ est une base de voisinages de $1$ dans $G$.

\subsection{$(H^\natural,\omega,B)$--modules admissibles}\label{modules admissibles}
La reprŽsentation ${i}_P^G(\rho_{B_P})$ de $G$ introduite en \ref{centre} est un 
{\it $(G,B_P)$--module admissible} au 
sens de \cite{BD}, \cad un $B_P$-module $V$ muni d'une reprŽsentation $\pi:G\rightarrow {\rm Aut}_{\Bbb C}(V)$  
telle que l'action de $G$ sur $V$ commute ˆ celle de $B_P$, vŽrifiant la condition d'admissibilitŽ: 
pour tout sous-groupe ouvert compact $J$ de $G$, le $B_P$-module $V^J$ est 
projectif et de type fini. Plus gŽnŽralement, on a:

\begin{madefi}
{\rm Soit $H^\natural$ un espace topologique tordu de groupe sous-jacent $H$ localement profini, et soit 
$\mathfrak{X}$ une variŽtŽ algŽbrique affine sur ${\Bbb C}$ 
d'algbre affine $B={\Bbb C}[\mathfrak{X}]$. On appelle {\it $(H^\natural,\omega,B)$--module admissible}\index{$(H^\natural,\omega,B)$--module admissible} la donnŽe 
d'un $B$-module $V$ et d'une $\omega$--reprŽsentation de $H^\natural$ d'espace $V$ 
telle que l'action de $H^\natural$ sur $V$ commute ˆ celle de $B$, vŽrifiant la condition d'admissibilitŽ: 
pour tout sous-groupe ouvert compact $J$ de $H$, 
le $B$--module $V^{J}$ est projectif de type fini --- i.e. le $(H,B)$--module sous--jacent est admissible. 
}
\end{madefi}

\begin{marema}
{\rm Si $H^\natural$ vŽrifie la propriŽtŽ $({\rm P}_1)$ de \cite[8.3]{L2}, \cad s'il existe une famille de sous--espaces 
tordus ouverts compacts de $H^\natural$ telle que les sous--groupes de $H$ sous--jacents aux ŽlŽments de cette famille 
forment une base de voisinage de $1$ dans $H$, alors la condition d'admissibilitŽ est Žquivalente ˆ: 
pour tout sous--espace tordu ouvert compact $J^\natural$ de $H^\natural$, le $B$--module $V^{J^\natural}$ est projectif 
de type fini.\hfill $\blacksquare$}
\end{marema}

Soit $V$ un $(H^\natural,\omega,B)$--module admissible, o $B$ est l'algbre affine 
d'une variŽtŽ algŽbrique affine $\mathfrak{X}$ sur ${\Bbb C}$ et $H^\natural$ opre 
sur $V$ via une $\omega$--reprŽsentation $\Pi$. Tout morphisme de variŽtŽs algŽbriques 
$u:\mathfrak{X}'\rightarrow \mathfrak{X}$ dŽfinit comme suit un 
$(H^\natural,\omega,B')$--module admissible $V_u=V\otimes_{B,\tilde{u}}B'$, 
o $B'={\Bbb C}[\mathfrak{X}']$ et $\tilde{u}:B'\rightarrow B$ 
est le morphisme d'algbres correspondant ˆ $u$. En particulier pour tout point 
$x\in \mathfrak{X}$, vu comme un morphisme $x:{\rm Spec}({\Bbb C})\rightarrow\mathfrak{X}$, 
le localisŽ $V_x= V\otimes_{B,\tilde{x}}{\Bbb C}$ de $V$ en $x$ est une $\omega$--reprŽsentation 
de $H^\natural$, notŽe $\Pi_x$, et la reprŽsentation sous--jacente $\Pi_x^\circ$ de $H$ est admissible et de 
longueur finie.

Soit $P\in \EuScript{P}(G^\natural)$, et soit $\Sigma$ une $\omega$--reprŽsentation de $M_{\sP}^\natural$ 
telle que la reprŽsentation sous--jacente $\Sigma^\circ$ de $M_P$ est admissible. Rappelons 
que $\mathfrak{P}_{\Bbb C}(M_{\sP}^\natural)$ est un groupe algŽbrique affine, diagonalisable sur ${\Bbb C}$ 
(cf. la remarque 2 de \ref{carac nr}). Notons 
$B=B_{P^\natural}$\index{$B_{P^\natural}={\Bbb C}[\mathfrak{P}_{\Bbb C}(M_{\sP}^\natural)]$, $\varphi_{P^\natural}$} 
l'algbre affine ${\Bbb C}[\mathfrak{P}_{\Bbb C}(M_{\sP}^\natural)]$, 
et $\varphi_{P^\natural}:M_{\sP}^\natural\rightarrow B$ le 
\og caractre universel \fg donnŽ par l'Žvaluation:
$$\varphi_{P^\natural}(\delta)(\Xi)=\Xi(\delta),\quad \delta\in M_{\sP}^\natural,\, \Xi\in \mathfrak{P}_{\Bbb C}(M_{\sP}^\natural).
$$
On dŽfinit comme en \ref{centre} une $\omega$--reprŽsentation $\Sigma_B=\Sigma\otimes\varphi_{P^\natural}$ 
de $M_{\sP}^\natural$: l'espace de $\Sigma_B$ est $W=V_\Sigma\otimes_{\Bbb C}B$, et pour 
$\delta\in M_{\sP}^\natural$, $v\in W$ et $b\in B$, on pose
$$
\Sigma_B(\delta)(v\otimes b)=\Sigma(\delta)(v)\otimes \varphi_{P^\natural}(\delta)b.
$$
Posons
$$
(\Pi,V)= {^\omega{{i}}_{P^\natural}^{G^\natural}}(\Sigma_B,W).
$$
C'est une $\omega$--reprŽsentation de $G$. L'anneau $B$ opre naturellement sur l'espace $V$, 
ce qui le munit d'une structure de $(G^\natural,\omega,B)$--module admissible. Pour 
$\Xi\in \mathfrak{P}_{\Bbb C}(M_{\sP}^\natural)$ correspondant ˆ $u:B\rightarrow {\Bbb C}$, la $\omega$--reprŽsentation 
$\Pi_\Xi$ de $G^\natural$ sur le localisŽ $V_\Xi= V\otimes_{B,u}{\Bbb C}$ est isomorphe ˆ 
${^\omega{{i}}_{P^\natural}^{G^\natural}}(\Xi\cdot\Sigma)$. 

Soit une fonction $\phi\in \Hn$. Rappelons qu'on a notŽ $\Phi_\phi$ l'ŽlŽment de $\G_{\Bbb C}(G^\natural,\omega)^*$ 
dŽfini par\index{$\phi\mapsto \Phi_\phi$}
$$
\Phi_\phi(\Pi')=\Theta_{\Pi'}(\phi),\quad\Pi'\in {\rm Irr}_{\Bbb C}(G^\natural,\omega).
$$ 
Soit $J^\natural$ un sous--espace tordu ouvert compact de $G^\natural$, 
de groupe sous--jacent $J$, tel que $\omega\vert_J=1$ et $\phi\in \Hn_{\sJ}$. 
L'opŽrateur $\Pi(\phi)$ est un $B$--endomorphisme du sous--espace $V^J=V^{J^\natural}$ de 
$V$ formŽ des vecteurs fixŽs par $J$. Puisque $V^J$ est un $B$--module projectif de type fini, 
on dispose d'une application trace ${\rm tr}_B:{\rm End}_B(V^J)\rightarrow B$. 
On pose
$$
b={\rm tr}_B(\Pi(\phi))\in B.
$$
Pour $\Xi\in \mathfrak{P}_{\Bbb C}(M_{\sP}^\natural)$ correspondant ˆ $u:B\rightarrow {\Bbb C}$, l'endomorphisme $\Pi(\phi)\otimes_{B,u}{\Bbb C}$ de 
$$(V\otimes_{B,u}{\Bbb C})^J=V^J\otimes_{B,u}{\Bbb C}$$ est isomorphe ˆ ${^\omega{{i}}_{P^\natural}^{G^\natural}}(\Xi\cdot\Sigma)(\phi)$, par consŽquent 
$$
b(\Xi)= \Phi_\phi({^\omega{{i}}_{P^\natural}^{G^\natural}}(\Xi\cdot\Sigma)).
$$
En d'autres termes, l'application $\Xi\mapsto \Phi_\phi({^\omega{{i}}_{P^\natural}^{G^\natural}}(\Xi\cdot\Sigma))$ est 
une fonction rŽgulire sur la variŽtŽ $\mathfrak{P}_{\Bbb C}(M_{\sP}^\natural)$.

\section{\'EnoncŽ du rŽsultat}

\subsection{Le thŽorme principal}\label{ŽnoncŽ} 
Soit $\F(G^\natural,\omega)$\index{$\F(G^\natural,\omega)$, $\F_{\rm tr}(G^\natural,\omega)$} 
le sous--espace de $\G_{\Bbb C}(G^\natural,\omega)^*$ formŽ des 
formes linŽaires $\Phi$ qui vŽrifient les conditions (i) et (ii) suivantes:

\begin{enumerate}
\item[(i)] Il existe un ensemble {\rm fini} $\mathfrak{S}\subset\mathfrak{B}_{G^\natural,\omega}(G)$ tel que $\Phi(\Pi)=0$ 
pour tout $\Pi\in {\rm Irr}_{\Bbb C}(G^\natural,\omega)$ tel que $\beta_{G^\natural}(\Pi)\not\in \mathfrak{S}$.
\item[(ii)] Pour $P^\natural\in \EuScript{P}(G^\natural)$ et $\Sigma\in {\rm Irr}_{\Bbb C}(M_{\sP}^\natural,\omega)$, l'application 
$\Xi\mapsto \Phi({^\omega{i}}_{P^\natural}^{G^\natural}(\Xi \Sigma))$
est une fonction r\'eguli\`ere sur la variŽtŽ $\mathfrak{P}_{\Bbb C}(M_{\sP}^\natural)$.
\end{enumerate}

\begin{marema1}
{\rm D'aprs \cite[3.7 et 3.9]{BD}, la condition (i) est Žquivalente ˆ la condition (i') suivante: {\it 
il existe un sous--groupe ouvert compact $J$ de $G$ tel que $\Phi(\Pi)=0$ pour tout 
$\Pi\in {\rm Irr}_{\Bbb C}(G^\natural,\omega)$ tel que $V_\Pi^J=0$.} D'ailleurs d'aprs \ref{H modules} 
cette condition (i') est Žquivalente ˆ la condition (i'') suivante: 
{\it il existe un sous--espace tordu ouvert compact $J^\natural$ de $G^\natural$ tel que $\Phi(\Pi)=0$ pour tout 
$\Pi\in {\rm Irr}_{\Bbb C}(G^\natural,\omega)$ tel que $V_\Pi^{J^\natural}=0$.} D'autre part dans la condition (ii), on 
peut bien s\^ur remplacer le groupe algŽbrique affine $\mathfrak{P}_{\Bbb C}(M^\natural_{\sP})$ par sa composante neutre 
$\mathfrak{P}_{\Bbb C}^0(M^\natural_{\sP})$.
\hfill $\blacksquare$
}
\end{marema1}

Soit aussi $\F_{\rm tr}(G^\natural,\omega)$ le sous--espace de $\F(G^\natural,\omega)$ formŽ des 
$\Phi$ de la forme $\Phi_\phi$ pour une fonction $\phi\in \Hn$.

Soit enfin $[\Hn,\H]_\omega$\index{$[\Hn,\H]_\omega$, 
$\smash{\overline{\H}}^\natural_\omega=\overline{\H}(G^\natural,\omega)$} le sous--espace vectoriel de $\Hn$ engendrŽ par les fonctions de la forme $\phi*f -\omega f*\phi$ 
pour $\phi\in \Hn$ et $f\in\H$. On note $\smash{\overline{\H}}^\natural_\omega=\overline{\H}(G^\natural,\omega)$ 
l'espace quotient $\H^\natural/[\H^\natural,\H]_\omega$. En d'autres termes, 
$\smash{\overline{\H}}^\natural_\omega$ est le quotient de l'espace $\H^\natural_\omega= \H(G^\natural,\omega)$ 
introduit en \ref{H modules} (variante) par le sous--espace $[\H^\natural_\omega,\H]$ 
engendrŽ par les commutateurs $\phi\cdot f - f\cdot \phi$ pour $\phi\in \H^\natural_\omega$ et $f\in \H$.

\begin{montheo}
L'application $\H(G^\natural)\mapsto \G_{\Bbb C}(G^\natural,\omega)^*,\,\phi\mapsto \Phi_\phi $ induit un 
isomorphisme de ${\Bbb C}$--espaces vectoriels
$$
\overline{\H}(G^\natural,\omega) \rightarrow \F(G^\natural,\omega).
$$
\end{montheo}

D'aprs \ref{modules admissibles}, on a l'inclusion $\EuScript{F}_{\rm tr}(G^\natural,\omega)\subset 
\EuScript{F}(G^\natural,\omega)$. Si $\Pi$ est 
une $\omega$--reprŽsentation de $G^\natural$, pour $\phi\in \Hn$ et $f\in \H$, 
on a $\Pi(\phi* f)= \Pi(\omega f*\phi)$ et donc 
$\Pi(\phi*f -\omega f* \Pi)=0$. Ainsi, la transformŽe de Fourier $\Hn\mapsto \G_{\Bbb C}(G^\natural,\omega)^*,\,\phi\mapsto \Phi_f$ 
induit bien une application ${\Bbb C}$--linŽaire
$$
\overline{\H}(G^\natural,\omega) \rightarrow \EuScript{F}(G^\natural,\omega),
$$
et il s'agit de prouver qu'elle est surjective (thŽorme de Paley--Wiener) 
et injective (thŽorme de densitŽ spectrale). La suite de l'article et \cite{HL} sont consacrŽs ˆ la dŽmonstration de ces deux rŽsultats. 
Par rŽcurrence sur la dimension des sous--espaces paraboliques de $G^\natural$, on se ramne dans la section 4 
ˆ dŽmontrer un thŽorme analogue (\ref{ŽnoncŽ discret}) sur la partie \og discrte\fg de la thŽorie. 
La surjectivitŽ de l'application du thŽorme de \ref{ŽnoncŽ discret} est prouvŽe dans la section 5, tandis que son injectivitŽ 
est prouvŽe dans \cite{HL}. 

On peut voir $\H^\natural$ comme 
un $\H$--module non dŽgŽnŽrŽ (ˆ gauche). Pour chaque ŽlŽment $z$ de $\mathfrak{Z}(G)$, on a donc 
un ŽlŽment $z_{\H^\natural}\in {\rm End}_G(\H^\natural)$. Reprenons le ${\Bbb C}$--isomorphisme $u:\H\rightarrow \H^\natural_\omega$ 
de l'exemple de \ref{H modules} (rappelons que $\H^\natural_\omega=\H^\natural$ comme $\H$--module ˆ gauche). 
Pour $f,\,h,\,f'\in \H$, il vŽrifie $u(f*h*f')= f\cdot u(h)\cdot \tau(f')$, o l'on a posŽ 
$\tau(f')=\omega f'^\theta$. On a donc
$$u \circ z_\H= 
z_{\H^\natural}\circ u,\quad z\in \frak{Z}(G).
$$
En particulier pour $f,\, f'\in \H$ et $\phi\in \H^\natural_\omega$, posant $h=u^{-1}(\phi)$, on a
\begin{eqnarray*}
z_{\H^\natural}(f\cdot \phi\cdot f') &=& z_{\H^\natural}\circ u (f*h*\tau^{-1}(f'))\\
&=&u\circ z_\H(f*h*\tau^{-1}(f'))\\
&=&u(f*z_\H(h)*\tau^{-1}(f'))= f\cdot z_{\H^\natural}(\phi)\cdot f'.
\end{eqnarray*}
L'application $z\mapsto z_{\H^\natural}$ est donc un isomorphisme 
de $\mathfrak{Z}(G)$ sur ${\rm End}_{\H\times \H^{\rm op}}(\H^\natural_\omega)$. De plus 
$[\H^\natural_\omega,\H]$ est un sous--espace vectoriel $\frak{Z}(G)$--stable de $\H^\natural_\omega$, i.e. 
$[\H^\natural,\H]_\omega$ 
est un sous--espace vectoriel $\mathfrak{Z}(G)$--stable de $\H^\natural$.  

L'anneau $\mathfrak{Z}(G)$ opre sur l'espace $\G_{\Bbb C}(G^\natural,\omega)^*$: 
pour $z\in \mathfrak{Z}(G)$ et $\Phi\in \G_{\Bbb C}(G^\natural,\omega)^*$, on note $z\cdot \Phi$ 
l'ŽlŽment de $\G_{\Bbb C}(G^\natural,\omega)^*$ dŽfini par
$$
(z\cdot \Phi)(\Pi)= f_z(\theta_{G^\natural}(\Pi))\Phi(\Pi),\quad \Pi\in {\rm Irr}_{\Bbb C}(G^\natural,\omega).
$$

\begin{monlem}
La transformŽe de Fourier $\H(G^\natural)\mapsto \G_{\Bbb C}(G^\natural,\omega)^*,\,\phi\mapsto \Phi_\phi $ 
est un morphisme de $\mathfrak{Z}(G)$--modules.
\end{monlem}

\begin{proof}
D'aprs \cite[1.5]{BD}, le centre $\mathfrak{Z}(G)$ de $\mathfrak{R}(G)$ est la limite projective des centres 
$Z(e*\H*e)$ o $e$ parcourt les idempotents de $\H$, pour les morphismes de transitions 
$$Z(e'*\H*e')\rightarrow Z(e*\H*e),\, h\mapsto h*e\quad \hbox{si 
$e*\H*e\subset e'* \H*e'$.}
$$ En d'autres termes, 
tout \'el\'ement 
$z$ de $\mathfrak{Z}(G)$ est la donn\'ee, pour chaque idempotent $e$ de $\EuScript{H}$, d'un \'el\'ement 
$z(e)\in  Z(e*\EuScript{H}*e)$, avec la relation $z(e)=z(e')*e$ si $e = e*e'$. L'action de $\mathfrak{Z}(G)$ 
sur le $\H$--module (ˆ gauche) $\H$ est donnŽe par (pour $z\in \mathfrak{Z}(G)$ et $f\in \H$):
$$
z\cdot f =z(e)*f \quad \hbox{si $e*f =f$} 
$$
De m\^eme l'action de $\mathfrak{Z}(G)$ sur $\H^\natural$ est donnŽe par 
(pour $z\in \mathfrak{Z}(G)$ et $\phi\in \H^\natural$):
$$
z\cdot \phi = z(e)* \phi\quad \hbox{si $e*\phi =\phi$}.
$$
Si $\Pi$ est une $\omega$--repr\'esentation de $G^\natural$, pour $z\in \mathfrak{Z}(G)$ et $\phi\in \EuScript{H}^\natural$, 
on a
$$
\Pi(z\cdot \phi)=\Pi(z(e)*\phi)= \Pi^\circ(z(e))\circ \Pi(\phi)=z_{\Pi^\circ}\circ \Pi(\phi),
$$
o\`u $\Pi^\circ(z(e))$ est l'opŽrateur $\int_G z(e)(g)\Pi^\circ(g)dg$ sur l'espace de $\Pi$. 
Si de plus $\Pi$ est $G$--irr\'eductible, on a $z_{\Pi^\circ}=f_z(\theta_G(\Pi^\circ)){\rm id}_{V_\Pi}$. 
D'o le lemme.
\end{proof}

\begin{marema2}
{\rm 
Le lemme ci--dessus ne dŽpend pas du thŽorme. Joint au thŽorme, il implique en particulier que le sous--espace 
$\F(G^\natural,\omega)$ de $\G_{\Bbb C}(G^\natural,\omega)^*$ est 
$\mathfrak{Z}(G)$--stable.
\hfill $\blacksquare$
}\end{marema2}

\subsection{Variante \og tempŽrŽe \fg du thŽorme}\label{variante tempŽrŽe}
Pour $P^\natural\in \P(G^\natural)$, $\mathfrak{P}_{\Bbb C}(M_{\sP}^\natural,\vert \omega \vert)$ 
est un espace principal homogne sous $\mathfrak{P}_{\Bbb C}(M_{\sP}^\natural)$, donc en particulier une variŽtŽ algŽbrique affine complexe. 
Comme dans le cas non tordu, on peut remplacer la condition (ii) du thŽorme de \ref{ŽnoncŽ} 
par la condition (ii') suivante: {\it pour $P^\natural\in \EuScript{P}(G^\natural)$ et $\Sigma\in {\rm Irr}_{{\Bbb C},{\rm t}}(M_{\sP}^\natural,\omega_{\rm u})$, l'application 
$\Xi\mapsto \Phi({^\omega{i}}_{P^\natural}^{G^\natural}(\Xi \Sigma))$
est une fonction r\'eguli\`ere sur la variŽtŽ $\mathfrak{P}_{\Bbb C}(M_{\sP}^\natural,\vert\omega\vert)$.} 
Cette affirmation rŽsulte du lemme suivant.

\begin{monlem}
Soit $\Phi\in \F(G^\natural,\omega)$ tel que pour tout $P^\natural\in \P(G^\natural)$ et 
tout $\Sigma\in {\rm Irr}_{{\Bbb C},{\rm t}}(M_{\sP}^\natural,\omega_{\rm u})$, l'application $\Xi\mapsto 
\Phi({^\omega{{i}}_{P^\natural}^{G^\natural}}(\Xi\Sigma))$ est une fonction rŽgulire sur 
la variŽtŽ $\mathfrak{P}_{\Bbb C}(M_{\sP}^\natural,\vert\omega\vert)$. Alors pour tout $P^\natural\in \P(G^\natural)$ et 
tout $\Sigma\in {\rm Irr}_{\Bbb C}(M_{\sP}^\natural,\omega)$, 
l'application $\Xi\mapsto 
\Phi({^\omega{{i}}_{P^\natural}^{G^\natural}}(\Xi\Sigma))$ est une fonction rŽgulire sur la variŽtŽ 
$\mathfrak{P}_{\Bbb C}(M_{\sP}^\natural)$.
\end{monlem}

\begin{proof}D'aprs le lemme 1 de \ref{base langlands}, 
il suffit de montrer que pour tout $P^\natural\in \P(G^\natural)$ et tout triplet de Langlands $\mu'$ pour $(M^\natural_{\sP},\omega)$, 
notant $\widetilde{\Sigma}_{\mu'}$ la $\omega$--reprŽsentation de $M^\natural_{\sP}$ associŽe 
ˆ $\mu'$ et $\widetilde{\Sigma}_{{\Bbb C},{\mu'}}$ son image dans $\G_{{\Bbb C}}(M^\natural_{\sP},\omega)$, 
l'application $\Xi\mapsto \Phi({^\omega{i}_{P^\natural}^{G^\natural}}(\Xi\widetilde{\Sigma}_{{\Bbb C},\mu'}))$ 
est une fonction rŽgulire sur la variŽtŽ $\mathfrak{P}_{\Bbb C}(M_{\sP}^\natural)$. Le triplet $\mu'$ s'Žcrit 
$\mu'=(Q^\natural \cap M^\natural_{\sP}, \Sigma',\Xi')$ o $Q^\natural$ est un ŽlŽment de $\P(G^\natural)$ tel 
que $Q^\natural\subset P^\natural$, $\Sigma'$ est une $\omega_{\rm u}$--reprŽsentation $M_Q$--irrŽductible 
tempŽrŽe de $M^\natural_{{\sQ}}$, et $\Xi'$ est un ŽlŽment de $\mathfrak{P}_{\Bbb C}(M^\natural_{{\sQ}},\vert\omega\vert)$ 
tel que $\Xi'^\circ$ est positif par rapport ˆ $U_Q\cap M_P$. On a (par dŽfinition) $\widetilde{\Sigma}_{\mu'}=
{^\omega{i}_{Q^\natural}^{P^\natural}}(\Xi'\cdot \Sigma')$ et
$$
{^\omega{i}_{P^\natural}^{G^\natural}}(\Xi\cdot \widetilde{\Sigma}_{\mu'})= 
{^\omega{i}_{Q^\natural}^{G^\natural}}(\Xi\vert_{M^\natural_{\sQ}}\Xi'\cdot \Sigma'),\quad \Xi\in \mathfrak{P}_{\Bbb C}(M^\natural_{\sP}).
$$
D'o le rŽsultat, puisque l'application
$$
\mathfrak{P}_{\Bbb C}(M^\natural_{\sP})\rightarrow \mathfrak{P}_{\Bbb C}(M^\natural_{\sQ},\vert \omega\vert),\,
\Xi \mapsto \Xi\vert_{M^\natural_{\sQ}}\Xi'
$$
est un morphisme algŽbrique. \end{proof}

\begin{marema}
{\rm Supposons le caractre $\omega$ {\it unitaire} (i.e. $\vert \omega \vert=1$). Pour $P^\natural\in \P(G^\natural)$, notons $\frak{P}_{\Bbb{C},{\rm u}}^0(M^\natural_{\sP})$ 
le sous--groupe de $\frak{P}_{\Bbb{C}}^0(M^\natural_{\sP})$ formŽ des ŽlŽments unitaires. Rappelons que $\frak{P}_{\Bbb{C}}^0(M^\natural_{\sP})$ est un sous--ensemble 
de ${\rm Irr}_{\Bbb C}(M^\natural_{\sP})={\rm Irr}_{\Bbb C}(M^\natural_{\sP},\xi=1)$, et qu'une reprŽsentation de $M^\natural_{\sP}$ est dite unitaire si son espace est muni d'un produit scalaire hermitien $M^\natural_{\sP}$--invariant (cf. \ref{langlands}). 
Alors on peut, comme le fait Waldspurger \cite[6.1]{W}, remplacer la condition (ii) du thŽorme de \ref{ŽnoncŽ} 
par la condition (ii'') suivante: {\it pour $P^\natural\in \EuScript{P}(G^\natural)$ et $\Sigma\in {\rm Irr}_{{\Bbb C},{\rm t}}(M_{\sP}^\natural,\omega)$, l'application 
$\Xi\mapsto \Phi({^\omega{i}}_{P^\natural}^{G^\natural}(\Xi \Sigma))$
est une \og fonction de Paley--Wiener \fg sur $\mathfrak{P}_{\Bbb{C},{\rm u}}^0(M_{\sP}^\natural)$.} 
Pour la notion de fonction de Paley--Wiener, on renvoie ˆ 
\cite[2.6]{W} et \ref{les espaces bP}.
\hfill $\blacksquare$}
\end{marema}

\subsection{Variante \og finie \fg du thŽorme}\label{variante finie}
Pour un sous--espace tordu ouvert compact $K^\natural=K\cdot \delta$ de $G$ tel que $\omega$ est 
trivial sur $K$, on note 
$[\H^\natural_{\sK},\H_K]_\omega$\index{$[\H^\natural_{\sK},\H_K]_\omega$, 
$\smash{\overline{\H}}^\natural_{\sK,\omega}=\overline{\H}_K(G^\natural,\omega)$} le sous--espace vectoriel 
de $\H^\natural_{\sK}=\H_K(G^\natural)$ engendrŽ par 
les fonctions de la forme $\phi* f - \omega f * \phi$ pour $\phi\in \H^\natural_{\sK}$ et $f\in \H_K$, et l'on note 
$\smash{\overline{\H}}^\natural_{\sK,\omega}=\overline{\H}_K(G^\natural,\omega)$ l'espace quotient
$
\H^\natural_{\sK}/[\H^\natural_{\sK},\H_K]_\omega.
$
En d'autres termes, notant $\Hn_{\sK,\omega}=\H_K(G^\natural,\omega)$ le $\H_K$--bimodule dŽfini comme en \ref{H modules} (variante) 
en rempla\c{c}ant la paire $(\Hn,\H)$ par la paire $(\Hn_{\sK},\H_K)$, l'espace $\smash{\overline{\H}}^\natural_{\sK,\omega}$ est le quotient de 
$\Hn_{\sK,\omega}$ par le sous--espace $[\Hn_{\sK,\omega},\H_K]$ engendrŽ par les commutateurs $\phi\cdot f - f\cdot \phi$ pour 
$\phi\in \Hn_{\sK,\omega}$ et $f\in \H_K$. 

D'autre part on note $\F_K(G^\natural,\omega)$\index{$\F_K(G^\natural,\omega)$} le sous--espace vectoriel de $\F(G^\natural,\omega)$ formŽ des ŽlŽments $\Phi$ 
telles que $\Phi(\Pi)=0$ pour tout $\Pi\in {\rm Irr}_{\Bbb C}(G^\natural,\omega)$ tel que $\Pi^K=0$, o $\Pi^K$ dŽsigne la 
classe d'isomorphisme du $(\H^\natural_{\sK},\omega)$--module $\H_K$--simple associŽ ˆ $\Pi$ (cf. \ref{H modules}).

Rappelons que ${\boldsymbol{J}}_{G^\natural,\omega}(G)$ est l'ensemble des sous--groupes ouverts compacts $J$ de $G$ 
tels que $J$ est \og bon \fg et normalisŽ par un ŽlŽment de $G^\natural$, et $\omega\vert_J=1$

\begin{montheo}
Pour tout $J\in {\boldsymbol{J}}_{G^\natural,\omega}(G)$, l'application $\H(G^\natural,J)\rightarrow \G_{\Bbb C}(G^\natural,\omega)^*,\,\phi\mapsto \Phi_\phi$ induit 
un isomorphisme de ${\Bbb C}$--espaces vectoriels
$$
\smash{\overline{\H}}_J(G^\natural,\omega)\rightarrow \F_J(G^\natural,\omega).
$$
\end{montheo}

D'aprs la remarque 1 de \ref{ŽnoncŽ}, l'espace $\F(G^\natural,\omega)$ est la limite inductive 
des sous--espaces $\F_J(G^\natural,\omega)$ 
pour $J\in {\boldsymbol{J}}_{G^\natural,\omega}(G)$. D'autre part, pour $J,\,J'\in {\boldsymbol{J}}_{G^\natural,\omega}(G)$ 
tels que $J'\subset J$, on verra en \cite{HL} que l'inclusion $\H_{J}(G^\natural,\omega)\subset \H_{J'}(G^\natural,\omega)$ 
induit par passage 
aux quotients une application injective (\cite[lemma 3.2]{K2} dans le cas non tordu)
$$
\overline{\H}_{J}(G^\natural,\omega)\subset \overline{\H}_{J'}(G^\natural,\omega).
$$
En d'autres termes, on a l'ŽgalitŽ
$$
[\H^\natural_{\smash{\sJ'}\!,\omega},\H_{\smash{\sJ'}}] \cap \H^\natural_{\sJ}= [\H^\natural_{{\sJ},\omega},\H_J].\leqno{(*)}$$ 
D'ailleurs cette ŽgalitŽ est contenue dans le thŽorme ci--dessus. 
Par consŽquent si 
le thŽorme ci--dessus est vrai, l'espace $\overline{\H}(G^\natural,\omega)$ est la limite inductive des 
sous--espaces $\smash{\overline{\H}}_J(G^\natural,\omega)$ pour $J\in {\boldsymbol{J}}_{G^\natural,\omega}(G^\natural)$, 
et par passage aux limites inductives on en dŽduit 
thŽorme de \ref{ŽnoncŽ}.

\begin{marema}
{\rm Nous n'en aurons pas besoin par la suite, mais signalons quand m\^eme que la rŽciproque est vraie aussi: le thŽorme de \ref{ŽnoncŽ} implique le 
thŽorme ci--dessus. En effet d'aprs \ref{bons seoc}, pour $J\in {\boldsymbol{J}}_{G^\natural,\omega}(G)$, on a la dŽcomposition en produit 
de catŽgories abŽliennes\index{$\mathfrak{R}(G^\natural,\omega)=\mathfrak{R}_J(G^\natural,\omega)\times \mathfrak{R}_J^\perp G^\natural,\omega)$}
$$
\mathfrak{R}(G^\natural,\omega)=\mathfrak{R}_J(G^\natural,\omega)\times \mathfrak{R}_J^\perp G^\natural,\omega),
$$
o $\mathfrak{R}_J^\perp(G^\natural,\omega)$ est la sous--catŽgorie pleine de $\mathfrak{R}(G^\natural,\omega)$ 
engendrŽe par les $\omega$--reprŽsentations $\Pi_J^\perp$ de $G^\natural$ pour $\Pi$ parcourant 
les objets de $\mathfrak{R}(G^\natural,\omega)$. D'o la dŽcomposition
\index{$\F(G^\natural,\omega)=\F_J(G^\natural,\omega)\oplus \F_J^\perp(G^\natural,\omega)$}
$$
\F(G^\natural,\omega)=\F_J(G^\natural,\omega)\oplus \F_J^\perp(G^\natural,\omega),
$$
o $\F_J^\perp(G^\natural,\omega)$ est le sous--espace vectoriel de $\F(G^\natural,\omega)$ formŽ des 
ŽlŽments $\Phi$ tels que $\Phi(\Pi)=0$ pour tout $\Pi\in {\rm Irr}_{\Bbb C}(G^\natural,\omega)$ tel que $\Pi^J\neq 0$. 
On verra en \cite{HL} que 
l'espace vectoriel $\overline{\H}(G^\natural,\omega)$ admet lui aussi 
une dŽcomposition\index{$\overline{\H}(G^\natural,\omega)=\overline{\H}_J(G^\natural,\omega)\oplus \overline{\H}_J^\perp(G^\natural,\omega)$}
$$
\overline{\H}(G^\natural,\omega)=\overline{\H}_J(G^\natural,\omega)\oplus \overline{\H}_J^\perp(G^\natural,\omega)
$$
qui, par dŽfinition, vŽrifie $\Theta_\Pi\vert_{\overline{\H}_J(G^\natural,\omega)}=0$ pour tout 
$\Pi\in {\rm Irr}_{\Bbb C}(G^\natural,\omega)$ tel que $\Pi^J\neq 0$. On en dŽduit que si 
le thŽorme de \ref{ŽnoncŽ} est vrai alors le thŽorme ci--dessus l'est aussi.\hfill $\blacksquare$}
\end{marema}

\section{RŽduction ˆ la partie \og discrte \fg de la thŽorie}

\subsection{Le \og lemme gŽomŽtrique\fg}\label{lemme gŽo}Rappelons que l'on a pos\'e $W_G=N_G(A_\circ)/M_\circ$. Pour chaque 
$w\in W_G$, on fixe 
un reprŽsentant $n_w$ de $w$ dans $N_G(A_\circ)$. 
De la m\^eme manire, pour $P\in \P(G)$, on pose\index{$W_{M_P}=N_{M_P}(A_\circ)/M_\circ$}
$$
W_{M_P}=N_{M_P}(A_\circ)/M_\circ \subset W_G.
$$
Pour 
$P,\,Q\in \P(G)$, on pose\index{$W_G(P,Q)$, $W_G^{P,Q}$}
$$
W_G(P,Q)=\{w\in W_G: w(M_P)=M_Q\}
$$
et
$$
W_G^{P,Q}=\{w\in W_G: w(M_P\cap P_\circ)\subset P_\circ,\, w^{-1}(M_Q\cap P_\circ)\subset P_\circ\};
$$
o, pour toute partie $X$ de $G$ normalisŽe par $M_\circ$, on a posŽ $w(X)={\rm Int}_{\boldsymbol{G}}(n_w)(X)$. 
D'aprs \cite[2.11]{BZ}, $W_G^{P,Q}$ est un systme de reprŽsentants des doubles classes $W_{M_Q}\backslash W_G/W_{M_P}$  
dans $W_G$. 
Notons que $W_G(P,Q)\cap W_G^{P,Q}$ est un systme de reprŽsentants des classes de
$$
W_{M_Q}\backslash W_G(P,Q)= W_G(P,Q)/W_{M_P}.
$$
Pour $w\in W_G^{P,Q}$, on pose\index{$M_{P,\bar{w}}$, $M_{Q,w}$, $P_{\bar{w}}$, $Q_w$}
$$
M_{P,\bar{w}}= M_P\cap w^{-1}(M_Q),\quad M_{Q,w}=w(M_{P,{\bar{w}}})= w(M_P)\cap M_Q.
$$
Ces groupes sont les composantes de Levi standard des sous--groupes paraboliques standard $P_{\bar{w}}$ 
et $Q_w$ de $G$ 
dŽfinis par
$$
P_{\bar{w}}= M_{P,\bar{w}}U_\circ,\quad Q_w= M_{Q,w}U_\circ.
$$

Pour $P^\natural\in \P(G^\natural)$, le $F$--automorphisme $\theta={\rm Int}_{{\boldsymbol{G}}^\natural}(\delta_1)$ de ${\boldsymbol{G}}^\natural$ 
opre sur $W_{M_P}$. On pose\index{$W_{\smash{M^\natural_{\sP}}}$, $W_{G^\natural}(P,Q)$, $W_{G^\natural}^{P,Q}$}
$$
W_{\smash{M^\natural_{\sP}}}=\{w\in W_{M_P}:\theta(w)=w\}.
$$
C'est un sous--groupe de $W_{M_P}$ qui ne dŽpend pas du choix de $\delta_1\in M^\natural_\circ$. D'autre part, 
pour $P^\natural,\,Q^\natural\in \P(G^\natural)$, 
$\theta$ opre sur $W_G(P,Q)$ et sur $W_G^{P,Q}$, 
et l'on pose
$$
W_{G^\natural}(P,Q)=\{w\in W_G(P,Q):\theta(w)=w\},\quad W_{G^\natural}^{P,Q}=\{w\in W_G^{P,Q}: \theta(w)=w\}.
$$
On a donc
$$
W_{G^\natural}(P,Q)= W_{G^\natural}\cap W_G(P,Q),\quad W_{G^\natural}^{P,Q}= W_{G^\natural}\cap W_G^{P,Q},
$$
et tout comme $W_{G^\natural}$, ces groupes ne  
dŽpendent pas du choix de $\delta_1\in M^\natural_\circ$. De plus, $W_{G^\natural}^{P,Q}$ 
s'identifie au sous--ensemble $[W_{M_Q}\backslash W_G/W_{M_P}]^\theta$ de $W_{M_Q}\backslash W_G/W_{M_P}$ formŽ 
des doubles classes qui sont stabilisŽes par $\theta$. 
Pour $w\in W_{G^\natural}^{P,Q}$, les groupes $P_{{\bar{w}}}$ et $Q_w$ 
sont $\theta$--stables, donc dŽfinissent des sous--espaces paraboliques standard $P_{{\bar{w}}}\cdot \delta_1$ 
et $Q_w\cdot \delta_1$ de $G^\natural$, notŽs $P^\natural_{\bar{w}}$ et $Q^\natural_w$. Leurs 
composantes de Levi standard, $M^\natural_{\sP,\bar{w}}=M_{P,\bar{w}}\cdot \delta_1$ 
et $M^\natural_{\sQ,w}= M_{Q,w}\cdot \delta_1$, vŽrifient
$$
w(M^\natural_{\sP,\bar{w}})= M^\natural_{\sQ,w};
$$
o, pour toute partie $Y$ de $G^\natural$ normalisŽe par $M_\circ$, on a posŽ $w(Y)
=n_w\cdot Y\cdot n_w^{-1}$. Soit\index{${\boldsymbol{f}}(w)={^{\omega}{\boldsymbol{f}}_{P^\natural}^{Q^\natural}}(w)$, 
${\boldsymbol{f}}_{\Bbb C}(w)$} 
$${\boldsymbol{f}}(w)={^{\omega}{\boldsymbol{f}}_{P^\natural}^{Q^\natural}}(w):
\mathfrak{R}(M^\natural_{\sP},\omega)\rightarrow \mathfrak{R}(M^\natural_{\sQ},\omega)$$ le foncteur dŽfini par
$$
{\boldsymbol{f}}(w)= {^\omega{i}_{Q^\natural_w}^{Q^\natural}}\circ (\Sigma \rightarrow {^{n_w}\Sigma}) \circ {^\omega{r}_{P^\natural}^{P^\natural_{\bar{w}}}};
$$
o ${^{n_w}\Sigma}(\delta)=\Sigma(n_w^{-1}\cdot \delta \cdot n_w)$, $\delta \in M^\natural_{{\sP},\bar{w}}$. 
Il induit un morphisme ${\Bbb C}$--linŽaire
$$
{\boldsymbol{f}}_{\Bbb C}(w)={^{\omega}{\boldsymbol{f}}_{P^\natural,{\Bbb C}}^{Q^\natural}}(w):\G_{\Bbb C}(M^\natural_{\sP},\omega)\rightarrow \G_{\Bbb C}(M^\natural_{\sQ},\omega)
$$
qui ne dŽpend pas du choix du reprŽsentant $n_w$ de $w$ dans $N_G(A_\circ)$. 

Soit aussi\index{${\boldsymbol{h}}={^{\omega}{\boldsymbol{h}}_{P^\natural}^{Q^\natural}}$, ${\boldsymbol{h}}_{\Bbb C}$} 
$${\boldsymbol{h}}={^{\omega}{\boldsymbol{h}}_{P^\natural}^{Q^\natural}}:\mathfrak{R}(M^\natural_{\sP},\omega)\rightarrow \mathfrak{R}(M^\natural_{\sQ},\omega)$$
le foncteur dŽfini par
$$
{\boldsymbol{h}}= {^\omega{r}_{G^\natural}^{Q^\natural}}\circ {^\omega{i}_{P^\natural}^{G^\natural}}.
$$
Il induit un morphisme ${\Bbb C}$--linŽaire
$$
{\boldsymbol{h}}_{\Bbb C}={^{\omega}{\boldsymbol{h}}_{P^\natural,{\Bbb C}}^{Q^\natural}}:\G_{\Bbb C}(M^\natural_{\sP},\omega)\rightarrow \G_{\Bbb C}(M^\natural_{\sQ},\omega).
$$

\begin{mapropo}
Pour $\Sigma\in \G_{\Bbb C}(M^\natural_{\sP},\omega)$, on a l'ŽgalitŽ 
dans $\G_{\Bbb C}(M^\natural_{\sQ},\omega)$
$$
{\boldsymbol{h}}_{\Bbb C}(\Sigma)= 
\sum_w {\boldsymbol{f}}_{\Bbb C}(w)(\Sigma),
$$
o $w$ parcourt les ŽlŽments de $W_{G^\natural}^{P,Q}$.
\end{mapropo}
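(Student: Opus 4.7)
L'id\'ee est de partir du lemme g\'eom\'etrique classique de Bernstein--Zelevinsky et de suivre l'action du \og twist \fg $\theta$ sur la filtration sous--jacente.
D'apr\`es \cite{BZ}, on dispose pour toute repr\'esentation $\sigma$ de $M_P$ d'une filtration naturelle de $r_G^Q\circ i_P^G(\sigma)$ comme $M_Q$--module,
index\'ee par les doubles classes $W_{M_Q}\backslash W_G/W_{M_P}$ (via la d\'ecomposition de Bruhat $G=\coprod_{w\in W_G^{P,Q}}Qn_wP$),
dont le sous--quotient associ\'e \`a $w\in W_G^{P,Q}$ est
$i_{Q_w}^Q\circ ({^{n_w}\cdot})\circ r_P^{P_{\bar w}}(\sigma)={\boldsymbol{f}}(w)(\sigma)^\circ$.
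En particulier, on obtient l'\'egalit\'e dans $\G(M_Q)$\,:
$$
r_G^Q\circ i_P^G(\sigma)=\sum_{w\in W_G^{P,Q}}{\boldsymbol{f}}(w)(\sigma)^\circ.
$$

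La strat\'egie consiste \`a montrer que cette filtration, appliqu\'ee \`a $\Sigma^\circ$, est compatible au $\omega$--twist $\Sigma(\delta_1)$ \emph{modulo regroupement des strates sous l'action de $\theta$}.
Pr\'ecis\'ement, puisque $\delta_1$ normalise $M_\circ$, l'op\'erateur $\Pi(\delta_1)$ (o\`u $\Pi={^\omega i_{P^\natural}^{G^\natural}}(\Sigma)$) respecte la stratification de Bruhat de $G$ modulo le d\'ecalage $w\mapsto \theta(w)$ induit par $\theta$ sur $W_{M_Q}\backslash W_G/W_{M_P}$.
Les strates correspondant \`a une $\theta$--orbite $\mathcal{O}=\{w,\theta(w),\dots,\theta^{k-1}(w)\}$ de longueur $k$ se regroupent donc en une sous--$\omega$--repr\'esen\-tation $\Pi_{\mathcal{O}}$ de ${^\omega r_{G^\natural}^{Q^\natural}}\circ{^\omega i_{P^\natural}^{G^\natural}}(\Sigma)$ dont la structure est pr\'ecis\'ement celle d'un $\iota_k$\,: en effet, sur la somme $\oplus_{i=0}^{k-1}{\boldsymbol{f}}(\theta^i(w))(\Sigma^\circ)$ l'op\'erateur $\Pi(\delta_1)$ permute cycliquement les facteurs, avec un op\'erateur de retour sur le $k$--i\`eme passage qui d\'efinit une $\omega_k$--repr\'esentation $\Sigma_{\mathcal{O}}$ de $\mathcal{G}_k$ telle que $\Pi_{\mathcal{O}}\simeq \iota_k(\Sigma_{\mathcal{O}})$.

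Il en r\'esulte, d'apr\`es la troisi\`eme relation d\'efinissant $\G_{\Bbb C}(M^\natural_{\sQ},\omega)$ (cf. \ref{groth}), que la classe de $\Pi_{\mathcal{O}}$ est nulle dans $\G_{\Bbb C}(M^\natural_{\sQ},\omega)$ d\`es que $k>1$. Seules subsistent donc les strates correspondant aux orbites triviales, \cad aux \'el\'ements $w\in W_{G^\natural}^{P,Q}$\,; pour un tel $w$, la sous--$\omega$--repr\'esentation associ\'ee est par construction isomorphe \`a ${\boldsymbol{f}}(w)(\Sigma)$ (les espaces $P^\natural_{\bar w}$, $Q^\natural_w$ et $M^\natural_{\sP,\bar w}$, $M^\natural_{\sQ,w}$ sont $\theta$--stables gr\^ace \`a $\theta(w)=w$, ce qui permet d'appliquer le foncteur tordu ${^\omega i_{Q^\natural_w}^{Q^\natural}}\circ({^{n_w}\cdot})\circ{^\omega r_{P^\natural}^{P^\natural_{\bar w}}}$). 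D'o\`u l'\'egalit\'e annonc\'ee, par passage aux classes dans $\G_{\Bbb C}(M^\natural_{\sQ},\omega)$ et extension lin\'eaire \`a tout $\Sigma\in\G_{\Bbb C}(M^\natural_{\sP},\omega)$.

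Le point d\'elicat sera l'\'etape d'identification de $\Pi_{\mathcal{O}}$ avec $\iota_k(\Sigma_{\mathcal{O}})$\,: il faut v\'erifier que les normalisations choisies pour ${^\omega i^{G^\natural}_{P^\natural}}$ et ${^\omega r^{Q^\natural}_{G^\natural}}$ (cf. \cite[5.9--5.10]{L2}) rendent l'op\'erateur de retour au bout de $k$ pas d'action de $\delta_1$ compatible avec la structure de $\omega_k$--repr\'esentation, \cad qu'aucune constante parasite n'appara\^it\,; ceci repose sur la propri\'et\'e $\omega_k=\omega\circ\N_{\theta,k}$ et sur la commutation de $\iota_k$ aux foncteurs d'induction et de restriction de Jacquet tordus rappel\'ee en \ref{ip et rj}.
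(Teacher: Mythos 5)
Votre esquisse suit essentiellement la d\'emonstration du texte : filtration du lemme g\'eom\'etrique de Bernstein--Zelevinsky, regroupement des strates par $\theta$--orbites (le texte rend ce regroupement licite en ordonnant les $w_i$ par longueurs d\'ecroissantes, ce qui est possible car $l\circ\theta=l$ et garantit que les r\'eunions de strates sur une orbite sont $\theta$--stables), annulation dans $\G_{\Bbb C}$ des contributions des orbites de cardinal $k>1$ via $\iota_k$, et identification pour $\theta(w)=w$ du sous--quotient avec ${\boldsymbol{f}}(w)(\Sigma)$ par l'isomorphisme explicite de \cite[5.5]{BZ}, dont on v\'erifie qu'il commute aux op\'erateurs en $\delta_1$. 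Le \og point d\'elicat \fg que vous signalez est trait\'e exactement ainsi dans le texte (commutation de $\bar\beta$ avec $\overline{\boldsymbol{h}}_j(\Sigma)(\delta_1)$ et ${\boldsymbol{f}}(w)(\Sigma)(\delta_1)$), donc l'argument est correct et ne diff\`ere pas de celui du papier.
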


\begin{proof} Dans le cas non tordu --- i.e. pour $\theta={\rm id}$ et $\omega=1$ ---, on sait 
d'aprs \cite[2.12]{BZ} qu'il existe une filtration $0=h_0\subset h_1\subset \cdots \subset h_k = h$ 
du foncteur
$$h=r_G^Q\circ i_P^G:\mathfrak{R}(M_P)\rightarrow\mathfrak{R}(M_Q)
$$
telle que pour $i=1,\ldots ,k$, le foncteur quotient $h_i/h_{i-1}:\mathfrak{R}(M_P)\rightarrow\mathfrak{R}(M_Q)$ 
est isomorphe \`a
$$f(w_i)= i_{Q_{w_i}}^Q\circ (\sigma \rightarrow {^{n_{w_i}}\sigma})\circ r_P^{P_{\bar{w}_i}}$$
pour un $w_i\in W_G^{P,Q}$. Les $w_i$ sont deux--ˆ--deux distincts, et $W_G^{P,Q}=\{w_i:i=1,\ldots ,k\}$. 
PrŽcisŽment, soit 
$w_1,\ldots ,w_k$ les ŽlŽments de $W_G^{P,Q}$ ordonnŽs de telle manire que pour $i=1,\ldots ,k-1$, on ait 
$$
l(w_i)\geq l(w_{i+1}),
$$
o $l:W_G\rightarrow {\Bbb Z}_{>0}$ dŽsigne la fonction longueur. Alors pour $i=1,\ldots ,k$, 
$$
X_i= \textstyle{\coprod_{j=1}^i} Pn_{w_j}Q
$$
est ouvert dans $G$ (et $Q$--invariant ˆ droite). 
Rappelons que pour une reprŽsentation $\sigma$ de $M_P$, la reprŽsentation 
$\pi=i_P^G(\sigma)$ de $G$ opre par translations ˆ droite sur l'espace des fonctions $\varphi:G\rightarrow V_\sigma$ 
vŽrifiant:
\begin{itemize}
\item $\varphi(mug)= \delta_P^{1/ 2}(m)\sigma(m)\varphi(g)$ pour $m\in M_P$, $u\in U_P$, $g\in G$;
\item il existe un sous--groupe ouvert compact $K_\varphi$ de $G$ tel que $\varphi(gx)=\varphi(g)$ 
pour $g\in G$, $x\in K_\varphi$.
\end{itemize}
Ici $\delta_P:M_P\rightarrow {\Bbb R}_{>0}$ 
dŽsigne la fonction module habituelle. Pour 
$i=1,\ldots ,k$, on note $V_\pi(X_i)$ le sous--espace ($Q$--stable) de $V_\pi$ form\'e des fonctions $\varphi: G\rightarrow V_\sigma$ 
ˆ support dans $X_i$. Il dŽfinit une sous--reprŽsentation $\pi_i$ de $\pi\vert_Q$, dont on peut prendre la restriction 
de Jacquet normalis\'ee (d'espace le quotient de $V_\pi(X_i)$ par le sous--espace engendrŽ par les $\pi_i(u)(v)-v$ pour $v\in V_\pi(X_i)$ 
et $u\in U_Q$): c'est une reprŽsentation de $M_Q$, 
que l'on note $h_i(\sigma)$. La filtration $0=h_0\subset h_1\subset \cdots \subset h_k=h$ ainsi dŽfinie 
vŽrifie $h_i/h_{i-1}\simeq f(w_i)$ \cite[5.2]{BZ}. 

Passons au cas tordu. Notons $\Omega_1,\ldots ,\Omega_s$ les $\theta$--orbites dans 
$W_G^{P,Q}$. Puisque $l\circ \theta = l$, on peut supposer que les ŽlŽments de $W_G^{P,Q}$ 
ont ŽtŽ ordonnŽs de telle manire que
$$
\Omega_j =\{ w_{i_{j-1}+1}, w_{i_{j-1}+2},\ldots ,w_{i_j} \},\quad j=1,\ldots ,s\quad (i_0=0,\, i_s =k).
$$
Soit $\Sigma$ une $\omega$--reprŽsentation de $G^\natural$, et soit $\Pi= {^\omega{i}_{P^\natural}^{Q^\natural}}(\Sigma)$. 
Posons $\sigma=\Sigma^\circ$ et $\pi=\Pi^\circ$. On a $\pi=i_P^G(\sigma)$, 
et d'apr\`es \cite[2.7]{L2}, l'action de $\Pi(\delta_1)$ sur $V_\pi$ est donn\'ee par
$$
\Pi(\delta_1)(\varphi)(g)= \omega(\theta^{-1}(g))\Sigma(\delta_1)(\varphi(\theta^{-1}(g)),\quad \varphi\in V_\pi,\,g\in G.
$$
Pour $j=1,\ldots ,s$, puisque 
$\theta(X_{i_j})=X_{i_j}$, le sous--espace 
$V_j = V_\pi(X_{i_j})$ de $V_\pi$ est stable sous l'action de $\Pi(\delta_1)$, donc d\'efinit une sous--$\omega$--reprŽsentation 
$\Pi_j$ de $\Pi\vert_{Q^\natural}$ telle que $\Pi_j^\circ= \pi_{i_j}$. Comme dans le cas 
non tordu, on peut prendre la restriction de Jacquet normalis\'ee de $\Pi_j$ (cf. \cite[5.10]{L2}): c'est une $\omega$--reprŽsentation 
de $M^\natural_{\sQ}$, que l'on note ${\boldsymbol{h}}_j(\Sigma)$. 
On obtient ainsi une filtration $0={\boldsymbol{h}}_0\subset {\boldsymbol{h}}_1\subset\cdots \subset  {\boldsymbol{h}}_s = {\boldsymbol{h}}$ du foncteur ${\boldsymbol{h}}$ 
qui vŽrifie ${\boldsymbol{h}}_j^\circ = h_{i_j}$, $ j=1,\ldots ,s$.
Soit $j\in \{1,\ldots ,s\}$. Notons $\overline{\boldsymbol{h}}_j$ le foncteur quotient
$$
{\boldsymbol{h}}_j/{\boldsymbol{h}}_{j-1}: \mathfrak{R}(M^\natural_{\sP},\omega)
\rightarrow \mathfrak{R}(M^\natural_{\sQ},\omega).$$
Posons $k_j= i_j - i_{j-1}$. C'est le cardinal de $\Omega_j$. Pour $a=1,\ldots ,k_j$, posons 
$w_{j,a}= w_{i_{j-1}+a}$ et notons $V_j(a)$ 
le sous--espace de $V_j$ form\'e des fonctions $\varphi:G\rightarrow V_\sigma$ ˆ support 
dans
$$X_{j,a}=X_{i_{j-1}}\coprod Pn_{w_{j,a}}Q.
$$
Quitte ˆ rŽordonner l'orbite $\Omega_j$, on peut supposer que $w_{j,a}=\theta^{a-1}(w_{j,1})$ 
et $\theta^{k_j}(w_{j,1})= w_{j,1}$. 
L'automorphisme $\Pi_j(\delta_1)=\Pi(\delta_1)\vert_{V_j}$ de $V_j$ permute les $V_j(a)$: on a 
$$
\Pi_j(\delta_1)(V_j(a))=V_j(a+1),\quad a=1,\ldots k_{j-1}$$
et
$$
\Pi_j(\delta_1)(V_j(k_j))=V_j(1).
$$
On distingue deux cas: $k_j>1$ et $k_j=1$. Si $k_j>1$, 
la $\omega$--reprŽsentation $\Pi_j/\Pi_{j-1}$ de $Q^\natural$ est dans 
l'image du foncteur $\iota_{k_j}$ pour $Q^\natural$ (cf. \ref{iota}), par suite la $\omega$--reprŽsentation 
$\overline{\boldsymbol{h}}_j(\Sigma)$ de $M^\natural_{\sQ}$ est dans l'image du foncteur $\iota_{k_j}$ pour 
$M^\natural_{\sQ}$. Si $k_j=1$, i.e. si $\theta(w_{i_j})=w_{i_j}$, la $\omega$--reprŽsentation 
$\overline{\boldsymbol{h}}_j(\Sigma)$ de $M^\natural_{\sQ}$ vŽrifie
$$
\overline{\boldsymbol{h}}_j(\Sigma)^\circ = h_{i_j}(\sigma)/h_{i_j-1}(\sigma) \simeq f(w_{i_j})(\sigma)={\boldsymbol{f}}(w_{i_j})(\Sigma)^\circ.
$$
L'isomorphisme ci--dessus n'est pas vraiment canonique: il dŽpend du choix d'une mesure de Haar 
sur l'espace quotient $(U_Q\cap w(P))\backslash U_Q$, o l'on a posŽ $w=w_{i_j}$. Fixons une telle mesure, disons $d\bar{u}$. 
Notons $\alpha: V_\sigma \rightarrow r_P^{P_{\bar{w}}}(V_\sigma)$ la projection canonique, 
et $\beta:V_{\pi_{i_j}}\rightarrow V_{f(w)(\sigma)}$ l'opŽrateur dŽfini par
$$
\beta(\varphi)(m_Q)=\delta_Q^{1/2}(m)\int_{U_Q\cap w(P)\backslash U_Q} \alpha(\varphi(n_w^{-1}um_Q n_w))d\bar{u},\quad m_Q\in M_Q.
$$
D'aprs \cite[5.5]{BZ}, $\beta$ induit par passage au quotient l'isomorphisme cherchŽ
$$
\bar{\beta}:h_{i_j}(\sigma)/h_{i_j-1}(\sigma)\rightarrow f(w)(\sigma).
$$
Puisque $\bar{\beta}$ commute aux opŽrateurs $\overline{\boldsymbol{h}}_j(\Sigma)(\delta_1)$ 
et ${\boldsymbol{f}}(w)(\Sigma)(\delta_1)$ --- cf. l'action de $\Pi(\delta_1)$ rappelŽe plus haut ---, 
c'est un isomorphisme de $\overline{\boldsymbol{h}}_j(\Sigma)$ sur ${\boldsymbol{f}}(w)(\Sigma)$. 
En conclusion, pour $\Sigma \in \G(M^\natural_{\sP},\omega)$, si $\vert \Omega_j \vert>1$ 
on a $\overline{\boldsymbol{h}}_j(\Sigma)\in \G_{0^+}(M^\natural_{\sQ},\omega)$, et sinon on a l'ŽgalitŽ dans 
$\G(M^\natural_{\sQ},\omega)$
$$\overline{\boldsymbol{h}}_j(\Sigma) ={\boldsymbol{f}}(w_{i_j})(\Sigma).$$
La proposition est dŽmontrŽe.
\end{proof}

\subsection{Les espaces $\mathfrak{a}_P$, $\mathfrak{a}_Q^P$, $\mathfrak{a}_P^*$, 
etc.}\label{les espaces aP} Pour $P\in  \P(G)$, on note 
${\rm X}^*_F(M_P)$\index{${\rm X}^*_F(M_P)$, $A_P$, ${\rm X}^*(A_P)={\rm X}^*_F(A_P)$} 
le groupe des caractres algŽbriques de $M_P$ qui sont dŽfinis sur $F$. 
On note aussi $A_P$ le tore central dŽployŽ maximal 
de $M_P$, et ${\rm X}^*(A_P)={\rm X}^*_F(A_P)$ le groupe des caractres algŽbriques 
de $A_P$ --- ils sont tous dŽfinis sur $F$. L'application ${\rm X}^*_F(M_P)\rightarrow {\rm X}^*(A_P),\,\chi\mapsto \chi\vert_{A_P}$ est 
injective, de conoyau fini, et l'on pose\index{$\mathfrak{a}_P$, $H_P$, $\mathfrak{a}_{P,F}$, $\mathfrak{a}_{A_P,F}$}
$$
\mathfrak{a}_P = {\rm Hom}_{\Bbb Z}({\rm X}^*_F(M_P),{\Bbb R})={\rm Hom}_{\Bbb Z}({\rm X}^*(A_P),{\Bbb R}).
$$
C'est un espace vectoriel sur ${\Bbb R}$, de dimension finie $\dim A_P$. 
L'application $H_P:M_P\rightarrow \mathfrak{a}_P$ dŽfinie par
$$e^{\langle \chi, H_P(m)\rangle } = \vert \chi(m)\vert_F,\quad m\in M_P,\,\chi \in {\rm X}^*_F(M_P),
$$
a pour noyau $M_P^1$ et pour image un rŽseau de $\mathfrak{a}_P$, notŽ $\mathfrak{a}_{P,F}$. Ici 
$\vert\;\vert_F$ dŽsigne la valeur absolue normalisŽe sur $F$. Comme la restriction de $H_P$ 
ˆ $A_P$ co\"{\i}ncide avec $H_{A_P}$, on peut aussi poser $\mathfrak{a}_{A_P,F}=H_P(A_P)$. C'est 
encore un rŽseau de $\mathfrak{a}_P$, et un sous--groupe d'indice fini de $A_{P,F}$. 

Posons\index{$\mathfrak{a}_P^*$, $\mathfrak{a}_{P,{\Bbb C}}^*$}
$$
\mathfrak{a}_P^*={\rm Hom}_{\Bbb R}(\mathfrak{a}_P,{\Bbb R}), \quad 
\mathfrak{a}_{P,{\Bbb C}}^*={\rm Hom}_{\Bbb R}(\mathfrak{a}_P,{\Bbb C})= \mathfrak{a}_P^*\otimes_{\Bbb R}{\Bbb C}.
$$
Pour tout sous--groupe fermŽ $\Lambda$ de 
$\mathfrak{a}_P$, on note $\Lambda^\vee$ le sous--groupe de $\mathfrak{a}_P^*$ dŽfini par\index{$\Lambda \mapsto \Lambda^\vee$}
$$
\Lambda^\vee=\{\nu\in \mathfrak{a}_P^*:\langle \lambda, \nu\rangle \in 2\pi{\Bbb Z},\,\forall \lambda \in \Lambda\}.
$$
Pour $\nu\in \mathfrak{a}_{P,{\Bbb C}}^*$, on note $\psi_\nu$ le caractre non ramifiŽ de $M_P$ dŽfini par 
$$\psi_\nu(m) = e^{\langle H_P(m),\nu\rangle}.
$$
L'application
$$
\mathfrak{a}^*_{P,{\Bbb C}}/i \mathfrak{a}_{P,F}^\vee \rightarrow \mathfrak{P}(M_P),\, \nu \mapsto \psi_\nu
$$
est un isomorphisme de groupes. Il identifie $\mathfrak{a}_P^*$ au groupe des caractres non ramifiŽs {\it positifs} de $M_P$, 
et $i\mathfrak{a}_P^*/i\mathfrak{a}_{P,F}^\vee$ au groupe des caractres non ramifiŽs {\it unitaires} de $M_P$. On obtient de la 
m\^eme manire un isomorphisme de groupes 
$$
\mathfrak{a}_{P,{\Bbb C}}^*/i\mathfrak{a}_{A_P,F}^\vee\rightarrow \mathfrak{P}(A_P),\, \nu\mapsto \chi_\nu.
$$ 
Le groupe $\mathfrak{a}_{P,F}^\vee$ est un rŽseau de $\mathfrak{a}_P^*$ et un sous--groupe d'indice fini de 
$\mathfrak{a}_{A_P,F}^\vee$, et la projection canonique $\mathfrak{a}^*_{P,{\Bbb C}}/i \mathfrak{a}_{P,F}^\vee
\rightarrow \mathfrak{a}^*_{P,{\Bbb C}}/i \mathfrak{a}_{A_P,F}^\vee$ correspond, via les isomorphismes $\nu\mapsto \psi_\nu$ 
et $\nu\mapsto \chi_\nu$, ˆ la restriction des caractres de $\mathfrak{P}(M_P)\rightarrow \mathfrak{P}(A_P),\,\psi \mapsto \psi\vert_{A_P}$. 

Soit $P,\,Q\in \P(G)$ tels que $Q\subset P$. L'inclusion $M_Q\subset M_P$ induit une application injective (restriction 
des caractres) ${\rm X}^*_F(M_P)\rightarrow {\rm X}^*_F(M_Q)$, et donc aussi une application surjective\index{$\pi_Q^P:\mathfrak{a}_Q\rightarrow \mathfrak{a}_P$, 
$\mathfrak{a}_Q^P$, $(\mathfrak{a}_Q^P)^*$}
$$
\pi_Q^P:\mathfrak{a}_Q\rightarrow \mathfrak{a}_P,
$$
dont le noyau est notŽ $\mathfrak{a}_Q^P$. D'autre part l'inclusion $A_P\subset A_Q$ induit une application surjective 
(restriction des caractres) ${\rm X}(A_Q)\rightarrow {\rm X}^*(A_P)$, et donc aussi une application injective
$$
\mathfrak{a}_P\rightarrow \mathfrak{a}_Q,
$$
qui est une section de $\pi_Q^P$. On obtient les dŽcompositions\index{$\mathfrak{a}_Q=\mathfrak{a}_P\oplus \mathfrak{a}_Q^P$, 
$\mathfrak{a}_Q^*=\mathfrak{a}_P^*\oplus (\mathfrak{a}_Q^P)^*$}
$$
\mathfrak{a}_Q=\mathfrak{a}_P\oplus \mathfrak{a}_Q^P,\quad
\mathfrak{a}_Q^*=\mathfrak{a}_P^*\oplus (\mathfrak{a}_Q^P)^*,
$$
o l'on a posŽ $(\mathfrak{a}_Q^P)^*={\rm Hom}_{\Bbb R}(\mathfrak{a}_Q^P,{\Bbb R})$. 

Pour allŽger l'Žcriture, on remplace l'indice $P_\circ$ par un indice $\circ$ dans toutes les notations prŽcŽdentes. Ainsi pour 
$Q=P_\circ$ et $P=G$, on a les dŽcompositions
$$
\mathfrak{a}_\circ=\mathfrak{a}_G\oplus \mathfrak{a}_\circ^G,\quad \mathfrak{a}_\circ^*=\mathfrak{a}_G^*\oplus (\mathfrak{a}_\circ^G)^*.
$$

On fixe une 
forme quadratique $(\cdot ,\cdot )_\circ$\index{$(\cdot ,\cdot )_\circ$, $(\cdot ,\cdot )_P$, $(\cdot ,\cdot )_Q^P$} 
sur l'espace $\mathfrak{a}_\circ$, dŽfinie positive et invariante sous l'action du groupe de Weyl $W_G$. 
Cela munit $\mathfrak{a}_\circ$ d'une norme, donnŽe par $\vert \nu \vert =(\nu,\nu)^{1/2}$. Par dualitŽ 
l'espace $\mathfrak{a}_\circ^*$ est lui aussi muni d'une norme. Pour $P\in \P(G)$, la dŽcomposition $\mathfrak{a}_\circ=\mathfrak{a}_P\oplus \mathfrak{a}_\circ^P$ 
est orthogonale, et la forme quadratique $(\cdot,\cdot)_\circ$ induit par restriction des 
formes quadratiques dŽfinies positives sur les espaces $\mathfrak{a}_P$ et $\mathfrak{a}_\circ^P$, not\'ees $(\cdot,\cdot)_P$ et $(\cdot,\cdot)_\circ^P$. 
Plus gŽnŽralement, pour $P,\,Q\in \P(G)$ tels que $Q\subset P$, la dŽcomposition $\mathfrak{a}_Q=\mathfrak{a}_P\oplus \mathfrak{a}_Q^P$ est orthogonale, et la 
forme quadratique $(\cdot,\cdot)_Q$ induit par restriction une forme quadratique dŽfinie positive sur l'espace $\mathfrak{a}_Q^P$, 
notŽe $(\cdot,\cdot)_Q^P$. 

\begin{marema}
{\rm 
Les racines de $A_\circ$ dans ${\rm Lie}(G)$ sont des ŽlŽments de $\mathfrak{a}_\circ^*$, nuls sur $\mathfrak{a}_G$. 
Leurs restrictions ˆ $\mathfrak{a}_\circ^G$ forment un systme de racine, en gŽnŽral non rŽduit. 
L'ensemble des racines simples relativement au sous--groupe parabolique minimal $P_\circ$ de $G$, notŽ  
$\Delta_\circ^G$,\index{$\Delta_\circ^G$, $\Delta_\circ^P$, $\Delta_Q^P$} est une base 
de $(\mathfrak{a}_\circ^G)^*$, et l'ensemble des coracines $\check{\alpha}$ pour $\alpha\in \Delta_\circ^G$ est une base de 
$\mathfrak{a}_\circ^G$.

Pour $P\in \P(G)$, on dŽfinit de la m\^eme manire l'ensemble $\Delta_\circ^P$ des racines simples de $A_\circ$ dans ${\rm Lie}(M_P)$ 
relativement ˆ $P_\circ\cap M_P$; c'est une base de $(\mathfrak{a}_\circ^P)^*$. On peut aussi considŽrer les ŽlŽments de $\Delta_\circ^P$ 
comme des formes linŽaires sur $\mathfrak{a}_\circ$, ce qui fait de $\Delta_\circ^P$ un sous--ensemble de $\Delta_\circ^G$. Ainsi 
$\mathfrak{a}_P^G$ est le sous--espace de $\mathfrak{a}_\circ^G$ intersection des noyaux des $\alpha \in \Delta_\circ^P$. 

Pour $P,\,Q\in \P(G)$ tels que $Q\subset P$, soit $\Delta_Q^P$ l'ensemble des restrictions non nulles des ŽlŽments de $\Delta_\circ^P$ au sous--espace 
$\mathfrak{a}_Q^G$ de $\mathfrak{a}_\circ^G$. C'est une base de $(\mathfrak{a}_Q^P)^*$, et $\mathfrak{a}_P^G$ est le sous--espace de 
$\mathfrak{a}_Q^G$ intersection des noyaux des $\alpha\in \Delta_Q^P$. Notons  que $\Delta_Q^P$ n'est en gŽnŽral 
pas la base d'un systme de racines. 
\hfill $\blacksquare$
}
\end{marema} 

\subsection{Les espaces $\mathfrak{a}_{P^\natural}$, $\mathfrak{a}_{Q^\natural}^{P^\natural}$, $\mathfrak{b}_{P^\natural}$, 
$\mathfrak{b}_{P^\natural}^*$, etc.}\label{les espaces bP}
L'automorphisme $\theta={\rm Int}_{{\boldsymbol{G}}^\natural}(\delta_1)$ de $G$ induit par restriction un automorphisme 
de $A_\circ$, qui ne dŽpend pas du choix de $\delta_1\in M^\natural_\circ$. D'o un automorphisme 
de $\mathfrak{a}_\circ = {\rm Hom}_{\Bbb Z}({\rm X}^*(A_\circ),{\Bbb R})$,\index{$\mathfrak{a}_\circ = {\rm Hom}_{\Bbb Z}({\rm X}^*(A_\circ),{\Bbb R})$} 
dŽduit de $\theta\vert_{A_\circ}$ par fonctorialitŽ et que l'on note encore $\theta$, 
donnŽ par
$$ 
\langle \chi ,\theta(u)\rangle = \langle \chi \circ \theta\vert_{A_\circ} , u\rangle,\quad u\in \mathfrak{a}_\circ,\,\chi\in {\rm X}^*(A_\circ).
$$
On note encore $\theta$ l'automorphisme de $\mathfrak{a}_\circ^*$ dŽduit de $\theta$ par dualitŽ:
$$
\langle u, \theta(u^*)\rangle = \langle \theta^{-1}(u),u^*\rangle,\quad u\in \frak{a}_\circ,\, u^*\in \frak{a}_\circ^*.
$$
La dŽcomposition $\mathfrak{a}_\circ=\mathfrak{a}_G\oplus \mathfrak{a}_\circ^G$ est $W_G$--stable. Elle est aussi $\theta$--stable, 
et puisque $\theta$ induit une permutation de l'ensemble fini $\Delta_\circ^G$ (cf. la remarque de \ref{les espaces aP}), il 
induit un automorphisme d'ordre fini de l'espace $\mathfrak{a}_\circ^G$. On peut donc supposer --- i.e. on suppose! --- que la forme quadratique 
dŽfinie positive $W_G$--invariante $(\cdot ,\cdot)_\circ^G$ sur $\mathfrak{a}_\circ^G$, est aussi $\theta$--invariante. 

\begin{marema1}
{\rm 
Il n'est en gŽnŽral pas possible 
de choisir la forme quadratique dŽfinie positive $W_G$--invariante $(\cdot ,\cdot )$ sur $\mathfrak{a}_\circ$ 
de telle manire qu'elle soit $\theta$--invariante. C'est possible par exemple si la restriction de $\theta$ au centre $Z(G)$ 
de $G$ est d'ordre fini. 

Une hypothse moins restrictive consiste ˆ supposer que l'application naturelle 
$\mathfrak{a}_G^\theta\rightarrow \mathfrak{a}_{G,\theta}$ est un isomorphisme, o $\mathfrak{a}_G^\theta\subset \mathfrak{a}_G$ dŽsigne 
le sous--espace vectoriel des vecteurs $\theta$--invariants, et $\mathfrak{a}_{G,\theta}$ l'espace vectoriel 
des coinvariants de $\mathfrak{a}_G$ sous $\theta$, \cad le quotient 
de $\mathfrak{a}_G$ par le sous--espace vectoriel engendrŽ par les $u-\theta(u)$, $u\in \mathfrak{a}_G$. En ce cas le sous--espace vectoriel 
$(\mathfrak{a}_G^*)^\theta\subset \mathfrak{a}_G^*$ s'identifie au dual 
$(\mathfrak{a}_G^\theta)^*={\rm Hom}_{\Bbb R}(\mathfrak{a}_G^\theta,{\Bbb R})$ de $\mathfrak{a}_G^\theta$. \hfill $\blacksquare$
}
\end{marema1}

Pour $P\in \P(G^\natural)$, les dŽcompositions 
$$
\mathfrak{a}_\circ = \mathfrak{a}_P\oplus \mathfrak{a}_\circ^P,\quad 
\mathfrak{a}_P=\mathfrak{a}_G\oplus \mathfrak{a}_P^G,$$
sont $\theta$--stables. Soit 
$\mathfrak{a}_{P^\natural}=\mathfrak{a}_P^\theta$
\index{$A_{P^\natural}$, $\mathfrak{a}_{P^\natural}$, $\mathfrak{a}_{P^\natural}^{G^\natural}$, 
$\tilde{\mathfrak{a}}_G^{G^\natural}$, $\mathfrak{b}_{G^\natural}$, $\mathfrak{b}_{P^\natural}$} 
le sous--espace vectoriel de $\mathfrak{a}_P$ formŽ des vecteurs $\theta$--invariants. Il co\"{\i}ncide avec 
${\rm Hom}_{\Bbb Z}({\rm X}^*(A_{P^\natural}),{\Bbb R})$ o $A_{P^\natural}$ est le plus grand tore dŽployŽ du centre 
$Z(M^\natural_{\sP})$ de $M^\natural_{\sP}$ --- 
c'est un sous--tore de $A_P$, et l'inclusion $\mathfrak{a}_{P^\natural}\subset \mathfrak{a}_P$ est 
donnŽe par la restriction des caractres ${\rm X}^*(A_P)\rightarrow {\rm X}^*(A_{P^\natural}$). 
On note:
\begin{itemize}
\item $\mathfrak{a}_{P^\natural}^{G^\natural}= (\mathfrak{a}_P^G)^\theta$ le sous--espace vectoriel de $\mathfrak{a}_P^G$ formŽ des vecteurs 
$\theta$--invariants;
\item $\tilde{\mathfrak{a}}_G^{G^\natural}$ le sous--espace vectoriel de $\mathfrak{a}_G$ engendrŽ par les $u-\theta(u)$, $u\in \mathfrak{a}_G$;
\item $\mathfrak{b}_{G^\natural}= \mathfrak{a}_{G,\theta}$ l'espace vectoriel $\mathfrak{a}_G/\tilde{\mathfrak{a}}_G^{G^\natural}$ des coinvariants de 
$\mathfrak{a}_G$ sous $\theta$;
\item $\mathfrak{b}_{P^\natural}$ l'espace vectoriel produit $\mathfrak{b}_{G^\natural}\times \mathfrak{a}_{P^\natural}^{G^\natural}$.
\end{itemize}
L'espace dual $\mathfrak{b}_{G^\natural}^*={\rm Hom}_{\Bbb R}(\mathfrak{b}_{G^\natural},{\Bbb R})$ co\"{\i}ncide 
avec le sous--espace vectoriel $(\mathfrak{a}_G^*)^\theta$ de $\mathfrak{a}_G^*$ formŽ des vecteurs $\theta$--invariants. 
D'autre part, comme la forme quadratique $(\cdot,\cdot)_P^G$ sur 
$\mathfrak{a}_P^G$ est $\theta$--invariante, l'orthogonal de $\mathfrak{a}_{P^\natural}^{G^\natural}$ dans 
$\mathfrak{a}_P^G$ co\"{\i}ncide avec le sous--espace vectoriel de $\mathfrak{a}_P^G$ engendrŽ par les $u-\theta(u)$, $u\in \mathfrak{a}_P^G$. 
On en dŽduit que l'espace dual 
$(\mathfrak{a}_{P^\natural}^{G^\natural})^*={\rm Hom}_{\Bbb R}(\mathfrak{a}_{P^\natural}^{G^\natural},{\Bbb R})$
\index{$(\mathfrak{a}_{P^\natural}^{G^\natural})^*$, $\mathfrak{b}_{P^\natural}^*$, $\mathfrak{b}_{P^\natural,{\Bbb C}}^*$} co\"{\i}ncide 
avec le sous--espace vectoriel $(\mathfrak{a}_P^G)^{*,\theta}=((\mathfrak{a}_P^G)^*)^\theta$ de $(\mathfrak{a}_P^G)^*$ 
formŽ des vecteurs $\theta$--invariants. Par suite l'espace dual 
$\mathfrak{b}_{P^\natural}^*={\rm Hom}_{\Bbb R}(\mathfrak{b}_{P^\natural},{\Bbb R})$ 
co\"{\i}ncide avec le sous--espace 
$(\mathfrak{a}_P^*)^\theta= (\mathfrak{a}_G^*)^\theta\oplus (\mathfrak{a}_P^G)^{*,\theta}$ de $\mathfrak{a}_P^*$. 
On note\index{$\pi_P^{P^\natural}:\mathfrak{a}_P\rightarrow \mathfrak{b}_{P^\natural}$}
$$
\pi_P^{P^\natural}:\mathfrak{a}_P\rightarrow \mathfrak{b}_{P^\natural}
$$
la projection naturelle, produit de la projection canonique $\mathfrak{a}_G\rightarrow \mathfrak{b}_{G^\natural}$ et de la projection orthogonale 
$\mathfrak{a}_P^G\rightarrow \mathfrak{a}_{P^\natural}^{G^\natural}$. Enfin comme plus haut, on pose
$$
\mathfrak{b}_{{P^\natural},{\Bbb C}}^*={\rm Hom}_{\Bbb R}(\mathfrak{b}_{P^\natural},{\Bbb C})=
\mathfrak{b}_{P^\natural}^*\otimes_{\Bbb R}{\Bbb C}.
$$

Pour $P,\,Q\in \P(G^\natural)$ tels que $Q\subset P$, on dŽfinit de la m\^eme manire l'espace 
$\mathfrak{a}_{Q^\natural}^{P^\natural} =(\mathfrak{a}_Q^P)^\theta$\index{$\mathfrak{a}_{Q^\natural}^{P^\natural}$, $(\mathfrak{a}_{Q^\natural}^{P^\natural})^*$}. 
On a les dŽcompositions\index{$\mathfrak{b}_{Q^\natural}= \mathfrak{b}_{P^\natural}\oplus \mathfrak{a}_{Q^\natural}^{P^\natural}$, 
$\mathfrak{b}_{Q^\natural}^*= \mathfrak{b}_{P^\natural}^*\oplus (\mathfrak{a}_{Q^\natural}^{P^\natural})^*$}
$$
\mathfrak{b}_{Q^\natural}= \mathfrak{b}_{P^\natural}\oplus \mathfrak{a}_{Q^\natural}^{P^\natural},\quad 
\mathfrak{b}_{Q^\natural}^*= \mathfrak{b}_{P^\natural}^*\oplus (\mathfrak{a}_{Q^\natural}^{P^\natural})^*,
$$
o l'on a posŽ $(\mathfrak{a}_{Q^\natural}^{P^\natural})^*={\rm Hom}_{\Bbb R}(\mathfrak{a}_{Q^\natural}^{P^\natural},{\Bbb R})$. Soit
\index{$\pi_Q^{P^\natural}:\mathfrak{a}_Q\rightarrow \mathfrak{b}_{P^\natural}$, 
$\pi_{Q^\natural}^{P^\natural}: \mathfrak{a}_{Q^\natural}\rightarrow \mathfrak{b}_{P^\natural}$} 
$$
\pi_Q^{P^\natural}: \mathfrak{a}_Q\rightarrow \mathfrak{b}_{P^\natural}
$$
la projection naturelle, donnŽe par $\pi_Q^{P^\natural}= \pi_P^{P^\natural}\circ \pi_Q^P$. 
Notant
$$
\pi_{Q^\natural}^{P^\natural}: \mathfrak{b}_{Q^\natural}\rightarrow \mathfrak{b}_{P^\natural}
$$
la projection orthogonale, on a l'ŽgalitŽ
$$
\pi_Q^{P^\natural}= \pi_{Q^\natural}^{P^\natural}\circ \pi_Q^{Q^\natural}.
$$

\begin{marema2}
{\rm 
Soit $\tilde{\mathfrak{a}}_P^{P^\natural}$ le sous--espace de $\mathfrak{a}_P$ engendrŽ par les $u-\theta(u)$, $u\in \mathfrak{a}_P$, 
et $\tilde{a}_{P^\natural}$ son orthogonal dans $\frak{a}_P$. Soit aussi $\mathfrak{a}_P^{P^\natural}$ l'orthogonal de $\mathfrak{a}_{P^\natural}$ 
dans $\mathfrak{a}_P$. Les espaces $\tilde{\mathfrak{a}}_{P^\natural}$ et $\mathfrak{a}_P^{P^\natural}$ 
ne sont en gŽnŽral pas $\theta$--stables. Ils le sont par exemple si la forme quadratique $(\cdot,\cdot)_\circ$ sur $\mathfrak{a}_\circ$ 
est $\theta$--invariante, \cad si la forme quadratique $(\cdot,\cdot)_G$ sur $\mathfrak{a}_G$ est $\theta$--invariante, 
auquel cas on a les ŽgalitŽs $\tilde{\frak{a}}_{P^\natural}=\frak{a}_{P^\natural}$ et 
$\frak{a}_P^{P^\natural}=\tilde{\frak{a}}_P^{P^\natural}$, et la dŽcomposition $\theta$--invariante $\mathfrak{a}_P =\mathfrak{a}_{P^\natural}\oplus
\mathfrak{a}_P^{P^\natural}$. En ce cas $\mathfrak{b}_{P^\natural}=\mathfrak{a}_{P^\natural}$, 
et $\pi_P^{P^\natural}:\mathfrak{a}_P\rightarrow \mathfrak{a}_{P^\natural}$ est la projection orthogonale.\hfill $\blacksquare$
}
\end{marema2}

Pour $P\in \P(G^\natural)$, l'application $\mathfrak{a}_P^*\rightarrow\mathfrak{P}(M_P),\,\nu\mapsto \psi_\nu$ est $\theta$--\'equivariante. 
Elle identifie $\mathfrak{b}_{P^\natural}^*$ au groupe des caractres non ramifiŽs positifs de $M^\natural_{\sP}$. 
Pour tout sous--groupe fermŽ $\Lambda$ de $\mathfrak{b}_{P^\natural}$, on note $\Lambda^\vee$ le sous--groupe 
de $\mathfrak{b}_{P^\natural}^*$ dŽfini comme plus haut en rempla\c{c}ant la paire $(\mathfrak{a}_P,\mathfrak{a}_P^*)$ par 
la paire $(\mathfrak{b}_{P^\natural},\mathfrak{b}_{P^\natural}^*)$. Soit $\mathfrak{b}_{P^\natural,F}$\index{$\mathfrak{b}_{P^\natural,F}$, 
$\mathfrak{b}_{A_{\smash{P^\natural}},F}$} l'image de $\mathfrak{a}_{P,F}$ par 
l'application $\pi_P^{P^\natural}:\mathfrak{a}_P\rightarrow \mathfrak{b}_{P^\natural}$. On a l'ŽgalitŽ 
$$
\mathfrak{b}_{P^\natural,F}^\vee=\mathfrak{a}_{P,F}^\vee\cap \mathfrak{b}_{P^\natural}^*,
$$ 
et l'application $\nu\mapsto \psi_\nu$ induit par restriction un morphisme injectif
$$
\mathfrak{b}_{P^\natural,{\Bbb C}}^*/i\mathfrak{b}_{P^\natural,F}^\vee\rightarrow \mathfrak{P}(M^\natural_{\sP}).
$$
L'image de ce morphisme est un sous--tore (algŽbrique, complexe) de $\mathfrak{P}(M^\natural_{\sP})$, 
de dimension celle de $\mathfrak{P}(M^\natural_{\sP})$. 
Ce tore est la composante neutre $\mathfrak{P}^0(M^\natural_{\sP})$ de 
$\mathfrak{P}(M^\natural_{\sP})$, et le morphisme $\nu\mapsto \psi_\nu$ identifie $\mathfrak{b}_{P^\natural}^*$ 
(resp. $i\mathfrak{b}_{P^\natural}^*/i\mathfrak{b}_{P^\natural,F}^\vee$) au 
sous--groupe de $\mathfrak{P}^0(M^\natural_{\sP})$ formŽ des caractres positifs 
(resp. unitaires). Soit aussi $\mathfrak{b}_{A_{\smash{P^\natural}},F}$ l'image de $H_P(A_{P^\natural}(F))$ 
par l'application $\pi_P^{P^\natural}$. 
L'application $\nu\mapsto \chi_\nu$ induit par 
restriction un isomorphisme de groupes
$$
\mathfrak{b}_{P^\natural,{\Bbb C}}^*/i\mathfrak{b}_{A_{\smash{P^\natural}},F}^\vee \rightarrow\mathfrak{P}(A_{P^\natural}),
\quad \mathfrak{P}(A_{P^\natural})={\rm Hom}(A_{P^\natural}/A_{P^\natural}^1,{\Bbb C}^\times).
$$
Comme en \ref{les espaces aP}, la projection canonique 
$\mathfrak{b}_{P^\natural,{\Bbb C}}^*/i\mathfrak{b}_{P^\natural,F}^\vee\rightarrow 
\mathfrak{b}_{P^\natural,{\Bbb C}}^*/i\mathfrak{b}_{A_{\smash{P^\natural}},F}^\vee$ correspond, via les 
isomorphismes $\nu\mapsto \psi_\nu$ et $\nu\mapsto \chi_\nu$, ˆ la 
restriction des caractres $\mathfrak{P}^0(M^\natural_{\sP})\rightarrow \mathfrak{P}(A_{P^\natural})$.

\begin{marema3}
{\rm Posons $A^\natural_{\sP}= A_P\cdot \delta_1$.\index{$A^\natural_{\sP}$, $\mathfrak{b}_{\smash{A^\natural_{\sP}},F}$} 
C'est un sous--espace tordu de $M^\natural_{\sP}$ 
(isomorphe ˆ $A^\natural_{\sP}\cdot \delta$ pour tout $\delta\in M^\natural_{\sP}$), et 
le groupe $\mathfrak{P}(A^\natural_{\sP})$ des caractres non ramifiŽs de $A^\natural_{\sP}$ est par 
dŽfinition le sous--groupe de $\mathfrak{P}(A_P)$ formŽ des ŽlŽments $\theta$--invariants. Notons 
$\mathfrak{b}_{\smash{A^\natural_{\sP}},F}$ l'image de $\mathfrak{a}_{A_P,F}$ par l'application 
$\pi_P^{P^\natural}$. C'est un rŽseau de $\mathfrak{a}_{P^\natural}$ 
qui vŽrifie la double inclusion
$$\mathfrak{b}_{A_{\smash{P^\natural}},F}\subset \mathfrak{b}_{\smash{A^\natural_{\sP}},F}
\subset \mathfrak{b}_{P^\natural,F}.
$$
La restriction des caractres $\mathfrak{P}(M^\natural_{\sP})\rightarrow\mathfrak{P}(A^\natural_{\sP})$ 
est un morphisme de variŽtŽs algŽbriques, de noyau et de conoyau finis. D'aprs ce qui prŽcde, 
il induit un  morphisme surjectif 
$\mathfrak{P}^0(M^\natural_{\sP})\rightarrow\mathfrak{P}^0(A^\natural_{\sP})$ de noyau isomorphe au groupe 
(fini) $i\mathfrak{b}_{\smash{A^\natural_{\sP}},F}^\vee/i\mathfrak{b}_{P^\natural,F}^\vee$.\hfill $\blacksquare$}
\end{marema3}

\begin{notation}
{\rm 
Pour $P\in \P(G)$, on pose\index{$d(M_P)$, $d(M^\natural_{\sP})$}
$$
d(M_P)= \dim \mathfrak{P}(M_P)\;(=\dim_{\Bbb R} \mathfrak{a}_P),
$$
et pour $P^\natural\in \P(G^\natural)$, on pose
$$
d(M^\natural_{\sP})= \dim \mathfrak{P}(M^\natural_{\sP})\;(=
\dim \mathfrak{P}^0(M^\natural_{\sP})= \dim_{\Bbb R}\mathfrak{b}_{P^\natural})).
$$
}
\end{notation}

\subsection{Les morphismes ${^\omega{T}_{P^\natural,{\Bbb C}}}$}\label{les T}
Pour $P^\natural\in \P(G^\natural)$, on note ${^\omega{T}_{P^\natural}}$ le foncteur\index{${^\omega{T}_{P^\natural}}$, 
${^\omega{T}_{{P^\natural},{\Bbb C}}}$}
$$
{^\omega{i}_{P^\natural}^{G^\natural}}\circ {^\omega{r}_{G^\natural}^{P^\natural}}:\mathfrak{R}(G^\natural,\omega)\rightarrow 
\mathfrak{R}(G^\natural,\omega).
$$
Il induit un morphisme de ${\Bbb C}$--espaces vectoriels
$$
{^\omega{T}_{{P^\natural},{\Bbb C}}}:\G_{\Bbb C}(G^\natural,\omega)\rightarrow \G_{\Bbb C}(G^\natural,\omega).
$$

On note $\G_{{\Bbb C},{\rm ind}}(G^\natural,\omega)$\index{$\G_{{\Bbb C},{\rm ind}}(G^\natural,\omega)$, 
$\G^{\rm dis}_{\Bbb C}(G^\natural,\omega)$} le sous--espace de $\G_{\Bbb C}(G^\natural,\omega)$ 
engendrŽ par les ${^\omega{i}_{P^\natural}^{G^\natural}}(\G_{\Bbb C}(M^\natural_{\sP},\omega))$ pour 
$P^\natural\in \P(G^\natural)$, $P^\natural\neq G^\natural$, et l'on pose
$$
\G^{\rm dis}_{\Bbb C}(G^\natural,\omega)=
\G_{\Bbb C}(G^\natural,\omega)/\G_{{\Bbb C},{\rm ind}}(G^\natural,\omega).
$$

On dŽfinit comme dans le cas non tordu une filtration dŽcroissante 
$\{\G_{{\Bbb C},i}(G^\natural,\omega)\}$ de $\G_{\Bbb C}(G^\natural,\omega)$: pour chaque 
entier $i\geq -1$, on pose\index{$\G_{{\Bbb C},i}(G^\natural,{\Bbb C})$}
$$
\G_{{\Bbb C},i}(G^\natural,{\Bbb C})= \sum_{P^\natural,\,d(M_P)>i}{^\omega{i}_{P^\natural}^{G^\natural}}(\G_{\Bbb C}
(M^\natural_{\sP},\omega)),
$$
o $P^\natural$ parcourt les ŽlŽments de $\P(G^\natural)$. 
On a donc
$$
\G_{{\Bbb C},i}(G^\natural,\omega)=\G(G^\natural,\omega),\quad i<d(G),
$$ 
$$
\G_{{\Bbb C},i}(G^\natural,\omega)=0,\quad i\geq d(M_\circ),
$$
et
$$
\G_{{\Bbb C},d(G)}(G^\natural,\omega)= \G_{{\Bbb C},{\rm ind}}(G^\natural,\omega).
$$

\begin{mapropo}
Soit $ d$ un entier tel que $d(G)\leq d< d(M_\circ)$. Il existe une famille 
de nombres rationnels $\lambda_d = \{\lambda_d(P^\natural):P^\natural\in \P(G^\natural),\,d(M_P)>d\}$ 
telle que le ${\Bbb C}$--endomorphisme ${\boldsymbol{A}}_d= {\boldsymbol{A}}_{\lambda_d}$\index{${\boldsymbol{A}}_d= {\boldsymbol{A}}_{\lambda_d}$} 
de $\G_{\Bbb C}(G^\natural,\omega)$ 
dŽfini par
$${\boldsymbol{A}}_d= {\rm id} + \sum_{P^\natural,\,d(M_P)>d}\lambda_d(P^\natural){^\omega{T}_{P^\natural,{\Bbb C}}}$$
vŽrifie les propriŽtŽs:
$$\ker {\boldsymbol{A}}_d =\G_{{\Bbb C},d}(G^\natural,\omega),\quad 
{\boldsymbol{A}}_d\circ {\boldsymbol{A}}_d=
{\boldsymbol{A}}_d.
$$
\end{mapropo}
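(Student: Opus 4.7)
The plan is to adapt the construction from Bernstein--Deligne--Kazhdan (and its variant in \cite{R} for the case $\theta$ of finite order and $\omega=1$) to the general twisted setting, using the twisted geometric lemma (Proposition of \ref{lemme g�o}) as the main computational tool. The key point is that the filtration $\{\G_{{\Bbb C},i}(G^\natural,\omega)\}$ is stable under every operator ${^\omega T_{P^\natural,{\Bbb C}}}$: indeed the image of ${^\omega T_{P^\natural,{\Bbb C}}}$ is contained in ${^\omega{i}}_{P^\natural}^{G^\natural}(\G_{\Bbb C}(M^\natural_{\sP},\omega))\subset \G_{{\Bbb C},d(M_P)-1}(G^\natural,\omega)$, and for $\Pi$ of the form ${^\omega{i}}_{Q^\natural}^{G^\natural}(\Sigma)$ the composition rewrites as a sum of inductions from Levi subspaces of $M^\natural_{\sP}$. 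In particular ${\boldsymbol{A}}_d$ automatically carries $\G_{\Bbb C}(G^\natural,\omega)$ into $\G_{{\Bbb C},d}(G^\natural,\omega) + {\Bbb C}\cdot {\rm id}$, and the whole game is to choose the $\lambda_d(P^\natural)$ so that ${\boldsymbol{A}}_d$ vanishes identically on $\G_{{\Bbb C},d}(G^\natural,\omega)$.

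The core computation is to expand, for $P^\natural, Q^\natural \in \P(G^\natural)$ with $d(M_P),d(M_Q)>d$,
$$
{^\omega T_{P^\natural,{\Bbb C}}}\bigl({^\omega{i}}_{Q^\natural}^{G^\natural}(\Sigma)\bigr)
= {^\omega{i}}_{P^\natural}^{G^\natural}\!\Bigl(\,\sum_{w\in W_{G^\natural}^{Q,P}}{^\omega{\boldsymbol{f}}_{Q^\natural,{\Bbb C}}^{P^\natural}}(w)(\Sigma)\Bigr),
$$
by applying the twisted geometric lemma to ${^\omega r_{G^\natural}^{P^\natural}}\circ {^\omega{i}}_{Q^\natural}^{G^\natural}$. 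The contribution of the \emph{longest} $\theta$-fixed Weyl element $w=1$ gives back $\Pi = {^\omega{i}}_{Q^\natural}^{G^\natural}(\Sigma)$ when $Q^\natural\subset P^\natural$ (after transitivity, Lemma of \ref{ip et rj}), while the remaining terms give inductions from strictly smaller Levi subspaces. Thus on the graded piece $\G_{{\Bbb C},d-1}/\G_{{\Bbb C},d}$ the operators ${^\omega T_{P^\natural,{\Bbb C}}}$ reduce to a combinatorial action indexed by $\theta$-fixed double cosets in $W_G$.

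With this in hand, the plan is to solve for $\{\lambda_d(P^\natural)\}$ by descending induction on $d(M_Q)$, starting from the case $Q^\natural = P^\natural_\circ$. At each stage, the requirement that ${\boldsymbol{A}}_d$ annihilate $\Pi = {^\omega{i}}_{Q^\natural}^{G^\natural}(\Sigma)$ produces one linear equation on the $\lambda_d(P^\natural)$ for $P^\natural\supset Q^\natural$ (or rather, for $P^\natural$ whose Levi is $W_{G^\natural}$-conjugate to one containing $M_{\sQ}^\natural$), whose coefficients are integers extracted from the cardinalities of the twisted Weyl orbits $W_{G^\natural}^{Q,P}$ appearing in the geometric lemma. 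Since the leading coefficient on the diagonal (corresponding to $P^\natural=Q^\natural$) is nonzero and rational, the triangular system admits a unique rational solution. Once $\ker {\boldsymbol{A}}_d = \G_{{\Bbb C},d}$ is established, idempotence is automatic: ${\boldsymbol{A}}_d^2 = {\boldsymbol{A}}_d + ({\boldsymbol{A}}_d - {\rm id})\circ {\boldsymbol{A}}_d$, and $({\boldsymbol{A}}_d - {\rm id})\circ {\boldsymbol{A}}_d$ vanishes because $({\boldsymbol{A}}_d - {\rm id})(\G_{\Bbb C}(G^\natural,\omega))\subset \G_{{\Bbb C},d}(G^\natural,\omega) = \ker {\boldsymbol{A}}_d$.

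The main obstacle will be the bookkeeping of the twisted Weyl combinatorics: one must check that the contributions of non-$\theta$-fixed orbits in $W_G^{Q,P}$ are killed in $\G_{\Bbb C}(G^\natural,\omega)$ (they lie in the image of $\iota_k$ for $k>1$, hence are zero by the definition of $\G_{\Bbb C}$, cf. the lemma of \ref{inclusion iota +}), so that only the $\theta$-fixed double cosets $W_{G^\natural}^{Q,P}$ actually enter the equations. This reduces the twisted problem to one structurally identical to the non-twisted case of \cite{BDK}, in which the invertibility of the relevant combinatorial matrix is known; the only additional verification needed is that the inductive system remains rational when passing to the $\theta$-fixed sublattice of the Weyl-group data.
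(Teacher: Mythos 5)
Your overall skeleton (stability of the filtration under the operators ${^\omega T_{P^\natural,{\Bbb C}}}$, analysis level by level via the geometric lemma, and the final derivation of $\ker {\boldsymbol{A}}_d=\G_{{\Bbb C},d}(G^\natural,\omega)$ and ${\boldsymbol{A}}_d^2={\boldsymbol{A}}_d$ from $({\boldsymbol{A}}_d-{\rm id})(\G_{\Bbb C}(G^\natural,\omega))\subset \G_{{\Bbb C},d}(G^\natural,\omega)$) is the right one, but you are missing the ingredient that carries essentially all the weight in the twisted case: the invariance of the induction map under $W_{G^\natural}$--conjugation of the Levi subspace, i.e. ${^\omega{i}_{Q^\natural,{\Bbb C}}^{G^\natural}}({^w\Sigma})={^\omega{i}_{P^\natural,{\Bbb C}}^{G^\natural}}(\Sigma)$ when $M^\natural_{\sQ}=w(M^\natural_{\sP})$, $w\in W_{G^\natural}$ (the twisted analogue of \cite[lemma 5.4]{BDK}). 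You use it tacitly: the terms of the geometric lemma are of the form ${^\omega{i}_{Q^\natural_w}^{Q^\natural}}\circ({^{n_w}}\!\cdot)\circ{^\omega{r}}$, and to say that the same--level contributions "give back $\Pi$'' (with total multiplicity $p_{Q^\natural}=\vert[W_{M_Q}\backslash W_G(Q,Q)]^\theta\vert$, not $1$ -- your "$w=1$'' count is off) one must identify such a $w$--twisted induction with the untwisted one inside $\G_{\Bbb C}(G^\natural,\omega)$. In the presence of $\theta$ and $\omega$ this is not bookkeeping reducible to \cite{BDK}: the paper proves it (lemme 1 de \ref{les T}) by deforming $\Sigma$ along generic elements of $\mathfrak{P}^0_{\Bbb C}(M^\natural_{\sP})$, using Frobenius reciprocity together with the geometric lemma and central--character/unitarity arguments to get generic irreducibility and equality, then the regularity of $\Xi\mapsto\Phi_\phi({^\omega{i}_{P^\natural}^{G^\natural}}(\Xi\Sigma))$ (\ref{modules admissibles}) and the linear independence of twisted characters to conclude for all $\Sigma$. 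Your remark that non--$\theta$--fixed Weyl orbits die through $\iota_k$ is correct but is already part of the geometric lemma; it does not yield this conjugation invariance.

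The second gap is the linear--system step. Requiring that ${\boldsymbol{A}}_d$ annihilate ${^\omega{i}_{Q^\natural}^{G^\natural}}(\Sigma)$ exactly is not one scalar equation (the lower terms of the geometric lemma are inductions from strictly smaller Levi subspaces, not multiples of the class), so the equations must be posed on the graded pieces $\G_{{\Bbb C},k-1}/\G_{{\Bbb C},k}$; and there the system is not triangular with invertible diagonal: for two distinct standard $P_1^\natural\neq P_2^\natural$ at the same level with $W_{G^\natural}$--conjugate Levi subspaces the corresponding rows are linearly dependent (the diagonal block has entries $p_{Q^\natural}$ repeated along the association class), so there is no unique solution and even consistency requires an argument you do not give. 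The paper sidesteps any system: it forms $U_k=(T_{k,n(k)}-p_{k,n(k)})\circ\cdots\circ(T_{k,1}-p_{k,1})$ over the $P^\natural$ with $d(M_P)=k$, shows $U_k$ kills $\G_{{\Bbb C},k-1}/\G_{{\Bbb C},k}$ using the congruence ${^\omega{T}_{Q^\natural,{\Bbb C}}}\circ{^\omega{i}_{P^\natural}^{G^\natural}}(\Sigma)\equiv p_{Q^\natural}\,{^\omega{i}_{P^\natural}^{G^\natural}}(\Sigma)$ ou $0$ $({\rm mod}\;\G_{{\Bbb C},d(M_Q)})$, and then expands the composite $U_{d(M_\circ)}\circ\cdots\circ U_{d+1}$ back into the required shape $\mu({\rm id}+\sum_{P^\natural}\lambda_d(P^\natural){^\omega{T}_{P^\natural,{\Bbb C}}})$, $\mu\in{\Bbb Z}\smallsetminus\{0\}$, by means of the second composition formula ${^\omega{T}_{Q^\natural,{\Bbb C}}}\circ{^\omega{T}_{P^\natural,{\Bbb C}}}=\sum_{w\in W_{G^\natural}^{P,Q}}{^\omega{T}_{P^\natural_{\bar w},{\Bbb C}}}$, which your proposal never mentions but which is exactly what guarantees the existence of the rational family $\lambda_d$. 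Either supply these two lemmas (conjugation invariance and the $T\circ T$ formula) and adopt the product construction, or repair the graded linear system by grouping associated parabolics and proving its consistency; as written, the argument does not go through.
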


\begin{proof} 
Elle nŽcessite d'Žtablir quelques lemmes. Le lemme 1 Žtend au cas tordu la propriŽtŽ d'invariance du morphisme 
induction parabolique sous l'action de $W_G$ par conjugaison \cite[lemma 5.4]{BDK}. Le lemme 2 est la variante tordue de 
\cite[cor. 5.4]{BDK}. 

\begin{monlem1}
Soit $P^\natural,\,Q^\natural\in \P(G^\natural)$. Supposons que 
$M^\natural_{\sQ}=w(M^\natural_{\sP})\;(=n_w\cdot M^\natural_{\sP}\cdot n_w^{-1})$ 
pour un 
$w\in W_{G^\natural}$. Pour $\Sigma\in \G_{\Bbb C}(M^\natural_{\sP},\omega)$, on a l'Žgalit\'e dans 
$\G_{\Bbb C}(G^\natural,\omega)$
$$
{^\omega{i}_{Q^\natural}^{G^\natural}}(^w{\Sigma})= {^\omega{i}_{P^\natural}^{G^\natural}}(\Sigma).
$$
\end{monlem1}

\begin{proof}Par transport de structures, on a l'ŽgalitŽ
$${^\omega{i}_{w(P^\natural),{\Bbb C}}^{G^\natural}}(^w{\Sigma})= 
{^\omega{i}_{P^\natural,{\Bbb C}}^{G^\natural}}(\Sigma),$$ o ${^\omega{i}_{\omega(P^\natural),{\Bbb C}}^{G^\natural}}$ 
est le morphisme induction parabolique normalisŽ de $M^\natural_{\sQ}$ ˆ $G^\natural$ par 
rapport au sous--espace parabolique (non standard) $w(P^\natural)= M^\natural_{\sQ}\cdot U_{w(P)}$ de 
$G^\natural$. Rappelons que l'indice ${\Bbb C}$ indique que l'on est dans $\G_{\Bbb C}$, cf. \ref{ip et rj}. 
Il s'agit de voir que l'on peut remplacer $w(P^\natural)$ par $w(M^\natural_{\sP})\cdot U_\circ = Q^\natural$.

La double classe $W_{M_Q}wW_{M_P}=W_{M_Q}w = wW_{M_P}$ est $\theta$--stable. 
Quitte ˆ remplacer $w$ par un ŽlŽment de $W_{M_Q}w$, on peut supposer que 
$w\in W_{G^\natural}^{P,Q}$. Alors $w(M_P\cap P_\circ) = M_Q \cap P_\circ$, et pour tout 
sous--groupe parabolique standard $R'$ de $M_P$, $w(R')$ est un sous--groupe 
parabolique standard de $M_Q$. On en dŽduit que pour $R^\natural \in \P(G^\natural)$ tel que 
$R^\natural \subset P^\natural$, $w(R^\natural\cap M^\natural_{\sP})$ est un sous--espace 
parabolique standard de $M^\natural_{\sQ}$ de composante de Levi standard $\omega(M^\natural_{\sR})$.

D'aprs le lemme~1 de \ref{base langlands}, 
on peut supposer que $\Sigma = {^\omega{i}_{R^\natural,{\Bbb C}}^{P^\natural}}(\Xi'\Sigma')$ 
pour un $R\in \P(G^\natural)$ tel que $R^\natural\subset P^\natural$, une $\omega_{\rm u}$--reprŽsentation 
$\Sigma'\in {\rm Irr}_{{\Bbb C},{\rm t}}(M^\natural_{\sR},\omega_{\rm u})$ et un caractre $\Xi'\in \mathfrak{P}_{\Bbb C}(M^\natural_{\sR},
\vert\omega\vert)$ tel que $\Xi'^\circ$ est positif par rapport ˆ $U_R\cap M_P$. 
Par transitivitŽ du morphisme induction parabolique, on a l'ŽgalitŽ ${^\omega{i}_{P^\natural,{\Bbb C}}^{G^\natural}}(\Sigma)=
 {^\omega{i}_{R^\natural,{\Bbb C}}^{G^\natural}}(\Xi'\Sigma')$ et 
(d'aprs ce qui prŽcde) 
$${^\omega{i}_{Q^\natural,{\Bbb C}}^{G^\natural}}(^w{\Sigma})= 
{^\omega{i}^{G^\natural}_{w(R^\natural\cap M^\natural_{\sP})\cdot U_Q,{\Bbb C}}}({^w(\Xi'\Sigma')}).
$$
Quitte ˆ remplacer $P^\natural$ par $R^\natural$ et $\Sigma$ par $\Xi'\Sigma'$, on peut donc supposer que 
$\Sigma=\Xi'\Sigma'$ pour une $\omega_{\rm u}$--reprŽsentation $\Sigma'\in {\rm Irr}_{{\Bbb C},{\rm t}}(M^\natural_{\sP},\omega_{\rm u})$ et 
un caractre $\Xi'\in \mathfrak{P}_{\Bbb C}(M^\natural_{\sP},\vert\omega\vert)$ tel que $\Xi'^\circ >0$. 

Supposons qu'il existe un sous--ensemble Zariski--dense $\Omega$ de la sous--variŽtŽ 
irrŽductible $\mathfrak{X}=\{\Psi\Xi':\Psi\in \mathfrak{P}_{\Bbb C}^0(M^\natural_{\sP})\}$ de 
$\mathfrak{P}_{\Bbb C}(M^\natural_{\sP},\vert\omega\vert)$ tel 
que pour tout $\Psi\in  \Omega$, on a l'ŽgalitŽ
$$
{^\omega{i}_{P^\natural,{\Bbb C}}^{G^\natural}}(\Psi\Sigma')=
{^\omega{i}_{Q^\natural,{\Bbb C}}^{G^\natural}}({^w(\Psi\Sigma')}).
$$
Pour $\phi\in \H^\natural$, les fonctions $\Psi\mapsto \Phi_\phi({^\omega{i}_{P^\natural}^{G^\natural}}(\Psi\Sigma'))$ 
et $\Psi\mapsto \Phi_\phi({^\omega{i}_{Q^\natural}^{G^\natural}}({^w(\Psi\Sigma'))})$ sur 
$\mathfrak{P}_{\Bbb C}(M^\natural_{\sP},\vert\omega\vert)$
sont rŽgulires (\ref{modules admissibles}), d'o l'ŽgalitŽ
$$\Phi_\phi({^\omega{i}_{P^\natural,{\Bbb C}}^{G^\natural}}(\Sigma))
=\Phi_\phi({^\omega{i}_{Q^\natural,{\Bbb C}}^{G^\natural}}({^w\Sigma})).
$$
Enfin la propriŽtŽ d'indŽpendance linŽaire des caractres--distributions des 
$\omega$--reprŽsentations $G$--irrŽductibles de $G^\natural$ \cite[8.5, prop.]{L2} 
implique l'ŽgalitŽ
$$
{^\omega{i}_{P^\natural,{\Bbb C}}^{G^\natural}}(\Sigma)=
{^\omega{i}_{Q^\natural,{\Bbb C}}^{G^\natural}}({^w{\Sigma}}).
$$

Reste ˆ prouver l'existence de $\Omega$. Pour $\nu\in \mathfrak{b}_{{P^\natural},{\Bbb C}}^*$, notons 
$\Psi_\nu$ l'ŽlŽment de $\mathfrak{P}_{\Bbb C}^0(M^\natural_{\sP})$ dŽfini par 
$\Psi_\nu = \psi_\nu^{\delta_1}$ (\ref{les espaces bP}, et remarque 2 de \ref{carac nr}); on a donc 
$\Psi_\nu^\circ = \psi_\nu$ et $\psi_\nu\vert_{A_{P^\natural}}=\chi_\nu$. Soit $\boldsymbol{\Sigma}'$ une $\omega_{\rm u}$--reprŽsentation 
$M_P$--irrŽductible de $M^\natural_{\sP}$ dans la classe d'isomorphisme $\Sigma'$. Puisque $\boldsymbol{\Sigma}'$ est 
unitaire (\ref{langlands}, dŽfinition), 
son caractre central (cf. \ref{H modules}, remarque 3) l'est aussi. Choisissons une uniformisante 
$\varpi$ de $F$, et posons $A_{P^\natural}^\varpi={\rm Hom}({\rm X}^*(A_{P^\natural}),\langle \varpi \rangle)$. 
Via la dŽcomposition $A_{P^\natural}=A_{P^\natural}^1\times A_{P^\natural}^\varpi$, la restriction ˆ 
$A_{P^\natural}^\varpi$ du caractre central de $\Sigma'$ dŽfinit un caractre non ramifiŽ unitaire de $A_{P^\natural}$ 
(cf. \ref{carac nr}, exemple), i.e. de la forme $\chi_{i\mu_0}$ pour un ŽlŽment $\mu_0\in \mathfrak{b}_{P^\natural}^*$. 
Posons ${\boldsymbol{\Sigma}}'_0= \Psi_{-i\mu_0}\cdot 
\boldsymbol{\Sigma}'$. C'est une $\omega_{\rm u}$--reprŽsentation $M_P$--irrŽductible tempŽrŽe de $M^\natural_{\sP}$, 
de classe d'isomorphisme $\Sigma'_0=\Psi_{-i\mu_0}\Sigma'$. 
D'autre part, le caractre $\omega$ est trivial sur le centre $Z(M^\natural_{\sP})$ de $M^\natural_{\sP}$, par consŽquent 
la restriction de $\Xi'^\circ$ 
au tore $A_{P^\natural}$ est un caractre non ramifiŽ positif de $A_{P^\natural}$, i.e. de la forme $\chi_{\eta_0}$ pour un ŽlŽment 
$\eta_0\in \mathfrak{b}_{P^\natural}^*$ (cf. \ref{les espaces bP}). 
Posons $\Xi'_0= \Psi_{-\eta_0}\cdot \Xi'$. 
C'est  
un ŽlŽment de $\mathfrak{X}$ 
vŽrifiant $\Xi'_0(\delta_1)=\Xi'(\delta_1)$ et $\Xi'_0\vert_{A_{P^\natural}}=1$. En particulier, 
$\Xi'_0$ se relve en un ŽlŽment $\widetilde{\Xi}'_0\in \mathfrak{P}_{\Bbb C}(G^\natural,\vert\omega\vert)$. 
Posons $\boldsymbol{\Sigma}_0=\Xi'_0\cdot \boldsymbol{\Sigma}'_0$. C'est une 
$\omega$--reprŽsentation $M_P$--irrŽductible de $M^\natural_{\sP}$, de classe d'isomorphisme 
$\Sigma_0= \Xi'_0\Sigma'_0$. 
Pour $\nu\in \mathfrak{b}_{{P^\natural},{\Bbb C}}^*$, 
d'aprs le lemme de \ref{ip et rj} (rŽciprocitŽ de Frobenius), on a isomorphisme naturel
$$
{\rm Hom}_{\smash{G^\natural}}({^\omega{i}_{P^\natural}^{G^\natural}}(\Psi_\nu\cdot\boldsymbol{\Sigma}_0), 
{^\omega{i}_{Q^\natural}^{G^\natural}}({^{n_w}(\Psi_\nu\cdot\boldsymbol{\Sigma}_0)})\simeq
{\rm Hom}_{\smash{M^\natural_{\sQ}}}({\boldsymbol{h}}(\Psi_\nu\cdot\boldsymbol{\Sigma}_0),{^{n_w}(\Psi_\nu\cdot\boldsymbol{\Sigma}_0)});\leqno{(*)}
$$
o ${\boldsymbol{h}}$ dŽsigne le foncteur $
{^\omega{r}_{G^\natural}^{Q^\natural}}\circ{^\omega{i}_{P^\natural}^{G^\natural}}$.
D'aprs le lemme gŽomŽtrique (\ref{lemme gŽo}, proposition), l'image ${\boldsymbol{h}}_{\Bbb C}(\Psi_\nu\Sigma_0)$ de 
${\boldsymbol{h}}(\Psi_\nu\cdot{\boldsymbol{\Sigma}}_0)$ dans $\G_{\Bbb C}(M^\natural_{\sQ},\omega)$ est donnŽe par
$$
{\boldsymbol{h}}_{\Bbb C}(\Psi_\nu\Sigma_0)=\sum_s {\boldsymbol{f}}_{\Bbb C}(s)(\Psi_\nu\Sigma_0),\leqno{(**)}
$$
o $s$ parcourt les ŽlŽments de $W_{G^\natural}^{P,Q}$. 
Pour 
$s\in W_{G^\natural}^{P,Q}$, l'ŽlŽment ${\boldsymbol{f}}_{\Bbb C}(s)(\Psi_\nu\Sigma_0)$ de $\G_{\Bbb C}(M^\natural_{\sQ},\omega)$ 
a un caractre central, et la restriction ˆ $A_{Q^\natural}^\varpi\;(=w(A^\varpi_{P^\natural}))$ de ce caractre, identifiŽe comme plus haut 
ˆ un caractre non ramifiŽ de $A_{Q^\natural}$ via la dŽcomposition $A_{Q^\natural}=A_{Q^\natural}^\varpi\times A_{Q^\natural}^1$, 
est donnŽe par la projection orthogonale (cf. \ref{les espaces bP}) 
de $s(\nu) \in s(\mathfrak{b}_{P^\natural_{\bar{s}},{\Bbb C}}^*)= \mathfrak{b}_{Q^\natural_s,{\Bbb C}}^*$ sur 
$\mathfrak{b}_{Q^\natural,{\Bbb C}}^*$. De la m\^eme manire, la restriction ˆ $A_{Q^\natural}^\varpi$ du caractre central de 
${^w(\Psi_\nu\Sigma_0)}$ est donnŽe par $w(\nu)$. Comme $s$ et $w$ oprent isomŽtriquement sur les espaces en question, 
on en dŽduit que pour $\nu$ gŽnŽrique, seul l'ŽlŽment $s=w$ dans la somme $(**)$ peut donner 
une contribution non triviale ˆ l'espace ${\rm Hom}$ de droite dans l'isomorphisme $(*)$, et comme 
${^{n_w}(\Psi_\nu\cdot \boldsymbol{\Sigma}_0)}$ est un quotient de ${\boldsymbol{h}}(\Psi_\nu\cdot \boldsymbol{\Sigma}_0)$ --- cf. la dŽmonstration de 
la proposition de \ref{lemme gŽo} ---, il en donne effectivement une.  
On obtient que pour $\nu\in \mathfrak{b}_{P^\natural,{\Bbb C}}^*$ gŽnŽrique, on a 
$$
\dim_{\Bbb C}{\rm Hom}_{\smash{M^\natural_{\sQ}}}({^\omega{i}_{P^\natural,{\Bbb C}}^{G^\natural}}(\Psi_\nu\cdot \boldsymbol{\Sigma}_0), 
{^\omega{i}_{Q^\natural,{\Bbb C}}^{G^\natural}}({^{n_w}(\Psi_\nu\cdot \boldsymbol{\Sigma}_0)})=1.
$$
Le m\^eme raisonnement entra\^{\i}ne que pour $\nu\in \mathfrak{b}_{P^\natural,{\Bbb C}}^*$ gŽnŽrique, on a 
$$
\dim_{\Bbb  C}{\rm End}_{\smash{M^\natural_{\sP}}}({^\omega{i}_{P^\natural,{\Bbb C}}^{G^\natural}}(\Psi_\nu\cdot
\boldsymbol{\Sigma}_0))=1
$$
et
$$
\dim_{\Bbb  C}{\rm End}_{\smash{M^\natural_{\sQ}}}({^\omega{i}_{Q^\natural,{\Bbb C}}^{G^\natural}}({^w(\Psi_\nu
\cdot \boldsymbol{\Sigma}_0)}))=1
$$

D'aprs le raisonnement ci--dessus, on peut remplacer la condition \og $\nu\in\mathfrak{b}_{P^\natural,{\Bbb C}}^*$ gŽnŽrique \fg 
par \og $\nu=i\mu$, $\mu\in \mathfrak{b}_{P^\natural}^*$ gŽnŽrique \fg. Pour $\nu\in \mathfrak{b}_{P^\natural,{\Bbb C}}^*$, on a
$$
{^\omega{i}_{P^\natural}^{G^\natural}}(\Psi_\nu\cdot\boldsymbol{\Sigma}_0)= \widetilde{\Xi}'_0\cdot 
{^\omega{i}_{P^\natural}^{G^\natural}}(\Psi_\nu\cdot\boldsymbol{\Sigma}'_0).
$$
Pour $\mu\in \mathfrak{b}_{P^\natural}^*$, la reprŽsentation 
${^\omega{i}_{P^\natural}^{G^\natural}}(\Psi_{i\mu}\cdot\boldsymbol{\Sigma}'_0)^\circ=i_P^G(\psi_{i\mu}(\boldsymbol{\Sigma}'_0)^\circ)$ de $G$ est unitaire 
et donc semisimple, par suite la $\omega$--reprŽsentation ${^\omega{i}_{P^\natural}^{G^\natural}}(\Psi_{i\mu}\cdot\boldsymbol{\Sigma}_0)$ 
de $G^\natural$ est elle aussi semisimple (\cad somme directe de $\omega$--reprŽsentations irrŽductibles de $G^\natural$). 
De m\^eme, toujours pour $\mu\in \mathfrak{b}_{P^\natural}^*$, la 
$\omega$--reprŽsentation ${^\omega{i}_{Q^\natural}^{G^\natural}}({^{n_w}(\Psi_{i\mu}\cdot\boldsymbol{\Sigma}_0)})$ 
de $G^\natural$ est semisimple. Par suite pour $\mu\in \mathfrak{b}_{P^\natural}^*$ gŽnŽrique, les 
ŽlŽments ${^\omega{i}_{P^\natural,{\Bbb C}}^{G^\natural}}(\Psi_{i\mu}\Sigma_0)$ et 
${^\omega{i}_{P^\natural,{\Bbb C}}^{G^\natural}}({^w(\Psi_{i\mu}\Sigma_0)})$ de $\G_{\Bbb C}(G^\natural,\omega)$ 
appartiennent ˆ ${\rm Irr}_{\Bbb C}(G^\natural,\omega)$, et sont Žgaux.

Pour $\mu\in \mathfrak{b}_{P^\natural}^*$, on a $\Psi_{i\mu}\Sigma_0=
\Psi_{-\eta_0 + i(\mu-\mu_0)}\Xi'\Sigma'$. On conclut en remarquant que
$$
\Omega= \{\lambda\Psi_{-\eta_0 + i(\mu-\mu_0)} \Xi':\mbox{$\mu\in \mathfrak{b}_{P^\natural}^*$ gŽnŽrique, $\lambda\in {\Bbb C}^\times$}\}
$$
est un sous--ensemble Zariski--dense de $\mathfrak{X}$.
\end{proof}

\goodbreak
\begin{monlem2}Soit $P^\natural,\,Q^\natural\in \P(G^\natural)$.
\begin{enumerate}
\item[(1)]Pour $\Sigma\in \G_{\Bbb C}(G^\natural,\omega)$, on a l'ŽgalitŽ dans $\G_{\Bbb C}(G^\natural,\omega)$
$$
{^\omega{T}_{Q^\natural,{\Bbb C}}}\circ {^\omega{i}_{P^\natural}^{G^\natural}}(\Sigma)=
\sum_w {^\omega{i}_{P^\natural_{\bar{w}}}^{G^\natural}}\circ {^\omega{r}_{P^\natural}^{P^\natural_{\bar{w}}}}(\Sigma),
$$
o $w$ parcourt les ŽlŽments de $W_{G^\natural}^{P,Q}$.
\item[(2)]Pour $\Pi\in \G_{\Bbb C}(G^\natural,\omega)$, on a l'ŽgalitŽ dans $\G_{\Bbb C}(G^\natural,\omega)$
$$
{^\omega{T}_{Q^\natural,{\Bbb C}}}\circ {^\omega{T}_{P^\natural,{\Bbb C}}}(\Pi)= \sum_w {^{\omega}T_{P^\natural_{\bar{w}},{\Bbb C}}}(\Pi),
$$
o $w$ parcourt les ŽlŽments de $W_{G^\natural}^{P,Q}$.
\end{enumerate}
\end{monlem2}

\begin{proof}Lorsqu'il n'y a pas d'ambigu\"{\i}tŽ possible, pour $w\in W_{G^\natural}$, 
on note \og $w$ \fg le foncteur \og $\Sigma \rightarrow {^{n_w}\Sigma}$\fg. 
D'aprs la proposition de \ref{lemme gŽo}, on a l'ŽgalitŽ dans $\G_{\Bbb C}(G^\natural,\omega)$
$$
{^\omega{T}_{Q^\natural,{\Bbb C}}}\circ {^\omega{i}_{P^\natural}^{G^\natural}}(\Sigma)= \sum_w
{^\omega{i}_{Q^\natural}^{G^\natural}}\circ {^\omega{i}_{Q^\natural_w}^{Q^\natural}}\circ w \circ {^\omega{r}_{P^\natural}^{P^\natural_{\bar{w}}}}(\Sigma),
$$
o $w$ parcourt les ŽlŽments de $W_{G^\natural}^{P,Q}$. D'aprs le lemme 1, pour $w\in W_{P^\natural,Q^\natural}$, on a 
l'ŽgalitŽ dans $\G_{\Bbb C}(M^\natural_{\sQ},\omega)$
$$
{^\omega{i}_{Q^\natural_w}^{Q^\natural}}\circ w \circ {^\omega{r}_{P^\natural}^{P^\natural_{\bar{w}}}}(\Sigma)=
{^\omega{i}_{P^\natural_{\bar{w}}}^{Q^\natural}}\circ {^\omega{r}_{P^\natural}^{P^\natural_{\bar{w}}}}(\Sigma).
$$
D'o le point (1), par transitivitŽ du morphisme induction parabolique. 

Quant au point (2), par transitivitŽ du morphisme restriction de Jacquet, d'aprs la proposition de \ref{lemme gŽo}, on 
a l'ŽgalitŽ dans $\G_{\Bbb C}(M^\natural_{\sQ},\omega)$
$$
{^\omega{r}_{G^\natural}^{Q^\natural}}\circ {^\omega{T}_{P^\natural,{\Bbb C}}}(\Pi)=\sum_w{^\omega{i}_{\sQ^\natural_w}^{\sQ^\natural}}
\circ w \circ {^\omega{r}_{G^\natural}^{P^\natural_{\bar{w}}}}(\Pi),
$$
o $w$ parcourt les ŽlŽments de $W_{G^\natural}^{P,Q}$. On obtient le point (2) en appliquant le morphisme 
${^\omega{i}_{Q^\natural}^{G^\natural}}$ ˆ cette expression (gr\^ace au lemme 1, et par transitivitŽ du 
morphisme induction parabolique).
\end{proof}

On peut maintenant dŽmontrer la proposition. 
Pour allŽger l'Žcriture, on pose
$$\G_{{\Bbb C},i}=
\G_{{\Bbb C},i}(G^\natural,\omega),\quad i\geq -1.
$$ 
D'aprs le point (1) du lemme 2, pour $Q^\natural\in \P(G^\natural)$, l'opŽrateur ${^\omega{T}_{Q^\natural,{\Bbb C}}}$ 
prŽserve la filtration $\{\G_{{\Bbb C},i}\}$. Pour $P^\natural,\,Q^\natural\in \P(G^\natural)$, 
puisque $W_G(P,Q)\cap W_G^{P,Q}$ est un systme de reprŽsentants des classes de
$$
W_{M_Q}\backslash W_G(P,Q)=W_G(P,Q)/W_{M_P},
$$ l'ensemble 
$W_{G^\natural}(P,Q)\cap W_{G^\natural}^{P,Q}$ paramŽtrise le sous--ensemble
$$[W_{M_Q}\backslash W_G(P,Q)]^\theta\subset W_{M_Q}\backslash W_G(P,Q)
$$
formŽ des classes $\theta$--invariantes, 
et s'il est non vide alors il est de cardinal
$$
p_{Q^\natural}= \vert [W_{M_Q}\backslash W_G(Q,Q)]^\theta \vert. 
$$
D'aprs loc.~cit. et le lemme 1, on en dŽduit que pour 
et $\Sigma\in \G_{\Bbb C}(M^\natural_{\sP},\omega)$, on a 
$${^\omega{T}_{Q^\natural,{\Bbb C}}}\circ {^\omega{i}_{P^\natural}^{G^\natural}}(\Sigma)\equiv 
\left\{\begin{array}{cc} 
p_{Q^\natural}{^\omega{i}_{P^\natural}^{G^\natural}}(\Sigma)\;({\rm mod}\,\G_{{\Bbb C},d(M_Q)})&
\hbox{si $W_{G^\natural}(P,Q)\cap W_{G^\natural}^{P,Q}\neq \emptyset$}\\
0\;({\rm mod}\,\G_{{\Bbb C},d(M_Q)})\hfill &\hbox{sinon}\hfill \\
\end{array}\right..
$$

Pour $k> d$, choisissons un ordre $\{P_{k,1}^\natural,\ldots , P_{k,n(k)}^\natural\}$ 
sur l'ensemble des $P^\natural\in \P(G^\natural)$ tels que $d(M_P)=k$, et notons
$U_k:\G_{\Bbb C}(G^\natural,\omega)\rightarrow \G_{\Bbb C}(G^\natural,\omega)$ le morphisme 
dŽfini par
$$
U_k = (T_{k,n(k)} - 
p_{k,n(k)})\circ \cdots \circ (T_{k,1} - p_{k,1});
$$
o l'on a posŽ
$$
T_{k,i}={^\omega{T}_{P^\natural_{k,i},{\Bbb C}}},\quad p_{k,i}= p_{P^\natural_{k,i}}.
$$
D'aprs ce qui prŽcde, l'opŽrateur $U_k$ prŽserve la filtration $\{\G_{{\Bbb C},i}\}$ et 
annule $\G_{{\Bbb C},k-1}/\G_{{\Bbb C},k}$.
Posons
$$
\boldsymbol{A}'_d= U_{d(M_\circ)}\circ\cdots \circ  U_{d+1}.
$$
On a $\boldsymbol{A}'_d(\G_{{\Bbb C},d})\subset \G_{{\Bbb C},d(M_\circ)}=0$, 
et d'aprs le point (2) du lemme 2, il existe un $\mu\in {\Bbb Z}\smallsetminus \{0\}$ et 
des $\lambda_d(P^\natural)\in {\Bbb Q}$ pour $P^\natural\in \P(G^\natural)$, $d(M^\natural_{\sP})>d$, 
tels que
$$
\boldsymbol{A}'_d=\mu ({\rm id} + \sum_{P^\natural,\, d(M_P)>d}\lambda_d(P^\natural){^\omega{T}_{P^\natural,{\Bbb C}}}).
$$
L'opŽrateur ${\boldsymbol{A}}_d= \mu^{-1}\boldsymbol{A}'_d:\G_{\Bbb C}(G^\natural,\omega)\rightarrow \G_{\Bbb C}(G^\natural,\omega)$ vŽrifiant
$$
{\boldsymbol{A}}_d(\Pi)\equiv \Pi\;({\rm mod}\, \G_{{\Bbb C},d}(G^\natural,\omega)),
$$
on a bien
$$
\ker {\boldsymbol{A}}_d = \G_{{\Bbb C},d}(G^\natural,\omega),\quad {\boldsymbol{A}}_d\circ {\boldsymbol{A}}_d={\boldsymbol{A}}_d.
$$
Cela achve la dŽmonstration de la proposition. \end{proof}

\begin{variante}
{\rm On peut dŽfinir une autre filtration dŽcroisssante 
$\{\G_{\Bbb C}(G^\natural,\omega)_i\}$ de $\G_{\Bbb C}(G^\natural,\omega)$, en rempla\c{c}ant la 
condition $d(M_P)>i$ par la condition $d(M^\natural_{\sP})>i$: pour chaque entier 
$i\geq -1$, on pose\index{$\G_{{\Bbb C}}(G^\natural,{\Bbb C})_i$}
$$
\G_{{\Bbb C}}(G^\natural,{\Bbb C})_i= \sum_{P^\natural,\,d(M^\natural_{\sP})>i}{^\omega{i}_{P^\natural}^{G^\natural}}(\G_{\Bbb C}
(M^\natural_{\sP},\omega)),
$$
o $P^\natural$ parcourt les ŽlŽments de $\P(G^\natural)$. 
On a donc
$$
\G_{\Bbb C}(G^\natural,\omega)_i=\G(G^\natural,\omega),\quad i<d(G^\natural),
$$ 
$$
\G_{\Bbb C}(G^\natural,\omega)_i=0,\quad i\geq d(M^\natural_\circ).
$$
On peut aussi, gr\^ace au lemme 2, Žtablir la variante suivante de la proposition:  

{\it Soit $d'$ un entier tel que 
$d(G^\natural)\leq d'<d(M^\natural_\circ)$. 
Il existe une famille de nombres rationnels $\mu_{d' }= \{\mu_{d'}(P^\natural):P^\natural\in \P(G^\natural),\,d(M^\natural_{\sP})>d'\}$ 
telle que le ${\Bbb C}$--endomorphisme ${\boldsymbol{B}}_{d'}= {\boldsymbol{B}}_{\mu_{d'}}$\index{${\boldsymbol{B}}_{d'}= {\boldsymbol{B}}_{\mu_{d'}}$} 
de $\G_{\Bbb C}(G^\natural,\omega)$ 
dŽfini par
$${\boldsymbol{B}}_{d'}= {\rm id} + \sum_{P^\natural,\,d(M^\natural_{\sP})>d}\mu_{d'}(P^\natural){^\omega{T}_{P^\natural,{\Bbb C}}}$$
vŽrifie les propriŽtŽs:
$$\ker {\boldsymbol{B}}_{d'} =\G_{\Bbb C}(G^\natural,\omega)_{d'},\quad 
{\boldsymbol{B}}_{d'}\circ {\boldsymbol{B}}_{d'}=
{\boldsymbol{B}}_{d'}.\eqno{\blacksquare}
$$}
}
\end{variante}

\subsection{Actions duales de $\mathfrak{Z}(G)$ et de $\mathfrak{P}_{\Bbb C}(G^\natural)$.}\label{actions duales} 
Commen\c{c}ons par quelques rappels sur le \og centre \fg. Pour $P,\,Q\in \EuScript{P}(G)$ tels que $Q\subset P$, l'application naturelle
\index{${i}_{P,Q}$, ${i}_{P,Q}^*$} 
$$
{i}_{P,Q}:\Theta(M_Q)\rightarrow \Theta(M_P),
$$ qui a $[L,\rho]_{M_Q}$ associe $[L,\rho]_{M_P}$,  
est un morphisme {\it fini} de vari\'et\'es alg\'ebriques. Par dualit\'e il induit un morphisme d'anneaux
$$
{i}_{P,Q}^*:\mathfrak{Z}(M_P)\rightarrow \mathfrak{Z}(M_Q),
$$
donn\'e par 
$$
f_{{i}_{P,Q}^*(z)}(x)=f_z({i}_{P,Q}(x)),\quad z\in \mathfrak{Z}(M_P),\, x\in \Theta(M_Q).
$$
Ce morphisme $i_{P,Q}^*$ fait de $\mathfrak{Z}(M_Q)$ un $\mathfrak{Z}(M_P)$-module de type fini, et d'aprs 
\cite[2.13--2.16]{BD}, on a le

\begin{monlem1}Soit $P,\,Q\in \EuScript{P}(G)$ tels que $Q\subset P$. 
Soit $z\in \mathfrak{Z}(M_P)$ 
et $t={i}_{P,Q}^*(z)\in \mathfrak{Z}(M_Q)$.
\begin{enumerate} 
\item[(1)] Pour toute repr\'esentation $\sigma$ de $M_Q$, l'endomorphisme ${i}_Q^P(t)$ de ${i}_Q^P(\sigma)$ co\"{\i}ncide avec $z$.
\item[(2)] Pour toute repr\'esentation $\pi$ de $M_P$, l'endomorphisme $r_P^Q(z)$ de $r_P^Q(\pi)$ co\"{\i}ncide avec $t$.
\end{enumerate}
\end{monlem1}

Rappelons (\ref{ŽnoncŽ}) que l'anneau $\mathfrak{Z}(G)=\prod_{\mathfrak{s}}\mathfrak{Z}_\mathfrak{s}$ op\`ere sur 
l'espace $\G_{\Bbb C}(G^\natural,\omega)^*$: pour 
$z\in \EuScript{Z}(G)$ et $\Phi\in \G_{\Bbb C}(G^\natural,\omega)^*$, $z\cdot\Phi$ est l'ŽlŽment de 
$ \G_{\Bbb C}(G^\natural,\omega)^*$ est donnŽ par
$$
(z\cdot \Phi)(\Pi)=f_z(\theta_G(\Pi^\circ))\Phi(\Pi),\quad \Pi\in {\rm Irr}_{\Bbb C}(G^\natural,\omega).
$$
D'aprs le lemme 1, pour $P^\natural,\,Q^\natural\in \EuScript{P}(G^\natural)$ tels que $Q^\natural\subset P^\natural$, les morphismes $({^\omega{i}}_{P^\natural}^{Q^\natural})^*$ et $({^\omega{r}}_{P^\natural}^{Q^\natural})^*$ sont des morphismes de $\mathfrak{Z}(M_P)$-modules, i.e. on a
$$
({^\omega{i}}_{Q^\natural}^{P^\natural})^*(z\cdot\Phi)
={i}_{P,Q}^*(z)\cdot ({^\omega{i}}_{Q^\natural}^{P^\natural})^*(\Phi),\quad \Phi\in \G_{\Bbb C}(M_{\sP}^\natural,\omega)^*,\, z\in \mathfrak{Z}(M_P),
$$
et
$$
({^\omega{r}}_{P^\natural}^{Q^\natural})^*({i}_{P,Q}^*(z)\cdot\Phi)=z\cdot ({^\omega{r}}_{P^\natural}^{Q^\natural})^*(\Phi).
$$

On a aussi une action du groupe $\mathfrak{P}_{\Bbb C}(G^\natural)$ sur l'espace $\G_{\Bbb C}(G^\natural,\omega)^*$: pour 
$\Psi\in \mathfrak{P}_{\Bbb C}(G^\natural)$ 
et $\Phi\in \G_{\Bbb C}(G^\natural,\omega)^*$, on pose
$$
(\Psi\Phi)(\Pi)=\Phi(\Psi\Pi),\quad \Pi\in {\rm Irr}_{\Bbb C}(G^\natural,\omega).
$$

\begin{monlem2}
On a:
\begin{enumerate}
\item[(1)]$\mathfrak{Z}(G)\cdot \EuScript{F}(G^\natural,\omega)=\EuScript{F}(G^\natural,\omega)$ et 
$\mathfrak{P}_{\Bbb C}(G^\natural)\F(G^\natural,\omega)=\EuScript{F}(G^\natural,\omega)$.
\item[(2)]$\mathfrak{Z}(G) \cdot \EuScript{F}_{\rm tr}(G^\natural,\omega)=\EuScript{F}_{\rm tr}(G^\natural,\omega)$ et 
$\mathfrak{P}_{\Bbb C}(G^\natural) \EuScript{F}_{\rm tr}(G^\natural,\omega)=\EuScript{F}_{\rm tr}(G^\natural,\omega)$.
\end{enumerate}
\end{monlem2}

\begin{proof} 
Soit $P^\natural\in \EuScript{P}(G^\natural)$ et $\Phi\in \G_{\Bbb C}(G^\natural,\omega)^*$. Pour $z\in \mathfrak{Z}(G)$ et 
$\Sigma\in {\rm Irr}_{\Bbb C}(M_{\sP}^\natural,\omega)$, posant 
$t={i}^*_{G,P}(z)\in \mathfrak{Z}(M_P)$, on a

\begin{eqnarray*}
(z\cdot\Phi)({^\omega{i}_{P^\natural}^{G^\natural}}(\Sigma)) &=&
({^\omega{i}_{P^\natural}^{G^\natural}})^*(z\cdot \Phi)(\Sigma)\\
&=& (t\cdot ({^\omega{i}_{P^\natural}^{G^\natural}})^*(\Phi))(\Sigma)\\
&=& f_t(\theta_{M_P}(\Sigma^\circ))
({^\omega{i}_{P^\natural}^{G^\natural}})^*(\Phi)(\Sigma).
\end{eqnarray*}

Supposons que $\Phi$ appartient ˆ $\EuScript{F}(G^\natural,\omega)$. Alors 
$({^\omega{i}_{P^\natural}^{G^\natural}})^*(\Phi)$ appartient \`a $\EuScript{F}(M_{\sP}^\natural,\omega)$, 
d'apr\`es la propriŽtŽ de transitivitŽ du morphisme induction parabolique. 
Par suite l'application
$$
\Xi\mapsto (z\cdot\Phi)({^\omega{\iota_{P^\natural}^{G^\natural}}}(\Xi\Sigma))
$$
est une fonction r\'eguli\`ere sur $\mathfrak{P}_{\Bbb C}(M_{\sP}^\natural)$. Comme par ailleurs le \og support inertiel \fg
de $z\cdot \Phi$ --- \cad l'ensemble des $\mathfrak{s}\in \mathfrak{B}_{G^\natural,\omega}(G)$ tels que $\theta_{G^\natural}(\Pi)=\mathfrak{s}$ pour 
un $\Pi\in {\rm Irr}_{\Bbb C}(G^\natural,\omega)$ vŽrifiant $(z\cdot\Phi)(\Pi)\neq 0$ --- est contenu dans celui de $\Phi$, on a l'\'egalit\'e 
$\mathfrak{Z}(G)\cdot \EuScript{F}(G^\natural,\omega)=\EuScript{F}(G^\natural,\omega)$. La seconde \'egalit\'e du point (1) 
est claire, puisque pour $\Psi\in \mathfrak{P}_{\Bbb C}(G^\natural)$ on a
$$
\Psi\,{^\omega{{i}_{P^\natural}^{G^\natural}(\Sigma)}}= {^\omega{{i}_{P^\natural}^{G^\natural}(\Psi\vert_{M_{\sP}^\natural}\Sigma)}}.
$$

Quant au point (2), la premire ŽgalitŽ 
rŽsulte du fait que l'application $\phi\mapsto \Phi_\phi$ est un morphisme de $\mathfrak{Z}(G)$--module 
(lemme de \ref{ŽnoncŽ}), et la seconde 
du fait qu'elle est $\mathfrak{P}_{\Bbb C}(G^\natural)$--Žquivariante pour l'action naturelle de $\mathfrak{P}_{\Bbb C}(G^\natural)$ 
sur $\H^\natural$ (donnŽe par $(\Psi,\phi)\mapsto \Psi\phi$). 
\end{proof}

\subsection{Induction parabolique et restriction de Jacquet: morphismes duaux}\label{morphismes duaux}
Pour $P^\natural,\,Q^\natural\in \P(G^\natural)$ tels que $Q^\natural\subset P^\natural$, le 
morphisme ${^\omega{i}}_{Q^\natural}^{P^\natural}:\G_{\Bbb C}(M_Q^\natural,\omega)\rightarrow \G_{\Bbb C}(M_{\sP}^\natural,\omega)$ 
induit par dualitŽ un morphisme ${\Bbb C}$--linŽaire\index{$({^\omega{i}}_{Q^\natural}^{P^\natural})^*$, $({^\omega{r}}_{P^\natural}^{Q^\natural})^*$}
$$
({^\omega{i}}_{Q^\natural}^{P^\natural})^*: \G(M_{\sP}^\natural,\omega)^*\rightarrow \G_{\Bbb C}(M_Q^\natural,\omega)^*.
$$
PrŽcisŽment, on a
$$
({^\omega{i}}_{Q^\natural}^{P^\natural})^*(\Phi)(\Sigma)= \Phi({^\omega{i}}_{Q^\natural}^{P^\natural}(\Sigma)),\quad 
\Phi\in \G_{\Bbb C}(M_{\sP}^\natural,\omega),\,\Sigma\in \G_{\Bbb C}(M_Q^\natural,\omega).
$$
De la m\^eme mani\`ere, le morphisme 
${^\omega{r}}_{P^\natural}^{Q^\natural}:\G_{\Bbb C}(M_{\sP}^\natural,\omega)\rightarrow \G_{\Bbb C}(M_Q^\natural,\omega)$
induit par dualit\'e un morphisme ${\Bbb C}$--linŽaire
$$
({^\omega{r}}_{P^\natural}^{Q^\natural})^*:\G_{\Bbb C}(M_Q^\natural,\omega)^*\rightarrow \G_{\Bbb C}(M_{\sP}^\natural,\omega)^*.
$$

\begin{mapropo}\label{compat i-r et bo-tr}
Pour $P^\natural,\,Q^\natural\in \P(G^\natural)$ tels que $Q^\natural\subset P^\natural$, on a:
\begin{enumerate}
\item[(1)]$({^\omega{i}}_{Q^\natural}^{P^\natural})^*(\F(M_{\sP}^\natural,\omega))\subset \F(M_Q^\natural,\omega)$.
\item[(2)]$({^\omega{r}}_{P^\natural}^{Q^\natural})^*(\F_{\rm tr}(M_Q^\natural,\omega))\subset \F_{\rm tr}(M_{\sP}^\natural,\omega)$.
\end{enumerate}
\end{mapropo}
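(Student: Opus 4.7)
The strategy is to treat the two parts separately: (1) is essentially formal and follows from the functorial properties of $\F$, whereas (2) requires an explicit lift of Hecke functions via the Casselman--Jacquet construction.

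For part (1), fix $\Phi\in\F(M^\natural_{\sP},\omega)$ and verify conditions (i) and (ii) of \ref{�nonc�} for $({^\omega i_{Q^\natural}^{P^\natural}})^*(\Phi)$. The regularity condition (ii) is immediate from transitivity of parabolic induction: for $R^\natural\in\P(G^\natural)$ with $R^\natural\subset Q^\natural$, $\Sigma\in{\rm Irr}_{\Bbb C}(M^\natural_{\sR},\omega)$, and $\Xi\in\mathfrak{P}_{\Bbb C}(M^\natural_{\sR})$, the lemma of \ref{ip et rj} gives
$$({^\omega i_{Q^\natural}^{P^\natural}})^*(\Phi)({^\omega i_{R^\natural}^{Q^\natural}}(\Xi\Sigma))=\Phi({^\omega i_{R^\natural}^{P^\natural}}(\Xi\Sigma)),$$
which is regular in $\Xi$ since $R^\natural\subset P^\natural$ and $\Phi$ satisfies (ii). For the support condition (i), let $\mathfrak{S}\subset\mathfrak{B}_{M^\natural_{\sP},\omega}(M_P)$ be the finite set attached to $\Phi$. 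The inertial map $i_{P,Q}\colon\Theta(M_Q)\to\Theta(M_P)$ recalled in \ref{actions duales} is finite, hence $i_{P,Q}^{-1}(\mathfrak{S})$ is finite; and every irreducible constituent of $({^\omega i_{Q^\natural}^{P^\natural}})(\Sigma)^\circ$ has inertial class in $i_{P,Q}(\beta_{M_Q^\natural}(\Sigma))$, so $({^\omega i_{Q^\natural}^{P^\natural}})^*(\Phi)$ vanishes off $i_{P,Q}^{-1}(\mathfrak{S})$.

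For part (2), given $\phi\in\H(M^\natural_{\sQ})$ the task is to produce $\phi'\in\H(M^\natural_{\sP})$ satisfying
$$\Theta_\Pi(\phi')=\Theta_{{^\omega r_{P^\natural}^{Q^\natural}}(\Pi)}(\phi),\qquad \Pi\in{\rm Irr}_{\Bbb C}(M^\natural_{\sP},\omega).$$
Put $R=Q\cap M_P=M_Q\,U'$ with $U'=U_Q\cap M_P$, and $R^\natural=M^\natural_{\sQ}\cdot U'$. Arguing as in (1), the pullback $({^\omega r_{P^\natural}^{Q^\natural}})^*(\Phi_\phi)$ has finite inertial support $\mathfrak{T}\subset\mathfrak{B}(M_P)$ determined by $\phi$. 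Using the Casselman canonical lifting, choose a compact open subgroup $K'\subset U'$ such that, for every admissible $\Pi$ with $\beta_{M_P^\natural}(\Pi)\in\mathfrak{T}$, the idempotent $e_{K'}$ realizes the projection onto the Jacquet module $(V_\Pi)_{U'}$ when restricted to $V_\Pi^J$ for $J$ in a cofinal family of compact open subgroups of $M_P$ in good position with respect to $(R,M_Q)$. Lift $\phi$ to $\phi'\in\H(M^\natural_{\sP})$ by spreading it across $R^\natural\cdot K$ for a suitable good compact open $K\subset M_P$, using the triangular decomposition of $K$ relative to $R$ and normalizing by $\delta_R^{1/2}$. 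Computing $\Theta_\Pi(\phi')$ on $V_\Pi^K$ and invoking the definition of the normalized twisted Jacquet restriction from \cite[5.10]{L2}, whose compensating factor $\delta_R^{-1/2}$ cancels the above normalization, one obtains the required identity modulo $[\H(M^\natural_{\sP}),\H(M_P)]_\omega$.

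The main obstacle is ensuring that $K'$ (and hence $K$) may be chosen uniformly in $\Pi$ with $\beta_{M_P^\natural}(\Pi)\in\mathfrak{T}$; this rests on the finiteness of $\mathfrak{T}$ together with the uniform Casselman--Jacquet bound available on each Bernstein block, which is controlled by the common cuspidal support. If several blocks require a priori different subgroups $K'_i$, one passes to their common intersection, adjusts $K$ correspondingly, assembles the pieces, and uses the commutator relation $\phi*f\equiv \omega f*\phi\pmod{[\H(M^\natural_{\sP}),\H(M_P)]_\omega}$ to merge them into a single element $\phi'\in\H(M^\natural_{\sP})$.
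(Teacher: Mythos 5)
Part (1) of your proposal is correct and is exactly the paper's argument (transitivity of parabolic induction for the regularity condition, finiteness of $i_{P,Q}$ for the support condition); nothing to add there.

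Part (2), however, has a genuine gap, and it sits precisely where the real work is. Your lift of $\phi$ --- inflating it over $R^\natural\cdot K$ with a $\delta_R^{1/2}$ normalisation, together with a compact open $K'\subset U'$ whose idempotent $e_{K'}$ is supposed to ``realize the projection onto the Jacquet module on $V_\Pi^J$'' --- does not satisfy the required identity $\Theta_\Pi(\phi')=\Theta_{{^\omega r_{P^\natural}^{Q^\natural}}(\Pi)}(\phi)$. Averaging over a compact subgroup of $U'$ does not compute the Jacquet module on $J$-fixed vectors: the subgroup $\Omega\subset U'$ needed to kill $V_\Pi(U')\cap V_\Pi^J$ depends on $\Pi$, and no ``uniform Casselman--Jacquet bound on each Bernstein block'' is available off the shelf --- you assert it but neither prove nor cite it, and it is exactly the issue to be overcome. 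Moreover, even granting such a $K'$, the trace of $\Pi(\phi')$ on $V_\Pi^J$ for an inflated function does not equal the trace on the Jacquet module (already for $\phi$ the unit of $\H_{J_M}$ one would be comparing $\dim V_\Pi^J$ with the dimension of the $J_M$-fixed vectors of the Jacquet module, which differ in general, the map $V^J\to (V_{U'})^{J_M}$ being surjective but not injective). Casselman's theorem identifies the two traces only after inserting a strictly contracting element: it applies to functions of the form $\phi^{J_M}_{a^k\cdot\delta}=(f^{J_M}_a)^{*k}*\phi^{J_M}_\delta$ with $a\in A_Q$ contracting $U_Q$ and $k$ large, the matching function on the bigger space being $\phi^{J}_{a^k\cdot\delta}$.

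This is how the paper proceeds (after reducing to $P^\natural=G^\natural$): the identity is established only for these contracted functions, and the contraction factor is then removed by a Bernstein-centre argument which your proposal lacks entirely --- $\H_{J_M}(M^\natural_{\sQ})$ is a $\mathfrak{Z}(G)$-module of finite type via $i_{G,Q}^*$, the subspace $\H^\star$ of functions whose pullback lies in $\F_{\rm tr}$ is a $\mathfrak{Z}(G)$-submodule (because $({^\omega r})^*$ is $\mathfrak{Z}(G)$-equivariant and $\F_{\rm tr}$ is $\mathfrak{Z}$-stable, the lemmas of \ref{�nonc�} and \ref{actions duales}), so a single power $(f^{J_M}_a)^{*l_0}$ sends all of $\H_{J_M}$ into $\H^\star$, and invertibility of $f^{J_M}_a$ in $\H_{J_M}(M_Q)$ then gives $\H^\star=\H_{J_M}$. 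Your closing appeal to intersecting the various $K'_i$ and ``merging via the commutator relation'' does not substitute for this mechanism: the commutator relation only says $\Phi_{\phi'}$ depends on the class of $\phi'$ modulo $[\H(M^\natural_{\sP}),\H(M_P)]_\omega$, it does not produce the function $\phi'$ nor fix the non-uniformity in $\Pi$.
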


\begin{proof} Le point (1) --- dŽjˆ utilisŽ dans la preuve du lemme 2 de \ref{actions duales} --- 
r\'esulte de la propriŽtŽ de transitivit\'e du morphisme induction parabolique. 

Quant au point (2), on procde comme dans \cite[5.3]{BDK}, gr\^ace aux r\'esultats de Casselman reliant les 
caract\`eres--distributions aux foncteurs restriction de Jacquet \cite{C} (dans le cas tordu, voir aussi \cite[5.10]{L2}). On peut supposer 
$P^\natural=G^\natural$. Fixons un sous--groupe d'Iwahori $I$ de $G^\natural$ en bonne position par rapport 
ˆ $(P_\circ,M_\circ)$ et tel que $\theta(I)=I$, et notons $\mathfrak{I}$ l'ensemble $\{I^n: n\geq 0\}$ des sous--groupes 
de congruence de $\mathfrak{I}$. Pour $n\geq 0$, posons 
$\H_n(M^\natural_{\sQ})= \H_{I^n\cap {M_Q}}(M^\natural_{\sQ})$. Puisque $\H(M^\natural_{\sQ})=
\bigcup_{n\geq 0}\H_n(M^\natural_{\sQ})$, il suffit de montrer que pour chaque $n\geq 0$, le sous--espace 
$\H_n^\star(M^\natural_{\sQ})$ de $\H_n(M^\natural_{\sQ})$ formŽ des fonctions $\phi'$ telles 
que $({^\omega{r}}_{G^\natural}^{Q^\natural})^*(\Phi_{\phi'})\in \F_{\rm tr}(G^\natural,\omega)$, co\"{\i}ncide avec 
$\H_n(M^\natural_{\sQ})$; o $\Phi_{\phi'}$ est l'ŽlŽment de $\F_{\rm tr}(M^\natural_{\sQ},\omega)$ dŽfini comme en 
\ref{carac} en rempla\c{c}ant $G^\natural$ par $M^\natural_{\sQ}$. Posons $M=M_Q$, $U=U_Q$ et $\overline{U}=\overline{U}_Q$. 

Fixons $n\geq 0$, et posons $J=I^n$ et $J_\Gamma=J\cap \Gamma$ pour tout sous--groupe fermŽ $\Gamma$ de $G$. 
Pour $\delta\in G^\natural$, on note 
$\phi^J_\delta\in \H_J(G^\natural)$ la fonction caractŽristique de la double classe $J\cdot \delta \cdot J$ divisŽe par 
${\rm vol}(J\cdot \delta \cdot J, d\delta)$. Pour $\delta\in M^\natural=M^\natural_{\sQ}$, on dŽfinit de la m\^eme 
manire $\phi^{J_M}_\delta\in \H_{J_M}(M^\natural)$. Dans le cas non tordu (i.e. si $G^\natural=G$), 
on pose $f^J_g = \phi^J_g$ pour $g\in G$, 
et $f^{J_M}_m=\phi^{J_M}_m$ pour $m\in M$. Fixons un ŽlŽment $a\in A= A_Q$ tel que 
${\rm Int}_{\boldsymbol{G}}(a)$ contracte strictement $U$, \cad vŽrifiant
$$
\bigcap_{k\geq 1} {\rm Int}_{\boldsymbol{G}}(a)^k (U_J)=\{1\}.
$$

Soit $\Pi$ une $\omega$--reprŽsentation admissible de $G^\natural$. Posons 
$V=V_\Pi$ et $\pi=\Pi^\circ$. L'espace
$$V(U)=\langle \pi(u)(v)-v: v\in V,\,u\in U\rangle$$ co\"{\i}ncide avec l'ensemble des $v\in V$ 
tels que $\int_{\Omega_v}\pi(u)(v)du=0$ pour un sous--groupe ouvert compact $\Omega_v$ de $U$. Puisque $\Pi$ est 
admissible, l'espace $V(U)\cap V^J$ est dimension finie, et il existe un sous--groupe ouvert compact $\Omega$ de $U$ tel 
que $\int_\Omega \pi(u)(v)du=0$ pour tout $v\in V(U)\cap V^J$. Quitte ˆ remplacer $\Omega$ par un groupe plus gros, 
on peut supposer que $J_U$ est contenu dans $\Omega$. Soit $k_0\geq 1$ un entier tel que 
${\rm Int}_{{\boldsymbol{G}}^\natural}(a)^{k_0}(\Omega)\subset J_U$. 
Soit $\overline{\Pi}= \delta_{Q^\natural}^{1\over 2}{^\omega{r}_{G^\natural}^{Q^\natural}}$ 
la $\omega$--reprŽsentation de $M^\natural$ dŽduite de $\Pi$ par passage au quotient sur l'espace $\overline{V}=V/V(U)$. 
D'aprs la proposition 3.3 de \cite{C}, pour tout entier $k\geq k_0$, notant $V_a^{J,k}$ le sous--espace 
$\pi(f^J_a)^k(V)$ de $V^J$, on a:
\begin{itemize}
\item la projection canonique $p:V\rightarrow  \overline{V}$ induit par restriction 
un isomorphisme de ${\Bbb C}$--espaces vectoriels $V^{J,k}_a\rightarrow \smash{\overline{V}}^{J_M}$;
\item $\pi(f_a^J)(V_a^{J,k})=V^{J,k}_a$.
\end{itemize}
D'autre part, pour tout $\gamma\in M^\natural$ tel que ${\rm Int}_{{\boldsymbol{G}}^\natural}(\gamma)$ 
contracte strictement $U$, d'aprs la dŽmons\-tration 
du point (1) du lemme 2 de \cite[5.10]{L2}, on a
$$
p(\Pi(\phi_\gamma^J)(v))= \overline{\Pi}(\gamma)(p(v)),\quad v\in V^J.
$$
Pour un tel $\gamma$, et pour tout entier $k\geq k_0$, on a $\phi^J_{a^k\cdot \gamma}= (f^J_a)^{*k}* \phi^J_\gamma$, 
$(f^J_a)^{*k}=f^J_a*\cdots *f^J_a$ ($k$ fois); de m\^eme on a  et $\phi^{J_M}_{a^k\cdot\gamma}=(f^{J_M}_a)^{*k}*\phi^{J_M}_\gamma$. 
Par suite $\Pi(\phi^J_{a^k\cdot \gamma})(V)\subset V^{J,k}_a$, et puisque $p\circ \Pi(\delta_1)=\overline{\Pi}(\delta_1)\circ p$, on a
\begin{eqnarray*}
{\rm tr}(\Pi(\phi^J_{a^k\cdot \gamma}); V^J)&=& {\rm tr}(\Pi(\phi^J_{a^k\cdot \gamma}); V^{J,k}_a)\\
&=&{\rm tr}(\overline{\Pi}(a^k\cdot \gamma); \smash{\overline{V}}^{J_M})
={\rm tr}(\overline{\Pi}(\phi^{J_M}_{a^k\cdot\gamma});\smash{\overline{V}}^{J_M}).
\end{eqnarray*}
Posant $\phi=\phi^J_{a^k\cdot\gamma}$ et $\phi'= \phi^{J_M}_{a^k\cdot \gamma}$, 
on a $({^\omega{r}_{G^\natural}^{Q^\natural}})^*(\Phi_{\phi'})= \Phi_\phi$, par consŽquent $\phi'\in \H^\star_{J_M}(M^\natural)$. 

Pour un $\delta\in M^\natural$ arbitraire, il existe un entier $m\geq 1$ tel que ${\rm Int}_{{\boldsymbol{G}}^\natural}(a^m\cdot \delta)$ contracte strictement 
$U$, et d'aprs la discussion prŽcŽdente, pour tout entier $k\geq k_0$, la fonction
$$
\phi_{a^{k+m}\cdot \delta}^{J_M}= (f^{J_M}_a)^{*(k+m)}* \phi^{J_M}_\delta
$$
est dans l'espace $\H^*_{J_M}(M^\natural)$. Puisque les fonctions $\phi^{J_M}_\delta$ engendrent le 
${\Bbb C}$--espace vectoriel $\H_{J_M}(M^\natural)$, on obtient que pour toute fonction $\phi'\in \H_{J_M}(M^\natural)$, il existe un 
entier $l\geq 1$ tel que $(f^J_a)^{*l}*\phi'$ appartient ˆ $\H^\star_{J_M}(M^\natural)$. 

Rappelons que l'espace $\H_{J_M}(M)$, vu comme un $\H_{J_M}(M)$--module non dŽgŽnŽrŽ pour la multiplication 
ˆ gauche, est un $\mathfrak{Z}(M)$--module de type fini \cite[cor.~3.4]{BD}. D'aprs \ref{actions duales}, 
le morphisme 
$$i_{G,Q}^*:\mathfrak{Z}(G)\rightarrow \mathfrak{Z}(M)$$
en fait un $\mathfrak{Z}(G)$--module de type fini. Puisque $({^\omega{r}_{G^\natural}^{Q^\natural}})^*$ est un 
morphisme de $\mathfrak{Z}(G)$--modules (\ref{actions duales}) et que $\F_{\rm tr}(M^\natural,\omega)$ est un sous--$\mathfrak{Z}(M)$--module 
de $\G_{\Bbb C}(M^\natural,\omega)^*$ (lemme 2 de \ref{actions duales}), $\H_{J_M}^\star(M)$ est un 
sous--$\mathfrak{Z}(G)$--module de $\H_{J_M}(M)$. Par suite il existe un entier $l_0\geq 1$ tel que
$$
(f^J_a)^{*{l_0}}*\phi'\in \H^\star_{J_M}(M),\quad \phi'\in \H_{J_M}(M).
$$
Puisque $f^J_a$ est inversible dans $\H_{J_M}(M)$, on obtient le rŽsultat cherchŽ:
$$\H_{J_M}^\star(M)=\H_{J_M}(M).
$$
Cela achve la dŽmonstration du point (2). \end{proof}

\subsection{Terme constant suivant $P^\natural$; caractres des induites paraboliques.}\label{terme constant}
On a notŽ $G^1$\index{$G^1$, $K^\circ$} 
le sous--groupe de $G$ engendrŽ par les sous--groupes ouverts compacts. 
Il co\"{\i}ncide avec le sous--groupe
$$\bigcap_{\chi\in {\rm X}^*_F(G)} \ker \vert \chi \vert_F\subset G,
$$
o (rappel) ${\rm X}^*_F(G)$ dŽsigne 
le groupe des caractres algŽbriques de $G$ qui sont dŽfinis sur $F$.
Soit $K_\circ$ le stabilisateur dans $G^1$ d'un sommet spŽcial de l'appartement 
de l'immeuble (non Žtendu) de $G$ associŽ ˆ $A_\circ$,  C'est un sous--groupe 
compact maximal spŽcial de $G$, {\it en bonne position rapport ˆ $(P,M_P)$} pour tout $P\in \P(G)$: on a
$$
G=K_\circ P,\quad P\cap K_\circ = (P\cap M_P)(U_P\cap K_\circ).
$$
Notons qu'on ne suppose pas que $\theta(K_\circ)=K_\circ$ (d'ailleurs $K_\circ$ n'est en 
gŽnŽral pas $\theta$--stable, cf. la remarque (3) de \cite[5.2]{L2}). 

Pour $\phi\in \H^\natural$ et $P^\natural\in \P(G^\natural)$, on note 
${^\omega{\phi}_{P^\natural,K_\circ}}\in \H(M^\natural_{\sP})$ le terme constant de $\phi$ suivant 
$P^\natural$ relativement ˆ $(K_\circ,\omega)$, dŽfini par \cite[5.9]{L2}\index{${^\omega{\phi}_{P^\natural,K_\circ}}$}
$$
{^\omega{\phi}_{P^\natural,K_\circ}}(\delta)= \delta_{P^\natural}^{1/2}(\delta)\int\!\!\!\int_{U_P\times K_\circ}
\omega(k)\phi(k^{-1}\cdot \delta\cdot uk)du_P dk,\quad \delta\in M^\natural_{\sP},
$$
o les mesures de Haar $d\delta$, $du_P$, $dk$ sur $G^\natural$, $U_P$, $K_\circ$ ont ŽtŽ choisies 
de manire compatible. PrŽcisŽment, on peut supposer que toutes ces mesures sont celles 
{\it normalisŽes par $K_\circ$}, comme suit.

\begin{madefi}
{\rm 
Soit $K$ un sous--groupe ouvert compact de $G$. Pour tout sous--groupe fermŽ unimodulaire 
$H$ de $G$, on appelle {\it mesure de Haar sur $H$ normalisŽe par $K$} l'unique mesure de 
Haar $dh$ sur $H$ telle que ${\rm vol}(H\cap K,dh)=1$. De m\^eme, pour tout sous--espace 
fermŽ $H^\natural$ de $G^\natural$ de groupe sous--jacent $H$ unimodulaire, on appelle 
{\it mesure de Haar sur $H^\natural$ normalisŽe par $K$} l'image de la mesure de Haar $dh$ 
sur $H$ normalisŽe par $K$ par l'isomorphisme topologique $H\rightarrow H^\natural,\,h\mapsto
h\cdot \delta$ pour un (i.e. pour tout) $\delta\in H^\natural$.  
}
\end{madefi}

\begin{monhypo}
On suppose dŽsormais que toutes les mesures de Haar utilisŽes sont celles normalisŽes par $K_\circ$. 
\end{monhypo}

On a 
donc
$$
{\rm vol}(K_\circ\cdot \delta_1,d\delta)= {\rm vol}(K_\circ, dg)= {\rm vol}(K_\circ,dk)=1,
$$
et pour $P^\natural\in \P(G^\natural)$, on a
$$
{\rm vol}(M^\natural_{\sP}\cap (K_\circ \cdot \delta_1),d\delta_{\smash{M^\natural_{\sP}}})=
{\rm vol}(M_P\cap K_\circ,dm_P)=1
$$
et
$$
{\rm vol}(U_P\cap K_\circ ,du_P)=1.
$$

Soit $P^\natural\in \P(G^\natural)$. Pour toute $\omega$--reprŽsentation {\it admissible} $\Sigma$ de $M^\natural_{\sP}$ 
telle $\Sigma^\circ$ est admissible, on note $\Theta_\Sigma$ 
la distribution sur $M^\natural_{\sP}$ dŽfinie comme en \ref{carac} ˆ l'aide de la mesure $d\delta_{\smash{M^\natural_{\sP}}}$. 
On a la formule de descente \cite[5.9, thŽo.]{L2}:

\begin{mapropo}
Soit $P^\natural\in \P(G^\natural)$. Soit $\Sigma$ une $\omega$--reprŽsentation de $M^\natural_{\sP}$ telle que 
$\Sigma^\circ$ est admissible, 
et soit $\Pi={^\omega{i}_{P^\natural}^{G^\natural}(\Sigma)}$. Pour toute fonction $\phi\in \H^\natural$, on a l'ŽgalitŽ
$$
\Theta_\Pi(\phi)= \Theta_\Sigma({^\omega{\phi}_{P^\natural,K_\circ}})
$$
\end{mapropo}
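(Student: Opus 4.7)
The plan is to follow the standard proof of the Van Dijk formula for traces of induced representations, carefully adapted to the twisted setting. The key is to realize $\Pi = {^\omega i_{P^\natural}^{G^\natural}}(\Sigma)$ explicitly on a compact model, so that $\Pi(\phi)$ becomes an integral operator on $K_\circ$ whose diagonal kernel can be traced out.

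First, I would use the Iwasawa decomposition $G = K_\circ P$ (which holds because $K_\circ$ is in good position with respect to every $(P, M_P)$). This identifies $V_\Pi$ with the space of locally constant functions $\varphi : K_\circ \to V_\Sigma$ satisfying $\varphi(m_0 u_0 k) = \delta_P^{1/2}(m_0)\Sigma^\circ(m_0)\varphi(k)$ for $m_0 \in M_P\cap K_\circ$, $u_0 \in U_P\cap K_\circ$. Since $\delta_1\in M^\natural_\circ\subset M^\natural_{\sP}$, the operator $\Pi(\delta_1)$ is the one described in the induction construction of \cite[5.10]{L2}: for $k\in K_\circ$, decomposing $k\cdot\delta_1 = m(k)\cdot u(k)\cdot \kappa(k)$ via Iwasawa in $G^\natural$, one has $(\Pi(\delta_1)\varphi)(k) = \delta_P^{1/2}(m(k))\Sigma(m(k)\cdot\delta_1)(\varphi(\kappa(k)))$ up to the appropriate $\omega$-twist, and $\Pi(g\cdot\delta_1)(\varphi) = \pi(g)\Pi(\delta_1)(\varphi)$ for $g\in G$.

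Second, I would write $\Pi(\phi) = \int_{G^\natural}\phi(\delta)\Pi(\delta)d\delta$ as an integral operator with kernel $\mathcal{K}_\phi:K_\circ\times K_\circ\to \mathrm{End}_{\mathbb C}(V_\Sigma)$. Using Iwasawa $G^\natural = K_\circ\cdot M^\natural_{\sP}\cdot U_P$ and the measure normalization fixed by $K_\circ$ (so that $dg$ decomposes as $\delta_P^{-1}(m)\,dk\,dm\,du$), a direct unfolding gives
$$
(\Pi(\phi)\varphi)(k)=\int_{K_\circ}\mathcal{K}_\phi(k,k')(\varphi(k'))\,dk',
$$
where $\mathcal{K}_\phi(k,k')$ is obtained by integrating $\phi(k\cdot\delta\cdot u\cdot k'^{-1})$ against $\Sigma(\delta)$ over $(\delta,u)\in M^\natural_{\sP}\times U_P$, multiplied by the relevant $\delta_P^{1/2}$ and $\omega$ factors. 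Because $\Sigma^\circ$ is admissible, for any fixed $J\subset K_\circ$ with $\omega|_J=1$, the restriction of $\mathcal{K}_\phi$ to $V_\Sigma^{J\cap M_P}$ is of finite rank, so the trace $\Theta_\Pi(\phi) = \int_{K_\circ}\mathrm{tr}_{V_\Sigma}(\mathcal{K}_\phi(k,k))\,dk$ is well defined.

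Third, computing the diagonal $\mathcal{K}_\phi(k,k)$ and applying Fubini, the outer $K_\circ$-integral combines with the $M^\natural_{\sP}\times U_P$ integral to produce, for each $\delta\in M^\natural_{\sP}$, exactly the integrand
$
\delta_{P^\natural}^{1/2}(\delta)\int\!\!\int_{U_P\times K_\circ}\omega(k)\phi(k^{-1}\cdot\delta\cdot u k)\,du\,dk
= {^\omega\phi_{P^\natural,K_\circ}}(\delta),
$
paired against $\mathrm{tr}_{V_\Sigma}(\Sigma(\delta))$; whence $\Theta_\Pi(\phi) = \Theta_\Sigma({^\omega\phi_{P^\natural,K_\circ}})$. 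The main obstacle is bookkeeping: one must correctly track the $\omega$-twist (which enters when moving $K_\circ$-elements past $\delta_1$, since $K_\circ$ is not assumed $\theta$-stable), the normalization factors $\delta_P^{1/2}$ versus $\delta_{P^\natural}^{1/2}$, and the compatibility of the Haar measures $d\delta$, $dk$, $du_P$, $dm$ all normalized by $K_\circ$. A clean way to organize the calculation is to first treat the case $\phi = f\cdot u_{\delta_1}$ for $f\in\mathcal{H}(G)$ via the isomorphism $u$ of \ref{H modules} (Exemple), where everything reduces to the non-twisted Van Dijk identity applied to $f$, then recover the general case by linearity and density.
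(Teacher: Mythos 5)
Your outline follows the classical Van Dijk computation (compact model of the induced space on $K_\circ$ via the Iwasawa decomposition $G=K_\circ P$, realization of $\Pi(\phi)$ as an integral operator with kernel on $K_\circ\times K_\circ$, trace of the diagonal, then Fubini), and this is indeed the route behind the statement. Note, however, that the paper gives no proof of this proposition at all. It is quoted as the \og formule de descente \fg from \cite[5.9]{L2}, so the only possible comparison is with that reference, whose argument is precisely the twisted adaptation of Van Dijk that you sketch. Your steps 1--3 are sound, granted the bookkeeping you yourself flag (the $\omega$--factors produced by $\Pi(\delta_1)$ through the formula $\Pi(\delta_1)(\varphi)(g)=\omega(\theta^{-1}(g))\Sigma(\delta_1)(\varphi(\theta^{-1}(g)))$, the factor $\delta_{P^\natural}^{1/2}$, the measures all normalized by $K_\circ$, and the admissibility of $\Pi^\circ=i_P^G(\Sigma^\circ)$ which legitimates the finite--rank and trace manipulations).

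The one step that would fail is your closing suggestion. Writing $\phi=u_{\delta_1}(f)$ with $f=\phi^\circ\in\H$ reduces nothing, since every $\phi\in\Hn$ is of this form, so \og linearity and density \fg is vacuous. More importantly, $\Theta_\Pi(\phi)={\rm tr}(\Pi^\circ(f)\circ\Pi(\delta_1))$ is a twisted trace. The operator $\Pi(\delta_1)$, which intertwines $\omega\Pi^\circ$ and $(\Pi^\circ)^\theta$, does not disappear, and correspondingly the twisted constant term ${^\omega{\phi}_{P^\natural,K_\circ}}$ (with the factor $\omega(k)$ and the twisted conjugation $k^{-1}\cdot\delta\cdot uk$) is not the ordinary constant term of $f$. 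Hence the untwisted Van Dijk identity applied to $f$ gives no information about $\Theta_\Pi(\phi)$, and the twisted identity must be proved directly by the kernel computation of your steps 2--3, which is exactly what \cite[5.9]{L2} does.
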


\subsection{Le thŽorme principal sur la partie \og discrte \fg.}\label{ŽnoncŽ discret} 
Une forme linŽaire $\Phi$ sur $\G_{\Bbb C}(G^\natural,\omega)$ 
est dite \og discrte \fg si elle est nulle sur $\G_{{\Bbb C},{\rm ind}}(G^\natural,\omega)$. On note 
$\G_{\Bbb C}^{\rm dis}(G^\natural,\omega)^*$ l'espace des formes linŽaires discrtes sur $\G_{\Bbb C}(G^\natural,\omega)$, et 
l'on pose\index{$\F^{\rm dis}(G^\natural,\omega)$, $\F^{\rm dis}_{\rm tr}(G^\natural,\omega)$}
$$
\F^{\rm dis}(G^\natural,\omega)=\F(G^\natural,\omega)\cap \G_{\Bbb C}^{\rm dis}(G^\natural,\omega)^*,
$$
$$
\F^{\rm dis}_{\rm tr}(G^\natural,\omega)=\F_{\rm tr}(G^\natural,\omega)\cap \G_{\Bbb C}^{\rm dis}(G^\natural,\omega)^*.
$$

Une fonction $\phi\in \H^\natural$ est dite \og $\omega$--cuspidale \fg 
si pour tout $P^\natural\in \P(G^\natural)\smallsetminus \{G^\natural\}$, l'image de $\phi^{K_\circ,\omega}_{P^\natural}\in \H(M^\natural_{\sP})$ 
dans $\overline{\H}(M^\natural_{\sP},\omega)$ est nulle; o (rappel)
$$
\overline{\H}(M^\natural_{\sP},\omega)= \H(M^\natural_{\sP})/[\H(M^\natural_{\sP}),\H(M_P)]_\omega.
$$
On note $\smash{\overline{\H}}^{\rm dis}(G^\natural,\omega)$\index{$\smash{\overline{\H}}^{\rm dis}(G^\natural,\omega)$} 
le sous--espace de $\smash{\overline{\H}}(G^\natural,\omega)$ 
engendrŽ par les images dans $\smash{\overline{\H}}(G^\natural,\omega)$ des fonctions $\omega$--cuspidales.

\begin{montheo}
L'application $\H(G^\natural)\rightarrow \G_{\Bbb C}(G^\natural,\omega)^*,\,\phi\mapsto \Phi_\phi$ induit par restriction 
un isomorphisme de ${\Bbb C}$--espaces vectoriels
$$
\smash{\overline{\H}}^{\rm dis}(G^\natural,\omega)\rightarrow \F^{\rm dis}(G^\natural,\omega).
$$
\end{montheo}

D'aprs la proposition de \ref{terme constant}, la transformŽee de Fourier 
$\H^\natural\rightarrow \F(G^\natural,\omega),\,\phi \mapsto \Phi_\phi$ 
induit bien une application ${\Bbb C}$--linŽaire
$$
\smash{\overline{\H}}^{\rm dis}(G^\natural,\omega)\rightarrow \F^{\rm dis}(G^\natural,\omega).
$$

\begin{marema}
{\rm Si $G=M_\circ$, \cad si $G$ est compact modulo son centre, alors on a $\smash{\overline{\H}}^{\rm dis}(G^\natural,\omega)=
\overline{\H}(G^\natural,\omega)$ et $\F^{\rm dis}(G^\natural,\omega)=\F(G^\natural,\omega)$; en ce cas le thŽorme ci--dessus 
co\"{\i}ncide avec le thŽorme principal (\ref{ŽnoncŽ}). En gŽnŽral, si le thŽorme de \ref{ŽnoncŽ} est vrai pour tous les 
sous--espaces tordus $M^\natural_{\sP}$, $P^\natural\in \P(G^\natural)\smallsetminus \{G^\natural\}$, alors d'aprs la proposition 
de \ref{terme constant}, une fonction $\phi\in \H^\natural$ est $\omega$--cuspidale si et seulement si elle annule toutes les 
traces des reprŽsentations ${^\omega{i}_{P^\natural}^{G^\natural}}(\Sigma)$, $P^\natural\in \P(G^\natural)\smallsetminus 
\{G^\natural\}$, $\Sigma\in \G_{\Bbb C}(M^\natural_{\sP},\omega)$. On en dŽduit que si le thŽorme de \ref{ŽnoncŽ} est vrai en gŽnŽral 
--- donc en particulier pour tous les espaces tordus $M^\natural_{\sP}$, $P^\natural\in \P(G^\natural)$ ---, alors 
le thŽorme ci--dessus l'est aussi.\hfill $\blacksquare$
}
\end{marema}

\subsection{RŽduction du thŽorme principal (\ref{ŽnoncŽ}) au thŽorme de \ref{ŽnoncŽ discret}.}\label{rŽduction} Supposons 
dŽmontrŽ le thŽorme de \ref{ŽnoncŽ discret} (en gŽnŽral, \cad pour tout $G^\natural$) 
et dŽduisons--en le thŽorme de \ref{ŽnoncŽ}. D'aprs la remarque de \ref{ŽnoncŽ discret}, 
le thŽorme de \ref{ŽnoncŽ} est vrai pour $G^\natural = M^\natural_\circ$. Par rŽcurrence sur la dimension de $M_P$ pour 
$P\in \P(G^\natural)$, on peut donc supposer que le thŽorme de \ref{ŽnoncŽ} est vrai pour tout sous--espace $M^\natural_{\sP}$, 
$P^\natural\in \P(G^\natural)\smallsetminus\{G^\natural\}$.

Commen\c{c}ons par le thŽorme de Paley--Wiener, \cad la surjectivitŽ de l'application 
$\H^\natural\rightarrow \F(G^\natural,\omega),\, \phi\mapsto \Phi_\phi$. D'aprs la proposition de 
\ref{les T} pour $d=d(G)$, il existe des nombres rationnels $\lambda(P^\natural)$ pour 
$P^\natural\in \P(G^\natural)\smallsetminus \{G^\natural\}$ tels que le ${\Bbb C}$--endomorphisme 
de $\G_{\Bbb C}(G^\natural,\omega)$
$$
{\boldsymbol{A}}_d= {\rm id}+\sum_{P^\natural\neq G^\natural}\lambda(P^\natural){^\omega{T}_{P^\natural,{\Bbb C}}}
$$
vŽrifie
$$
\ker {\boldsymbol{A}}_d = \G_{{\Bbb C},{\rm ind}}(G^\natural,\omega),\quad {\boldsymbol{A}}_d\circ 
{\boldsymbol{A}}_d={\boldsymbol{A}}_d.
$$
Le morphisme adjoint
$$
{\boldsymbol{A}}_d^*= {\rm id} + \sum_{P^\natural\neq G^\natural} \lambda(P^\natural) ({^\omega{r}_{G^\natural}^{P^\natural}})^*
\circ ({^\omega{i}_{P^\natural}^{G^\natural}})^*
$$
envoie $\G_{\Bbb C}(G^\natural,\omega)^*$ dans $\G_{\Bbb C}^{\rm dis}(G^\natural,\omega)^*$. Soit $\Phi\in \F(G^\natural,\omega)$. 
Posons $\Phi'= {\boldsymbol{A}}_d^*(\Phi)\in \G_{\Bbb C}^{\rm dis}(G^\natural,\omega)^*$. Pour $P^\natural\in \P(G^\natural)
\smallsetminus \{G^\natural)$, d'aprs la proposition de \ref{morphismes duaux} et le thŽorme de Paley--Wiener pour 
$M^\natural_{\sP}$, on a
$$
({^\omega{i}_{P^\natural}^{G^\natural}})^*(\Phi)\in \F(M^\natural_{\sP},\omega)= \F_{\rm tr}(M^\natural_{\sP},\omega),
$$
Ensuite, d'aprs le point (2) de la proposition de \ref{morphismes duaux}, on a
$$
({^\omega{r}_{G^\natural}^{P^\natural}})^*
\circ ({^\omega{i}_{P^\natural}^{G^\natural}})^*(\Phi)\in \F_{\rm tr}(G^\natural,\omega)\subset \F(G^\natural,\omega).
$$
Par consŽquent $\Phi'$ appartient ˆ $\G_{\Bbb C}^{\rm dis}(G^\natural,\omega)^*\cap \F(G^\natural,\omega)= \F^{\rm dis}(G^\natural,\omega)$, et 
d'aprs le thŽorme de \ref{ŽnoncŽ discret}, il existe une fonction $\omega$--cuspidale $\phi'\in \H^\natural$ telle 
que $\Phi'= \Phi_{\phi'}$. D'autre part, on vient de voir qu'il existe une fonction $\phi''\in \H^\natural$ telle que
$$
({^\omega{r}_{G^\natural}^{P^\natural}})^*
\circ ({^\omega{i}_{P^\natural}^{G^\natural}})^*(\Phi)=\Phi_{\phi''}.
$$
On a donc
$$
\Phi = \Phi_{\phi'}-\Phi_{\phi''}=\Phi_{\phi'-\phi''}\in \F_{\rm tr}(G^\natural,\omega).
$$

Passons au thŽorme de densitŽ spectrale, \cad ˆ l'injectivitŽ de l'application 
$\smash{\overline{\H}}^\natural_\omega\rightarrow \F(G^\natural,\omega),\, \phi\mapsto \Phi_\phi$. 
Soit $\phi\in \H^\natural$ une fonction telle que $\Phi_\phi=0$. Pour $P^\natural\in \P(G^\natural)$ tel que $P^\natural\neq G^\natural$, 
d'aprs la proposition de \ref{terme constant} et le thŽorme de densitŽ spectrale 
pour $M^\natural_{\sP}$, le terme constant $\phi_{P^\natural}= \phi_{P^\natural}^{K_\circ,\omega}$ 
appartient au sous--espace $[\H(M^\natural_{\sP}),\H(M_P)]_\omega$ de $\H(M^\natural_{\sP})$. Par consŽquent 
la fonction $\phi$ est $\omega$--cuspidale, et d'aprs le thŽorme de \ref{ŽnoncŽ discret}, elle est dans $[\H^\natural,\H]_\omega$, 
ce qu'il fallait dŽmontrer.

\begin{marema}
{\rm D'aprs ce qui prŽcde, par rŽcurrence sur la dimension de $G$, la surjectivitŽ de l'application du 
thŽorme de \ref{ŽnoncŽ} est impliquŽe par la surjectivitŽ de celle du thŽorme de \ref{ŽnoncŽ discret}; idem 
pour l'injectivitŽ. Pour dŽmontrer le thŽorme principal (\ref{ŽnoncŽ}), il suffit donc de dŽmontrer sa 
variante sur la partie discrte (\ref{ŽnoncŽ discret}). \hfill $\blacksquare$
}
\end{marema}

\section{Le thŽorme de Paley--Wiener sur la partie discrte}\label{PW discret}

D'aprs \ref{rŽduction}, le thŽorme de Paley--Wiener 
est impliquŽ par la surjectivitŽ de l'application 
du thŽorme de \ref{ŽnoncŽ discret}. C'est cette dernire que l'on Žtablit dans 
cette section \ref{PW discret}.

\subsection{Support cuspidal des reprŽsentations discrtes} 
Une $\omega$--reprŽsentation $G$--irrŽductible $\Pi$ de $G^\natural$ est dite \og discrte \fg si son 
image dans $\G_{\Bbb C}^{\rm dis}(G^\natural,\omega)$ 
n'est pas nulle; o (rappel) $\G_{\Bbb C}^{\rm dis}(G^\natural,\omega)=\G_{\Bbb C}(G^\natural,\omega)/\G_{{\Bbb C},{\rm ind}}(G^\natural,\omega)$. 
On note ${\rm Irr}_0^{\rm dis}(G^\natural,\omega)$\index{${\rm Irr}_0^{\rm dis}(G^\natural,\omega)$, 
${\rm Irr}_{\Bbb C}^{\rm dis}(G^\natural, \omega)$} le sous--ensemble de ${\rm Irr}_0(G^\natural,\omega)$ 
formŽ des $\omega$--reprŽsentations discrtes. Il s'identifie ˆ un sous--ensemble de ${\rm Irr}_{\Bbb C}(G^\natural,\omega)$, 
que l'on note ${\rm Irr}_{\Bbb C}^{\rm dis}(G^\natural, \omega)$.

\begin{monlem}\label{supp cusp des discrtes}
Soit $\Pi$ une $\omega$--repr\'esentation $G$--irrŽductible discr\`ete de $G^\natural$. Il existe une 
$\omega_{\rm u}$-repr\'esentation $G$--irr\'eductible temp\'er\'ee $\Pi'$ de $G^\natural$ et un 
ŽlŽment $\Psi$ de $\mathfrak{P}^{\vert \omega \vert}_{\Bbb C}(G^\natural)$ tels que $\theta_{G^\natural}(\Pi)= \theta_{G^\natural}(\Psi\cdot \Pi')$.
\end{monlem}

\begin{proof} Soit $(P^\natural,\Sigma,\Xi)$ un triplet de Langlands associ\'e \`a $\Pi$. 
Posons $\sigma=\Sigma^\circ$, $\xi=\Xi^\circ$, et soit $(M_{P'},\sigma')$ 
une paire cuspidale standard de $G$ telle que $P'\subset P$ et $\xi\cdot \sigma$ 
est isomorphe \`a un sous-quotient (irr\'eductible) de l'induite parabolique ${i}_{P'}^P(\sigma')$. 
D'aprs la dŽmonstration du lemme 1 de \ref{base langlands}, on a une dŽcomposition
$$
\Pi \equiv \sum_{i=1}^m \widetilde{\Pi}_{\mu_i}\quad({\rm mod}\;\G_{>0}(G^\natural,\omega))
$$
o les $\mu_i$ sont des triplets de Langlands pour $(G^\natural,\omega)$ tels que
$\theta_G(\Pi_{\mu_i}^\circ)=[M_{P'},\sigma']$. Rappelons que si $\mu=(P^\natural,\Sigma,\Xi)$ est un triplet de 
Langlands pour $(G^\natural,\omega)$, on a posŽ $\widetilde{\Pi}_\mu = {^\omega{i_{P^\natural}^{G^\natural}}}(\Xi\cdot\Pi)$. 
Puisque $\Pi$ est discrte, l'un au moins de ces triplets, disons $\mu_{i_0}$ est de la forme 
$(G^\natural,\Pi',\Psi)$, et l'on a $\theta_{G^\natural}(\Psi\cdot \Pi')= \theta_G(\Pi_{\mu_{i_0}}^\circ)=\theta_{G^\natural}(\Pi)$.
\end{proof}

\subsection{Un rŽsultat de finitude}\label{finitude}
Pour $\mathfrak{s}\in \mathfrak{B}(G)$, 
on note $\Theta_{G^\natural,\omega}^{\rm dis}(\mathfrak{s})$\index{$\Theta_{G^\natural,\omega}^{\rm dis}(\mathfrak{s})$} 
le sous--ensemble de $\Theta_{G^\natural,\omega}(\mathfrak{s})$ 
form\'e des $\theta_{G^\natural}(\Pi)$ pour un $\Pi\in {\rm Irr}_0^{\rm dis}(G^\natural,\omega)$. 

\begin{mapropo}
Soit $\mathfrak{s}\in \mathfrak{B}(G)$. L'ensemble $\Theta_{G^\natural,\omega}^{\rm dis}(\mathfrak{s})$, s'il est non vide, 
est union finie de $\mathfrak{P}(G^\natural)$--orbites.
\end{mapropo}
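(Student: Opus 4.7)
La d\'emonstration que je propose s'appuie sur deux ingr\'edients : la constructibilit\'e de $\Theta^{\rm dis}_{G^\natural,\omega}(\mathfrak{s})$ dans la vari\'et\'e $\Theta(\mathfrak{s})$ (\'etablie dans la proposition de \ref{constructible}, comme annonc\'e dans l'introduction) et la $\mathfrak{P}(G^\natural)$-stabilit\'e de cet ensemble. La finitude d\'esir\'ee en r\'esultera par un argument g\'en\'eral sur l'action d'un groupe alg\'ebrique connexe sur une vari\'et\'e.

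Je commencerais par v\'erifier la $\mathfrak{P}(G^\natural)$-stabilit\'e. Pour tout rel\`evement $\Psi\in \mathfrak{P}_{\Bbb C}(G^\natural)$ d'un caract\`ere $\psi\in \mathfrak{P}(G^\natural)$ et tout $P^\natural\in \P(G^\natural)$, l'identit\'e
$$
\Psi\cdot {^\omega{i}_{P^\natural}^{G^\natural}}(\Sigma) = {^\omega{i}_{P^\natural}^{G^\natural}}(\Psi\vert_{\smash{M^\natural_{\sP}}}\cdot \Sigma)
$$
montre que la multiplication par $\Psi$ pr\'eserve le sous-espace $\G_{{\Bbb C},{\rm ind}}(G^\natural,\omega)$, et stabilise donc la partie \og discr\`ete \fg de ${\rm Irr}_{\Bbb C}(G^\natural,\omega)$. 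Comme de plus $\theta_{G^\natural}(\Psi\cdot \Pi) = \psi\cdot \theta_{G^\natural}(\Pi)$, il s'ensuit que $\Theta^{\rm dis}_{G^\natural,\omega}(\mathfrak{s})$ est une partie $\mathfrak{P}(G^\natural)$-stable de $\Theta(\mathfrak{s})$.

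Le c\oe ur du raisonnement est alors le principe g\'en\'eral suivant, que j'\'etablirais par r\'ecurrence noeth\'erienne sur $\dim \overline{X}$ : toute partie constructible $X$ d'une vari\'et\'e alg\'ebrique $Y$, stable sous l'action alg\'ebrique d'un groupe alg\'ebrique lin\'eaire connexe $H$, est r\'eunion finie de $H$-orbites. Par connexit\'e de $H$, chaque composante irr\'eductible de $\overline{X}$ est $H$-stable, de sorte qu'on peut supposer $\overline{X}$ irr\'eductible ; par le th\'eor\`eme de Chevalley, $X$ contient un ouvert dense $U$ de $\overline{X}$. Toute $H$-orbite $\mathcal{O}$ rencontrant $U$ est localement ferm\'ee (r\'esultat classique sur les actions alg\'ebriques), de fermeture \'egale \`a $\overline{X}$, donc ouverte dans $\overline{X}$ ; par stabilit\'e elle est contenue dans $X$, et $X\smallsetminus \mathcal{O}$ est alors constructible, $H$-stable, et contenue dans un ferm\'e strict de $\overline{X}$, ce qui permet de conclure par r\'ecurrence.

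On applique ce principe \`a $H=\mathfrak{P}^0(G^\natural)$, composante neutre de $\mathfrak{P}(G^\natural)$ : c'est un tore complexe connexe op\'erant alg\'ebriquement sur $\Theta(\mathfrak{s})$ par restriction de l'action naturelle de $\mathfrak{P}(G)$. On obtient ainsi que $\Theta^{\rm dis}_{G^\natural,\omega}(\mathfrak{s})$ est r\'eunion finie de $\mathfrak{P}^0(G^\natural)$-orbites, donc \emph{a fortiori} de $\mathfrak{P}(G^\natural)$-orbites puisque $\mathfrak{P}(G^\natural)/\mathfrak{P}^0(G^\natural)$ est fini. La difficult\'e principale r\'eside en amont, dans l'\'etablissement de la constructibilit\'e qui n\'ecessite une analyse fine du comportement des $\omega$-repr\'esentations discr\`etes le long des familles param\'etr\'ees par $\mathfrak{P}_{\Bbb C}(M_{\sP}^\natural)$ ; une fois celle-ci acquise, la pr\'esente proposition s'en d\'eduit formellement.
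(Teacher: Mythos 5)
Votre r\'eduction \`a la seule constructibilit\'e ne suffit pas : le \og principe g\'en\'eral \fg que vous invoquez est faux. Une partie constructible $X$ d'une vari\'et\'e, stable sous l'action alg\'ebrique d'un groupe alg\'ebrique lin\'eaire connexe $H$, n'est pas n\'ecessairement r\'eunion finie de $H$--orbites : prenez $H={\Bbb G}_{\rm m}$ agissant par homoth\'eties sur ${\Bbb A}^2$ et $X={\Bbb A}^2\smallsetminus \{0\}$, qui est ouvert, stable, et r\'eunion d'une infinit\'e d'orbites (les droites \'epoint\'ees). L'erreur pr\'ecise dans votre r\'ecurrence est l'affirmation qu'une orbite rencontrant l'ouvert dense $U$ de $\overline{X}$ a pour adh\'erence $\overline{X}$ tout entier : rencontrer un ouvert dense n'entra\^{\i}ne pas la densit\'e de l'orbite. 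Et ce d\'efaut est r\'eel dans la situation pr\'esente : $\Theta(\mathfrak{s})$ lui--m\^eme est constructible et $\mathfrak{P}(G^\natural)$--stable, mais d\`es que $\mathfrak{s}$ vit sur un Levi propre, le tore $\mathfrak{P}(M_P)$ qui op\`ere transitivement sur $\Theta(\mathfrak{s})$ est en g\'en\'eral de dimension strictement sup\'erieure \`a celle de $\mathfrak{P}(G^\natural)$ (lequel n'op\`ere que via la restriction des caract\`eres), de sorte que $\Theta(\mathfrak{s})$ poss\`ede une infinit\'e de $\mathfrak{P}(G^\natural)$--orbites. La constructibilit\'e et la stabilit\'e ne peuvent donc pas, \`a elles seules, donner la finitude ; votre v\'erification de la stabilit\'e est correcte mais ne porte pas la conclusion.

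L'ingr\'edient qui manque, et que la d\'emonstration de \ref{finitude} utilise de fa\c{c}on essentielle, est propre aux repr\'esentations discr\`etes : d'apr\`es le lemme sur le support cuspidal des repr\'esentations discr\`etes (cons\'equence de la d\'ecomposition de Langlands), tout point $x$ de $\Theta_{G^\natural,\omega}^{\rm dis}(\mathfrak{s})$ s'\'ecrit $x=\theta_G(\psi\,\pi')$ avec $\pi'$ temp\'er\'ee, donc unitaire, et $\psi\in \mathfrak{P}(G^\natural)$. L'involution anti--alg\'ebrique $+$ sur $\Theta(\mathfrak{s})$ d\'eduite de $\rho\mapsto \overline{\check{\rho}}$ fixe alors $\pi'$, d'o\`u $x^+=\psi^+\psi^{-1}\cdot x\in \mathfrak{P}(G^\natural)\cdot x$ ; autrement dit, l'image de $\Theta_{G^\natural,\omega}^{\rm dis}(\mathfrak{s})$ dans la vari\'et\'e quotient $\Theta(\mathfrak{s})/\mathfrak{P}(G^\natural)$ est fix\'ee point par point par une involution anti--alg\'ebrique, et une partie constructible form\'ee de points fixes d'une telle involution est finie. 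C'est cette combinaison \og constructibilit\'e $+$ unitarit\'e des temp\'er\'ees \fg (et non la seule $\mathfrak{P}(G^\natural)$--stabilit\'e) qui donne la proposition ; pour compl\'eter votre texte il faudrait donc remplacer votre principe g\'en\'eral par cet argument, la partie d\'ef\'er\'ee \`a \ref{constructible} restant inchang\'ee.
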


\begin{proof}
On reprend la m\'ethode de \cite{BDK} en l'adaptant au cas tordu, 
comme le fait Flicker dans \cite{F}. 
Cette m\'ethode consiste \`a v\'erifier que $\Theta_{G^\natural,\omega}^{\rm dis}(\mathfrak{s})$ est une partie {\it constructible} de $\Theta(\mathfrak{s})$, c'est-\`a-dire 
une union finie de parties localement ferm\'es (pour la topologie de Zariski). Admettons pour l'instant ce rŽsultat --- il sera dŽmontrŽ en \ref{constructible} --- 
et dŽduisons--en la proposition. 

Notons 
$\omega^+$ le caract\`ere $\overline{\omega^{-1}}$ de $G$. 
L'application $\Pi\mapsto \Pi^+=\overline{\check{\Pi}}$ est une bijection de ${\rm Irr}_0(G^\natural,\omega)$ sur 
${\rm Irr}_0(G^\natural,\omega^+)$, v\'erifiant $(\Pi^+)^+=\Pi$. Elle 
commute au foncteur d'oubli $\Pi\mapsto \Pi^\circ$: en notant $\pi\mapsto \pi^+$ l'involution de ${\rm Irr}(G)$ 
d\'efinie de la m\^eme mani\`ere, on a $(\Pi^+)^\circ = (\Pi^\circ)^+$. De plus, l'involution $\pi\mapsto \pi^+$ de 
${\rm Irr}(G)$ induit par lin\'earit\'e 
une involution de $\EuScript{G}(G)$, qui commute au morphisme induction parabolique ${i}_P^G$, 
$P\in \EuScript{P}(G)$. D'o\`u une involution \og $+$ \fg sur la vari\'et\'e alg\'ebrique complexe 
$\Theta(\mathfrak{s})$, qui commute \`a l'application support cuspidal $\theta_G$: si $\theta_G(\pi)=[M,\rho]\in \Theta(\mathfrak{s})$ 
pour une paire cuspidale standard $(M,\rho)$ de $G$, on a
$$
\theta_G(\pi^+)= [M,\rho^+] =\theta_G(\pi)^+.
$$
On a aussi une involution $\psi\mapsto \psi^+$ sur $\mathfrak{P}(M)$, 
donn\'ee par $\psi^+=\overline{\psi^{-1}}$. Les involutions $+$ sur $\mathfrak{P}(M)$ et $\Theta(\mathfrak{s})$ 
sont anti--alg\'ebriques, et compatibles: 
pour $[M,\rho]\in \Theta(\mathfrak{s})$ et $\psi\in \mathfrak{P}(M)$, on a
$$
(\psi\cdot [M,\rho])^+= [M, (\psi\rho)^+]=\psi^+\cdot [M,\rho]^+.
$$
Supposons que $\Theta_{G^\natural,\omega}^{\rm dis}(\mathfrak{s})$ est non vide. D'apr\`es le lemme de \ref{supp cusp des discrtes}, 
tout \'el\'ement $x$ de $\Theta_{G^\natural,\omega}^{\rm dis}(\mathfrak{s})$ est de la forme $x=\theta_G(\psi \Pi'^\circ)$ pour une 
$\omega_{\rm u}$--repr\'esentation $G$--irr\'eductible temp\'er\'ee $\Pi'$ de $G^\natural$ 
et un ŽlŽment $\psi$ de $\mathfrak{P}(G^\natural)$. Puisque $\pi'=\Pi'^\circ$ est temp\'er\'ee, donc 
en particulier unitaire, on a $\pi'^+=\pi'$ et
$$
x^+ = \theta_G(\psi^+\pi') = \psi^+\psi^{-1}\cdot x
$$
appartient ˆ $\mathfrak{P}(G^\natural)\cdot x$. 
Ici $\mathfrak{P}(G^\natural)\;(=\mathfrak{P}(G)^\theta)$ op\`ere sur $\Theta(\mathfrak{s})$ via la restriction des caract\`eres de $G$ \`a $M$. 
L'involution $+$ sur $\Theta(\mathfrak{s})$ induit par passage au quotient une involution anti--alg\'ebrique 
sur la vari\'et\'e alg\'ebrique quotient $\mathfrak{X}= \Theta(\mathfrak{s})/\mathfrak{P}(G^\natural)$, que l'on note encore \og $+$ \fg. 
D'apr\`es le calcul ci--dessus, cette involution $+$ sur $\mathfrak{X}$ fixe les points du sous--ensemble 
$\Theta_{G^\natural,\omega}^{\rm dis}(\mathfrak{s})/\mathfrak{P}(G^\natural)$. Ce dernier est constructible, 
donc fini, ce qu'il fallait d\'emontrer.
\end{proof}

\subsection{DŽcomposition des fonctions rŽgulires}\label{dŽcomposition} 
On est donc ramenŽ ˆ vŽrifier que pour $\mathfrak{s}\in \mathfrak{B}(G)$, 
l'ensemble $\Theta_{G^\natural,\omega}^{\rm dis}(\mathfrak{s})$ est une partie constructible de 
$\Theta(\mathfrak{s})$. Posons $\Theta= \Theta(\mathfrak{s})$ et 
$\Theta'= \Theta_{G^\natural,\omega}^{\rm dis}(\mathfrak{s})$. Comme dans \cite[5.1]{BDK}, il 
suffit de montrer que la paire $\Theta'\subset \Theta$ satisfait au critre suivant (loc.~cit., p.~187):

\begin{enumerate}
\item[(*)]Pour toute sous-vari\'et\'e localement ferm\'ee $\mathfrak{X}$ 
de $\Theta$, 
il existe un morphisme dominant \'etale $\phi: \mathfrak{Y}\rightarrow \mathfrak{X}$ tel que, notant $(\mathfrak{X}\cap \Theta')^\C$ le 
complŽmentaire de $\mathfrak{X}\cap \Theta'$ dans $\mathfrak{X}$, l'ensemble 
$\phi^{-1}((\mathfrak{X}\cap \Theta')^\C)$ est 
vide ou \'egal \`a $\mathfrak{Y}$ tout entier.
\end{enumerate}

\begin{madefi}
{\rm 
Soit $\mathfrak{X}$ une vari\'et\'e alg\'ebrique affine complexe, d'anneau de fonctions r\'eguli\`eres 
$B={\Bbb C}[\mathfrak{X}]$. Une application $\nu:\mathfrak{X}\rightarrow \G(G^\natural,\omega)$ 
est dite {\it rŽgulire} si elle est de la forme $x\mapsto \Pi_x$ 
pour un $(G^\natural,\omega,B)$-module 
admissible $(\Pi,V)$, o $\Pi_x$ dŽsigne la (semisimplifiŽe de la) localisation 
de $\Pi$ en $x$ --- cf. \ref{modules admissibles}. Une application rŽgulire 
$\nu:\mathfrak{X}\rightarrow \G(G^\natural,\omega)$ est dite 
{\it irrŽductible} si $\nu(\mathfrak{X})\subset {\rm Irr}(G^\natural,\omega)$, et 
{\it $G$--irrŽductible} si $\nu(\mathfrak{X})\subset{\rm Irr}_0(G^\natural,\omega)$. 
Deux applications rŽgulires (irrŽductibles) 
$\nu,\,\nu':\mathfrak{X}\rightarrow {\rm Irr}(G^\natural,\omega)$ sont dites {\it disjointes} si $\nu(x)\neq \nu'(x)$ 
pour tout $x\in \mathfrak{X}$, et elles sont dites ${\Bbb C}^\times$--disjointes si $\nu(x)\neq \lambda\cdot\nu'(x)$ 
pour tout $x\in \mathfrak{X}$ et tout $\lambda\in {\Bbb C}^\times$.}
\end{madefi}

\begin{marema}
{\rm 
Pour chaque $x\in \mathfrak{X}$, la localisation $\Pi_x$ de $\Pi$ en $x$ est une $\omega$--reprŽsentation de $G^\natural$ de 
longueur finie, et puisque la reprŽsentation $\Pi_x^\circ$ de $G$ sous--jacente est elle aussi de longueur finie, pour tout 
sous--quotient irrŽductible $\Pi'$ de $\Pi_x$, on a $s(\Pi')<+\infty$. Choisissons une suite de Jordan-H\"{o}lder de $\Pi$:
$$
0=\Pi_{x,0}\subset \Pi_{x,1}\subset \cdots \subset \Pi_{x,n}=\Pi_x.
$$
Pour $i=1,\ldots ,n$, $\Pi_{x,i}$ est une sous--$\omega$--reprŽsentation de $\Pi$, et le sous--quotient 
$\Pi_{x,i}/\Pi_{x,i-1}$ de $\Pi_x$ est irrŽductible. La semisimplifiŽe de $\Pi_x$ est par dŽfinition 
la somme sur $i$ des classes d'isomorphisme des $\Pi_{x,i}/\Pi_{x,i-1}$. \hfill $\blacksquare$
}
\end{marema}

Le lemme suivant est une variante tordue des constructions  de loc.~cit. (step 1 -- step 2). On aurait pu 
se contenter de la version non tordue (cf. la proposition de \ref{consŽquence}), mais comme ces constructions sont au coeur du raisonnement, et qu'il nous 
faut de toutes fa\c{c}ons les reprendre en dŽtail, on prŽfre le faire dans le cadre tordu qui nous 
intŽresse ici.

\begin{monlem}
Soit $\nu:\mathfrak{X}\rightarrow \G(G^\natural,\omega)$  une application r\'eguli\`ere. 
Il existe un morphisme domi\-nant \'etale $\phi:\mathfrak{Y}\rightarrow \mathfrak{X}$, des 
applications r\'eguli\`eres 
$\mu_1,\ldots ,\mu_m:\mathfrak{Y}\rightarrow{\rm Irr}(G^\natural,\omega)$ 
deux--\`a--deux disjointes, et des entiers $a_1,\ldots ,a_m>0$ tels que $\nu\circ \phi = \sum_{i=1}^m a_i\mu_i$. 
\end{monlem}

\begin{proof} Posons $B={\Bbb C}[\mathfrak{X}]$, et soit $(\Pi,V)$ un $(G^\natural,\omega,B)$--module admissible 
tel que $\nu(x)=\Pi_x$. Choisissons un sous--groupe ouvert compact $J$ de 
$G$ tel que $V^J$ engendre $V$ comme $\H$--module, i.e. tel que $\Pi^\circ(G)(V^J)=V$. D'aprs  
\ref{bons sgoc}, on peut supposer que $J$ est \og bon \fg, $\theta(J)=J$ et $\omega\vert_J$. C'est donc un ŽlŽment de 
${\boldsymbol{J}}_{G^\natural,\omega}(G)$, et $J^\natural=J\cdot \delta_1$ est un Žlement de ${\boldsymbol{J}}(G^\natural,\omega)$. 
D'aprs \ref{bons seoc}, l'Žtude du $(\Hn,\omega)$--module non dŽgŽnŽrŽ $V$ se 
ramne ˆ celle du $(\Hn_{\sJ},\omega)$--module non dŽgŽnŽrŽ $V^J=V^{J^\natural}$. En particulier, 
tout sous--quotient 
irrŽductible $\Pi'$ de $\Pi$ vŽrifie $\Pi'^{J^\natural}\neq 0$.

Quitte ˆ remplacer la variŽtŽ $\mathfrak{X}$ par l'une de ses composantes irrŽductibles, on peut la 
supposer irrŽductible. Soit ${\Bbb K}={\Bbb C}(\mathfrak{X})$ le corps des fractions de $B$, et soit 
$\overline{\Bbb K}$ une cl\^oture algŽbrique de ${\Bbb K}$. Le $(\Hn_{\sJ},\omega)$--module 
$V^J$ est aussi un $B$--module de type fini. Par consŽquent $W= {\Bbb K}\otimes_B V^J$ est un ${\Bbb K}$--espace vectoriel 
de dimension finie, $\overline{W}= \overline{\Bbb K}\otimes_{\Bbb K}W$ est un $\overline{\Bbb K}$--espace vectoriel 
de dimension finie, et il existe une suite de $\overline{\Bbb K}$--espaces vectoriels
$$
0= \overline{W}_0\subset \overline{W}_1\subset \cdots \subset\overline{W}_n= \overline{W}
$$
telle que pour $i=1,\ldots ,n$:

\begin{itemize}
\item $\overline{W}_i$ est un sous--$(\Hn_{\sJ},\omega)$--module de $\overline{W}$; 
\item $\overline{W}_i/\overline{W}_{i-1}$ est un $(\Hn_{\sJ},\omega)$--module simple 
sur $\overline{\Bbb K}$. 
\end{itemize}

\ni De plus, 
il existe une sous--extension finie ${\Bbb K}'\!/{\Bbb K}$ de $\overline{\Bbb K}/{\Bbb K}$, un sous--${\Bbb K}'$--espace vectoriel 
$W'$ de $\overline{W}$ de dimension finie, et une suite de ${\Bbb K}'$--espaces 
vectoriels
$$
0=W'_0\subset W'_1\subset \cdots \subset W'_n=W'
$$
telle que pour $i=1,\ldots ,n$:

\begin{itemize}
\item $W'_i$ est un sous--$(\Hn_{\sJ},\omega)$--module de $W'$; 
\item $\overline{W}_i=\overline{\Bbb K}\otimes_{{\Bbb K}'}W'_i$.
\end{itemize}

\ni Ainsi pour $i=1,\ldots ,n$, le quotient $X'_i=W'_i/W'_{i-1}$ est un $(\Hn_{\sJ},\omega)$--module simple 
sur ${\Bbb K}'$. D'aprs la remarque 2 de \ref{H modules}, $X'_i$ est un $\H_J$--module semisimple sur ${\Bbb K}'$. 
PrŽcisŽment, on choisit un sous--$\H_J$--module simple (sur ${\Bbb K}'$) $X'_{i,0}$ de $X'_i$, et pour chaque entier $k\geq 1$, 
on note $X'_{i,k}$ le sous--$\H_J$--module simple de $X'_i$ dŽfini par $X'_{i,k}=e_{J^\natural}\cdot X'_{i,k-1}$. 
Alors il existe un plus petit entier entier $s=s(i)\geq 1$ tel que 
$$
X'_i= X'_{i,0}\oplus X'_{i,1}\oplus \cdots \oplus X'_{i,s-1}.
$$
D'aprs le thŽorme de Burnside, $\H_J$ engendre 
l'espace ${\rm End}_{{\Bbb K}'}(X'_{i,k})$ sur ${\Bbb K}'$.

Le corps ${\Bbb K}'$ est une extension finie sŽparable de ${\Bbb K}$, par suite il existe une 
variŽtŽ algŽbrique affine irrŽductible $\mathfrak{Y}'$ Žtale sur $\mathfrak{X}$ (\cad un morphisme 
dominant Žtale $\mathfrak{Y}'\rightarrow \mathfrak{X}$) telle que ${\Bbb K}'={\Bbb C}(\mathfrak{Y}')$. 
Notons $B'={\Bbb C}[\mathfrak{Y}']$ l'algbre affine de $\mathfrak{Y}'$. 

Soit $i\in \{1,\ldots ,n\}$. Choisissons une ${\Bbb K}'$--base de $X'_{i,0}$, et notons 
$Y'_{i,0}$ le sous--$B'$--module de $X'_{i,0}$ engendrŽ par cette base. On a donc ${\Bbb K}'\otimes_{B'}Y'_{i,0}=X'_{i,0}$. 
Rappelons que $\H_J$ est une ${\Bbb C}$--algbre de type fini. Puisque
$$
{\rm End}_{{\Bbb K}'}(X'_{i,0})= {\Bbb K}'\otimes_{B'}{\rm End}_{B'}(Y'_{i,0})
$$
et que $\H_J$ engendre ${\rm End}_{{\Bbb K}'}(X'_{i,0})$ sur ${\Bbb K}'$, il existe un 
ouvert dense $\mathfrak{Y}_i$ de $\mathfrak{Y}'$ (ce qui revient ˆ inverser certains ŽlŽments de $B'$) tel que, 
notant $B_i={\Bbb C}[\mathfrak{Y}_i]$ l'algbre affine de $\mathfrak{Y}_i$, on a:

\begin{itemize}
\item $Y_{i,0}= B_i\otimes_{B'} Y'_{i,0}$ est libre (de type fini) sur $B_i$;
\item $\H_J\subset {\rm End}_{B_i}(Y_{i,0})$;
\item $\H_J$ engendre ${\rm End}_{B_i}(Y_{i,0})$ sur $B_i$.
\end{itemize}

\ni 
Pour $k=1,\ldots ,s(i)-1$, posons $Y_{i,k}=e_{J^\natural}\cdot Y_{i,0}$. C'est un sous--$(\H_J\otimes_{\Bbb C}B_i)$--module 
de $X'_{i,k}$, qui vŽrifie ${\Bbb K}'\otimes_{B_i}Y_{i,k}= X'_{i,k}$. On obtient ainsi un 
sous--$(\Hn_{\sJ},\omega)$--module (non dŽgŽnŽrŽ)
$$
Y_i=Y_{i,0}\oplus \cdots \oplus Y_{i,s(i)-1}
$$
de $X'_i$, qui est aussi un $B_i$--module libre de type fini, tel que $\Hn_{\sJ}$ engendre ${\rm End}_{B_i}(Y_i)$ sur 
$B_i$. Pour $y\in \mathfrak{Y}_i$, le localisŽ $(Y_{i,0})_y$ de 
$Y_{i,0}$ en $y$ est un $\H_J$--module simple, et le localisŽ $(Y_i)_y$ de $Y_i$ en $y$ est un $(\Hn_{\sJ},\omega)$--module 
simple. Notons $\Pi_i$ la $\omega$--reprŽsentation de $G^\natural$ d'espace $\H*e_J\otimes_{\H_J}Y_i$ 
correspondant au $(\Hn_J,\omega)$--module non dŽgŽnŽrŽ $Y_i$ (\ref{bons seoc}). C'est un $(G^\natural,\omega,B_i)$--module admissible 
tel que 
pour $y\in \mathfrak{Y}_i$, la localisation $\Pi_{i,y}$ de $\Pi_i$ en $y$ est irrŽductible (elle est $G$--irrŽductible si 
et seulement si $s(i)=1$).

Notons $\mathfrak{Y}$ l'intersection des $\mathfrak{Y}_i$, $i=1,\ldots ,n$. 
C'est un ouvert dense de $\mathfrak{Y}'$, Žtale sur $\mathfrak{X}$. La composition des morphismes $\mathfrak{Y}'\rightarrow \mathfrak{X}$ 
et $\mathfrak{Y}\hookrightarrow \mathfrak{Y}'$ est un morphisme dominant Žtale $\nu:\mathfrak{Y}\rightarrow \mathfrak{X}$. 
Pour $y\in \mathfrak{Y}$, on a l'ŽgalitŽ dans $\G(G^\natural,\omega)$: 
$$\Pi_{\nu(y)}=\sum_{i=1}^n\Pi_{i,y}.$$ 
En regroupant les indices $i$ tels que les fonctions 
$\mathfrak{Y}\rightarrow {\rm Irr}(G^\natural,\omega),\,y\mapsto \Pi_{i,x}$ 
sont Žgales, l'ŽgalitŽ ci--dessus s'Žcrit
$$
\Pi_{\nu(y)}= \sum_{i=1}^m a_i\mu_i(y)
$$
pour des applications rŽgulires $\mu_i:\mathfrak{Y}\rightarrow {\rm Irr}(G^\natural,\omega)$ 
deux--ˆ--deux distinctes 
et des entiers $a_1,\ldots ,a_m>0$. Pour $i\neq j$, l'ensemble des 
$y\in \mathfrak{Y}$ tels que $\mu_i(y)=\mu_j(y)$ est fermŽ dans $\mathfrak{Y}$ pour la topologie de Zariski. 
Quitte ˆ remplacer $\mathfrak{Y}$ par un ouvert plus petit, 
on peut supposer les fonctions $\mu_i$ deux--ˆ--deux disjointes. Cela achve la dŽmonstration 
du lemme. 
\end{proof}

\subsection{Une consŽquence du lemme de dŽcomposition}\label{consŽquence}
La dŽfinition de \ref{dŽcomposition} s'applique bien s\^ur au cas non tordu: 
si $\mathfrak{X}$ est une vari\'et\'e alg\'ebrique affine complexe, 
d'anneau de fonctions r\'eguli\`eres $B={\Bbb C}[\mathfrak{X}]$, une application 
$\nu:\mathfrak{X}\rightarrow \G(G)$ est dite {\it rŽgulire} si elle est de la forme 
$x\mapsto \pi_x$ pour un $(G,B)$--module admissible $(\pi,V)$, o $\pi_x$ 
est la (semisimplifiŽe de la) localisation de $\pi$ en $x$. Toute application rŽgulire $\nu:\mathfrak{X}\rightarrow \G(G^\natural,\omega)$ 
induit une application rŽgulire $\nu^\circ:\mathfrak{X}\rightarrow \G(G)$, donnŽe par 
$\nu^\circ(x)=\nu(x)^\circ$, $x\in \mathfrak{X}$.
Notons que deux application rŽgulires ($G$--irrŽductibles) 
$\nu,\,\nu':\mathfrak{X}\rightarrow {\rm Irr}_0(G^\natural,\omega)$ sont 
${\Bbb C}^\times$--disjointes si et seulement si les applications 
rŽgulires sous--jacentes $\nu^\circ,\,\nu'^\circ:\mathfrak{X}\rightarrow {\rm Irr}(G)$ sont disjointes.

Rappelons que le foncteur d'oubli $\Pi\mapsto \Pi^\circ$ induit une 
application injective
$${\rm Irr}_0(G^\natural,\omega)/{\Bbb C}^\times \hookrightarrow {\rm Irr}(G)$$
d'image le sous--ensemble ${\rm Irr}_1(G)={\rm Irr}_{G^\natural,\omega}(G)$\index{${\rm Irr}_1(G)={\rm Irr}_{G^\natural,\omega}(G)$} 
de ${\rm Irr}(G)$ 
formŽ des $\pi$ tels que $\pi(1)=\pi$. 
Soit $\G_1(G)$\index{$\G_1(G)$} le sous-groupe de $\EuScript{G}(G)$ engendr\'e par 
${\rm Irr}_1(G)$ --- c'est aussi un quotient de $\EuScript{G}(G)$ --- et
$$q_1:\EuScript{G}(G)\rightarrow \EuScript{G}_1(G)$$
la projection canonique. Par dŽfinition, le foncteur d'oubli $\Pi\mapsto \Pi^\circ$ induit 
un morphisme de groupes surjectif $\G_0(G^\natural,\omega)\rightarrow \G_1(G)$,
de noyau le sous--goupe de $\G_0(G^\natural,\omega)$ engendrŽ par les $\Pi-\lambda\cdot \Pi$ 
pour $\Pi\in {\rm Irr}_0(G^\natural,\omega)$ et $\lambda\in {\Bbb C}^\times$.

\begin{mapropo}
Soit $\nu:\mathfrak{X}\rightarrow \G(G)$ une application rŽgulire. 
Il existe un morphisme dominant Žtale $\phi:\mathfrak{Y}\rightarrow\mathfrak{X}$, des applications rŽgulires 
$\mu_1,\ldots ,\mu_m:\mathfrak{Y}\rightarrow {\rm Irr}_0(G^\natural,\omega)$ 
deux--ˆ--deux 
${\Bbb C}^\times$--disjointes, et des 
entiers $a_1,\ldots ,a_m>0$, tels que $q_1\circ \nu \circ \phi= \sum_{i=1}^m a_i\mu_i^\circ$.
\end{mapropo}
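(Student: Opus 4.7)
Le plan consiste à combiner l'analogue non tordu du lemme de \ref{d�composition} (\cad le résultat original de \cite{BDK}) avec un argument de relèvement pour remonter de ${\rm Irr}_1(G)$ à ${\rm Irr}_0(G^\natural,\omega)$. Première étape: on applique ce lemme non tordu à $\nu$, ce qui fournit un morphisme dominant étale $\phi_0:\mathfrak{Y}_0\rightarrow\mathfrak{X}$, des applications régulières irréductibles $\nu_1,\ldots,\nu_n:\mathfrak{Y}_0\rightarrow{\rm Irr}(G)$ deux-à-deux disjointes, et des entiers $b_1,\ldots,b_n>0$ tels que $\nu\circ\phi_0 = \sum_j b_j\nu_j$.

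Deuxième étape: pour chaque indice $j$, la famille $y\mapsto \nu_j(y)(1) = \omega^{-1}\nu_j(y)^\theta$ est encore une application régulière irréductible, et le sous-ensemble $E_j=\{y\in\mathfrak{Y}_0:\nu_j(y)\simeq \nu_j(y)(1)\}$ est constructible (le $B_0$-module ${\rm Hom}_G(\pi_j,\pi_j(1))$ associé, où $\pi_j$ est le $(G,B_0)$-module admissible réalisant $\nu_j$ et $B_0 = {\Bbb C}[\mathfrak{Y}_0]$, est de type fini et a des localisations de dimension $\leq 1$). Quitte à remplacer $\mathfrak{Y}_0$ par un ouvert dense et à réordonner les indices, on peut supposer qu'il existe $m\leq n$ tel que $E_j = \mathfrak{Y}_0$ pour $j\leq m$ et $E_j = \emptyset$ pour $j > m$. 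On a alors $q_1\circ\nu\circ\phi_0 = \sum_{j=1}^m b_j\nu_j$, les $\nu_j$ restantes étant à valeurs dans ${\rm Irr}_1(G)$.

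Troisième étape (le c\oe ur de l'argument): relever chaque $\nu_j$, $j\in\{1,\ldots,m\}$, en une application régulière $\mu_j:\mathfrak{Y}\rightarrow{\rm Irr}_0(G^\natural,\omega)$ telle que $\mu_j^\circ = \nu_j\circ \phi_1$ pour un morphisme étale convenable $\phi_1:\mathfrak{Y}\rightarrow\mathfrak{Y}_0$. Par le lemme de Schur, le $B_0$-module ${\rm Hom}_G(\pi_j,\pi_j(1))$ est localement libre de rang $1$ sur $\mathfrak{Y}_0$, donc définit un fibré en droites $\mathcal{L}_j$. L'obstacle principal est la trivialisation simultanée de ces fibrés: on prend pour $\phi_1$ le produit fibré sur $\mathfrak{Y}_0$ des ${\Bbb C}^\times$-torseurs sous-jacents aux $\mathcal{L}_j$, de sorte que chaque $\phi_1^*\mathcal{L}_j$ admet une section partout non nulle $A_j$. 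Ce dernier définit un $(G^\natural,\omega,B)$-module admissible $\Pi_j$ d'espace sous-jacent $\phi_1^*V_j$, avec $\Pi_j(\delta_1)=A_j$; sa localisation en chaque point de $\mathfrak{Y}$ est $G$-irréductible, donc $\mu_j$ est bien définie et à valeurs dans ${\rm Irr}_0(G^\natural,\omega)$. Les $\mu_j$ ainsi construites sont deux-à-deux ${\Bbb C}^\times$-disjointes, puisque les $\mu_j^\circ = \nu_j\circ\phi_1$ le sont d'après la remarque ouvrant \ref{cons�quence}; en posant $\phi=\phi_0\circ\phi_1$ et $a_i=b_i$, on obtient $q_1\circ\nu\circ\phi = \sum_{i=1}^m a_i\mu_i^\circ$.
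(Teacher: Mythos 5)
Your architecture is the same as the paper's: decompose $\nu$ after an \'etale base change using the untwisted decomposition lemma, isolate the indices $j$ for which $\nu_j(y)(1)\simeq \nu_j(y)$, and lift those families to $G$--irreducible $\omega$--representations by producing a family of intertwining operators (your Steps 1 and 2 are essentially the paper's reduction; the paper detects the relevant indices at the generic point of $\mathfrak{Y}'$, via the subset $I_0$ of indices where the generic fibres of $W_i$ and $W_i(1)$ are isomorphic, rather than through your fibrewise locus $E_j$, but after shrinking the two coincide). The genuine gap is in Step 3. The morphism $\phi_1$ you construct --- the fibre product over $\mathfrak{Y}_0$ of the ${\Bbb C}^\times$--torsors underlying the line bundles $\mathcal{L}_j$ --- is smooth of relative dimension $m>0$, in particular not quasi--finite, hence not \'etale; so $\phi=\phi_0\circ\phi_1$ is dominant and smooth but \emph{not} a dominant \'etale morphism, which is exactly what the statement (and its use through the constructibility criterion $(*)$ and the corollary that follows) requires. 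A second, smaller weakness: the claim that ${\rm Hom}_G(\pi_j,\pi_j(1))$ is locally free of rank $1$ ``by Schur's lemma'' is not immediate. Schur's lemma gives $\dim_{\Bbb C}{\rm Hom}_G(\pi_{j,y},\pi_{j,y}(1))=1$ fibre by fibre, but base change for ${\rm Hom}$ along $B_0\rightarrow \kappa(y)$ is not automatic, so the passage from the fibrewise statement to local freeness of the $B_0$--module needs an argument (generic rank $1$ plus shrinking, or a trace/character argument showing the generic fibres are isomorphic).

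Both defects are repairable, and the repair is precisely what the paper does: a line bundle on an irreducible variety has a rational section, hence is trivial on a dense open subset. So instead of passing to the torsor total space, shrink $\mathfrak{Y}_0$ to a dense open over which each intertwining module admits a nowhere vanishing section $A_j$; then $\phi_1$ is an open immersion, $\phi$ stays dominant \'etale, and the rest of your Step 3 (defining $\Pi_j(\delta_1)=A_j$, $G$--irreducibility of the localizations, ${\Bbb C}^\times$--disjointness from disjointness of the $\mu_j^\circ$) goes through unchanged. Concretely, the paper produces for each $i\in I_0$ an intertwiner $A_{i,{\Bbb K}'}$ over the function field ${\Bbb K}'={\Bbb C}(\mathfrak{Y}')$ using \cite[8.2]{L2}, then spreads it out to an open subset $\mathfrak{Y}_i$ on which it becomes an invertible ${\Bbb C}[\mathfrak{Y}_i]$--linear automorphism of $W_i$ commuting with the coefficients; this simultaneously settles the rank--one issue and the trivialization, with no need for a torsor. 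With these two adjustments your proof becomes a correct variant of the paper's argument.
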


\begin{proof}
Soit $B={\Bbb C}[\mathfrak{X}]$. Par dŽfinition 
l'application $\nu$ est de la forme $x\mapsto \pi_x$ pour un $(G,B)$--module admissible $(\pi,V)$. 
On choisit un bon sous--groupe ouvert compact $J$ de $G$ comme dans la dŽmonstration du lemme de \ref{dŽcomposition}, i.e. tel que 
$V^J$ engendre $V$ comme $\H$--module, $J$ est en \og bon \fg, $\theta(J)=J$ et 
$\omega\vert_J=1$. 
On peut aussi supposer $\mathfrak{X}$ irrŽductible. 
D'apr\`es le lemme de \ref{dŽcomposition}, il existe un morphisme dominant \'etale $\phi':\mathfrak{Y}'\rightarrow \mathfrak{X}$, 
des applications r\'eguli\`eres 
$\mu'_1,\ldots ,\mu'_n:\mathfrak{Y}'\rightarrow {\rm Irr}(G)$ deux--\`a--deux 
disjointes, et des entiers $a'_1,\ldots ,a'_n>0$, tels que $\nu\circ \phi' =\sum_{i=1}^n a'_i\mu'_i$. 
PrŽcisŽment, $\mathfrak{Y}'$ est une variŽtŽ algŽbrique affine complexe irrŽductible, 
d'anneau de fonctions rŽgulires $B'={\Bbb C}[\mathfrak{Y}']$, et pour $i=1,\ldots ,n$, on a 
$\mu'_i(y)=\pi_{i,y}$ pour un $(G,B')$-module admissible $(\pi_i,V_i)$ tel que:

\begin{itemize}
\item $W_i=(V_i)^J$ engendre $V_i$ comme $G$-module,
\item $\EuScript{H}_J$ engendre ${\rm End}_{B'}(W_i)$ comme $B'$-module.
\end{itemize}

\noindent Pour chaque $y\in \mathfrak{Y}'$, le $\EuScript{H}_J$-module $W_{i,y}=(V_{i,y})^J$ est simple, et pour $i\neq j$, 
les $\EuScript{H}_J$--modules simples $W_{i,y}$ et $W_{j,y}$ sont non isomorphes.

Pour $i=1,\ldots ,n$, notons $(\pi_i(1),V_i(1))$ le $(G,B')$--module admissible d\'efini par 
$V_i(1)=V_i$ et $\pi_i(1)=\omega^{-1}(\pi_i)^\theta$. Pour $y\in \mathfrak{Y}'$, la localisation $\pi_i(1)_y$ de 
$\pi_i(1)$ en $y$ est donn\'ee par
$$
\pi_i(1)_y= \omega^{-1}(\pi_{i,y})^\theta =\pi_{i,y}(1).
$$
L'application
$$
\mu'_i(1):\mathfrak{Y}'\rightarrow \EuScript{G}(G),\, y\mapsto \pi_i(1)_y
$$ est encore r\'eguli\`ere. Notons $W_i(1)$ le $(\EuScript{H}_J\otimes_{\Bbb C} B')$--module $V_i(1)^J=(V_i)^J$ d\'eduit de $\pi_i(1)$ 
par passage aux points fixes sous $J$. Soit ${\Bbb K}'={\Bbb C}(\mathfrak{Y}')$ le corps des fractions de $B'$. 
Pour $i=1,\ldots ,n$, les $\H_J$-modules $W_{i,{\Bbb K}'}={\Bbb K}'\otimes_{B'}W_i$ et 
${\Bbb K}'\otimes_{B'}W_i(1)$ sont simples (sur ${\Bbb K}'$), et l'on note $I_0$ le sous--ensemble de $\{1,\ldots, n\}$ formŽ des indices 
$i$ tels qu'ils sont isomorphes. D'apr\`es \cite[8.2]{L2}, pour $i\in I_0$, il existe un ${\Bbb K}'$-automorphisme 
$A_{i,{\Bbb K}'}$ de $W_{i,{\Bbb K}'}$ tel que
$$
A_{i,{\Bbb K}'}(\omega f\cdot v)= {^{\theta}\!f}\cdot A_{i,{\Bbb K}'}(v),\quad f\in \EuScript{H}_J,\, v\in W_{i,{\Bbb K}'},
$$
o l'on a posŽ ${^{\theta}\!f}= f\circ \theta^{-1}$. 
Puisque $W_{i,{\Bbb K}'}$ est de dimension finie sur ${\Bbb K}'$, il existe un ouvert $\mathfrak{Y}_i$ de $\mathfrak{Y}'$ tel que 
$A_{i,{\Bbb K}'}$ induit par restriction un ${\Bbb C}[\mathfrak{Y}_i]$--automorphisme de $W_i$, disons $A_i$. 
Cela munit $W_i$ d'une structure de $(\H^\natural_J,\omega)$--module non dŽgŽnŽrŽ, l'action de $\H^\natural_J$ 
commutant ˆ celle de ${\Bbb C}[\mathfrak{Y}_i]$. Soit $\Pi_i$ la $\omega$--reprŽsentation de $G^\natural$ 
d'espace $V_i=(\H*e_J)\otimes_{\H_J}W_i$ correspondant au $(\Hn_{\sJ},\omega)$--module $W_i$ 
(\ref{bons seoc}). Notons 
que $A_i$ co\"{\i}ncide avec la restriction de $\Pi_i(\delta_1)$ ˆ $W_i = (V_i)^J$. 
Par construction, $(\Pi_i,V_i)$ est un $(G^\natural,\omega,{\Bbb C}[\mathfrak{Y}_i])$--module admissible tel 
que pour $y\in \mathfrak{Y}_i$, la localisation $\Pi_{i,y}$ de $\Pi_i$ en $y$ est une $\omega$--reprŽsentation 
$G$--irrŽductible de $G^\natural$.

L'intersection $\mathfrak{Y}=\bigcap_{i\in I_0}\mathfrak{Y}_i$ est un ouvert (dense) de $\mathfrak{Y}'$, et 
pour $i\in I_0$, l'application rŽgulire
$$\mathfrak{Y}\rightarrow {\rm Irr}_0(G^\natural,\omega),\,y\mapsto \mu_i(y)=\Pi_{i,y}$$ 
vŽrifie $\mu_i^\circ(y)=\mu'_i(y)$, o $\mu'_i:\mathfrak{Y}'\rightarrow {\rm Irr}(G)$ est l'application rŽgulire introduite en dŽbut de dŽmonstration. 
La composition des morphismes $\phi':\mathfrak{Y}'\rightarrow \mathfrak{X}$ et $\mathfrak{Y}\hookrightarrow \mathfrak{Y}'$ 
est un morphisme dominant Žtale $\phi:\mathfrak{Y}\rightarrow \mathfrak{X}$, et  
pour $y\in \mathfrak{Y}$, on a
$$
q_1\circ\nu\circ \phi(y) =\sum_{i\in I_0}a'_i\mu'_i(y).
$$
Puisque par construction les applications $\mu_i$ (pour $i\in I_0$) sont deux--ˆ--deux ${\Bbb C}^\times$--disjointes, 
la proposition est dŽmontrŽe.
\end{proof}

\begin{moncoro}Soit $\nu_1,\ldots ,\nu_m:\mathfrak{X}\rightarrow \G(G)$ des applications 
rŽgulires, et $\mu_1,\ldots ,\mu_n:\mathfrak{X}\rightarrow {\rm Irr}_0(G^\natural,\omega)$ 
des applications r\'eguli\`eres deux--ˆ--deux ${\Bbb C}^\times$--disjointes. 
Soit $\mathfrak{X}'$ l'ensemble des $x\in \mathfrak{X}$ 
tels que $\{\mu_1^\circ(x),\ldots ,\mu_n^\circ(x)\}$ est contenu dans 
le sous--groupe de $\G_1(G)$ engendr\'e par $q_1\circ\nu_1(x),\ldots ,q_1\circ \nu_m(x)$. 
Il existe un morphisme dominant \'etale $\phi:\mathfrak{Y}\rightarrow \mathfrak{X}$ tel que $\phi^{-1}(\mathfrak{X}')$ est 
soit vide soit \'egal \`a $\mathfrak{Y}$ tout entier.
\end{moncoro}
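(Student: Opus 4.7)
Le plan consiste \`a d\'eduire le corollaire de la proposition pr\'ec\'edente par une succession de raffinements \'etales, suivie d'une analyse combinatoire. Quitte \`a traiter s\'epar\'ement chaque composante irr\'eductible de $\mathfrak{X}$, on peut supposer $\mathfrak{X}$ irr\'eductible. J'appliquerais alors la proposition \`a chaque $\nu_j$, $j=1,\ldots ,m$: cela fournit un morphisme dominant \'etale $\phi_j:\mathfrak{Y}_j\rightarrow \mathfrak{X}$, des applications r\'eguli\`eres $G$--irr\'eductibles $\lambda_{j,1},\ldots ,\lambda_{j,n_j}:\mathfrak{Y}_j\rightarrow {\rm Irr}_0(G^\natural,\omega)$ deux--\`a--deux ${\Bbb C}^\times$--disjointes, et des entiers $a_{j,i}>0$ v\'erifiant
$$
q_1\circ \nu_j\circ \phi_j=\sum_{i=1}^{n_j}a_{j,i}\lambda_{j,i}^\circ.
$$
En prenant le produit fibr\'e $\mathfrak{Y}_0=\mathfrak{Y}_1\times_{\mathfrak{X}}\cdots \times_{\mathfrak{X}}\mathfrak{Y}_m$ (puis, au besoin, une composante irr\'eductible), on obtient un morphisme dominant \'etale $\phi_0:\mathfrak{Y}_0\rightarrow \mathfrak{X}$ tel que toutes ces d\'ecompositions, ainsi que les $\mu_k\circ \phi_0$, soient d\'efinies sur $\mathfrak{Y}_0$.

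Je raffinerais ensuite $\mathfrak{Y}_0$ en un ouvert de Zariski dense de sorte que, pour toute paire d'\'el\'ements $\alpha,\,\beta$ de la famille finie $\mathcal{F}$ form\'ee des $\mu_k\circ \phi_0$ ($k=1,\ldots ,n$) et des $\lambda_{j,i}$ (vus sur $\mathfrak{Y}_0$), on ait soit $\alpha^\circ(y)\simeq \beta^\circ(y)$ dans ${\rm Irr}(G)$ pour tout $y$, soit $\alpha$ et $\beta$ soient ${\Bbb C}^\times$--disjointes. Ce raffinement repose sur le fait que, pour deux telles applications r\'eguli\`eres $G$--irr\'eductibles, le lieu $\{y:\alpha^\circ(y)\simeq \beta^\circ(y)\}$ est Zariski--ferm\'e --- argument analogue \`a celui utilis\'e \`a la fin de la preuve du lemme de d\'ecomposition des fonctions r\'eguli\`eres. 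Notons $\phi:\mathfrak{Y}\rightarrow \mathfrak{X}$ le morphisme dominant \'etale ainsi obtenu.

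Apr\`es ce double raffinement, il existe un sous--ensemble fini $\{e_1,\ldots ,e_N\}\subset \mathcal{F}$ d'applications deux--\`a--deux ${\Bbb C}^\times$--disjointes tel que chaque \'el\'ement de $\mathcal{F}$ co\"{\i}ncide, au niveau des $^\circ$, avec exactement un $e_s$. On obtient alors des entiers $b_{j,s}\geq 0$ et des indices $s(k)\in\{1,\ldots ,N\}$, tous ind\'ependants de $y$, tels que pour tout $y\in \mathfrak{Y}$:
$$
q_1\circ \nu_j\circ \phi(y)=\sum_{s=1}^N b_{j,s}e_s^\circ(y),\qquad \mu_k^\circ \circ \phi(y)=e_{s(k)}^\circ(y).
$$
Comme les \'el\'ements $e_1^\circ(y),\ldots ,e_N^\circ(y)$ sont, pour chaque $y\in \mathfrak{Y}$, deux--\`a--deux distincts dans ${\rm Irr}_1(G)$, ils forment une ${\Bbb Z}$--base du sous--groupe qu'ils engendrent dans $\G_1(G)$. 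La condition d\'efinissant $\phi^{-1}(\mathfrak{X}')$ se traduit alors par l'appartenance, pour chaque $k\in \{1,\ldots ,n\}$, du vecteur canonique $e_{s(k)}$ au sous--${\Bbb Z}$--module de ${\Bbb Z}^N$ engendr\'e par les $(b_{j,1},\ldots ,b_{j,N})$, $j=1,\ldots ,m$. Cette condition \'etant purement combinatoire et ind\'ependante de $y$, elle vaut soit pour tout $y\in \mathfrak{Y}$, soit pour aucun.

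L'obstacle principal sera la deuxi\`eme \'etape, \`a savoir la justification rigoureuse que le lieu de co\"{\i}ncidence (au niveau des $^\circ$) de deux applications r\'eguli\`eres $G$--irr\'eductibles \`a valeurs dans ${\rm Irr}_0(G^\natural,\omega)$ est Zariski--ferm\'e. Ceci devrait se d\'eduire de la structure des $(G^\natural,\omega,B)$--modules admissibles sous--jacents et de la rigidit\'e des classes d'isomorphisme de leurs localis\'es, mais exige une v\'erification soigneuse pour distinguer la ${\Bbb C}^\times$--\'equivalence des applications de la simple \'egalit\'e de leurs foncteurs d'oubli.
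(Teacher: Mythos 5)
Votre preuve est correcte et suit pour l'essentiel la m\^eme d\'emarche que le texte~: raffinement \'etale commun fourni par la proposition (le texte l'\'enonce directement pour toute la famille des $\nu_i$, vous le construisez proprement par produit fibr\'e), r\'etr\'ecissement de $\mathfrak{Y}$ pour que les applications en jeu soient deux--\`a--deux \'egales ou disjointes, puis ind\'ependance lin\'eaire des irr\'eductibles distincts dans $\G_1(G)$ qui rend la condition d'appartenance purement enti\`ere et ind\'ependante du point (le texte la formule en fixant un point de $\phi^{-1}(\mathfrak{X}')$ et en tirant $\sum_i b_{k,i}a_{i,j}=\delta_{k,j}$, ce qui revient au m\^eme). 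L'\og obstacle \fg que vous signalez --- la fermeture de Zariski du lieu $\{y: \alpha^\circ(y)\simeq \beta^\circ(y)\}$ --- est exactement le fait que le texte utilise lui aussi sans plus de d\'etail (dans la preuve du lemme de d\'ecomposition et dans celle du corollaire), et il se v\'erifie de fa\c{c}on standard, par exemple via la r\'egularit\'e des fonctions traces $y\mapsto {\rm tr}(\alpha^\circ(y)(f))$, $f\in \H_J$, et l'ind\'ependance lin\'eaire des caract\`eres~; ce n'est donc pas une lacune de votre argument par rapport au texte.
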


\begin{proof} D'aprs la proposition, il existe un morphisme dominant Žtale $\phi:\mathfrak{Y}\rightarrow \mathfrak{X}$ 
et des applications rŽgulires $\lambda_1,\ldots ,\lambda_s:\mathfrak{Y}\rightarrow {\rm Irr}_0(G^\natural,\omega)$ 
tels que pour $i=1,\ldots ,m$, l'application $q_1\circ \nu_i\circ \phi:\mathfrak{Y}\rightarrow \G_1(G)$ se dŽcompose en
$$
q_1\circ \nu_i\circ \phi= \sum_{j=1}^{s}a_{i,j}\lambda_j^\circ
$$
pour des entiers $a_{i,j}>0$. 
Quitte ˆ remplacer $\mathfrak{Y}$ par un ouvert plus petit, on peut supposer que les applications 
$\lambda_1^\circ,\ldots ,\lambda_s^\circ:\mathfrak{Y}\rightarrow {\rm Irr}_{G^\natural,\omega}(G)$ sont deux--ˆ--deux disjointes, 
et que pour pour 
tout $k\in \{1,\ldots ,n\}$ et tout $j\in \{1,\ldots ,s\}$, les applications $\mu_k^\circ\circ \phi$ et 
$\lambda_j^\circ$ 
sont soit Žgales soit disjointes. Supposons $\phi^{-1}(\mathfrak{X}')\neq \emptyset$ et soit 
$y\in \phi^{-1}(\mathfrak{X}')$. On a forcŽment $n\leq s$ et quitte ˆ rŽordonner 
les $\lambda_j$, on peut supposer que $\mu_k^\circ \circ \phi=\lambda_k^\circ$ ($k=1,\ldots ,n$). 
Par hypothse, pour $k=1,\ldots ,n$, il existe des entiers $b_{k,i}$ ($i=1,\ldots ,m$) tels que
$$
\mu_k^\circ\circ \phi(y) =\sum_{i=1}^m b_{k,i}\sum_{j=1}^sa_{i,j}\lambda_j^\circ(y).
$$
Pour $k=1,\ldots ,n$ et $j=1,\ldots ,s$, on a donc
$$\sum_{i=1}^mb_{k,i}a_{i,j}=\delta_{k,j}.
$$
Par suite $\phi^{-1}(\mathfrak{X}')=\mathfrak{Y}$ et le lemme est dŽmontrŽ.
\end{proof}

\subsection{La partie $\Theta'=\Theta_{G^\natural,\omega}^{\rm dis}(\mathfrak{s})$ de $\Theta=\Theta(\mathfrak{s})$ 
est constructible}\label{constructible}Montrons que la paire $\Theta'\subset \Theta$ 
satisfait au critre $(*)$ de \ref{dŽcomposition}. On peut supposer que $\Theta'$ est non vide. 
Soit $(M_P,\rho)$ une paire cuspidale standard de $G$ telle que $[M_P,\rho]\in \Theta$. Pour 
$Q\in \EuScript{P}(G)$ tel que $P\subset Q$, notons 
$\eta_Q:\mathfrak{P}(M_P)\rightarrow \EuScript{G}(M_Q)$ 
l'application r\'eguli\`ere d\'efinie par
$$
\eta_Q(\psi)= i_P^Q(\psi \rho).
$$
Pour $Q^\natural\in \EuScript{P}(G^\natural)$, on dŽfinit 
comme en \ref{dŽcomposition} la projection canonique
$q_{Q,1}:\G(M_Q)\rightarrow \G_1(M_Q)$, o $\G_1(M_Q)$ 
est le sous--groupe de $\G(M_Q)$ engendrŽ par les reprŽsentations 
$\sigma\in {\rm Irr}(M_Q)$ telles que $\omega^{-1}\sigma^\theta =\sigma$.

Soit $\mathfrak{X}$ une sous--variŽtŽ localement fermŽe de $\Theta$. 
Puisque le morphisme
$$\mathfrak{P}(M_P)\rightarrow \Theta,\,\psi\mapsto [M_P,\psi\rho]$$
est dominant (et m\^eme fini) 
Žtale, il existe une sous--variŽtŽ $\widetilde{\mathfrak{X}}$ de $\mathfrak{P}(M_P)$ telle que l'ensemble 
$\{[M_P,\psi\rho]:\psi\in \mathfrak{X}'\}$ est contenu dans $\mathfrak{X}$ et le morphisme 
$$\widetilde{\mathfrak{X}}\rightarrow \mathfrak{X},\,\psi \mapsto [M_P,\psi\rho]$$ est dominant Žtale. 
D'aprs la proposition de \ref{consŽquence}, il existe un morphisme dominant Žtale 
$\tilde{\phi}:\mathfrak{Y}\rightarrow \widetilde{\mathfrak{X}}$ tel que pour chaque 
$Q^\natural\in \EuScript{P}(G^\natural)$ contenant 
$P^\natural$, l'application rŽgulire $$\tilde{\eta}_Q= \eta_Q\circ \tilde{\phi}:\mathfrak{Y}\rightarrow \G(M_Q)$$
composŽe avec la projection canonique $q_{Q,1}:\G(M_Q)\rightarrow \G_1(M_Q)$ 
se dŽcompose en
$$
q_{Q,1}\circ \tilde{\eta}_Q=\sum_{i=1}^{m(Q^\natural)}a_{Q^\natural\!,i}\,\mu_{Q^\natural\!,i}^\circ$$ 
o:
\begin{itemize}
\item $\mu_{Q^\natural\!,1},\ldots ,\mu_{Q^\natural,m(Q^\natural)}:\mathfrak{Y}\rightarrow {\rm Irr}_0(M_Q^\natural,\omega)$ 
sont des applications rŽgulires deux--ˆ--deux ${\Bbb C}^\times$--disjointes; 
\item $a_{Q^\natural\!,1},\ldots ,a_{Q^\natural\!,m(Q^\natural)}$ sont des entiers $>0$. 
\end{itemize}
Posons $n=m(G^\natural)$ et 
$\mu_i= \mu_{G^\natural\!,i}$ ($i=1,\ldots ,n$). 

Pour $Q^\natural\in \P(G^\natural)\smallsetminus \{G^\natural\}$ contenant $P^\natural$ 
et $i=1,\ldots ,m(Q^\natural)$, l'application
$$
\nu_{Q,i}={i}_Q^G\circ \mu_{Q^\natural\!,i}^\circ:\mathfrak{Y}\rightarrow \G(G)
$$
est rŽgulire. La famille 
des $\nu_{Q,i}: \mathfrak{Y}\rightarrow \G(G)$ obtenue en faisant varier $Q^\natural\;(\neq G^\natural)$ et $i$ de cette manire, 
est notŽe 
$\{\nu_1,\ldots ,\nu_m\}$. L'ensemble des points $y\in \mathfrak{Y}$ tels que $\{\mu_1^\circ(y),\ldots ,\mu_n^\circ(y)\}$ 
est contenu dans le sous--groupe de $\G_1(G)$ engendrŽ par $q_1\circ\nu_1(y),\ldots ,q_1\circ\nu_m(y)$ est exactement l'image 
rŽciproque du complŽmentaire 
$(\mathfrak{X}\cap \Theta')^{\EuScript{C}}$ de $\mathfrak{X}\cap \Theta'$ dans $\mathfrak{X}$ 
par le morphisme dominant Žtale
$$\phi=(\widetilde{\mathfrak{X}}\rightarrow \mathfrak{X})\circ \tilde{\phi}:\mathfrak{Y}\rightarrow 
\mathfrak{X}.
$$ On conclut 
gr\^ace au corollaire de \ref{consŽquence}. Puisque $\Theta'$ vŽrifie la propriŽtŽ $(*)$ de \ref{dŽcomposition}, 
c'est une partie constructible de $\Theta$. Cela achve la dŽmonstration de la proposition de \ref{finitude}.

\subsection{DŽcomposition des espaces $\F^{\rm dis}(G^\natural,\omega)$ et $\F_{\rm tr}^{\rm dis}(G^\natural,\omega)$.}
\label{dŽcomposition des espaces}
Pour tout sous--ensemble $Y$ de $\Theta(G)$, on note $\G_{\Bbb C}(G^\natural,\omega;Y)$ le sous--espace 
vectoriel de $\G_{\Bbb C}(G^\natural,\omega)$ engendrŽ par les $\Pi\in {\rm Irr}_{\Bbb C}(G^\natural,\omega)$ tels que 
$\theta_{G^\natural}(\Pi)\in Y$. On a la dŽcomposition\index{$\G_{\Bbb C}(G^\natural,\omega;Y)=\bigoplus_{y}\G_{\Bbb C}(G^\natural,\omega;y)$}
$$
\G_{\Bbb C}(G^\natural,\omega;Y)=\bigoplus_{y}\G_{\Bbb C}(G^\natural,\omega;y)
$$
o $y$ parcourt les ŽlŽments de l'ensemble $Y\cap \Theta_{G^\natural,\omega}(G)$. 
Dualement, pour $Y\subset \Theta(G)$, on note $\F_Y(G^\natural,\omega)$\index{$\F_Y(G^\natural,\omega)$, 
$\F_\mathfrak{S}(G^\natural,\omega)$} le sous--espace vectoriel 
de $\G_{\Bbb C}(G^\natural,\omega;Y)^*$ formŽ des restrictions ˆ $\G_{\Bbb C}(G^\natural,\omega;Y)$ des ŽlŽments de 
$\F(G^\natural,\omega)$, et pour $\mathfrak{S}\subset \mathfrak{B}(G)$, on pose
$$\F_\mathfrak{S}(G^\natural,\omega)=
\F_{\Theta(\mathfrak{S})}(G^\natural,\omega),\quad 
\Theta(\mathfrak{S})= \coprod_{\mathfrak{s}\in \mathfrak{S}}\Theta(\mathfrak{s}).
$$ 
On a la dŽcomposition
$$
\F(G^\natural,\omega)= \bigoplus_{\mathfrak{s}}\F_\mathfrak{s}(G^\natural,\omega)
$$
o $\mathfrak{s}$ parcourt les ŽlŽments de $\mathfrak{B}_{G^\natural,\omega}(G^\natural)$.

\begin{marema}
{\rm Pour $\mathfrak{s}\in \mathfrak{B}(G)$, notant $z_\mathfrak{s}$ l'ŽlŽment unitŽ de l'anneau $\mathfrak{Z}_\mathfrak{s}$, on a 
l'ŽgalitŽ $\F_\mathfrak{s}(G^\natural,\omega)=z_\mathfrak{s}\cdot\F(G^\natural,\omega)$.\hfill $\blacksquare$} 
\end{marema}

Posons\index{$\Theta_{G^\natural,\omega}^{\rm dis}(G)$}
$$
\Theta_{G^\natural,\omega}^{\rm dis}(G)= \coprod_{\mathfrak{s}\in \mathfrak{B}(G)}\Theta_{G^\natural,\omega}^{\rm dis}(\mathfrak{s}).
$$ 
Pour $Y\subset \Theta(G)$, on note $\G_{\Bbb C}^{\rm dis}(G^\natural,\omega;Y)$\index{$\G_{\Bbb C}^{\rm dis}(G^\natural,\omega;Y)$} 
la projection de $\G_{\Bbb C}(G^\natural,\omega;Y)$ sur $\G_{\Bbb C}^{\rm dis}(G^\natural,\omega)$, et 
l'on pose\index{$\F_Y^{\rm dis}(G^\natural,\omega)$, $\F_\mathfrak{S}^{\rm dis}(G^\natural,\omega)$} 
$$
\F_Y^{\rm dis}(G^\natural,\omega)= \F_Y(G^\natural,\omega)\cap \F^{\rm dis}(G^\natural,\omega)\subset 
\G_{\Bbb C}^{\rm dis}(G^\natural,\omega;Y)^*.
$$
Pour $\mathfrak{S}\subset \mathfrak{B}(G)$, on pose
$$
\F_\mathfrak{S}^{\rm dis}(G^\natural,\omega)= \F_{\Theta(\mathfrak{S})}^{\rm dis}(G^\natural,\omega).
$$
Puisque 
$\G_{\Bbb C}^{\rm dis}(G^\natural,\omega)^* \subset 
\G_{\Bbb C}(G^\natural, \omega; \Theta_{G^\natural,\omega}^{\rm dis}(G))^*$,
on a la dŽcomposition
$$
\F^{\rm dis}(G^\natural,\omega)= \bigoplus_{\mathfrak{s}}\F_{\mathfrak{s}}^{\rm dis}(G^\natural,\omega)
$$
o $\mathfrak{s}$ parcourt les ŽlŽments de l'ensemble $\mathfrak{B}_{G^\natural,\omega}^{\rm dis}(G)= 
\beta_{G^\natural}({\rm Irr}_0^{\rm dis}(G^\natural,\omega))$ --- le sous--ensemble de 
$\mathfrak{B}(G)$ formŽ des $\mathfrak{s}$ tels que $\Theta_{G^\natural,\omega}^{\rm dis}(\mathfrak{s})$ est non vide. 

En rempla\c{c}ant $\F(G^\natural,\omega)$ par $\F_{\rm tr}(G^\natural,\omega)$, on dŽfinit 
de la m\^eme manire le sous--espace $\F_{{\rm tr},Y}(G^\natural,\omega)$\index{$\F_{{\rm tr},Y}(G^\natural,\omega)$, 
$\F_{{\rm tr},\mathfrak{S}}(G^\natural,\omega)$} de 
$\G_{\Bbb C}(G^\natural,\omega;Y)^*$ (pour $Y\subset \Theta(G)$), et l'on pose 
$\F_{{\rm tr},\mathfrak{S}}(G^\natural,\omega)= \F_{{\rm tr},\Theta(\mathfrak{S})}(G^\natural,\omega)$ 
(pour $\mathfrak{S}\subset \mathfrak{B}(G)$). On a la dŽcomposition
$$
\F_{\rm tr}(G^\natural,\omega)= \bigoplus_{\mathfrak{s}}\F_{{\rm tr},\mathfrak{s}}(G^\natural,\omega)
$$
o $\mathfrak{s}$ parcourt les ŽlŽments de $\mathfrak{B}_{G^\natural,\omega}(G)$. De m\^eme, 
posant\index{$\F_{{\rm tr},Y}^{\rm dis}(G^\natural,\omega)$, $\F_{{\rm tr},\mathfrak{S}}^{\rm dis}(G^\natural,\omega)$}
$$
\F_{{\rm tr},Y}^{\rm dis}(G^\natural,\omega)= \F_{{\rm tr},Y}(G^\natural,\omega)\cap \F^{\rm dis}(G^\natural,\omega),\quad 
Y\subset \Theta(G),
$$
et 
$$\F_{{\rm tr},\mathfrak{S}}^{\rm dis}(G^\natural,\omega)
=\F_{{\rm tr},\Theta(\mathfrak{S})}^{\rm dis}(G^\natural,\omega),\quad \mathfrak{S}\subset \mathfrak{B}(G),
$$
on a la dŽcomposition
$$
\F_{\rm tr}^{\rm dis}(G^\natural,\omega)= \bigoplus_{\mathfrak{s}}\F_{{\rm tr},\mathfrak{s}}^{\rm dis}(G^\natural,\omega)
$$
o $\mathfrak{s}$ parcourt les ŽlŽments de $\mathfrak{B}_{G^\natural,\omega}^{\rm dis}(G)$.

Pour dŽmontrer la surjectivitŽ dans le thŽorme de \ref{ŽnoncŽ discret}, il suffit de montrer que pour chaque $\mathfrak{s}\in 
\mathfrak{B}_{G^\natural,\omega}^{\rm dis}(G)$, l'inclusion
$$
\F^{\rm dis}_{{\rm tr},\mathfrak{s}}(G^\natural,\omega)\subset \F_\mathfrak{s}^{\rm dis}(G^\natural,\omega)
$$
est une ŽgalitŽ. 

Le lemme suivant est impliquŽ par 
la propriŽtŽ d'indŽpendance linŽaire des caractres--distributions des $\omega$--reprŽsentations $G$--irrŽductibles 
de $G^\natural$ \cite[8.5, prop.]{L2}.

\begin{monlem}Pour tout sous--ensemble fini $Y$ de $\Theta(G)$, 
on a l'ŽgalitŽ
$$\F_{{\rm tr},Y}(G^\natural,\omega)= \G_{\Bbb C}(G^\natural,\omega;Y)^*.$$ 
\end{monlem}

Notons que si le groupe $\mathfrak{P}(G^\natural)$ est fini, alors pour chaque $\mathfrak{s}$ l'ensemble 
$\Theta_{G^\natural,\omega}^{\rm dis}(\mathfrak{s})$ est fini 
(proposition de \ref{finitude}), et d'aprs le lemme on a l'ŽgalitŽ cherchŽe: 
$$\F_{{\rm tr},\mathfrak{s}}^{\rm dis}(G^\natural,\omega)=\G_{\Bbb C}^{\rm dis}(G^\natural,\omega;\Theta(\mathfrak{s}))^*
=\F_\mathfrak{s}^{\rm dis}(G^\natural,\omega).$$

\subsection{SurjectivitŽ dans le thŽorme de \ref{ŽnoncŽ discret}.}\label{surjectivitŽ} 
L'idŽe consiste ˆ se ramener au lemme de \ref{dŽcomposition des espaces}, 
comme dans \cite[4.2]{BDK}.

Rappelons que $A_G$ est le tore central dŽployŽ maximal de $G$. 
Choisissons une uniformisante $\varpi$ 
de $F$ et identifions le groupe ${\rm Hom}(A_G^\varpi,{\Bbb C}^\times)$ ˆ $\mathfrak{P}(A_G)$ comme dans l'exemple de \ref{carac nr}. 
Pour $u\in \H(A_G)$, on note $z(u)$ 
l'ŽlŽment de $\mathfrak{Z}(G)$ dŽfini par
$$
z(u)_\pi = \int_{A_G}u(a)\pi(a)da, \quad \pi \in {\rm Irr}(G);
$$
o $da$ est la mesure de Haar sur $A_G$ qui donne le volume $1$ ˆ $A_G^1$. L'algbre ${\Bbb C}[\mathfrak{P}(A_G)]$ 
des fonctions rŽgulires sur la variŽtŽ $\mathfrak{P}(A_G)$, identifiŽe ˆ l'algbre de groupe 
${\Bbb C}[A_G^\varpi]$, est une sous--algbre de $\H(A_G)$; d'o un morphisme d'algbres 
${\Bbb C}[\mathfrak{P}(A_G)]\rightarrow \mathfrak{Z}(G)$. Pour chaque $\mathfrak{s}\in \mathfrak{B}(G)$, 
on peut composer ce morphisme d'algbres avec la projection canonique $\mathfrak{Z}(G)\rightarrow \mathfrak{Z}_\mathfrak{s}$. 
Le morphisme de variŽtŽs correspondant $\eta_\mathfrak{s}:\Theta(\mathfrak{s})\rightarrow \mathfrak{P}(A_G)$ est donnŽ par
$$
\eta_\mathfrak{s}(\theta_G(\pi))= \omega_\pi\vert_{A_G^\varpi},\quad \pi\in \beta_G^{-1}(\mathfrak{s}).
$$
Notons que ce morphisme $\eta_\mathfrak{s}$ est $\mathfrak{P}(G)$--Žquivariant pour l'action (algŽbrique) 
de $\mathfrak{P}(G)$ sur $\mathfrak{P}(A_G)$ donnŽe par 
$(\psi,\chi)\mapsto (\psi\vert_{A_G})\chi$, pour $\psi\in \mathfrak{P}(G)$ et $\chi\in \mathfrak{P}(A_G)$. 

Comme pour $A_G$, on identifie le groupe ${\rm Hom}(A_{G^\natural}^\varpi,{\Bbb C}^\times)$ ˆ $\mathfrak{P}(A_{G^\natural})$; 
o (rappel) $A_{G^\natural}$ est le tore dŽployŽ maximal du centre $Z^\natural$ de $G^\natural$. 
L'action de $\mathfrak{P}(G)$ sur $\mathfrak{P}(A_G)$ induit par restricion une 
action (algŽbrique) de $\mathfrak{P}(G^\natural)$ sur $\mathfrak{P}(A_{G^\natural})$. L'application $\mathfrak{P}(G^\natural)
\rightarrow \mathfrak{P}(A_{G^\natural}),\,\psi\mapsto \psi\vert_{A_{G^\natural}}$ est un morphisme surjectif 
de groupes algŽbriques, de noyau fini (cf. \ref{les espaces bP}). En particulier c'est un morphisme fini, donc Žtale (puisqu'il est lisse).

Fixons $\mathfrak{s}\in \mathfrak{B}_{G^\natural,\omega}^{\rm dis}(G)$ et 
posons $Y= \Theta_{G^\natural,\omega}^{\rm dis}(\mathfrak{s})$. Posons aussi $\mathfrak{X}=\mathfrak{P}(A_{G^\natural})$. 
Par restriction, $\eta_\mathfrak{s}$ induit 
une application $\mathfrak{P}(G^\natural)$--Žquivariante $\eta_Y: Y \rightarrow \mathfrak{X}$, 
donnŽe par
$$\eta_Y(\theta_{G^\natural}(\Pi))= \omega_\Pi\vert_{A_{G^\natural}^\varpi},\quad 
\Pi\in \theta_{G^\natural}^{-1}(Y).
$$
Pour $\Pi\in \theta_{G^\natural}^{-1}(Y)$ et $\psi \in \mathfrak{P}(G^\natural)$, elle vŽrifie
$$
\eta_Y(\psi\cdot \theta_{G^\natural}(\Pi))= \psi\vert_{A_{G^\natural}}\cdot\eta_Y(\theta_{G^\natural}(\Pi)).
$$
Comme $Y$ est union d'un nombre fini de $\mathfrak{P}(G^\natural)$--orbites (proposition de \ref{finitude}), et que pour chacune de 
ces orbites le stabilisateur dans $\mathfrak{P}(G^\natural)$ est fini, 
l'application $\eta_Y$ est un morphisme fini Žtale de variŽtŽs algŽbriques affines lisse. Ainsi 
le comorphisme ${\Bbb C}[\mathfrak{X}]\rightarrow {\Bbb C}[Y]$ 
fait de ${\Bbb C}[Y]$ un ${\Bbb C}[\mathfrak{X}]$--module de type fini. En particulier, 
l'espace $\E_Y=\G_{\Bbb C}^{\rm dis}(G^\natural,\omega;Y)^*$ est un ${\Bbb C}[\mathfrak{X}]$--module de type fini pour 
l'action de ${\Bbb C}[\mathfrak{X}]$ donnŽe par ($\varphi\in {\Bbb C}[\mathfrak{X}]$, 
$\Phi\in \E_Y$, $\Pi\in \theta_{G^\natural}^{-1}(Y)$)
$$
(\varphi\cdot \Phi)(\Pi)= \varphi(\omega_{\Pi}\vert_{A_{G^\natural}^\varpi})\Phi(\Pi).
$$
En d'autres termes, ${\Bbb C}[\mathfrak{X}]$ opre sur $\E_Y$ via le morphisme 
d'algbres ${\Bbb C}[\mathfrak{X}]\rightarrow \mathfrak{Z}_{\mathfrak{s},1},\,\varphi\mapsto z_\varphi$ donnŽ par 
$(z_\varphi)_{\Pi^\circ}= \varphi(\omega_{\Pi}\vert_{A_{G^\natural}^\varpi}){\rm id}_{V_\Pi}$ pour tout objet irrŽductible $\Pi$ 
de $\mathfrak{R}_\mathfrak{s}(G^\natural,\omega)$, cf. \ref{action sur le centre}. On en dŽduit 
que les espaces $\F_Y=\F_Y^{\rm dis}(G^\natural,\omega)$ et $\F'_Y= \F_{{\rm tr},Y}^{\rm dis}(G^\natural,\omega)$ sont des 
sous--${\Bbb C}[\mathfrak{X}]$--modules de $\E_Y$. L'action de $\mathfrak{P}_{\Bbb C}(G^\natural)$ sur $\G_{\Bbb C}(G^\natural,\omega)^*$ 
dŽfinie en \ref{actions duales} induit une action sur $\G_{\Bbb C}^{\rm dis}(G^\natural,\omega)^*$, et les espaces $\E_Y$, $\F_Y$ et $\F'_Y$ sont 
stables pour cette action.

Soit $x\in \mathfrak{X}$ correspondant \`a $u:{\Bbb C}[\mathfrak{X}]\rightarrow {\Bbb C}$. Pour tout 
${\Bbb C}[\mathfrak{X}]$--module $E$, on note $E_x$ la fibre $E\otimes_{{\Bbb C}[\mathfrak{X}],u}{\Bbb C}$ de 
$E$ au--dessus de $x$. Comme le morphisme $\eta_Y:Y\rightarrow \mathfrak{X}$ est fini et lisse, l'ensemble 
$Y_x=\eta_Y^{-1}(x)$ est fini et la fibre ${\Bbb C}[Y]_x$ co\"{\i}ncide avec ${\Bbb C}[Y_x]$, i.e. la \og fibre gŽomŽtrique \fg 
au--dessus de $x$ est rŽgulire. 
D'aprs le lemme 
de \ref{dŽcomposition des espaces}, on a l'ŽgalitŽ $\F'_{Y_x}=\E_{Y_x}$. En d'autres termes, posant 
$\mathfrak{p}_x= \ker u$, on a l'inclusion
$$
\F_{Y}\subset \F'_{Y} + \mathfrak{p}_x \cdot \E_Y.
$$

Posons $\overline{\E}= \E_Y/\F'_{Y}$ et $\overline{\F}= \F_Y/\F'_{Y}\subset \overline{\E}$. 
D'aprs l'inclusion 
ci--dessus, on a $\overline{\F}\subset \mathfrak{p}_x\cdot \overline{\E}$ pour tout 
$x\in \mathfrak{X}$. Puisque $\overline{\E}$ est un ${\Bbb C}[\mathfrak{X}]$--module de type fini, il 
est localement libre en presque tout point de $\mathfrak{X}$. Comme d'aprs le lemme 2 de \ref{actions duales}, 
$\overline{\E}$ est $\mathfrak{P}_{\Bbb C}(G^\natural)$--Žquivariant, 
il est localement libre en tout point de $\mathfrak{X}$, et l'inclusion $\overline{\F}\subset \mathfrak{p}_x\cdot \overline{\E}$ 
($x\in \mathfrak{X}$) implique que $\overline{\F}=0$; ce qu'on voulait dŽmontrer.

\vfill
\eject

\printindex

\vfill
\eject

\end{document}